\documentclass[10pt,leqno]{article}

\usepackage{amsmath,amssymb,amsthm,mathrsfs,dsfont}

\usepackage[margin=3cm]{geometry} 

\usepackage{titlesec,hyperref}

\usepackage{color}

\usepackage{fancyhdr}
\usepackage{subfigure}
\usepackage[graphicx]{realboxes}
\usepackage{float}
\usepackage{enumerate}
\usepackage{enumitem}
\setlist[enumerate]{label=(\arabic*), leftmargin=*, itemsep=2pt}
\usepackage{dsfont} 
\usepackage{extarrows}

\titleformat{\subsection}{\it}{\thesubsection.\enspace}{1pt}{}

\newtheorem{theo}{Theorem}[section]
\newtheorem{lemm}[theo]{Lemma}
\newtheorem{defi}[theo]{Definition}

\newtheorem{prop}[theo]{Proposition}
\newtheorem{rema}[theo]{Remark}

\allowdisplaybreaks 

\numberwithin{equation}{section}

\begin{document}
\title{Global Existence of Weak Martingale Solutions to the Camassa-Holm Equation with Linear Multiplicative Noise
	\hspace{-4mm}
}

\author{
	Wei $\mbox{Luo}^1$ \footnote{E-mail:  luow23@mail.sysu.edu.cn}, \quad
	Zhaoyang $\mbox{Yin}^{1,2}$\footnote{E-mail: mcsyzy@mail.sysu.edu.cn} \quad and
	Pei $\mbox{Zheng}^{1}$\footnote{E-mail: zhengp25@mail2.sysu.edu.cn}\\
	$^1\mbox{Department}$ of Mathematics,
	Sun Yat-sen University, Guangzhou 510275, China\\
	$^2\mbox{School}$ of Science,\\ Shenzhen Campus of Sun Yat-sen University, Shenzhen 518107, China}

\date{}
\maketitle
\hrule

\begin{abstract}
	In this paper, we consider the global existence and properties of $H^1$ martingale solution to the Camassa–Holm equation with linear multiplicative noise under periodic boundary conditions. The solution is obtained as limit of regular viscous approximate solutions to parabolic SPDEs, which are constructed using the Galerkin approximations ans the stochastic compactness method. The proof of convergence to a solution argues via tightness of the laws of the viscous approximations and Skorokhod-Jakubowski a.s. representations of random variables in quasi-Polish spaces. In particular, by means of the Girsanov-type transform for regular viscous approximations and the convergence of Skorokhod–Jakubowski representations, we are able to establish the one-sided supernorm estimate and space-time higher regularity of the first-order spatial derivative, and large-time behavior of the weak martingale solution in the stochastic framework.
	
	\vspace*{5pt}
					\noindent {\it 2020 Mathematics Subject Classification}: Primary, 35R60, 35D30; Secondary, 35A01, 60H15  
					
	\vspace*{5pt}
					\noindent{\it Keywords}: stochastic Camassa-Holm equation; Linear multiplicative noise; Global existence; Weak martingale solutions; Viscous approximation; One-sided supernorm estimate; Large-time behavior
	
%
%
\end{abstract}

\vspace*{10pt}

\tableofcontents

\section{Introduction}\label{1}
\subsection{Background and main result}
\quad In recent year, the Korteweg-de Vries (KdV) equation, which was introduced to describe the behavior of long waves on shallow water, is the most famous model in soliton theory because it is integrable and includes the phenomena of soliton interaction. But, the KdV equation cannot model the occurrence of breaking waves. In 1993, Camassa and Holm obtained the nonlinear partial differential equation \cite{CH}
$$
	m_{t}+um_{x}+2u_{x}m=0, m=u-u_{xx}+\kappa
$$
which is named the Camassa-Holm (CH) equation, has been well studied and a series of achievements have been made. A particular feature of the CH equation is that when $\kappa=0$ 
$$
	u_t-u_{xxt}+3u\,u_x=2u_x\,u_{xx}+u\,u_{xxx}
$$
it admits peaked soliton solutions which are also called peakons. It can be regarded as a shallow water wave equation with nonhydrostatic pressure \cite{CH,CH2,CH3}
$$
	\left\{
	\begin{aligned}
		&u_t+u\,u_x+P_x=0,\\
		&P-P_{xx}=u^2+\frac 1 2u_x^2
	\end{aligned}
	\right.
$$

The bi-Hamiltonian structure of CH equation was studied in \cite{CH6,CH7}, it ensures infinite conversation laws which was obtained in \cite {CH}, and the complete integrability was discussed in \cite{CH,CH4,CH5}. Moreover, the CH equation is such an equation that exhibits both phenomena of (peaked) soliton interaction and wave breaking (the solution remains bounded but its slope becomes unbounded in finite time; cf. \cite{CH12}.)

Constantin and Escher \cite{CH13,CH14} investigated the Cauchy problem for the periodic Camassa–Holm equation. The wave breaking of the Cauchy problem for Cauchy problem was studied in \cite{CH14,CH5,CH15}, and in \cite{CH16} Constantin claimed that wave breaking is the only way that singularities can occur in solutions. In particular, the CH equation admits exact orbitally stable peaked solitons of the form \cite{CH17}
$$u(t,x)=ce^{|x-ct|},\quad x\in\mathbb{R},\ c>0.$$ 

In recent year, the local well-posedness of Cauchy problem of the deterministic CH equation in Besov spaces and Sobolev spaces was proved in \cite{CH8,CH9,CH10,CH11}. According to the local well-posedness in Besov and Sobolev spaces, in Constantin and Molinet's work \cite{Constantin1999} in 1999, the existence and uniqueness of the global weak solution of energy conservation in whole space and periodic case is proved, and also the stability of solitons is given. Xin and Zhang also proved the existence and uniqueness of CH eqaution in \cite{Xin2000} by viscous approximation, and claimed the one-sided supernorm esimate and space-time integrability estimate in $L^p_{\rm loc}(\mathbb{R}^+\times\mathbb{R})$ for $p<3$ of the weak solutions, additionally, the large-time behavior of the weak solution is given. In \cite{Xin2002}, Xin and Zhang obtained a "weak=strong" theorem for the admissible weak solutions and the uniqueness of weak solution under the condition that the initial data $m_0=(1-\partial_x^2)u_0$ is a positive Radon measure.

However, due to the uncertainties in geophysical and climate dynamics \cite{Arnold2001,Holm2015}, we have to consider some influence of internal, external, or environmental noises. Besides, the whole background for the considered physical system may be difficult to describe deterministically. Thus, we consider the randomness of the background movement which is one of the prevailing hypotheses on the onset of turbulence in fluid models, and there is a lot of recent work done on PDEs with random perturbations \cite{SPDE1,SPDE2,Debussche2011,SPDE3,SPDE4,SPDE5,SPDE6,SPDE7}.

The stochastic CH equation was derived via the stochastic variational method in \cite{Holm2015,Holm2016}. Consequently, the well-posedness, uniqueness and blow up phenomena for the stochastic Camassa-Holm type equations with perturbation is currently a interesting topic in the field of physics and mathematics. Chen and Gao established the existence of stochastic CH equation with additive noise in $H^m$ with $m>3/2$ in \cite{Chen2012}, and the existence of a pathwise solution to a modified CH equation with deterministic initial data and linear multiplicative noise in \cite{Chen2016}. In \cite{Tang2018}, Tang proved the local existence and pathwise uniqueness of pathwise solution in Sobolev space $H^m$ with $m>3/2$, and the condition lead to the global existence and the blow-up for the linear noise case. In particular, Tang studied the pathwise dissipative effect of the linear noise on the periodic peakons to the deterministic CH eqaution. The well-posedness for a generalized CH equation with higher order nonlinearities under random perturbation was studied by Miao, Rohde and Tang in \cite{Miao2024}. In \cite{Holden2023}, Holden et al. proved the existence of a global martingale solution in $H^1$ for the stochastic CH equation with a viscous term under gradient noise.

Based on the well-posedness theory established above, the global $H^1$ solution of the aforementioned stochastic CH-type equation has attracted research interest. Chen, Duan and Gao established the existence of global $H^1$ martingale solutions of the stochastic CH equation with linear multiplicative noise under the initial condition $u_0\in H^1$ and $m_0=(1-\partial_x^2)u_0\in\mathcal{M}^+$, see \cite{Chen2021}. In \cite{Holden2024JDE}, Holden, Karlsen and Pang proved the global existence of dissipative solutions to CH equation with transport noise.

In this paper, we are interested in global weak martingale solution of the initial-value problem for the stochastic CH equation with linear multiplicative noise:
\begin{equation}\label{CH u}
	\left\{
	\begin{array}{l}
		{\rm d}u+(u\partial_xu+\partial_xP[u])\,{\rm d}t=\beta u\,{\rm d}W,\\
		u(0,x)=u_0(x),\\
		P[u]=(1-\partial_x^2)^{-1}\left(u^2+\frac 1 2(\partial_x u)^2\right)
	\end{array}
	\right.
\end{equation}
for $(t,x)\in [0,T]\times\mathbb{S}$, where $\mathbb{S}=\mathbb{S}/(2\pi\mathbb{Z})$ is the 1D torus, $T$ is a positive final time and $W$ is a 1D Wiener process defined on a standard filtered probability space $\mathcal{S}=(\Omega,\mathcal{F},\{\mathcal{F}_t\}_{t\in [0,T]},\mathbb{p})$, henceforth called a stochastic basis. Moreover, the elliptic equation for $P$ can be solved to supply
\begin{equation}\label{P convolution}
	P=P[u]:=K\ast\left(u^2+\frac 1 2(\partial_x u)^2\right),\quad K(x)=\frac{\cosh(x-2\pi\,{\rm int}(\frac{x}{2\pi})-\pi)}{2\sinh\,\pi},
\end{equation}
where $K$ is the Green's function of $1-\partial_x^2$ on $\mathbb{S}$, ${\rm int}(x)$ is the integer part of $x$, and $\ast$ means convolution in $x$.
Denote $m=(1-\partial_x^2)u$, then $m$ satisfies
\begin{equation}\label{CH m}
	\left\{
	\begin{array}{l}
		{\rm d}m+(u\partial_xm+2m\partial_xu)\,{\rm d}t=\beta m\,{\rm d}W,\\
		m(0,x)=m_0(x).\\
	\end{array}
	\right.
\end{equation}
For the sake of simplicity, in the rest of the paper, we set $\beta=1$.

Our work proves the existence of global weak martingale solution without assuming $m_0=(1-\partial_x^2)u_0\in\mathcal{M}^+$ for the initial data, and demonstrates that the corresponding properties of the deterministic CH equation’s admissible weak solutions, originally established by Xin and Zhang in \cite{Xin2000}, are preserved in the stochastic framework.

Our main result is the following theorem:

\begin{theo}[Existence and properties of $H^1$ martingale solution]\label{main theorem}
	Fix some $p_0>4$, for any initial probability distribution $\Lambda$ supported on $H^1(\mathbb{S})$,satisfying
	$$
	\int_{H^1 (\mathbb{S})}\Vert v\Vert_{H^1(\mathbb{S})}^{p_0}\,\Lambda({\rm d}v)<\infty,
	$$
	then there exists a weak martingale solution $(\tilde{\mathcal{S}},\tilde{u},\tilde{W})$ to the stochastic CH equation \eqref{CH u} with random initial data $\tilde{u}_0$ distributed according to $\Lambda$, where $\tilde{\mathcal{S}}=(\tilde{\Omega},\tilde{\mathcal{F}},\{\tilde{\mathcal{F}_t}\}_t\in[0,T],\tilde{\mathbb{P}})$ is a stochastic basis. Besides, the following energy inequality holds $\tilde{\mathbb{P}}$-a.s., for a.e. $s\in[0,T)$ and every $t$ with $s<t\le T$,
	\begin{equation}\label{energy inequality}
		\int_\mathbb{S}\tilde{u}^2+|\partial_x\tilde{u}|^2\,{\rm d}x\big|_s^t\le\int_s^t\int_\mathbb{S}\tilde{u}^2+|\partial_x\tilde{u}|^2\,{\rm d}x\,{\rm d}t+2\int_s^t\int_\mathbb{S}\tilde{u}^2+|\partial_x\tilde{u}|^2\,{\rm d}x\,{\rm d}\tilde{W}.
	\end{equation}
	In particular, it holds for $s=0$ and any $t\in(0,T]$, with $\int_{\mathbb{S}}\tilde{u}^2+|\partial_x\tilde{u}|^2\,{\rm d}x\big|_{s=0}$ replaced by $\int_{\mathbb{S}}\tilde{u}_0^2+|\partial_x\tilde{u}_0|^2\,{\rm d}x$.
	
	Moreover, the weak martingale solution $(\tilde{\mathcal{S}},\tilde{u},\tilde{W})$ satisfies the following properties:
	\begin{enumerate}
		\item One-sided supernorm estimate: The following one-sided $L^\infty$ norm estimate on the fisrt-order spatial derivative holds in the sense of distribution a.s.:
		\begin{equation}\label{main property one side}
			\tilde{\mathbb{P}}\left\{\lim_{t\rightarrow\infty}\partial_x \tilde{u}\le0,\ \forall x\in\mathbb{S}\right\}=1.
		\end{equation}
		\item Space-time higher integrability estimate: For any $r<3$, there exists a positive constant $C$ independent of $r$ such that
		\begin{equation}\label{main property space time higher}
			\tilde{E}\Vert \partial_x\tilde{u}\Vert_{L^r([0,T]\times\mathbb{S})}^r\le C.
		\end{equation}
		\item Large-time behavior: The weak martingale solution $\tilde{u}$ approaches $0$ pointwise almost surely as time goes to infinity, i.e.,
		\begin{equation}\label{main property large time}
			\lim_{t\rightarrow\infty}|\tilde{u}(t,x)|=0,\quad\forall\ x\in\mathbb{S},\ \tilde{\mathbb{P}}-{\rm a.s.}
		\end{equation}
	\end{enumerate}
\end{theo}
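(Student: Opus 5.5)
The plan follows the Xin--Zhang viscous approach, carried through a stochastic compactness argument.

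\textbf{Step 1: viscous approximation and uniform bounds.} I consider the viscous problem
\[
{\rm d}u_\varepsilon+(u_\varepsilon\partial_xu_\varepsilon+\partial_xP[u_\varepsilon])\,{\rm d}t=\varepsilon\partial_x^2u_\varepsilon\,{\rm d}t+u_\varepsilon\,{\rm d}W,
\]
with mollified initial data $u_{0,\varepsilon}\to u_0$ in $H^1$. I construct solutions by a Galerkin scheme together with stochastic compactness, which gives smooth global solutions for each fixed $\varepsilon$. The key structural device is the Girsanov/exponential transform $v_\varepsilon=e^{-W(t)+t/2}u_\varepsilon$. It converts the equation into a random PDE
\[
\partial_tv+e^{W-t/2}\big(v\partial_xv+\partial_xP[v]\big)=\varepsilon\partial_x^2v .
\]
Because the $H^1$ energy of the CH nonlinearity is conserved, I obtain
\[
\|v(t)\|_{H^1}^2+2\varepsilon\int_0^t\|\partial_xv\|_{H^1}^2\le\|v_0\|_{H^1}^2 .
\]
Rewriting this in terms of $u$ via Itô's formula yields the energy inequality with the ${\rm d}t$ and ${\rm d}W$ terms, and moment bounds of order $p_0$.

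\textbf{Step 2: estimates surviving $\varepsilon\to0$.} I differentiate the $v$-equation to get an equation for $q=\partial_xv$:
\[
q_t+e^{W-t/2}\Big(vq_x+\tfrac12q^2-v^2+P\Big)=\varepsilon q_{xx}.
\]
I then apply the comparison principle to the Riccati ODE $\dot y=-\tfrac12e^{W-t/2}y^2+Ce^{W-t/2}\|v\|_{H^1}^2$. This gives the one-sided bound
\[
\partial_xv\le \frac{C}{\int_0^te^{W(s)-s/2}\,{\rm d}s}+C .
\]
Next I multiply the $q$-equation by $q\,|q|^{\alpha}$ (with cut-offs, as in Xin--Zhang), for $\alpha\in(0,1)$. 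The $-\tfrac12 q^2$ term produces a good sign on the positive side only, and the one-sided bound handles the rest. This gives $\varepsilon$-uniform space-time bounds on $|q|^{2+\alpha}$ with random weights $e^{W-t/2}$. To convert these into the stated moment bound for $\tilde E\|\partial_x\tilde u\|_{L^r}^r$, $r<3$, I take expectations and use Hölder together with the finite $p_0$-moments (this is where $p_0>4$ is used).

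\textbf{Step 3: tightness and identification of the limit.} I prove tightness of the laws of $(u_\varepsilon,W)$ in the quasi-Polish space
\[
\mathcal X=C([0,T];H^1_w)\cap L^2([0,T];H^{s})\ (s<1)\times C([0,T]).
\]
The ingredients are the uniform bounds, Aldous/Hölder-in-time estimates from the equation, and the $L^{2+\alpha}$ bound on $\partial_xu_\varepsilon$, which rules out oscillation and makes $(\partial_xu_\varepsilon)^2$ equi-integrable. Skorokhod--Jakubowski then gives a new basis on which $\tilde u_\varepsilon\to\tilde u$ almost surely. The main obstacle, as in the deterministic case, is passing to the limit in $P[u]$, specifically in $(\partial_x\tilde u_\varepsilon)^2\rightharpoonup\overline{(\partial_x\tilde u)^2}$. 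My first attempt is Xin--Zhang's renormalization: derive equations for $\eta(\partial_x\tilde u_\varepsilon)$ with $\eta$ a truncated square, use the one-sided bound to control the defect measure, and show $\overline{q^2}=q^2$ by a Gronwall argument on the defect $\overline{q^2}-q^2$, carried out pathwise after the $v$-transform. With strong convergence in hand, I identify the stochastic integral by standard martingale arguments. The energy inequality and the estimate on $\tilde E\|\partial_x\tilde u\|_{L^r}^r$ follow by lower semicontinuity and Fatou.

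\textbf{Step 4: long-time properties.} The one-sided estimate passes to the limit in the sense of distributions. Since $W(t)/t\to0$ almost surely, we have $\int_0^\infty e^{W-s/2}\,{\rm d}s<\infty$, while $u=e^{W-t/2}v$ with $v$ bounded in $H^1$. It follows that $\|\tilde u(t)\|_{H^1}\le e^{W(t)-t/2}\|\tilde u_0\|_{H^1}\to0$ almost surely. Sobolev embedding into $L^\infty$ then yields \eqref{main property large time}, and the $\partial_x\tilde u$ bound, which decays by the same exponential factor, gives \eqref{main property one side}.
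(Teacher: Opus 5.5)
\textbf{Your route works, but it differs from the paper at the decisive step.}

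The paper uses the transform $v_\varepsilon=e^{-W(t)+t/2}u_\varepsilon$ for three things only: the pathwise $H^1$ bound, the one-sided estimate, and the large-time decay. Its one-sided estimate is a crude comparison $\partial_xv_\varepsilon\le\max(0,\partial_xu_0)+Rt$ that drops the $-\tfrac{\eta}{2}q^2$ term.

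The identification $\overline{q^2}=\tilde q^2$ is then done entirely in the stochastic framework, following Holden--Karlsen--Pang:
\begin{itemize}
\item all compositions $S_l((q_{\varepsilon_n})_\pm)$, $S_l'(\cdot)q$, $S_l''(\cdot)q^2,\dots$ go into the Skorokhod--Jakubowski vector on quasi-Polish spaces with weak topologies;
\item a stochastic inequality for $\overline{S(q)}$ is obtained via convergence of stochastic integrals;
\item $\mathbb{D}^+$ is controlled through an inequality for $\tilde q_+^2$, and the negative part through $\mathbb{D}_l^-$ on the sets $A_L^{n_0}$;
\item Gronwall is applied in expectation.
\end{itemize}
No one-sided bound enters this argument.

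You instead use the fact that linear multiplicative noise conjugates exactly to a random PDE. You then run Xin--Zhang's deterministic defect argument for each fixed $\tilde\omega$, with the Oleinik-type bound $\partial_xv\le 2/\tau(t)+C$, where $\tau(t)=\int_0^te^{W(s)-s/2}\,ds$. What your route buys:
\begin{itemize}
\item It is much lighter: Skorokhod only on $(u_\varepsilon,W,u_{0,\varepsilon})$, no weak-topology tightness of nonlinear compositions, and no stochastic integrals against weak limits.
\item Your Riccati bound is uniform in $x$ and independent of $\|\partial_xu_0\|_{L^\infty}$. The paper's bound involves $\max(0,\partial_xu_0)$, which is not bounded for $H^1$ data.
\end{itemize}
The paper's machinery, in exchange, needs neither the transform nor the Oleinik bound for the identification, so it survives for noises (e.g.\ transport noise) where neither is available.

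\textbf{Three things to state explicitly for the pathwise argument.}
\begin{enumerate}
\item The energy, $L^{2+\alpha}$ and one-sided bounds must hold pathwise, uniformly in $\varepsilon$, with $\tilde\omega$-dependent constants. They do on the $v$-equation, because $\tilde\eta_\varepsilon\to\tilde\eta$ uniformly on $[0,T]$ and $\tilde\eta$ is bounded below there.
\item The data must converge strongly in $H^1$ on the new space. Include $u_{0,\varepsilon}$ in the representation with the strong topology, as the paper does with $z_n$. Convergence in $C([0,T];H^1_w)$ gives only weak convergence at $t=0$, and then the initial defect need not vanish.
\item Weak limits are extracted along $\tilde\omega$-dependent subsequences. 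Note that the defect argument identifies every subsequential limit, so $\tilde q_\varepsilon\to\tilde q$ strongly along the whole sequence and no measurable selection is needed.
\end{enumerate}
Also, the $L^{2+\alpha}$ bound rules out concentration of $q_\varepsilon^2$, not oscillation. Removing oscillation is exactly the job of the defect argument.

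\textbf{The higher-integrability step in Step 2 fails as written.} Multiply the $q$-equation by $q|q|^\alpha$ and integrate the transport term by parts. You get
\[
\frac{1}{2+\alpha}\frac{d}{dt}\int_{\mathbb{S}}|q|^{2+\alpha}\,dx+\frac{\alpha\,\eta}{2(2+\alpha)}\int_{\mathbb{S}}q\,|q|^{2+\alpha}\,dx+\varepsilon(1+\alpha)\int_{\mathbb{S}}|q|^{\alpha}q_x^2\,dx=\eta\int_{\mathbb{S}}(v^2-P[v])\,q|q|^{\alpha}\,dx .
\]
The good sign sits on $q>0$, and the bad side is $q<0$, the wave-breaking direction. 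An upper bound on $q$ cannot control that side. Moreover, $\int|\partial_xu_0|^{2+\alpha}dx$ may be infinite for $H^1$ data, and this identity concerns $\sup_t\int|q|^{2+\alpha}\,dx$ rather than a space-time bound.

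The correct device (Xin--Zhang, and Proposition~\ref{viscous q higher} here) is to integrate the renormalized equation for $\theta(q)=q(1+|q|)^\alpha$. Then:
\begin{itemize}
\item $q\theta(q)-\tfrac12q^2\theta'(q)\ge\tfrac{1-\alpha}{2}|q|^{2+\alpha}$ for \emph{both} signs of $q$;
\item $|\theta(q)|\lesssim1+q^2$, so the endpoint terms are controlled by the energy;
\item $|\theta''|\lesssim1$, so the viscous term is absorbed by $\varepsilon\int_0^T\|\partial_xv\|_{H^1}^2\,dt$;
\item no one-sided bound is needed.
\end{itemize}
Done on the $v$-equation, this gives the pathwise bound Step 3 needs. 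Taking expectations with H\"older then gives the moment bound.
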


\begin{rema}
	In contrast to the $H^1$ weak solutions to deterministic CH equation constructed in \cite{Xin2000}, the $H^1$ martingale solutions exhibit enhanced decay properties, which are attributable to the linear multiplicative noise. This result is consistent with the dissipative effect of linear noise on periodic peakons as shown in \cite{Tang2018}.
\end{rema}

\subsection{Organization of paper}

\quad Now we outline the structure of our paper and the corresponding proof ideas. Similar to the deterministic case, we make use of the vanishing viscosity method and weak convergence techniques; cf. \cite{Holden2024JDE,Xin2000}.

First, in Section \ref{3}, by means of the Galerkin approximation method, we construct a global regular pathwise solution $u_\varepsilon$ to
\begin{equation}\label{CH viscous u}
	\left\{
	\begin{array}{l}
		{\rm d}u_\varepsilon-\varepsilon\partial_x^2u_\varepsilon\,{\rm d}t+(u_\varepsilon\partial_xu_\varepsilon+\partial_xP[u_\varepsilon])\,{\rm d}t=u_\varepsilon\,{\rm d}W,\\
		u_\varepsilon(0,x)=u_{0}(x),\\
		P[u_\varepsilon]=(1-\partial_x^2)^{-1}\left(u_\varepsilon^2+\frac 1 2(\partial_x u_\varepsilon)^2\right).
	\end{array}
	\right.
\end{equation}
Moreover, starting from smooth finite-energy initial data, we obtain some $\varepsilon$-uniform statistical estimates, including
$$
\mathbb{E}\Vert u_\varepsilon\Vert_{C^\theta([0,T];L^2(\mathbb{S}))}^p\lesssim 1\quad\text{and}\quad\mathbb{E}\Vert \partial_xu_\varepsilon\Vert_{L^r([0,T]\times\mathbb{S})}^r\lesssim 1
$$
with $p>2$, $\theta>0$ small enough and $r\in[2,3)$, 
which play a crucial role in the subsequent proof of the existence of global weak solutions to \eqref{CH u} and in deriving their relevant properties.

After having obtained sufficient a priori estimates, we expect that the limiting random variables, obtained as the limits of the viscous solutions, satisfies equation \eqref{CH u}. Therefore, in section \ref{4}, we first aim to establish the limits of viscous solutions the tightness of the corresponding probability laws of the desired random variables in appropriate quasi-Polish spaces respectively. This allows us to apply the Skorokhod-Jakubowski theorem to obtain a Skorokhod-Jakubowski representations on a new probability space $(\tilde{{\Omega}},\tilde{\mathcal{F}},\tilde{\mathbb{P}})$, along with the limiting random variables converging in the corresponding quasi-Polish space. We refer to Brze\'{z}niak and Ondrej\'{a}t \cite{Brzazniak2013,Ondrejat2010,Tang2018} for applications of the Skorokhod-Jakubowski theorem to stochastic partial differential equations. In particular, in order to obtain the SPDE of the limit $\tilde{u}$, the weak convergence of products like $S'(\tilde{q}_n)\,\tilde{P}_n$ for a suitable class of linearly growing nonlinearities $S(\cdot)$ is vital important. Moreover, the treatment of the $\tilde{q}_n^2$ term is the most difficult part; therefore, we construct a family of functions $v\mapsto S_l(v)$ defined as \ref{S_l} to approximate it. Hence, we need to consider the Skorokhod-Jakubowski representations of $S_l(q_\varepsilon)$. According to the equality of the laws and the weak convergences of Skorokhod-Jakubowski representations, we can obtain the SPDEs of $\tilde{u}_n$ and $\tilde{u}$ with assumption $\overline{q^2}=\tilde{q}^2$ a.e. in $\tilde{{\Omega}}\times[0,T]\times\mathbb{S}$.

The final Section \ref{5} is devoted to verifying the assumption $\overline{q^2}=\tilde{q}^2$ a.e. in $\tilde{{\Omega}}\times[0,T]\times\mathbb{S}$. The proof amounts to upgrading the weak convergence to strong convergence via a study of the defect measure:
\begin{equation}\label{defect measure}
	\mathbb{D}=\mathbb{D}(\tilde{\omega},t,x)=\frac 1 2\left(\overline{q^2}-\tilde{q}^2\right)\ge 0.
\end{equation}
Unfortunately, the most difficult challenge is the lack of the integrability of $\tilde{q}^2$ and $\overline{q^2}$. The way to overcome this difficulty is to work with renormalized formulations of the SPDEs for $\tilde{q}_n$, $\tilde{q}$ based on linearly growing approximations $S_l(v)$ of $v^2$ and eventually let $l$ tend to $\infty$. More precisely, we split $v$ into its positive $v_+$ and negative parts $v_-$ and then consider the SPDEs satisfied by the nonlinear compositions $S_l\big((\tilde{q}_n)_\pm\big)$, $S_l(\tilde{q}_\pm)$.

Next, we derive a transport-type SPDE, up to an inequality, for the evolution of $\mathbb{D}$ from the SPDEs of the nonlinear compositions, then by the time-continuity of $\mathbb{D}$ and the initial data $\mathbb{D}(0)=0$, we can deduce that $\mathbb{D}(t)=0$ for all $t>0$. Thereby, the existence of global weak martingale solution to \eqref{CH u} is proved.

\section{Notations and Preliminaries}\label{2}
\begin{defi}[Weak martingale solution]\label{CH H^1}
	Let $\Lambda$ be a probability measure on $H^1 $, for some $p_0>4$,
	$$\int_{H^1 }\Vert v\Vert_{H^1 }^{p_0}\,\Lambda({\rm d}v )<\infty.$$
	The triple $(\mathcal{S},u,W)$ is a weak martingale solution to the stochastic CH equation \eqref{CH u} with initial distribution $\Lambda$ if:
	\begin{enumerate}
		\item $\mathcal{S}=(\Omega,\mathcal{F},\{\mathcal{F}_t\}_{t\geq 0},\mathbb{P})$ is a stochastic basis;
		\item $W$ is a standard Wiener process on $\mathcal{S}$;
		\item $u:\Omega\times [0,T]\rightarrow L^2 $ is a progressively measurable stochastic process with paths $u(\omega)\in C([0,T];L^2 )\cap C([0,T];H^1-w)$, for $\mathbb{P}$-{\rm a.e.} $\omega\in\Omega$. Moreover, $u\in L^2(\Omega;L^\infty([0,T];H^1 ))$;
		\item initial data $\mathbb{P}\{u(0)\in Y\}=\Lambda(Y)$,\ $\forall\ Y\in\mathcal{B}(H^1 )$;
		\item the following equation holds in the sense of It\^{o}, for all $t\in[0,T]$ and for all $\varphi\in C^2(\mathbb{S}) $,
		\begin{equation}\label{CH integral u}
			{\rm d}\int_{\mathbb{S}}u\varphi\,{\rm d}x=\int_{\mathbb{S}}\left(\frac 1 2u^2+P[u]\right)\partial_x\varphi\, {\rm d}x\,{\rm d}t+\int_{\mathbb{S}}u\varphi\, {\rm d}x\,{\rm d}W,\ \mathbb{P}{\rm-a.s. };
		\end{equation}
		\item temporal right-continuity in $H^1 $: for a.e. $(\omega,t_0)\in\Omega\times[0,T]$,
		$$\lim_{t\downarrow t_0}\Vert u(t)-u(t_0)\Vert_{H^1}=0.$$
	\end{enumerate}
\end{defi}

In \cite{Holden2024JDE}, Galimberti, Holden, Karlsen, and Pang established a tightness criterion for the probability laws of a sequence of random variables taking values in a quasi-Polish space equipped with the weak spatial topology, which allows passing to the limit in products such as $q_\varepsilon P_\varepsilon$ towards $qP$.

\begin{lemm}[Compactness criterion]\label{compactness criterion}\cite{Holden2024JDE}
	Fix some integrability indices $p_1, p_2 \in (1, \infty)$, and consider the space $L^{p_1}(L^{p_2}_w) = L^{p_1}([0, T]; L^{p_2}(\mathbb{S}^1)-w)$, defined by weak topology \eqref{weak topology}. Let $\mathcal{K}$ be a subset of $L^{p_1}(L^{p_2}_w)$ for which the following conditions hold uniformly in $Q \in K$:
	\begin{enumerate}
		\item $\|Q\|_{L^{p_1}([0,T]; L^{p_2}(\mathbb{S}))} \lesssim 1$,
		\item $\|Q\|_{L^{\bar{p}_1}([0,T]; L^1(\mathbb{S}))} \lesssim 1$, for some $\bar{p}_1>p_1$,
		\item $\int_0^{T-\tau} \left| \int_{\mathbb{S}} \varphi(x) \bigl( Q(t+\tau, x) - Q(t, x) \bigr)\,{\rm d}x \right| \,{\rm d}t \rightarrow 0$ as $\tau\downarrow0$, $\forall\ \varphi \in C^\infty(\mathbb{S})$.
	\end{enumerate}
	Then $\mathcal{K}$ is relatively sequentially compact in $L^{p_1}(L^{p_2}_w)$.
\end{lemm}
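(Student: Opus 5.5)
The statement to prove is the compactness criterion (Lemma \ref{compactness criterion}). The plan is to reduce it to a diagonal argument in the weak topology. Everything is done inside a closed ball of $L^{p_1}([0,T];L^{p_2}(\mathbb{S}))$, where the weak topology is metrizable on bounded sets.

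\emph{Step 1: a weakly convergent subsequence.} Take any sequence $\{Q_n\}\subset\mathcal{K}$. By condition (1) it is bounded in the reflexive space $L^{p_1}([0,T];L^{p_2}(\mathbb{S}))$, since $1<p_1,p_2<\infty$. Hence a subsequence, not relabeled, converges weakly to some $Q$ in this space. The topology of $L^{p_1}(L^{p_2}_w)$ in \eqref{weak topology} is, I expect, the one in which
\[
t\mapsto \int_{\mathbb{S}} Q_n(t)\varphi\,{\rm d}x
\]
converges strongly in $L^{p_1}([0,T])$ for each test function $\varphi$. So the task is to upgrade weak convergence in time to strong convergence in time after pairing with a fixed $\varphi$.

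\emph{Step 2: compactness of the pairings.} Fix $\varphi$ from a countable dense subset of $C^\infty(\mathbb{S})$, dense in $L^{p_2'}$, and set
\[
f_n(t)=\int_{\mathbb{S}} Q_n(t,x)\varphi(x)\,{\rm d}x .
\]
Condition (2) gives $\|f_n\|_{L^{\bar p_1}(0,T)}\le\|\varphi\|_{L^\infty}\|Q_n\|_{L^{\bar p_1}(L^1)}\lesssim1$. Condition (3) gives equicontinuity of time translations in $L^1(0,T)$, uniformly in $n$.
\begin{itemize}
\item By the Riesz--Fréchet--Kolmogorov theorem (bounded in $L^1$ plus uniform $L^1$-translation continuity on a bounded interval), $\{f_n\}$ is relatively compact in $L^1(0,T)$.
\item Interpolating between $L^1$ and $L^{\bar p_1}$ with $\bar p_1>p_1$ turns $L^1$ convergence into $L^{p_1}$ convergence. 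Alternatively, uniform integrability of $|f_n|^{p_1}$ from the $L^{\bar p_1}$ bound and Vitali's theorem give the same conclusion.
\end{itemize}
The strong limit must coincide with $\int_{\mathbb{S}} Q\varphi\,{\rm d}x$, because weak convergence of $Q_n$ identifies it.

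\emph{Step 3: diagonal argument and density.} A Cantor diagonal extraction over the countable family of $\varphi$'s yields one subsequence that works for all of them simultaneously. To pass to general $\varphi\in L^{p_2'}$, or $C^\infty$, approximate $\varphi$ by $\varphi_k$ from the countable family. The error is
\[
\Big\|\int_{\mathbb{S}} (Q_n-Q)(\varphi-\varphi_k)\,{\rm d}x\Big\|_{L^{p_1}(0,T)}\le \sup_n\|Q_n-Q\|_{L^{p_1}(L^{p_2})}\,\|\varphi-\varphi_k\|_{L^{p_2'}},
\]
which is small uniformly in $n$ by condition (1).

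The main obstacle is Step 2's translation estimate. Condition (3) controls only forward shifts on $[0,T-\tau]$ with an $L^1$ modulus, whereas Kolmogorov's criterion is usually stated on $\mathbb{R}$. I would handle this in two ways:
\begin{itemize}
\item Use the Simon-type version on a bounded interval, where forward-shift control on $[0,T-\tau]$ suffices.
\item Alternatively, extend each $f_n$ by zero and note that the boundary-layer contribution $\int_{T-\tau}^T|f_n|\,{\rm d}t\le\tau^{1-1/\bar p_1}\|f_n\|_{L^{\bar p_1}}$ vanishes uniformly in $n$. Condition (2) is exactly what makes this work.
\end{itemize}
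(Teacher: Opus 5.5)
Your proof is correct. The paper gives no proof of its own for this lemma and simply imports it from \cite{Holden2024JDE}; your route is the standard argument behind that criterion: weak compactness from (1) in the reflexive space $L^{p_1}([0,T];L^{p_2}(\mathbb{S}))$, compactness of the scalar pairings $t\mapsto\int_{\mathbb{S}}Q\varphi\,{\rm d}x$ in $L^1(0,T)$ via (3) and a Simon/Kolmogorov criterion, the upgrade to $L^{p_1}(0,T)$ by interpolation against the $L^{\bar p_1}$ bound from (2), identification of the limit through the weak limit, and density of smooth $\varphi$ in $L^{p_2'}$ controlled by (1).

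Two cosmetic remarks. In the zero-extension variant there is also a boundary layer $\int_0^\tau|f_n|\,{\rm d}t$ at $t=0$, bounded in the same way by $\tau^{1-1/\bar p_1}\|f_n\|_{L^{\bar p_1}}$. The diagonal extraction is unnecessary, since the strong limit of any further subsequence is forced to equal $\int_{\mathbb{S}}Q\varphi\,{\rm d}x$.
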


\begin{lemm}[Tightness criterion]\label{tightness criterion}\cite{Holden2024JDE}
	Fix $p_1, p_2 \in (1, \infty)$, and consider the quasi-Polish space $L^{p_1}(L^{p_2}_w) = L^{p_1}([0, T]; L^{p_2}(\mathbb{S}^1)-w)$ defined by weak topology \eqref{weak topology}. Let $\{Q_n\}_{n\in\mathbb{N}}$ be a sequence of random variables defined on a standard probability space $(\Omega,\mathcal{F},\mathbb{P})$, that take values in $L^{p_1}(L^{p_2}_w)$. Supposing the following conditions holds uniformly in $n\in\mathbb{N}$:
	\begin{enumerate}
		\item $\mathbb{E}\|Q_n\|_{L^{p_1}([0,T]; L^{p_2}(\mathbb{S}))} \lesssim 1$,
		\item $\mathbb{E}\|Q_n\|_{L^{\bar{p}_1}([0,T]; L^1(\mathbb{S}))} \lesssim 1$, for some $\bar{p}_1>p_1$,
		\item $\forall \ \varphi \in C^\infty(\mathbb{S})$ and $\vartheta\in(0,T\wedge 1)$, $$\mathbb{E}\sup_{\tau\in(0,\vartheta)}\int_0^{T-\tau} \left| \int_{\mathbb{S}} \varphi(x) \bigl( Q(t+\tau, x) - Q(t, x) \bigr)\,{\rm d}x \right| \,{\rm d}t \le C_\varphi\vartheta^\alpha,$$
		for some $\alpha\in(0,1)$ and $C_\varphi$ independent of $n$.
	\end{enumerate}
	Then the sequence $\{\mathcal{L}(Q_n)\}_{n\in\mathbb{N}}$ of probability laws is tight on $L^{p_1}(L^{p_2}_w)$.
\end{lemm}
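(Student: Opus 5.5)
The statement immediately above is the tightness criterion (Lemma \ref{tightness criterion}), which I will deduce from the compactness criterion, Lemma \ref{compactness criterion}, by the standard Chebyshev argument. Fix $\eta>0$. I will build a set $\mathcal{K}_\eta\subset L^{p_1}(L^{p_2}_w)$ that satisfies the three hypotheses of Lemma \ref{compactness criterion}. Its closure is then sequentially compact, and since $L^{p_1}(L^{p_2}_w)$ is quasi-Polish, the Jakubowski framework makes this sequential compactness enough to check tightness. The goal is $\mathbb{P}\{Q_n\notin\mathcal{K}_\eta\}\le\eta$ uniformly in $n$.

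\emph{Norm bounds.} For $R>0$, set $A_R=\{Q:\|Q\|_{L^{p_1}(L^{p_2})}\le R,\ \|Q\|_{L^{\bar p_1}(L^1)}\le R\}$. By Markov's inequality and hypotheses (1)--(2),
\[
\mathbb{P}\{Q_n\notin A_R\}\le \frac{2C}{R},
\]
so choosing $R$ large makes this at most $\eta/2$.

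\emph{Time-translation condition.} Take a countable set $\{\varphi_j\}_{j\ge1}\subset C^\infty(\mathbb{S})$ that is dense in $C(\mathbb{S})$, for example trigonometric polynomials with rational coefficients. Write
\[
\omega_\varphi(Q,\vartheta)=\sup_{\tau\in(0,\vartheta)}\int_0^{T-\tau}\Big|\int_{\mathbb{S}}\varphi\,\big(Q(t+\tau)-Q(t)\big)\,{\rm d}x\Big|\,{\rm d}t .
\]
Pick $\vartheta_k=2^{-k}$ and thresholds $\delta_{j,k}=2^{-k\alpha/2}$. Define
\[
B_j=\{Q:\ \omega_{\varphi_j}(Q,\vartheta_k)\le M_j\,2^{-k\alpha/2}\ \text{for all } k\ge k_0\}.
\]
Hypothesis (3) and Markov give
\[
\mathbb{P}\{Q_n\notin B_j\}\le\sum_k \frac{C_{\varphi_j}2^{-k\alpha}}{M_j2^{-k\alpha/2}}=\frac{C_{\varphi_j}}{M_j}\sum_k 2^{-k\alpha/2}.
\]
I then choose $M_j$ so that this is at most $\eta 2^{-j-1}$. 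Set $\mathcal{K}_\eta=A_R\cap\bigcap_j B_j$. Summing the two estimates gives $\mathbb{P}\{Q_n\notin\mathcal{K}_\eta\}\le\eta$ for every $n$.

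\emph{Verifying Lemma \ref{compactness criterion}.} Conditions (1) and (2) hold on $\mathcal{K}_\eta$ by construction. For condition (3) with a general $\varphi\in C^\infty(\mathbb{S})$, I approximate $\varphi$ by $\varphi_j$ in sup norm and bound
\[
\int_0^{T-\tau}\Big|\int_{\mathbb{S}}(\varphi-\varphi_j)\big(Q(t+\tau)-Q(t)\big)\,{\rm d}x\Big|\,{\rm d}t\le 2\|\varphi-\varphi_j\|_\infty\|Q\|_{L^1(L^1)}\lesssim \|\varphi-\varphi_j\|_\infty R .
\]
The $\varphi_j$-part is at most $M_j2^{-k\alpha/2}\to0$ uniformly on $\mathcal{K}_\eta$. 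This gives uniform equicontinuity in $\tau$, so $\mathcal{K}_\eta$ is relatively sequentially compact.

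\emph{Main obstacle.} The compact sets must be closed, measurable subsets of a non-metrizable space. Closedness holds because the norm balls are weakly lower semicontinuous. The functionals $\omega_\varphi(\cdot,\vartheta)$ are continuous under the $L^{p_1}(L^{p_2}_w)$ convergence with uniform bounds, so the closure keeps the same bounds. Measurability follows because the sup over $\tau$ can be restricted to rationals by continuity of translations in $L^1$.
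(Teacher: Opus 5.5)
Your proof is correct. The paper gives no argument of its own for this lemma; it quotes it from \cite{Holden2024JDE}. Your route is the standard derivation of a tightness criterion from Lemma \ref{compactness criterion}:
\begin{itemize}
\item Markov's inequality for the two norm bounds;
\item a countable family $\{\varphi_j\}$ dense in $C(\mathbb{S})$, with summable thresholds $M_j2^{-k\alpha/2}$ along $\vartheta_k=2^{-k}$;
\item the error bound $2\|\varphi-\varphi_j\|_\infty\|Q\|_{L^1(L^1)}\lesssim_T \|\varphi-\varphi_j\|_\infty R$.
\end{itemize}

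The one step you assert rather than prove is that a closed, relatively sequentially compact subset of the non-metrizable space $L^{p_1}(L^{p_2}_w)$ is compact. This is what tightness in Jakubowski's sense requires. It is true, and one line suffices. Let $f=(f_n)_n$ be the separating continuous injection into $[-1,1]^{\mathbb{N}}$. Restricted to a sequentially compact set $K$, $f$ is a continuous closed bijection onto the compact metric set $f(K)$, hence a homeomorphism. Alternatively, since $L^{p_2'}(\mathbb{S})$ is separable, the topology is metrizable on the norm-bounded set $A_R$.

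Your phrase ``its closure is then sequentially compact'' is not automatic in a non-first-countable space. It is rescued by your last paragraph: $\mathcal{K}_\eta$ is already closed. Each $\omega_\varphi(\cdot,\vartheta)$ is continuous, indeed $2$-Lipschitz in $t\mapsto\int_{\mathbb{S}}\varphi\, Q\,{\rm d}x\in L^1(0,T)$, which depends continuously on $Q$. Both norms are lower semicontinuous, being suprema of continuous functionals. This also gives measurability of $\mathcal{K}_\eta$ directly, without restricting $\tau$ to rationals.
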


\section{Viscous Approximate Solutions}\label{3}

\quad In this section, we construct the global well-posedness of the viscous problem \eqref{CH viscous u}. At first, we present the definitions of the $H^m$ martingale and pathwise solution for the viscous SPDEs, and then we state the result concerning its global well-posedness.

\begin{defi}[$H^m$ martingale solution of viscous SPDE]
	For fixed $m\geq 1$ and some $p_0>4$, let $\Lambda$ be a probability measure on $H^m(\mathbb{S}) $, satisfying 	$$\int_{H^m(\mathbb{S}) }\Vert v\Vert_{H^m(\mathbb{S}) }^{p_0}\,\Lambda({\rm d}v )<\infty.$$
	The triple $(\mathcal{S},u,W)$ is a $H^m$ martingale solution to the stochastic viscous CH equation \eqref{CH viscous u} with initial distribution $\Lambda$ if:
	\begin{enumerate}
		\item $\mathcal{S}=(\Omega,\mathcal{F},\{\mathcal{F}_t\}_{t\geq 0},\mathbb{P})$ is a stochastic basis;
		\item $W$ is a standard Wiener process on $\mathcal{S}$;
		\item $u:\Omega\times [0,T]\rightarrow H^1(\mathbb{S}) $ is adapted, with $u\in L^{p_0}(\Omega;C([0,T];H^1(\mathbb{S}) ))$.  Moreover, $$u\in L^\infty([0,T];H^m(\mathbb{S}) )\cap L^2([0,T];H^{m+1}(\mathbb{S}) ) \quad\text{ and }\quad u\in L^2(\Omega;L^2([0,T];H^2 (\mathbb{S})))\  {\rm a.s.};$$
		\item initial data $\mathbb{P}\{u(0)\in Y\}=\Lambda(Y)$,\ $\forall\ Y\in\mathcal{B}(H^m(\mathbb{S}) )$;
		\item the following equation holds in the sense of It\^{o}, for all $t\in[0,T]$ and for all $\varphi\in C^1 $,
		\begin{equation}\label{CH integral viscous u}
			\begin{aligned}
				&\int_\mathbb{S}u(t)\varphi\,{\rm d}x-\int_{\mathbb{S}}u_0\varphi\,{\rm d}x\\
				=&\int_0^t\int_{\mathbb{S}}-u\partial_x u\varphi+\left(P[u]-\varepsilon\partial_xu\right)\partial_x\varphi\,{\rm d}x\,{\rm d}s+\int_0^t\int_{\mathbb{S}}u\varphi\,{\rm d}x\,{\rm d}W(s),\ \mathbb{P}{\rm-a.s. }.
			\end{aligned}
		\end{equation}
	\end{enumerate}
\end{defi}

\begin{defi}[$H^m$ pathwise solution of viscous SPDE]
	Let $u_0\in L^{p_0}(\Omega;H^m )$ for some $p_0>4$. Consider a fixed stochastic basis $\mathcal{S}=(\Omega,\mathcal{F},\{\mathcal{F}_t\}_{t\in[0,T]},\mathbb{P})$, we say $u$ defined on $\mathcal{S}$ is a $H^m$ pathwise solution to \eqref{CH viscous u} with initial data $u(0)=u_0$ if for a given Wiener process $W$ defined on $\mathcal{S}$, the triple $(\mathcal{S},u,W)$ constitutes a $H^m$ martingale solution to \eqref{CH viscous u} with initial distribution $\Lambda(Y)=\mathbb{P}\{u_0\in Y\}$,\ $\forall\ Y\in\mathcal{B}(H^m (\mathbb{S}))$.
\end{defi}

\begin{theo}\label{viscous Hm global}
	Fix $m\ge 1$ and $p_0>4$. Suppose $u_0\in L^{p_0}(\Omega,H^m(\mathbb{S}))$, then there exists a unique $H^m$ pathwise solution to \eqref{CH viscous u} with initial data condition $u|_{t=0}=u_0$.
\end{theo}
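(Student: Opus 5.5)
We prove Theorem \ref{viscous Hm global} by a Galerkin scheme combined with stochastic compactness and a pathwise uniqueness argument (Yamada--Watanabe/Gy\"ongy--Krylov). First let $\Pi_N$ be the Fourier projection onto modes $|k|\le N$ on $\mathbb{S}$. We solve the finite-dimensional SDE
\begin{equation*}
{\rm d}u_N=\bigl(\varepsilon\partial_x^2u_N-\Pi_N(u_N\partial_xu_N+\partial_xP[u_N])\bigr){\rm d}t+u_N\,{\rm d}W,\qquad u_N(0)=\Pi_Nu_0 .
\end{equation*}
Its coefficients are locally Lipschitz, so a unique local solution exists. Globality follows from the $H^1$ energy identity: by It\^o's formula applied to $\|u_N\|_{H^1}^2$, the nonlinear terms cancel exactly as in the deterministic CH equation, because $\Pi_N$ is self-adjoint and commutes with $(1-\partial_x^2)$. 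This gives
\begin{equation*}
{\rm d}\|u_N\|_{H^1}^2+2\varepsilon\|\partial_xu_N\|_{H^1}^2{\rm d}t=\|u_N\|_{H^1}^2{\rm d}t+2\|u_N\|_{H^1}^2{\rm d}W .
\end{equation*}
Consequently $\|u_N(t)\|_{H^1}^2$ is a geometric-Brownian-type quantity, which is finite a.s. Taking $p_0/2$ moments and applying the Burkholder--Davis--Gundy inequality and Gronwall's lemma yields, uniformly in $N$,
\begin{equation*}
\mathbb{E}\sup_t\|u_N\|_{H^1}^{p_0}+\mathbb{E}\Bigl(\int_0^T\|u_N\|_{H^2}^2\,{\rm d}t\Bigr)^{p_0/2}\lesssim 1+\mathbb{E}\|u_0\|_{H^1}^{p_0}.
\end{equation*}

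Second, we propagate $H^m$ regularity for $m>1$. We apply It\^o's formula to $\|\partial_x^mu_N\|_{L^2}^2$ and use the commutator (Kato--Ponce) estimate
\begin{equation*}
|(\partial_x^m(u\partial_xu),\partial_x^mu)|\lesssim\|\partial_xu\|_{L^\infty}\|u\|_{H^m}^2 .
\end{equation*}
Here $\|\partial_xu\|_{L^\infty}\lesssim\|u\|_{H^1}^{1/2}\|u\|_{H^2}^{1/2}$ is controlled by the viscous $H^2$ bound. The $P$ term is handled similarly, since $\partial_xP$ gains one derivative. The stochastic term is linear and contributes only $\|u\|_{H^m}^2{\rm d}t$ together with a martingale. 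A stochastic Gronwall lemma, or alternatively localization by the stopping time $\tau_R=\inf\{t:\|u_N\|_{H^1}\ge R\}$, then gives bounds on $u_N$ in $L^\infty_tH^m\cap L^2_tH^{m+1}$ that are uniform in $N$, in probability or in expectation on $\{t\le\tau_R\}$. The main obstacle lies here: the factor $\int_0^t\|\partial_xu_N\|_{L^\infty}{\rm d}s$ is only a.s. finite, with no uniform moment bound. We therefore work with the truncated quantity $e^{-C\int_0^t\|u_N\|_{H^2}^2{\rm d}s}\|u_N\|_{H^m}^2$, which makes the Gronwall step possible, and then remove the weight using the a.s. finiteness established in the first step.

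Third, we pass $N\to\infty$. Time regularity follows from the equation: $\mathbb{E}\|u_N\|_{C^\theta([0,T];H^{-1})}^p\lesssim1$. Together with the bounds above, this gives tightness of the laws of $(u_N,W)$ in $C([0,T];H^{m-\delta})\cap L^2([0,T];H^{m+1-\delta})$. The Skorokhod (or Jakubowski) theorem provides a.s. convergent copies, and the limit solves \eqref{CH integral viscous u}; this produces an $H^m$ martingale solution. Pathwise uniqueness comes from It\^o's formula applied to $\|u-v\|_{L^2}^2$ for two solutions $u,v$ on the same basis. Using
\begin{equation*}
\|\partial_x(P[u]-P[v])\|_{L^2}\lesssim(\|u\|_{W^{1,\infty}}+\|v\|_{W^{1,\infty}})\|u-v\|_{H^1}
\end{equation*}
and absorbing the $H^1$ part into the viscous term $\varepsilon\|\partial_x(u-v)\|_{L^2}^2$, we obtain
\begin{equation*}
{\rm d}\|w\|^2\le C(1+\|u\|_{H^2}^2+\|v\|_{H^2}^2)\|w\|^2{\rm d}t+{\rm d}M ,\qquad w=u-v .
\end{equation*}
Stopping times together with Gronwall's lemma then give $w=0$. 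Finally, the Gy\"ongy--Krylov criterion upgrades martingale existence plus pathwise uniqueness to a unique pathwise solution on the original basis. The continuity $u\in C([0,T];H^1)$ follows from the $L^2_tH^2$ and $H^{-1}$-valued time regularity via a Lions--Magenes type lemma, applied to the It\^o formula.
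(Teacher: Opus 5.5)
Your proposal is correct and follows the same overall architecture as the paper: a Galerkin scheme, the exactly cancelling $H^1$ energy identity with BDG and Gronwall, commutator estimates for the $H^m$ bounds, stochastic compactness, pathwise uniqueness and Gy\"ongy--Krylov. Several steps are executed differently, however.

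\textbf{$H^m$ bounds.} The paper localizes with the stopping time $\eta_R^n=\inf\{t:\int_0^t\|u_n\|_{W^{1,\infty}}^2\,{\rm d}s>R\}$ and a stochastic Gronwall lemma. You use the weight $e^{-C\int_0^t\|u_N\|_{H^2}^2{\rm d}s}$ instead. The two devices are interchangeable, since both rest only on the $\varepsilon$-dependent $L^2_tH^2_x$ bound. You are right that a stopping time on $\|u_N\|_{H^1}$ alone would not work.

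\textbf{Uniqueness and compactness.} This is where the routes genuinely differ. The paper proves pathwise uniqueness in $H^1$ via Friedrichs mollification and commutator error terms. It obtains compactness only in $L^2_tH^1_x$ and the quasi-Polish space $C([0,T];H^1_w)$. That forces it to use the Skorokhod--Jakubowski theorem together with a point-separating map before Gy\"ongy--Krylov applies, and to recover $H^1$-continuity from an energy equality. You instead prove uniqueness in $L^2$, absorbing the derivative loss in $\partial_x(P[u]-P[v])$ into the viscous term. This needs no mollification, because the $L^2$ It\^o formula is available for solutions in $L^2_tH^2_x$. Both arguments require that solution class anyway, and two $H^1$-continuous solutions that agree in $L^2$ agree in $H^1$, so nothing is lost. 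You also use the $L^\infty_tH^m\cap L^2_tH^{m+1}$ bounds to get tightness in the Polish space $C([0,T];H^{m-\delta})\cap L^2([0,T];H^{m+1-\delta})$. The classical Skorokhod and Gy\"ongy--Krylov theorems then apply directly, so your route is more elementary and self-contained.

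\textbf{A small correction.} To get $u\in C([0,T];H^1)$ through a Lions--Magenes/Krylov It\^o formula, the relevant triple is $H^2\subset H^1\subset L^2$ with $H^1$ as pivot. You therefore need the drift in $L^2_tL^2_x$, not $H^{-1}$-valued time regularity. This does hold, since $\varepsilon\partial_x^2u\in L^2_tL^2_x$ and $u\partial_xu,\ \partial_xP[u]\in L^\infty_tL^2_x$.
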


\subsection{A priori estimate of Galerkin approximate solutions}
To establish the global existence of $H^m$ pathwise solutions to the viscous equation \eqref{CH viscous u}, we employ the Galerkin method to construct a sequence of approximate solutions. Let $\{e_1,e_2,\dots\}\subseteq H^1 $ be an orthonormal basis of $L^2 $ that is dense in $H^1 $ and set $H_n={\rm span}\{e_1,e_2,\dots,e_n\}$, in particular, we take $e_{2j}=\cos(2\pi jx)$ and $e_{2j+1}=\sin(2\pi j x)$ for $x\in [0,1]$. Let $\Pi_n:\left(H^1 \right)^\ast\rightarrow H_n$ be defined by
$$\Pi_n u:=\sum_{i=1}^n\langle u,e_i\rangle_{L^2 }e_i,$$
restricted to $L^2 $, $\Pi_n$ is the orthogonal projection onto $H_n$. It can be easily deduced that for any $f\in H^1 $ and $n\in\mathbb{N}$, the spatial derivative of $\Pi_n f$ satisfies 
$$\partial_x\Pi_n f=\Pi_n\partial_x f,$$
and since $\Pi_n$ is self-adjoint and idempotent on $L^2(\mathbb{S})$, then for any $f\in L^2(\mathbb{S})$,
$$\int_\mathbb{S}u_n\Pi_n f\,{\rm d}x=\int_\mathbb{S}u_n f\,{\rm d}x.$$

For each $n\in\mathbb{N}$, we consider the Galerkin approximation of \eqref{CH integral viscous u} on $H_n$, i.e. for $u_n(\omega,t,x)=\sum_{i=1}^nw_i(\omega,t)e_i(x)$ satisfies the following equation:
\begin{equation}\label{Galerkin}
	\left\{
	\begin{array}{l}
		0={\rm d}u_n-\varepsilon\partial_x^2u_n\,{\rm d}t+\Pi_n(u_n\partial_x u_n+\partial_x P[u_n])\,{\rm d}t-u_n\,{\rm d}W,\\
		u_n(0)=\Pi_n u_0.
	\end{array}
	\right.
\end{equation}
Here $\mathbb{E}\Vert u_0\Vert_{H^1(\mathbb{S}) }^2<\infty$.

\begin{theo}\cite{Holden2023}
	For any fixed $n$, there exists a unique $C([0,T];H_n)$-valued adapted process $u_n$ that is a strong solution of \eqref{Galerkin}.
\end{theo}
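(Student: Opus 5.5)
The statement is the finite-dimensional existence and uniqueness result for the Galerkin system \eqref{Galerkin}. I would prove it by treating \eqref{Galerkin} as an It\^o SDE in $\mathbb{R}^n$ for the coefficient vector $w=(w_1,\dots,w_n)$:
$$
{\rm d}w_i=\Big(\varepsilon\langle\partial_x^2u_n,e_i\rangle-\langle u_n\partial_xu_n+\partial_xP[u_n],e_i\rangle\Big){\rm d}t+w_i\,{\rm d}W .
$$
Here $u_n=\sum_j w_je_j$, and $w_i(0)=\langle u_0,e_i\rangle$, which is $\mathcal{F}_0$-measurable with finite second moment.

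\textbf{Local Lipschitz coefficients.} Since $H_n$ consists of smooth trigonometric polynomials, all norms on $H_n$ are equivalent. The drift is built from three pieces:
\begin{itemize}
\item a linear term, $\varepsilon\partial_x^2u_n$;
\item a quadratic term, $u_n\partial_xu_n$;
\item the term $\partial_xP[u_n]=\partial_xK\ast(u_n^2+\tfrac12(\partial_xu_n)^2)$, which is quadratic in $w$ and continuous because $\partial_xK\in L^\infty$.
\end{itemize}
Hence the drift is a polynomial, in particular locally Lipschitz, map of $w$. The diffusion coefficient $w\mapsto w$ is globally Lipschitz. Standard SDE theory, via truncation of the coefficients outside balls and stopping times, gives a unique adapted continuous solution up to the explosion time $\tau_\infty=\lim_R\tau_R$, where $\tau_R=\inf\{t:|w(t)|\ge R\}$.

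\textbf{Non-explosion via the energy.} Apply It\^o's formula to $\|u_n\|_{H^1}^2=\int_{\mathbb{S}} u_n^2+(\partial_xu_n)^2\,{\rm d}x$, using $\partial_x\Pi_n=\Pi_n\partial_x$ and the self-adjointness of $\Pi_n$. The nonlinear terms then cancel exactly as in the deterministic CH energy identity: testing $u_n\partial_xu_n+\partial_xP[u_n]$ against $(1-\partial_x^2)u_n$ gives zero after integration by parts. This yields
$$
{\rm d}\|u_n\|_{H^1}^2+2\varepsilon\|\partial_xu_n\|_{H^1}^2\,{\rm d}t=\|u_n\|_{H^1}^2\,{\rm d}t+2\|u_n\|_{H^1}^2\,{\rm d}W .
$$
Taking expectations at $t\wedge\tau_R$, the martingale term vanishes because it is stopped. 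Gronwall then gives $\mathbb{E}\|u_n(t\wedge\tau_R)\|_{H^1}^2\le e^{t}\,\mathbb{E}\|u_0\|_{H^1}^2$. By norm equivalence on $H_n$, this implies $\mathbb{P}(\tau_R\le T)\le C_n/R^2\to0$, so $\tau_\infty>T$ almost surely and the solution is global on $[0,T]$ in $C([0,T];H_n)$.

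\textbf{Uniqueness.} Two solutions agree up to each $\tau_R$ by the Lipschitz property on balls, and since $\tau_R\to\infty$ they agree on $[0,T]$.

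\textbf{Main obstacle.} The one step that needs care is verifying the exact cancellation of the nonlinear terms in the Galerkin setting, so that no uncontrolled cubic term remains. If the cancellation does not hold exactly because of the projection, I would instead use only the $L^2$ energy, where the linear noise gives an explicit Gronwall bound. Norm equivalence on $H_n$ still controls $|w|$, but the cubic drift contribution must then be handled. That contribution is genuinely cubic in $w$, so a cruder Gronwall bound would not suffice and the cancellation is really needed.
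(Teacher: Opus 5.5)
Your proof is correct and is the standard argument behind the cited result: the drift on $H_n$ is polynomial, hence locally Lipschitz, and non-explosion follows from the $H^1$ energy identity, which is the same computation the paper carries out in Proposition~\ref{Galerkin estimate} (where it shows $I_1^n=0$). The cancellation you flag as the main obstacle holds exactly, because $(1-\partial_x^2)u_n\in H_n$, so testing against it removes $\Pi_n$ by self-adjointness, and the $L^2$-only fallback is never needed.
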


\begin{prop}[$H^1$ estimates]\label{Galerkin estimate}
	For each $n\in\mathbb{N}$, let $u_n$ be a solution to \eqref{Galerkin} with $\mathbb{E}\Vert u_0\Vert_{H^1 }^2<\infty$, there exists a constant $C=C(T,\mathbb{E}\Vert u_0\Vert_{H^1 (\mathbb{S})}^p)$ independent of $n$ and $\varepsilon$, such that
	\begin{equation}\label{Galerkin estimate 2}
		\mathbb{E}\Vert u_n\Vert_{L^\infty([0,T];H^1 (\mathbb{S}))}^2+\varepsilon\mathbb{E}\int_0^T\Vert\partial_xu_n(t)\Vert_{H^1(\mathbb{S}) }^2\,{\rm d}t\leq C.
	\end{equation}
	Moreover, if for fixed $p\in [4,\infty)$, $\mathbb{E}\Vert u_0\Vert_{H^1(\mathbb{S}) }^p<\infty$, there exists a constant $C=C(p,T,\mathbb{E}\Vert u_0\Vert_{H^1(\mathbb{S}) }^2)$, such that
	\begin{equation}\label{Galerkin estimate p}
		\mathbb{E}\Vert u_n\Vert_{L^\infty([0,T];H^1(\mathbb{S}) )}^p+\varepsilon^{p/2}\left( \mathbb{E}\int_0^T\Vert\partial_xu_n(t)\Vert_{H^1(\mathbb{S}) }^2 \,{\rm d}t \right)^{p/2}\leq C.
	\end{equation}
\end{prop}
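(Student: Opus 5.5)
I plan to apply It\^{o}'s formula to the functional $\Vert u_n\Vert_{H^1}^2=\int_\mathbb{S}u_n^2+(\partial_xu_n)^2\,{\rm d}x$ along the finite-dimensional SDE \eqref{Galerkin}. The first step is to note that this is legitimate: $u_n(t)\in H_n$ is a finite sum of smooth trigonometric functions, and the quadratic form is smooth on $H_n$. Writing $\langle\cdot,\cdot\rangle_{H^1}$ and using that $\Pi_n$ commutes with $\partial_x$ and is self-adjoint, the drift contributions are as follows.

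The viscous term gives $-2\varepsilon\Vert\partial_xu_n\Vert_{H^1}^2$.

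The nonlinear term gives
$$-2\int_\mathbb{S}(1-\partial_x^2)u_n\,\big(u_n\partial_xu_n+\partial_xP[u_n]\big)\,{\rm d}x .$$
Because $(1-\partial_x^2)P=u_n^2+\frac12(\partial_xu_n)^2$, this equals
$$-2\int_\mathbb{S}u_n\,\partial_x\Big(\tfrac12 u_n^2+u_n^2+\tfrac12(\partial_xu_n)^2\Big)-u_n\partial_xu_n\,\partial_x^2u_n\,{\rm d}x .$$
Integrating by parts, this vanishes exactly, which is the classical CH cancellation. The projection $\Pi_n$ disappears because it is tested against $(1-\partial_x^2)u_n\in H_n$.

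The noise term $u_n\,{\rm d}W$ produces the martingale $2\Vert u_n\Vert_{H^1}^2\,{\rm d}W$ and the It\^{o} correction $\Vert u_n\Vert_{H^1}^2\,{\rm d}t$. Hence
$$\Vert u_n(t)\Vert_{H^1}^2+2\varepsilon\int_0^t\Vert\partial_xu_n\Vert_{H^1}^2\,{\rm d}s=\Vert \Pi_nu_0\Vert_{H^1}^2+\int_0^t\Vert u_n\Vert_{H^1}^2\,{\rm d}s+2\int_0^t\Vert u_n\Vert_{H^1}^2\,{\rm d}W.$$

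To obtain \eqref{Galerkin estimate 2}, I take the supremum over $[0,T]$ and then expectation. The martingale term is handled by the Burkholder--Davis--Gundy inequality:
$$\mathbb{E}\sup_{t\le T}\Big|\int_0^t\Vert u_n\Vert_{H^1}^2{\rm d}W\Big|\le C\,\mathbb{E}\Big(\int_0^T\Vert u_n\Vert_{H^1}^4\,{\rm d}s\Big)^{1/2}\le \tfrac12\mathbb{E}\sup_t\Vert u_n\Vert_{H^1}^2+C\int_0^T\mathbb{E}\Vert u_n\Vert_{H^1}^2\,{\rm d}s .$$
After absorbing, Gronwall's lemma closes the estimate. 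We use $\Vert\Pi_nu_0\Vert_{H^1}\le\Vert u_0\Vert_{H^1}$, and the constant is independent of $n$ and $\varepsilon$. To justify the absorption, I first stop at $\tau_R=\inf\{t:\Vert u_n\Vert_{H^1}\ge R\}$ and let $R\to\infty$ by monotone convergence; this requires that $u_n$ be global, which the preceding existence theorem provides.

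For \eqref{Galerkin estimate p}, I apply It\^{o}'s formula to $X^{p/2}$ with $X=\Vert u_n\Vert_{H^1}^2$. An equivalent route is to raise the above identity to the power $p/2$. The nonlinearity again cancels, and we get
$$X^{p/2}(t)+p\varepsilon\int_0^tX^{p/2-1}\Vert\partial_xu_n\Vert_{H^1}^2\,{\rm d}s\le X^{p/2}(0)+C_p\int_0^tX^{p/2}\,{\rm d}s+p\int_0^tX^{p/2}\,{\rm d}W .$$
Here $C_p=\frac p2+p(\frac p2-1)$. The same BDG, Young, stopping-time and Gronwall argument gives $\mathbb{E}\sup_t X^{p/2}\le C$.

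For the dissipation term, I instead take the $p/2$ power of the $H^1$ identity directly: $\varepsilon\int_0^T\Vert\partial_xu_n\Vert_{H^1}^2$ is bounded by $X(0)+\int_0^TX+2\sup_t|M_t|$, where $M_t=\int_0^tX\,{\rm d}W$. Raising to $p/2$ and applying BDG to $M$ bounds $\varepsilon^{p/2}\mathbb{E}\big(\int_0^T\Vert\partial_xu_n\Vert_{H^1}^2\big)^{p/2}$ by $C\,\mathbb{E}\sup_tX^{p/2}$. By Jensen's inequality this dominates the quantity stated in \eqref{Galerkin estimate p}.

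The only genuinely delicate point is verifying the exact cancellation of the nonlinear term in the presence of $\Pi_n$. This works because $u_n\in H_n$ and $\partial_x^2$ preserves $H_n$, so $\Pi_n$ can be dropped against $(1-\partial_x^2)u_n$. The rest is routine stochastic calculus.
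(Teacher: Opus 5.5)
Your proposal is correct and follows essentially the same route as the paper: It\^o's formula for $\Vert u_n\Vert_{H^1}^2$, exact cancellation of the projected nonlinearity, then BDG, absorption and Gronwall, with the $p$-th moment obtained by raising the energy identity to the power $p/2$ (your It\^o formula for $X^{p/2}$, with $C_p=\frac p2+p(\frac p2-1)$, is an equivalent variant). One small fix: the martingale enters with coefficient $2$, so take the Young constant in your BDG step to be $\frac14$ rather than $\frac12$, otherwise the absorption does not close; your stopping-time justification and your observation that $\partial_x^2$ (rather than $\partial_x$) preserves $H_n$ are, if anything, more careful than the paper.
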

\begin{proof}

	By \eqref{Galerkin} and It\^{o}'s formula, since $\partial_x\Pi_n f=\Pi_n\partial_x f$, we obtain that
	$$
	\begin{aligned}
		&\frac 1 2{\rm d}\int_\mathbb{S}|u_n|^2\,{\rm d}x+\varepsilon\int_{\mathbb{S}}|\partial_x u_n|^2\,{\rm d}x\,{\rm d}t\\
		=&-\int_{\mathbb{S}}u_n^2\partial_x u_n+u_n\partial_x K\ast\left(u_n^2+\frac 1 2(\partial_x u_n)^2\right)\,{\rm d}x\,{\rm d}t
		+\frac 1 2\int_\mathbb{S}u_n^2\,{\rm d}x\,{\rm d}t
		+\int_{\mathbb{S}}u_n^2\,{\rm d}x\,{\rm d}W,\\
		&\frac 1 2{\rm d}\int_\mathbb{S}|\partial_xu_n|^2\,{\rm d}x+\varepsilon\int_{\mathbb{S}}|\partial_x^2 u_n|^2\,{\rm d}x\,{\rm d}t\\
		=&\int_{\mathbb{S}}u_n\partial_x u_n\partial_x^2 u_n+\partial_x^2u_n\partial_x K\ast\left(u_n^2+\frac 1 2(\partial_x u_n)^2\right)\,{\rm d}x\,{\rm d}t
		+\frac 1 2\int_{\mathbb{S}}|\partial_xu_n|^2\,{\rm d}x\,{\rm d}t
		+\int_{\mathbb{S}}|\partial_xu_n|^2\,{\rm d}x\,{\rm d}W,
	\end{aligned}
	$$
	adding the previous two equations, we arrive at
	$$\frac 1 2{\rm d}\Vert u_n\Vert_{H^1(\mathbb{S})}^2+\varepsilon\int_{\mathbb{S}}\Vert\partial_x u_n\Vert_{H^1(\mathbb{S})}^2\,{\rm d}x\,{\rm d}t=I_1^n\,{\rm d}t+I_2^n\,{\rm d}t+I_3^n\,{\rm d}W$$
	where
	$$
	\begin{aligned}
		&I_1^n=-\int_{\mathbb{S}}u_n^2\partial_x u_n+u_n\partial_x K\ast\left(u_n^2+\frac 1 2(\partial_x u_n)^2\right)\,{\rm d}x\\
		&\qquad\ +\int_{\mathbb{S}}u_n\partial_x u_n\partial_x^2 u_n+\partial_x^2u_n\partial_x K\ast\left(u_n^2+\frac 1 2(\partial_x u_n)^2\right)\,{\rm d}x,\\
		&I_3^n=2I_2^n=\int_{\mathbb{S}}\Vert u_n\Vert_{H^1(\mathbb{S})}^2\,{\rm d}x.
	\end{aligned}
	$$
	According to the kernel property of $K$ that $K-\partial_x^2 K=\delta$, the Dirac mass, we can easily deduce that $I_1^n=0$, thus for any $T>0$, integrating the above equality with respect to time from $0$ to $T$ yields:
	\begin{equation}\label{p=2 estimate Galerkin}
		\Vert u_n(t)\Vert_{H^1(\mathbb{S}) }^2-\Vert u_{n0}\Vert_{H^1 (\mathbb{S})}^2+2\varepsilon\int_0^T\Vert\partial_x u_n\Vert_{H^1(\mathbb{S}) }^2\,{\rm d}t\leq2\left|\int_0^t\Vert u_n\Vert_{H^1(\mathbb{S}) }^2\,{\rm d}W\right|+\int_0^T\Vert u_n\Vert_{H^1(\mathbb{S}) }^2\,{\rm d}t,
	\end{equation}
	taking a supremum over $t\in[0,T]$ and employing the BDG inequality and the Cauchy-Schawarz inequality yields that for some constant $C>0$
	$$
	\begin{aligned}
		\mathbb{E}\Vert u_n\Vert_{L^\infty([0,T];H^1(\mathbb{S}) )}^2+2\varepsilon\mathbb{E}\int_0^T&\Vert\partial_x u_n\Vert_{H^1(\mathbb{S}) }^2\,{\rm d}t\\
		\leq&\mathbb{E}\Vert u_{n0}\Vert_{H^1(\mathbb{S}) }^2+
		\frac 1 2\mathbb{E}\Vert u_n\Vert_{L^\infty([0,T];H^1(\mathbb{S}) )}^2+C\int_0^T\mathbb{E}\Vert u_n\Vert_{L^\infty([0,T];H^1(\mathbb{S}) )}^2\,{\rm d}t.
	\end{aligned}$$
	Therefore we can conclude \eqref{Galerkin estimate 2} by Gronwall's inequality.
	
	For $p>4$, by \eqref{p=2 estimate Galerkin} we can deduce that
	$$
	\begin{aligned}
		&\Vert u_n(t)\Vert_{H^1 (\mathbb{S})}^p+\left(2\varepsilon\int_0^T\Vert\partial_x u_n\Vert_{H^1(\mathbb{S}) }^2\,{\rm d}t\right)^{p/2}\\
		\leq& C_p\left(\Vert u_{n0}\Vert_{H^1(\mathbb{S}) }^p+\left|\int_0^t\Vert u_n\Vert_{H^1(\mathbb{S}) }^2\,{\rm d}W\right|^{p/2}+\left(\int_0^T\Vert u_n\Vert_{H^1(\mathbb{S}) }^2\,{\rm d}t\right)^{p/2}\right).
	\end{aligned}
	$$
	Since the convexity of $x\mapsto x^{p/4}$, also taking a supremum over $t\in[0,T]$ and using the BDG inequality, the Cauchy-Schwarz inequality and the Gronwall inequality, we can finally obtain \eqref{Galerkin estimate p}.
\end{proof}

In order to construct the tightness of laws $\{u_n\}$, we need the temporal continuity of $u_n$.
\begin{lemm}\label{Galerkin temporal continuity}
	For each $n\in\mathbb{N}$, let $\tilde{u}_n$ be a solution to \eqref{Galerkin} with $\mathbb{E}\Vert u_0\Vert_{H^1(\mathbb{S})}^p<\infty$ for $p>2$. For any $\theta\in [0,\frac{p-2}{4p})$, there exists a constant 
	$$C=C\left(T,p,\theta,\mathbb{E}\Vert u_0\Vert_{H^1(\mathbb{E})}^{4p}\right),$$
	independent of $n$ and $\varepsilon$, such that
	$$\mathbb{E}\Vert u_n\Vert_{C^\theta([0,T];L^2(\mathbb{S}))}^{2p}\le C.$$
\end{lemm}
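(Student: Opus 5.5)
We will prove the Hölder bound by writing $u_n$ in mild-free integral form, splitting it into a drift part and a martingale part, estimating each increment in $H^{-1}$ (or in $L^2$ after interpolation), and then applying the Kolmogorov continuity criterion. Concretely, for $0\le s<t\le T$, the equation \eqref{Galerkin} gives
\begin{equation*}
u_n(t)-u_n(s)=\int_s^t\Big(\varepsilon\partial_x^2u_n-\Pi_n\big(u_n\partial_xu_n+\partial_xP[u_n]\big)\Big)\,{\rm d}r+\int_s^t u_n\,{\rm d}W.
\end{equation*}

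\textbf{Drift part.} We estimate it in $H^{-1}(\mathbb{S})$, using that $\Pi_n$ is a contraction on $H^{-1}$ (it commutes with $\partial_x$ and is diagonal in the Fourier basis). Three ingredients control the integrand:
\begin{itemize}
\item[] $\Vert\varepsilon\partial_x^2u_n\Vert_{H^{-1}}\le\varepsilon\Vert u_n\Vert_{H^1}$;
\item[] $u_n\partial_xu_n=\frac12\partial_x(u_n^2)$ and $\Vert u_n^2\Vert_{L^2}\le\Vert u_n\Vert_{L^\infty}\Vert u_n\Vert_{L^2}\lesssim\Vert u_n\Vert_{H^1}^2$;
\item[] $\Vert\partial_xP[u_n]\Vert_{L^2}\lesssim\Vert u_n^2+\frac12(\partial_xu_n)^2\Vert_{L^1}\lesssim\Vert u_n\Vert_{H^1}^2$, since $\partial_xK\in L^\infty$.
\end{itemize}
Hence the drift increment has $H^{-1}$ norm at most $C|t-s|\,(1+\Vert u_n\Vert_{L^\infty_tH^1}^2)$. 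Its $2p$-th moment is then controlled by $\mathbb{E}\Vert u_n\Vert_{L^\infty H^1}^{4p}$, which is finite by \eqref{Galerkin estimate p} applied with exponent $4p$. This is why the constant depends on $\mathbb{E}\Vert u_0\Vert_{H^1}^{4p}$.

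\textbf{Martingale part.} By the BDG inequality, $\mathbb{E}\Vert\int_s^tu_n\,{\rm d}W\Vert_{L^2}^{2p}\lesssim|t-s|^{p}\,\mathbb{E}\Vert u_n\Vert_{L^\infty L^2}^{2p}$.

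\textbf{Interpolation to $L^2$.} We pass to $L^2$ using $\Vert f\Vert_{L^2}\le\Vert f\Vert_{H^{-1}}^{1/2}\Vert f\Vert_{H^1}^{1/2}$ together with the uniform $H^1$ bound, $\Vert u_n(t)-u_n(s)\Vert_{H^1}\le2\Vert u_n\Vert_{L^\infty H^1}$. Combined with Cauchy–Schwarz in $\omega$, this yields
\begin{equation*}
\mathbb{E}\Vert u_n(t)-u_n(s)\Vert_{L^2}^{2p}\le C|t-s|^{p/2},
\end{equation*}
with $C$ depending on the $4p$-th moment only. The exponents may need adjusting: one can alternatively interpolate only the drift part and keep the stochastic part directly in $L^2$, which gives order $|t-s|^{p/2}$ overall.

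\textbf{Conclusion via Kolmogorov.} The Kolmogorov continuity theorem (or the Garsia–Rodemich–Rumsey inequality, which gives the moment bound on the Hölder norm directly) applied to $\mathbb{E}\Vert u_n(t)-u_n(s)\Vert_{L^2}^{2p}\le C|t-s|^{1+\gamma}$ with $1+\gamma=p/2$ gives $\mathbb{E}\Vert u_n\Vert_{C^\theta([0,T];L^2)}^{2p}\le C$ for every $\theta<\gamma/(2p)=\frac{p-2}{4p}$. This is exactly the stated range.

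The sup-norm part of the Hölder norm is bounded by \eqref{Galerkin estimate p}. The main bookkeeping obstacle is matching the exponents so that the threshold $\frac{p-2}{4p}$ comes out. If the interpolation route gives a different exponent, I would instead place the whole increment in $H^{-1}$ with rate $|t-s|^{p}$ and interpolate at the level of the Hölder seminorm, which halves the exponent and again produces $\frac{p-2}{4p}$.
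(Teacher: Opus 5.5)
Your argument is correct and is essentially the paper's: both establish $\mathbb{E}\Vert u_n(t)-u_n(s)\Vert_{L^2(\mathbb{S})}^{2p}\le C|t-s|^{p/2}$, with the $4p$-th moment entering through the quadratic drift, and then apply Kolmogorov with $1+\gamma=p/2$ to get $\theta<\frac{p-2}{4p}$. The paper gets the half-rate by writing $\Vert u_n(t)-u_n(s)\Vert_{L^2}^2=\int_{\mathbb{S}}(u_n(t)-u_n(s))\int_s^t{\rm d}u_n\,{\rm d}x$, integrating the viscous piece by parts and applying Cauchy--Schwarz to the noise piece, which is just the duality form of your $H^{-1}$--$H^1$ interpolation; your hedged fallbacks are unnecessary, and halving the $H^{-1}$ Hölder exponent would give $\frac{p-1}{4p}$ rather than $\frac{p-2}{4p}$, though this slip is harmless.
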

\begin{proof}
	First, we separate the spatial integral as
	$$\int_\mathbb{S}(u_n(t)-u_n(s))^2\,{\rm d}x=\int_\mathbb{S} (u_n(t)-u_n(s))\int_s^t{\rm d}u_n(r)\,{\rm d}x=\sum_{i=1}^4 I_i,$$
	where
	$$
	\begin{aligned}
		&I_1=-\int_\mathbb{S} (u_n(t)-u_n(s))\int_s^t u_n(r)\partial_xu_n(r)\,{\rm d}x\,{\rm d}x,\\
		&I_2=-\int_\mathbb{S} (u_n(t)-u_n(s))\int_s^t \partial_x P[u_n](r)\,{\rm d}x\,{\rm d}x,\\
		&I_3=\varepsilon \int_\mathbb{S} (u_n(t)-u_n(s))\int_s^t \partial_x^2u_n\,{\rm d}x\,{\rm d}x,\\
		&I_2=-\int_\mathbb{S} (u_n(t)-u_n(s))\int_s^t u_n\,{\rm d}W(r)\,{\rm d}x.
	\end{aligned}
	$$
	
	By the Minkowski inequality and $H^1\hookrightarrow L^\infty$, we have 
	$$
	\begin{aligned}
		\mathbb{E}|I_1|^p
		&\le C\mathbb{E}\left(|t-s|\int_\mathbb{S} \|u_n\|_{L^\infty_T}^2\Vert\partial_x u_n\|_{L^\infty_T}\,{\rm d}x\right)^p
		\le C|t-s|^p\mathbb{E}\left(\|u_n\|_{L^2_xL^\infty_T}\| u_n\|_{L^\infty_xL^\infty_T}\|\partial_xu_n\|_{L^2_xL^\infty_T}\right)^p\\
		&\le C\left(\mathbb{E}\|u_n\|_{L^\infty([0,T];H^1(\mathbb{S}))}^{4p}\right)^{1/2}\left(\mathbb{E}\|u_n\|_{L^\infty([0,T];H^1(\mathbb{S}))}^{2p}\right)^{1/2}|t-s|^p.
	\end{aligned}$$
	
	Using Young's convolution inequality, we obtain
	$$\|\partial_x P[u_n]\|_{L^2(\mathbb{S})}\le C\|u_n+\frac 1 2(\partial_x u_n)^2\|_{L^1(\mathbb{S})}\le C\|u_n\|_{H^1(\mathbb{S})},$$
	and therefore
	$$
	\begin{aligned}
		\mathbb{E}|I_2|^p
		&\le C\mathbb{E}\left(\|u_n\|_{L^\infty([0,T];L^2(\mathbb{S}))}^p\|u_n\|_{L^\infty([0,T];H^1(\mathbb{S}))}\right)|t-s|^p\\
		&\le C\left(\mathbb{E}\|u_n\|_{L^\infty([0,T];H^1(\mathbb{S}))}^{4p}\right)^{1/2}\left(\mathbb{E}\|u_n\|_{L^\infty([0,T];H^1(\mathbb{S}))}^{2p}\right)^{1/2}|t-s|^p.
	\end{aligned}
	$$
	
	Integrating by parts, we can easily deduce that
	$$\mathbb{E}|I_3|^p\le C\varepsilon^p\mathbb{E}\|u_n\|_{L^\infty([0,T];H^1(\mathbb{S}))}^{2p}|t-s|^p.$$
	
	Finally, by the BDG inequality, Jensen's inequality and \eqref{viscous estimate p}, pointwise in $x$
	$$
	\begin{aligned}
		\mathbb{E}|I_4|^p
		&\le\left(\mathbb{E}\|u_n\|_{L^\infty([0,T];H^1(\mathbb{S}))}^{2p}\right)^{1/2}\left(\int_\mathbb{S}\mathbb{E}\left|\int_s^tu_n\,{\rm d}W(r)\right|^{2p}{\rm d}x\right)^{1/2}\\
		&\le C\left(\mathbb{E}\|u_n\|_{L^\infty([0,T];H^1(\mathbb{S}))}^{2p}\right)^{1/2}\left(\int_\mathbb{S}\mathbb{E}\left|\int_s^tu_n^2\,{\rm d}r\right|^{p}{\rm d}x\right)^{1/2}
		\le C\mathbb{E}\|u_n\|_{L^\infty([0,T];H^1(\mathbb{S}))}^{2p} |t-s|^{p/2}.
	\end{aligned}
	$$
	
	Combining above inequalities, we conclude that
	$$\mathbb{E}\|u_n(t)-u_n(s)\|_{L^2(\mathbb{S})}^{2p}\le C|t-s|^{p/2}=C |t-s|^{1+\frac {p-2} 2},$$
	where the constant $C$ is independent of $\varepsilon$. By Kolmogorov's continuity criterion, there is a version of $u_n$ in $C^\theta([0,T];L^2)$ for any $\theta\in[0,\frac {p-2} {4p})$.
\end{proof}

For $m>1$, for $f\in H^m(\mathbb{S})$, the convergence $\Vert \Pi_n f-f\Vert_{H^m(\mathbb{S})}\rightarrow0$ also holds. Thus, we consider the higher regularity of the Galerkin approximate solutions in order to build an $H^m$ pathwise solution to \eqref{CH viscous u}.

\begin{prop}[$H^m$ estimates up to a stopping time]\label{Galerkin estimate Hm}
	Let $u_n$ be a solution to \eqref{Galerkin} and initial data $u_0\in L^{2p}(\Omega;H^m(\mathbb{S}))$ for some $p\in[2,\infty)$. For $R>1$, let $\eta_R^n$ be the stopping time
	\begin{equation}\label{Gelerkin Hm stopping time}
		\eta_R^n:=\inf\left\{t\in[0,T]:\int_0^t\Vert u_n\Vert_{W^{1,\infty}(\mathbb{S})}^2\,{\rm d}s>R\right\},
	\end{equation}
	then $\eta_R^n\rightarrow T$, $\mathbb{P}$-a.s. uniformly in $n$ as $R\rightarrow\infty$. 
	
	Therefore, there exists a constant $C=C(p,T,R,\varepsilon,E\Vert u_0\Vert_{H^m(\mathbb{S})}^{2p})$ independent of $n$, such that 
	$$\mathbb{E}\Vert u_n\Vert_{L^\infty([0,\eta_R^n];H^m (\mathbb{S}))}^p\le C.$$
	In particular, for $p=2$, we have
	$$\mathbb{E}\Vert u_n\Vert_{L^2([0,\eta_R^n];H^{m+1} (\mathbb{S}))}^2\le C.$$
\end{prop}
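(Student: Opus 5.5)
We split the argument into two parts: first show that the stopping times $\eta_R^n$ exhaust $[0,T]$ uniformly in $n$, then derive the $H^m$ bound on $[0,\eta_R^n]$.

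\textbf{Step 1: the stopping times.} By the Sobolev embedding on $\mathbb{S}$, $\Vert u_n\Vert_{W^{1,\infty}}\lesssim \Vert u_n\Vert_{H^2}$. Hence
$$\int_0^T\Vert u_n\Vert_{W^{1,\infty}}^2\,{\rm d}s\lesssim T\Vert u_n\Vert_{L^\infty_TH^1}^2+\int_0^T\Vert\partial_xu_n\Vert_{H^1}^2\,{\rm d}s .$$
By \eqref{Galerkin estimate 2}, the expectation of the right-hand side is bounded by $C(1+\varepsilon^{-1})$, with $C$ independent of $n$. Chebyshev's inequality then gives
$$\mathbb{P}\{\eta_R^n<T\}\le \mathbb{P}\Big\{\int_0^T\Vert u_n\Vert_{W^{1,\infty}}^2\,{\rm d}s\ge R\Big\}\le C_\varepsilon/R,$$
which tends to $0$ uniformly in $n$ as $R\to\infty$. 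This is the intended meaning of "$\eta_R^n\to T$ uniformly in $n$". The convergence is in probability, and along a subsequence $R_k$ with $\sum_k C_\varepsilon/R_k<\infty$ it upgrades to almost sure convergence.

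\textbf{Step 2: the $H^m$ energy identity.} Apply It\^o's formula to $\Vert\Lambda^m u_n\Vert_{L^2}^2$ with $\Lambda=(1-\partial_x^2)^{1/2}$. Since $\Pi_n$ commutes with $\Lambda^m$, the projection drops out, as in Proposition \ref{Galerkin estimate}. This gives
$$\tfrac12{\rm d}\Vert u_n\Vert_{H^m}^2+\varepsilon\Vert\partial_x u_n\Vert_{H^m}^2\,{\rm d}t=\Big(-\langle\Lambda^m(u_n\partial_xu_n),\Lambda^mu_n\rangle-\langle\Lambda^m\partial_xP[u_n],\Lambda^mu_n\rangle+\tfrac12\Vert u_n\Vert_{H^m}^2\Big){\rm d}t+\Vert u_n\Vert_{H^m}^2\,{\rm d}W.$$

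\textbf{Step 3: the deterministic terms.} For the transport term, combine the Kato--Ponce commutator estimate with integration by parts:
$$|\langle\Lambda^m(u\partial_xu),\Lambda^m u\rangle|\le|\langle[\Lambda^m,u]\partial_xu,\Lambda^mu\rangle|+\tfrac12|\langle\partial_xu\,\Lambda^mu,\Lambda^mu\rangle|\lesssim\Vert u\Vert_{W^{1,\infty}}\Vert u\Vert_{H^m}^2 .$$
For the nonlocal term, $\partial_x(1-\partial_x^2)^{-1}$ gains one derivative. The Moser estimate therefore gives
$$\Vert\partial_xP[u]\Vert_{H^m}\lesssim\Vert u^2+\tfrac12(\partial_xu)^2\Vert_{H^{m-1}}\lesssim\Vert u\Vert_{W^{1,\infty}}\Vert u\Vert_{H^m}.$$
For $m$ close to $1$, the fractional Moser estimates require some care. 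If they cause trouble, I would use the viscous term: bound the product by $\Vert u\Vert_{W^{1,\infty}}\Vert u\Vert_{H^m}\Vert u\Vert_{H^{m+1}}$ and absorb it via Young's inequality, at the cost of a factor $\varepsilon^{-1}\Vert u\Vert_{W^{1,\infty}}^2\Vert u\Vert_{H^m}^2$. This version is also integrable in time, since $\int_0^{\eta_R^n}\Vert u_n\Vert_{W^{1,\infty}}^2\le R$. I would adopt this absorbed form from the start, because it handles all $m\ge1$ uniformly. The resulting pathwise inequality is
$${\rm d}\Vert u_n\Vert_{H^m}^2+\varepsilon\Vert u_n\Vert_{H^{m+1}}^2\,{\rm d}t\le C_\varepsilon\big(1+\Vert u_n\Vert_{W^{1,\infty}}^2\big)\Vert u_n\Vert_{H^m}^2\,{\rm d}t+2\Vert u_n\Vert_{H^m}^2\,{\rm d}W .$$

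\textbf{Step 4: removing the random Gronwall factor.} The Gronwall coefficient $\Vert u_n\Vert_{W^{1,\infty}}^2$ is random, so we cannot take expectations directly. This is the main obstacle, and it is exactly what the stopping time is designed to handle. Set
$$\Phi(t)=\exp\Big(-C_\varepsilon\int_0^t\big(1+\Vert u_n\Vert_{W^{1,\infty}}^2\big){\rm d}s\Big),$$
which on $[0,\eta_R^n]$ satisfies $e^{-C_\varepsilon(T+R)}\le\Phi\le1$. Apply It\^o's product rule to $\Phi\Vert u_n\Vert_{H^m}^2$ and stop at $t\wedge\eta_R^n$. The drift term involving $\Vert u_n\Vert_{W^{1,\infty}}^2$ cancels, which yields
$$\Phi(t)\Vert u_n(t)\Vert_{H^m}^2\le\Vert u_0\Vert_{H^m}^2+2\int_0^t\Phi\Vert u_n\Vert_{H^m}^2\,{\rm d}W$$
on $[0,\eta_R^n]$. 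Now raise to the power $p/2$, take the supremum over $t\le\eta_R^n$, and apply the BDG inequality to bound the martingale term by
$$\mathbb{E}\Big(\int_0^{\eta_R^n}\Vert u_n\Vert_{H^m}^4\,{\rm d}s\Big)^{p/4}\le\delta\,\mathbb{E}\sup_{s\le\eta_R^n}\Vert u_n\Vert_{H^m}^p+C_\delta\int_0^T\mathbb{E}\sup_{r\le s\wedge\eta_R^n}\Vert u_n\Vert_{H^m}^p\,{\rm d}s .$$
Absorbing the $\delta$ term and applying Gronwall's inequality gives the $L^\infty_tH^m$ bound, with constant $e^{C_\varepsilon(T+R)}$ times $\mathbb{E}\Vert u_0\Vert_{H^m}^{p}$, which is finite since $u_0\in L^{2p}(\Omega;H^m)$.

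\textbf{Step 5: the $H^{m+1}$ bound.} Keep the dissipation term $\varepsilon\int\Vert u_n\Vert_{H^{m+1}}^2$ in the stopped inequality with $p=2$. Taking expectations, the stochastic integral vanishes because it is a true martingale up to the stopping time, by the bound just obtained. This yields the $L^2([0,\eta_R^n];H^{m+1})$ estimate.
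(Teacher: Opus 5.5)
Your proposal is correct and follows essentially the same route as the paper. Chebyshev with \eqref{Galerkin estimate 2} gives $\mathbb{P}\{\eta_R^n<T\}\le C_\varepsilon/R$; an It\^o energy estimate in $H^m$ with commutator and product bounds then has its drift coefficient controlled by $\Vert u_n\Vert_{W^{1,\infty}}$, and the Gronwall step is closed by $\int_0^{\eta_R^n}\Vert u_n\Vert_{W^{1,\infty}}^2\,{\rm d}s\le R$. The only difference is that the paper applies It\^o to $\Vert\partial_x^lu_n\Vert_{L^2}^{2p}$ and cites a stochastic Gronwall lemma, whereas you make that step explicit with the integrating factor $\Phi$, which is deterministically bounded below on $[0,\eta_R^n]$. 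As a result you need only $\mathbb{E}\Vert u_0\Vert_{H^m}^p$, and $\Lambda^m$ covers non-integer $m$. Absorbing the $\delta$-term is legitimate because $\mathbb{E}\sup_{t\le T}\Vert u_n(t)\Vert_{H^m}^p<\infty$ for each fixed $n$, by equivalence of norms on $H_n$.
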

\begin{proof}
	According to the embedding $H^2(\mathbb{S})\hookrightarrow W^{1,\infty}(\mathbb{S})$ and Chebyshev's inequality,
	$$\begin{aligned}
		\mathbb{P}(\{\eta_R^n<T\})
		&\le\mathbb{P}\left(\left\{
		\int_0^T\Vert u_n(t)\Vert_{W^{1,\infty}(\mathbb{S})}^2\,{\rm d}t>R\right\}\right)\\
		&\le\frac C R\,\mathbb{E}\int_0^T\Vert u_n(t)\Vert_{H^2 (\mathbb{S})}^2\,{\rm d}t\overset{\eqref{Galerkin estimate 2}}{\le}\frac{C_\varepsilon}{R}\longrightarrow0
	\end{aligned}
	$$
	as $R$ tends to $\infty$, uniformly in $n$.
	
	Taking $l$-th derivative of \eqref{Galerkin}, 
	$${\rm d}\partial_x^l u_n-\varepsilon\,\partial_x^{l+2}u_n\,{\rm d}t+\left(\partial_x^l\Pi_n\,(u_n\partial_x u_n)+\partial_x^{l+1}P[u_n]\right)\,{\rm d}t=\partial_x^l u_n\,{\rm d}W.$$
	Apply It\'{o}'s formula, we can obtain that
	$$
	\begin{aligned}
		&\frac 1{2p}\,\Vert\partial_x^l u_n\Vert_{L^2(\mathbb{S})}^{2p}\bigg|_0^t+\varepsilon\,\int_0^t\Vert\partial_x^l u_n(s)\Vert_{L^2(\mathbb{S})}^{2p-2}\,\Vert\partial_x^{l+1}u_n(s)\Vert_{L^2(\mathbb{S})}^2\,{\rm d}s\\
		=&\frac 1 2\int_0^t\Vert\partial_x^l u_n(s)\Vert_{L^2(\mathbb{S})}^{2p-2}\int_{\mathbb{S}}\partial_xu_n\,(\partial_x^l u_n)^2\,{\rm d}x\,{\rm d}s	-\int_0^t\Vert\partial_x^l u_n(s)\Vert_{L^2(\mathbb{S})}^{2p-2}\int_{\mathbb{S}}\partial_x^{l+1}P[u_n]\partial_x^l u_n\,{\rm d}x\,{\rm d}s\\
		&+\int_0^t\Vert\partial_x^l u_n(s)\Vert_{L^2(\mathbb{S})}^{2p-2}\int_{\mathbb{S}}\left(u_n\,\partial_x^{l+1}u_n-\partial_x^l(u_n\,\partial_xu_n)\right)\partial_x^lu_n\,{\rm d}x\,{\rm d}s\\
		&+\int_0^t\Vert\partial_x^l u_n(s)\Vert_{L^2(\mathbb{S})}^{2p-2}\int_{\mathbb{S}}(\partial_x^l u_n)^2\,{\rm d}x\,{\rm d}W\\
		=&:\sum_{i=1}^3\int_0^t I_i\,{\rm d}s+\int_0^t \Vert \partial_x^l u_n(s)\Vert_{L^2(\mathbb{S})}^{2p}\,{\rm d}W.
	\end{aligned}
	$$
	Obviously, the stochastic integral $\int_0^{t\wedge\eta_R^n}\Vert \partial_x^l u_n(s)\Vert_{L^2(\mathbb{S})}^{2p}\,{\rm d}W$ is a square-integrable martingale.
	
	Applying the Leibniz rule and Gagliardo-Nirenberg inequality to $I_1$ and $I_2$, as in \cite{Xin2000}, we obtain
	$$
	\begin{aligned}
		|I_1|+|I_2|\le &C_\varepsilon\,\Vert\partial_x u_n\Vert_{L^\infty(\mathbb{S})}\,\Vert\partial_x^l u_n\Vert_{L^2(\mathbb{S})}^{2p}\\
		&+ C_\varepsilon\,\Vert u_n\Vert_{W^{1,\infty}(\mathbb{S})}\,\Vert\partial_x^l u_n\Vert_{L^2(\mathbb{S})}^{2p}
		+\frac \varepsilon 2\,\Vert\partial_x^l u_n\Vert_{L^2(\mathbb{S})}^{2p-2}\,\Vert\partial_x^{l+1}u_n\Vert_{L^2(\mathbb{S})}^2.
	\end{aligned}
	$$
	For $I_3$, we employ the commutator estimate as Lemma 2.97 in \cite{BCD2011} and Proposition 4.2 in \cite{Taylor2003},
	$$|I_3|\le C\,\Vert\partial_x u_n\Vert_{L^\infty(\mathbb{S})}\,\Vert\partial_x^l u_n\Vert_{L^2(\mathbb{S})}^{2p}.$$
	
	Therefore, an application of the stochastic Gronwall inequality gives
	$$
	\begin{aligned}
		&\left(\mathbb{E}\sup_{t\in[0,\eta_R
			^n]}
		\left(\Vert\partial_x^lu_n(s)\Vert_{L^2(\mathbb{S})}^{2p}+\frac \varepsilon 2\,\int_0^s\Vert\partial_x^lu_n(s')\Vert_{L^2(\mathbb{S})}^{2p-2}\,\Vert\partial_x^{l+1}u_n(s')\Vert_{L^2(\mathbb{S})}^{2}\,{\rm d}s'\right)^{1/2}
		\right)^2\\[2ex]
		&\le C\,\mathbb{E}\Vert\partial_x^lu_n(0)\Vert_{L^2(\mathbb{S})}^{2p},
	\end{aligned}
	\quad l=0,\dots,m.
	$$
	In particular, take $p=2$ we therefore obtain the second inequality in proposition.
	
\end{proof}

\subsection[Existence of Global Hm Solution]{Existence of Global $H^m$ Solution}
\quad We first establish the existence and uniqueness of a global $H^1$ pathwise solution. In order to upgrade the martingale solution to pathwise solution, below pathwise uniqueness is essential.

\begin{prop}[Pathwise uniqueness in $H^1$]\label{pathwise uniqueness}
	Let $u,v$ be strong $H^1$ solutions to the viscous stochastic CH equation \eqref{CH viscous u} with initial data $u_0\in L^{p_0}(\Omega;H^1(\mathbb{S}))$ for some $p_0>4$. Then
	$$\mathbb{E}\Vert u-v\Vert_{L^\infty([0,T];H^1(\mathbb{S}))}=0.$$
\end{prop}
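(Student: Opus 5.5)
Set $w=u-v$, which satisfies $w(0)=0$ and
$$
{\rm d}w-\varepsilon\partial_x^2w\,{\rm d}t+\big(u\partial_xw+w\partial_xv+\partial_x(P[u]-P[v])\big)\,{\rm d}t=w\,{\rm d}W .
$$
The noise is linear, so it commutes with differences. We apply It\^{o}'s formula to $\Vert w\Vert_{H^1}^2$, working with $w$ and $\partial_xw$ separately as in the proof of Proposition \ref{Galerkin estimate}. The It\^{o} correction is $\Vert w\Vert_{H^1}^2\,{\rm d}t$, and the martingale part is $2\Vert w\Vert_{H^1}^2\,{\rm d}W$.

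Since strong solutions only have $u,v\in L^2([0,T];H^2)$ a.s., we localize first. Define
$$
\tau_R:=\inf\Big\{t\in[0,T]:\ \int_0^t\big(\Vert u\Vert_{W^{1,\infty}}^2+\Vert v\Vert_{W^{1,\infty}}^2+\Vert u\Vert_{H^2}^2+\Vert v\Vert_{H^2}^2\big)\,{\rm d}s+\sup_{s\le t}\big(\Vert u\Vert_{H^1}+\Vert v\Vert_{H^1}\big)>R\Big\}.
$$
Then $\tau_R\uparrow T$ a.s., by $H^2\hookrightarrow W^{1,\infty}$ and the regularity in the definition of an $H^m$ martingale solution.

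Next we estimate the deterministic terms on $[0,\tau_R]$.
\begin{itemize}
\item In the $L^2$ part, $\int u\,\partial_xw\,w=-\frac12\int\partial_xu\,w^2$ and $\int w^2\partial_xv$ are both bounded by $(\Vert\partial_xu\Vert_{L^\infty}+\Vert\partial_xv\Vert_{L^\infty})\Vert w\Vert_{L^2}^2$.
\item For the pressure, $P[u]-P[v]=K\ast\big((u+v)w+\frac12(\partial_xu+\partial_xv)\partial_xw\big)$. Since $K,\partial_xK\in L^\infty$, Young's inequality gives $\Vert\partial_x(P[u]-P[v])\Vert_{L^2}\le C(\Vert u\Vert_{H^1}+\Vert v\Vert_{H^1})\Vert w\Vert_{H^1}$.
\item In the $\partial_x w$ part, the derivative falls on the nonlinearity, giving terms such as $\int\partial_x(u\partial_xw)\partial_xw=\frac12\int\partial_xu(\partial_xw)^2$ and $\int\partial_x(w\partial_xv)\partial_xw$. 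The term $\int w\,\partial_x^2v\,\partial_xw$ is bounded by $\Vert w\Vert_{L^\infty}\Vert\partial_x^2v\Vert_{L^2}\Vert\partial_xw\Vert_{L^2}\le C\Vert v\Vert_{H^2}\Vert w\Vert_{H^1}^2$.
\item For $\partial_x^2(P[u]-P[v])=(P[u]-P[v])-\big((u+v)w+\frac12(\partial_xu+\partial_xv)\partial_xw\big)$, the local term paired with $\partial_xw$ is bounded by $(\Vert u\Vert_{W^{1,\infty}}+\Vert v\Vert_{W^{1,\infty}})\Vert w\Vert_{H^1}^2$.
\item The viscous term has a good sign, so it can simply be dropped.
\end{itemize}
Altogether,
$$
{\rm d}\Vert w\Vert_{H^1}^2\le C\big(1+\Vert u\Vert_{W^{1,\infty}}+\Vert v\Vert_{W^{1,\infty}}+\Vert u\Vert_{H^1}+\Vert v\Vert_{H^1}+\Vert v\Vert_{H^2}\big)\Vert w\Vert_{H^1}^2\,{\rm d}t+2\Vert w\Vert_{H^1}^2\,{\rm d}W .
$$

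To close the argument, set $\Gamma(t)=\exp\big(-\int_0^{t}C(1+\dots)\,{\rm d}s\big)$ using the same bracket. By It\^{o}'s product rule, $\Gamma\Vert w\Vert_{H^1}^2$ is a nonnegative local supermartingale that starts at zero. Stopped at $\tau_R$, the stochastic integral is a true martingale, since $\Vert w\Vert_{H^1}\le 2R$ there. Therefore $\mathbb{E}\,\Gamma(t\wedge\tau_R)\Vert w(t\wedge\tau_R)\Vert_{H^1}^2=0$. Because $\Gamma>0$ a.s. on $[0,\tau_R]$ (its exponent is finite by the definition of $\tau_R$), $w=0$ on $[0,\tau_R]$ a.s. Letting $R\to\infty$ gives $w\equiv0$ on $[0,T]$ a.s., and hence $\mathbb{E}\Vert u-v\Vert_{L^\infty([0,T];H^1)}=0$. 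One could alternatively apply the stochastic Gronwall lemma directly.

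The main obstacle is the $\partial_xw$ energy estimate. There, terms like $w\,\partial_x^2v$ need $H^2$ control of $v$, which is available only in $L^2$ in time. Also, the It\^{o} formula for $\Vert\partial_xw\Vert_{L^2}^2$ requires $w\in L^2H^2$ in order to be justified, for instance through mollification or a Galerkin projection and a limit. This is why the stopping time must include $\int\Vert\cdot\Vert_{H^2}^2$, and why the exponential weight rather than a plain Gronwall is needed.
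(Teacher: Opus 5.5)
Your proof is correct and is essentially the paper's own (sketched) argument: an $H^1$ energy estimate for $w=u-v$, localized by a stopping time and closed by a Gronwall-type argument, with It\^{o}'s formula justified by regularization. The differences are only technical. You control $\int_{\mathbb{S}}w\,\partial_x^2v\,\partial_xw\,{\rm d}x$ through the $L^2_tH^2_x$ regularity of strong solutions, which is why your stopping time carries the $H^2$ terms, and you use an exponential weight in place of the stochastic Gronwall lemma. The paper instead mollifies the equations for $w$ and $\partial_xw$, handles the commutator error $E_\delta$, and stops only on $\int_0^t\Vert u+v\Vert_{W^{1,\infty}}^2\,{\rm d}s$.
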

\begin{proof}
	Suppose $u$ and $v$ are pathwise solutions defined relative to the same stochastic basis $(\Omega,\mathcal{F},$ $\{\mathcal{F}_t\}_{t\ge 0},\mathbb{P})$ and Brownian motion $W$. The difference $w=u-v$ obeys
	$$
	\begin{aligned}
		&{\rm d}w-\varepsilon\partial_x^2w\,{\rm d}t+(u\,\partial_x u-v\,\partial_xv)\,{\rm d}t+\partial_x K\ast\left(w\,(u+v)+\frac 1 2w\,\partial_x(u+v)\right)\,{\rm d}t=w\,{\rm d}W,\\
		&{\rm d}w_x-\varepsilon\partial_x^2w_x\,{\rm d}t+\partial_x(u\,\partial_xu-v\,\partial_xv)\,{\rm d}t+\partial_x^2K\ast \left(w\,(u+v)+\frac 1 2w\,\partial_x(u+v)\right)\,{\rm d}t=w_x\,{\rm d}W.
	\end{aligned}$$
	Mollify above two equations by $J_\delta$ the standard Friedrichs mollifier with notation $w_\delta=w\ast J_\delta$. The SPDEs can be understood in the pointwise sense:
	\begin{equation}
		\begin{aligned}
			&{\rm d}w_\delta-\varepsilon\partial_x^2w_\delta\,{\rm d}t+(u_\delta\,\partial_x u_\delta-v_\delta\,\partial_xv_\delta)\,{\rm d}t\\
			&\quad+\partial_x K\ast\left(w_\delta\,(u_\delta+v_\delta)+\frac 1 2w\,\partial_x(u_\delta+v_\delta)\right)\,{\rm d}t+E_\delta\,{\rm dt}=w\,{\rm d}W\\
			&{\rm d}\partial_xw_\delta-\varepsilon\partial_x^2\partial_xw_\delta\,{\rm d}t+\partial_x(u_\delta\,\partial_xu_\delta-v_\delta\,\partial_xv_\delta)\,{\rm d}t\\
			&\quad+\partial_x^2K\ast \left(w_\delta\,(u_\delta+v_\delta)+\frac 1 2w_\delta\,\partial_x(u_\delta+v_\delta)\right)\,{\rm d}t+\partial_x E_\delta\,{\rm d}t=\partial_xw_\delta\,{\rm d}W,\\
			&E_\delta=J_\delta\ast(u\,\partial_x u-v\,\partial_xv)-(u_\delta\,\partial_x u_\delta-v_\delta\,\partial_xv_\delta)\\
			&\qquad+J_\delta\ast\partial_x K\ast\left(w\,(u+v)+\frac 1 2w\,\partial_x(u+v)\right)\\
			&\qquad-\partial_x K\ast\left(w_\delta\,(u_\delta+v_\delta)+\frac 1 2w\,\partial_x(u_\delta+v_\delta)\right).
		\end{aligned}
	\end{equation}
	Using standard energy estimation methods and the convergence of error term $E_\delta$(see in Lemma 7.1 in \cite{Holden2023}), we introduce a stopping time $\eta_R=\inf\left\{t\in\mathbb{R}_+:\int_0^{t\wedge T}\Vert u+v\Vert_{W^{1,\infty}(\mathbb{S})}^2(t)\,{\rm d}s>R\right\}$ to ensure that the stochastic integral term is a square-integrable martingale. After obtaining the convergence of $\mathbb{E}\sup_{t\in[0,\eta_R]}\Vert w_\delta(t)\Vert_{H^1(\mathbb{S})}\rightarrow0$ as $\delta\rightarrow0$, we extend the stopping time to $T$ and take the limit with respect to $w_\delta(t)\rightarrow w(t)$ in $H^1(\mathbb{S})$, then we can prove the pathwise uniqueness stated in the proposition.
	
	This method is standard in SPDE uniqueness analysis and is omitted here.
\end{proof}

\begin{theo}\label{Galerkin H^1}
	Suppose $p_0>4$ and $u_0\in L^{p_0}(\Omega;H^1(\mathbb{S}))$. There exists a unique $H^1$ pathwise solution to \eqref{CH viscous u} with initial data $u|_{t=0}=u_0$.
\end{theo}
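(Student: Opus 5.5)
The plan is the classical stochastic compactness route followed by the Yamada--Watanabe/Gy\"ongy--Krylov argument. First I take the Galerkin solutions $u_n$ of \eqref{Galerkin}. Proposition \ref{Galerkin estimate} gives $\varepsilon$-dependent but $n$-uniform bounds on $\mathbb{E}\Vert u_n\Vert_{L^\infty([0,T];H^1)}^{p}$ and on $\varepsilon\,\mathbb{E}\int_0^T\Vert u_n\Vert_{H^2}^2\,{\rm d}t$. Lemma \ref{Galerkin temporal continuity} gives a uniform bound on $\mathbb{E}\Vert u_n\Vert_{C^\theta([0,T];L^2)}^{2p}$. Since $H^2\cap C^\theta(L^2)$ embeds compactly, via Aubin--Lions--Simon, into $L^2([0,T];H^1)\cap C([0,T];L^2)$, and bounded sets of $L^\infty(H^1)$ are compact in $C([0,T];H^1\text{-}w)$ once equicontinuity in $L^2$ is known, Chebyshev's inequality yields tightness of the laws of $(u_n,W)$ on
\[
\mathcal{X}=\big(L^2([0,T];H^1)\cap C([0,T];L^2)\cap C([0,T];H^1\text{-}w)\big)\times C([0,T];\mathbb{R}).
\]
This space is quasi-Polish.

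Next I apply the Skorokhod--Jakubowski theorem. It gives a new basis with $\tilde u_n\to\tilde u$ and $\tilde W_n\to\tilde W$ almost surely in $\mathcal{X}$, with equal laws. The strong convergence of $\partial_x\tilde u_n$ in $L^2_{t,x}$ is what lets the nonlinearities pass to the limit. The term $u\partial_x u$ converges in $L^1$. The term $P[u_n]=K\ast(u_n^2+\frac12(\partial_xu_n)^2)$ converges, because $K\ast$ is bounded from $L^1$ to $W^{1,\infty}$. Also $\Pi_n\varphi\to\varphi$ in $H^1$. To pass to the limit in the stochastic integrals I use the standard lemma of Debussche--Glatt-Holtz--Temam: if $\tilde W_n\to\tilde W$ in probability in $C_t$ and $\tilde u_n\to\tilde u$ in $L^2_tL^2$, then $\int\tilde u_n\,{\rm d}\tilde W_n\to\int\tilde u\,{\rm d}\tilde W$ in probability. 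Alternatively I identify the martingale through its quadratic variation and cross-variation with $\tilde W$. Both routes need the uniform moment bounds to give uniform integrability. This produces an $H^1$ martingale solution to \eqref{CH viscous u} satisfying \eqref{CH integral viscous u}. The regularity $u\in L^2(\Omega;L^2(H^2))$ and the $L^{p_0}(\Omega;L^\infty(H^1))$ bound pass to the limit by lower semicontinuity. Continuity in $H^1$ in time follows from the parabolic structure: $u\in L^2H^2$ and ${\rm d}u\in L^2(L^2)$ in the semimartingale sense, so an It\^o formula for $\Vert u\Vert_{H^1}^2$ (Krylov's) gives $u\in C([0,T];H^1)$.

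Then I upgrade to a pathwise solution with the Gy\"ongy--Krylov characterization. I take two subsequences $u_n,u_k$ on the original basis and show that the joint laws of $(u_n,u_k,W)$ are tight. By the previous step any limit point $(\tilde u^1,\tilde u^2,\tilde W)$ consists of two solutions driven by the same $\tilde W$ with the same initial data. Proposition \ref{pathwise uniqueness} forces $\tilde u^1=\tilde u^2$, so the joint limit law is concentrated on the diagonal. Gy\"ongy--Krylov then gives convergence in probability of the original $u_n$ on the original basis to an adapted $u$, and that limit is the pathwise solution. Uniqueness of the solution is again Proposition \ref{pathwise uniqueness}.

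I expect the main obstacle to be technical rather than conceptual. In the Skorokhod--Jakubowski limit one has to justify that $\tilde W$ is an $\tilde{\mathcal F}_t$-Wiener process and that $\tilde u$ is adapted; I would handle this by taking the filtration generated by $(\tilde u,\tilde W)$ together with the null sets. One also has to check that the strong convergence of $\partial_x\tilde u_n$ in $L^2_{t,x}$ really follows from the $\varepsilon$-dependent $L^2H^2$ bound combined with time regularity. If the $C^\theta(L^2)$ control is too weak for Aubin--Lions in $H^1$, I would interpolate between $L^2H^2$ and $C^\theta L^2$ to get fractional time regularity in $H^{-1}$, and then apply Simon's lemma.
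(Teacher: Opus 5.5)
Your proposal is correct and follows essentially the same route as the paper. Both arguments use $n$-uniform Galerkin bounds from Proposition~\ref{Galerkin estimate} and Lemma~\ref{Galerkin temporal continuity}, then tightness in $L^2([0,T];H^1)$ and $C([0,T];H^1\text{-}w)$, then Skorokhod--Jakubowski representations to get an $H^1$ martingale solution, and finally Gy\"ongy--Krylov together with Proposition~\ref{pathwise uniqueness} to upgrade it to the unique pathwise solution.

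The differences are only technical. You obtain $u\in C([0,T];H^1)$ from Krylov's It\^o formula in the triple $H^2\subset H^1\subset L^2$. This gives pathwise norm continuity more directly than the paper's mollified energy identity for $\mathbb{E}\Vert u(t)\Vert_{H^1}^2$ combined with weak continuity. Also, you should apply Gy\"ongy--Krylov on the Polish factor $L^2([0,T];H^1)\cap C([0,T];L^2)$ rather than on the quasi-Polish space $\mathcal{X}$; the paper handles this point via Lemma~\ref{map of quasi Polish}.
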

\begin{proof}
	According to the a priori estimate of the Galerkin approximate solutions $u_n$ in Proposition \ref{Galerkin estimate}, Lemma \ref{Galerkin temporal continuity}, we can establish the tightness of probability laws of $\{u_n\}$ in $L^2([0,T];H^1(\mathbb{S}))$ and $C([0,T];H^1_w(\mathbb{S}))$. Therefore, using Skorokhod-Jakubowski representation theorem Lemma \ref{SJ reprensentation theorem} in quasi-Polish space and the convergence of the projection operator $\Pi_n$, we can obtain the $H^1$ martingale solution to \eqref{CH viscous u}. Next, since we claim the pathwise uniqueness of $H^1$ solution to \eqref{CH viscous u} in Proposition \ref{pathwise uniqueness}, according to Gy\"{o}ngy-Krylov theorem and Lemma \ref{map of quasi Polish}, we can conclude that the $H^1$ martingale solution we built above is actually a unique $H^1$ pathwise solution of \eqref{CH viscous u}. Finally, also by the Friedrichs mollifier and convergence of error terms, the following energy equality holds:
	$$  
	\mathbb{E}\Vert u(r)\Vert_{H^1(\mathbb{S})}^2\big|_s^t+2\varepsilon\,\mathbb{E}\int_s^t\Vert \partial_x u\Vert_{H^1(\mathbb{S})}^2\,{\rm d}r =\mathbb{E}\int_s^t\mathbb{S}\Vert u(r)\Vert_{H^1(\mathbb{S})}^2\,{\rm d}x\,{\rm d}r.
	$$
	thus, we can deduce that $t\mapsto\mathbb{E}\Vert u(t)\Vert_{H^1(\mathbb{S})}^2$ is continuous. Combining with  $u\in C([0,T];H^1_w(\mathbb{S}))$, the temporal continuity is proved. Therefore, we finally obtain the unique $H^1$ pathwise solution to \eqref{CH viscous u}.
	
	The above procedure is the standard Galerkin approximation approach for pathwise solutions. As it is lengthy and not central to this paper, we only sketch the main steps and omit the detailed proof. 
\end{proof}

\begin{theo}
	Suppose $p_0>4$ and $u_0\in L^{p_0}(\Omega;H^m(\mathbb{S}))$ for some $m>1$. There exists a unique $H^1$ pathwise solution to \eqref{CH viscous u} with initial data $u|_{t=0}=u_0$.
\end{theo}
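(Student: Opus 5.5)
The statement follows almost directly from Theorem \ref{Galerkin H^1}, so the plan is a reduction to it. I also indicate how the construction behind that theorem runs for $H^m$ data, so that nothing circular is hidden in the reduction.

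\textbf{Step 1: reduction.} Since $m>1$, the continuous embedding $H^m(\mathbb{S})\hookrightarrow H^1(\mathbb{S})$ gives $\Vert u_0\Vert_{H^1(\mathbb{S})}\le \Vert u_0\Vert_{H^m(\mathbb{S})}$ pointwise in $\omega$. Hence
$$\mathbb{E}\Vert u_0\Vert_{H^1(\mathbb{S})}^{p_0}\le \mathbb{E}\Vert u_0\Vert_{H^m(\mathbb{S})}^{p_0}<\infty,$$
that is, $u_0\in L^{p_0}(\Omega;H^1(\mathbb{S}))$ with the same $p_0>4$. The hypotheses of Theorem \ref{Galerkin H^1} are therefore satisfied. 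It yields a unique $H^1$ pathwise solution $u$ of \eqref{CH viscous u} with $u|_{t=0}=u_0$, on the given stochastic basis and with the given $W$. Uniqueness holds within the whole class of $H^1$ pathwise solutions, by Proposition \ref{pathwise uniqueness}. Therefore no second $H^1$ pathwise solution exists, whether or not it carries extra $H^m$ information.

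\textbf{Step 2: the construction for $H^m$ data.} For completeness, the existence part of Theorem \ref{Galerkin H^1} runs as follows with $H^m$ data.
\begin{enumerate}
\item Take the Galerkin approximations $u_n$ of \eqref{Galerkin} with $u_n(0)=\Pi_n u_0$. Note that $\Vert\Pi_n u_0\Vert_{H^1}\le\Vert u_0\Vert_{H^1}\le\Vert u_0\Vert_{H^m}$.
\item Proposition \ref{Galerkin estimate} gives the $n$- and $\varepsilon$-uniform bounds \eqref{Galerkin estimate 2} and \eqref{Galerkin estimate p} with $p=p_0$. Lemma \ref{Galerkin temporal continuity} gives uniform H\"older-in-time bounds in $L^2$.
\item These bounds give tightness of the laws of $(u_n,W)$ in $L^2([0,T];H^1)\cap C([0,T];H^1_w)\times C([0,T])$. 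The $L^2([0,T];H^1)$ component uses, in addition, the $\varepsilon$-dependent bound on $\mathbb{E}\int_0^T\Vert u_n\Vert_{H^2}^2\,{\rm d}t$ together with Aubin--Lions.
\item The Skorokhod--Jakubowski theorem gives a.s. convergent representatives. The convergence $\Pi_n f\to f$ lets one pass to the limit in the weak formulation \eqref{CH integral viscous u}. The quadratic terms $u\partial_xu$ and $P[u]$ converge thanks to the strong $L^2_tH^1_x$ convergence. The stochastic integral converges by the standard convergence lemma for stochastic integrals. This produces an $H^1$ martingale solution.
\item Pathwise uniqueness (Proposition \ref{pathwise uniqueness}) combined with the Gy\"ongy--Krylov criterion upgrades this martingale solution to a pathwise solution on the original basis. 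The criterion is applied to pairs of subsequences.
\item The time continuity $u\in C([0,T];H^1)$ follows from weak continuity together with continuity of $t\mapsto\mathbb{E}\Vert u(t)\Vert_{H^1}^2$, obtained from the energy equality.
\end{enumerate}

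\textbf{Main obstacle.} The only delicate point is the pathwise uniqueness input. It requires the mollification argument with the commutator errors $E_\delta$ and the localisation by the stopping time $\eta_R$ in Proposition \ref{pathwise uniqueness}. The argument must show that $\eta_R\to T$ a.s. as $R\to\infty$, which uses the $\varepsilon$-dependent $L^2_tH^2_x$ bound. That proposition is already established and is invoked as is, so the present statement reduces to the embedding check in Step 1.
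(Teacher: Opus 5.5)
For the statement exactly as printed, your Step 1 is a correct and complete proof, and it is more direct than the paper's. The bound $\Vert u_0\Vert_{H^1}\le\Vert u_0\Vert_{H^m}$ puts $u_0$ in $L^{p_0}(\Omega;H^1(\mathbb{S}))$. Theorem \ref{Galerkin H^1} then gives existence, and Proposition \ref{pathwise uniqueness} gives uniqueness in the whole $H^1$ class.

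The paper's proof, however, treats the result as $H^m$ well-posedness. The ``$H^1$'' in the statement is a slip for ``$H^m$''; compare Theorem \ref{viscous Hm global} and the paper's closing line, which concludes ``global well-posedness \dots\ in $H^m$''. The paper reruns the construction of Theorem \ref{Galerkin H^1} using the stopped $H^m$ bounds of Proposition \ref{Galerkin estimate Hm}. It then gets $H^m$ uniqueness from Proposition \ref{pathwise uniqueness} together with uniqueness of limits in $L^1(\Omega;L^\infty([0,T];H^m))$.

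Your route gives the literal claim at no cost, but nothing beyond $H^1$. In particular, Step 2, despite its title, uses the $H^m$ data only through $\Vert u_0\Vert_{H^1}\le\Vert u_0\Vert_{H^m}$ and never touches Proposition \ref{Galerkin estimate Hm}. That extra regularity is what the paper relies on later:
\begin{enumerate}
\item the pointwise It\^o/Girsanov computation in Proposition \ref{viscous estimate} needs $m>3$;
\item the comparison argument in Proposition \ref{one side estimate} needs $m>3$;
\item Lemma \ref{priori estimate}(1) needs $\tilde u_n\in L^2([0,T];H^m)$ so that $\partial_x\tilde u_n$ is classical.
\end{enumerate}

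Your reduction extends easily to the intended statement. Keep the $H^1$ solution $u$ from Step 1. The Galerkin approximations converge to $u$ in probability in $L^2([0,T];H^1)$; this is what Gy\"ongy--Krylov gives. The map $v\mapsto\Vert v\Vert_{L^2([0,T];H^{m+1})}$ is lower semicontinuous there. Proposition \ref{Galerkin estimate Hm} applies with $p=2$, which is allowed since $p_0>4$. Together these give
\[
\mathbb{P}\bigl(\Vert u\Vert_{L^2([0,T];H^{m+1})}>M\bigr)\le\liminf_{n}\mathbb{P}\bigl(\Vert u_n\Vert_{L^2([0,T];H^{m+1})}>M\bigr)\le\sup_n\mathbb{P}(\eta_R^n<T)+\frac{C_R}{M^2}.
\]
Let $M\to\infty$ and then $R\to\infty$, using $\sup_n\mathbb{P}(\eta_R^n<T)\le C_\varepsilon/R$. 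This puts $u$ in $L^2([0,T];H^{m+1})$ almost surely. The $L^\infty([0,T];H^m)$ bound follows in the same way. Uniqueness in the $H^m$ class is then automatic, because an $H^m$ pathwise solution is in particular an $H^1$ one.
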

\begin{proof}
	The pathwise uniqueness of $H^m$ solution can be deduce from Lemma \ref{pathwise uniqueness} and uniqueness of weak-strong limits in $L^1(\Omega;L^\infty([0,T;H^m(\mathbb{S})]))$. Therefore, with the same argument employed in Theorem \ref{Galerkin H^1}, we conclude that the global well-posedness of \eqref{CH viscous u} in $H^m(\mathbb{S})$.
\end{proof}

\subsection{Properties of Solution to Viscous SPDE}
\begin{prop}\label{viscous estimate}
	Let $u_\varepsilon$ be a solution to \eqref{CH viscous u} with $\mathbb{E}\Vert u_0\Vert_{H^1}^2<\infty$, for any $T>0$, there exists a constant $C=C(T,\mathbb{E}\Vert u_0\Vert_{H^1}^2)$ independent of $\varepsilon$, such that
	\begin{equation}\label{viscous estimate 2}
		\mathbb{E}\Vert u_\varepsilon\Vert_{L^\infty([0,T];H^1)}^2+\varepsilon\mathbb{E}\int_0^T\Vert\partial_xu_\varepsilon(t)\Vert_{H^1}^2\,{\rm d}t\leq C.
	\end{equation}
	Moreover, if for fixed $p\in (4,\infty)$, $\mathbb{E}\Vert u_0\Vert_{H^1}^p<\infty$, there exists a constant $C=C(p,T,\mathbb{E}\Vert u_0\Vert_{H^1}^2)$, such that
	\begin{equation}\label{viscous estimate p}
		\mathbb{E}\Vert u_\varepsilon\Vert_{L^\infty([0,T];H^1)}^p+\varepsilon^{p/2}\left( \mathbb{E}\int_0^T\Vert\partial_xu_\varepsilon(t)\Vert_{H^1}^2 \,{\rm d}t \right)^{p/2}\leq C.
	\end{equation}
	In particular, if $m>3$, for fixed $p\ge2$, $\forall\ t\ge0$,
	\begin{equation}\label{viscous estimate 2 conservation}\Vert u_\varepsilon(\omega,t)\Vert_{H^1}\le\eta(\omega,t)\|u_0\|_{H^1},\ \mathbb{P}-{\rm a.s.},
	\end{equation}
	where $\eta(\omega,t)=e^{W(t)-\frac t 2}$. 
\end{prop}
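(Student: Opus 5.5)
The first two estimates \eqref{viscous estimate 2} and \eqref{viscous estimate p} follow from the energy identity for the viscous equation \eqref{CH viscous u}, exactly as in Proposition \ref{Galerkin estimate}. The only difference is that the identity is now applied to $u_\varepsilon$ itself rather than to $u_n$.

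Since $u_\varepsilon$ is only an $H^1$ pathwise solution (Theorem \ref{Galerkin H^1}), I would work with the Friedrichs mollification $J_\delta\ast u_\varepsilon$ and apply It\^o's formula to $\Vert J_\delta\ast u_\varepsilon\Vert_{H^1}^2$. The commutator error terms are controlled as in Proposition \ref{pathwise uniqueness}, using $\varepsilon\int_0^T\Vert \partial_x u_\varepsilon\Vert_{H^1}^2\,{\rm d}t<\infty$. Letting $\delta\to0$ gives the pathwise energy identity
$$
\Vert u_\varepsilon(t)\Vert_{H^1}^2+2\varepsilon\int_0^t\Vert\partial_xu_\varepsilon\Vert_{H^1}^2\,{\rm d}s=\Vert u_0\Vert_{H^1}^2+\int_0^t\Vert u_\varepsilon\Vert_{H^1}^2\,{\rm d}s+2\int_0^t\Vert u_\varepsilon\Vert_{H^1}^2\,{\rm d}W .
$$
This uses the cancellation $I_1=0$, which comes from $K-\partial_x^2K=\delta$. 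Taking the supremum in time, then applying BDG, Young and Gronwall, yields \eqref{viscous estimate 2}. Raising the identity to the power $p/2$ before taking expectations yields \eqref{viscous estimate p}. To make the stochastic integral a true martingale, I would localize with the stopping times $\eta_R$ of Proposition \ref{Galerkin estimate Hm} and then remove the localization by monotone convergence. Alternatively, one can pass to the limit in the Galerkin bounds using lower semicontinuity of norms along the Skorokhod representation; the constants are already uniform in $n$ and $\varepsilon$, and the laws agree by pathwise uniqueness.

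For the pathwise bound \eqref{viscous estimate 2 conservation}, set $Y(t)=\Vert u_\varepsilon(t)\Vert_{H^1}^2$. In differential form the identity reads
$$
{\rm d}Y=\big(-2\varepsilon\Vert\partial_xu_\varepsilon\Vert_{H^1}^2+Y\big)\,{\rm d}t+2Y\,{\rm d}W .
$$
I then multiply by the integrating factor $\eta^{-2}=e^{-2W+t}$, which satisfies ${\rm d}(\eta^{-2})=\eta^{-2}(-2\,{\rm d}W+3\,{\rm d}t)$. By It\^o's product rule, with cross variation $-4\eta^{-2}Y\,{\rm d}t$,
$$
{\rm d}(\eta^{-2}Y)=\eta^{-2}\big(Y+3Y-4Y-2\varepsilon\Vert\partial_xu_\varepsilon\Vert_{H^1}^2\big)\,{\rm d}t+\eta^{-2}(2Y-2Y)\,{\rm d}W=-2\varepsilon\,\eta^{-2}\Vert\partial_xu_\varepsilon\Vert_{H^1}^2\,{\rm d}t\le0 .
$$
Hence $\eta^{-2}(t)Y(t)\le Y(0)$ for all $t$ almost surely, which is exactly \eqref{viscous estimate 2 conservation}. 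Equivalently, $v=\eta^{-1}u_\varepsilon$ solves a random PDE without noise, for which the energy decreases pathwise; this is the Girsanov-type transform mentioned in the introduction.

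The main obstacle is justifying It\^o's formula for $Y$ pathwise and for every $t$, rather than only in expectation. This is why the statement assumes higher regularity ($m>3$): with $u_0\in H^m$, Theorem \ref{viscous Hm global} gives $u_\varepsilon\in L^\infty H^m\cap L^2H^{m+1}$. Then all terms, including $\int u_\varepsilon\partial_xu_\varepsilon\partial_x^2u_\varepsilon$ and the $K$-cancellation, are classical, and It\^o's formula in the Gelfand triple $H^2\subset H^1\subset L^2$ applies directly without mollification. Continuity of $t\mapsto Y(t)$ then upgrades the identity from a.e. $t$ to all $t\ge0$, and the estimate extends to every $T$.
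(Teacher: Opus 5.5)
Your proposal is correct. For \eqref{viscous estimate 2} and \eqref{viscous estimate p} it follows the paper's argument, which simply refers back to Proposition \ref{Galerkin estimate}.

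For \eqref{viscous estimate 2 conservation} you use the same exponential $\eta=e^{W(t)-t/2}$, but you apply it at a different level.

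\begin{itemize}
\item The paper applies It\^o's formula to $v_\varepsilon=\eta^{-1}u_\varepsilon$, turning the SPDE into the random PDE \eqref{v epsilon}. It then redoes the $H^1$ energy computation classically for this transformed equation, via $\int_{\mathbb{S}}v_\varepsilon n_\varepsilon\,{\rm d}x$ with $n_\varepsilon=(1-\partial_x^2)v_\varepsilon$. This classical computation with $n_\varepsilon$ is where $m>3$ is used.
\item You apply the integrating factor $\eta^{-2}$ to the scalar energy SDE you already derived. Your computation ${\rm d}(\eta^{-2}Y)=-2\varepsilon\,\eta^{-2}\Vert\partial_xu_\varepsilon\Vert_{H^1}^2\,{\rm d}t$ is right. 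Since $\eta^{-2}Y=\Vert v_\varepsilon\Vert_{H^1}^2$, it is exactly the identity \eqref{v epsilon H1 conservation law}.
\end{itemize}

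Your route is more economical. It needs only a one-dimensional It\^o product rule and reuses the energy identity. It also avoids the classical computation with $n_\varepsilon$. Krylov's It\^o formula in $H^2\subset H^1\subset L^2$ only needs $u_\varepsilon\in L^2H^2$ with drift in $L^2_tL^2_x$, so your argument already works for $m\ge1$. Your remark that $m>3$ is what makes It\^o's formula applicable therefore describes the paper's route, not yours.

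What the paper's route buys is the noise-free PDE for $v_\varepsilon$ itself, and hence equation \eqref{vx epsilon} for $\partial_xv_\varepsilon$. The comparison-principle argument in Proposition \ref{one side estimate} relies on that equation. Your scalar argument gives the $H^1$ bound but not the equation.
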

\begin{proof}
	\eqref{viscous estimate 2} and \eqref{viscous estimate p} can be obtained in the same way in Proposition \ref{Galerkin estimate}, and we omit the details for brevity.
	
	 Consider the Girsanov type transform \cite{Nathan2014,Michael2014,Tang2018}, denote $\eta(\omega,t)=e^{W(t)-\frac t 2}$, $v_\varepsilon=\eta^{-1}(\omega,t)u_\varepsilon$. Since $u_\varepsilon$ is the global $H^m$ solution to \eqref{CH viscous u} for $m>3$, by It\^{o}'s formula, $v_\varepsilon$ satisfies
	\begin{equation}\label{v epsilon}
		\partial_t v_\varepsilon+\eta v_\varepsilon\partial_xv_\varepsilon-\varepsilon\partial_x^2 v_\varepsilon+\eta\partial_x P[v_\varepsilon]=0,
	\end{equation}
	and $n_\varepsilon=(1-\partial_x^2)v_\varepsilon$ satisfies
	\begin{equation}\label{n epsilon}
		\partial_t n_\varepsilon+\eta v_\varepsilon\partial_xn_\varepsilon-\varepsilon\partial_x^2 n_\varepsilon+2\eta n_\varepsilon\partial_x v_\varepsilon=0.
	\end{equation}
	Integrating by parts, we can infer from \eqref{v epsilon} and \eqref{n epsilon} that for fixed $\omega\in\Omega$,
	$$
	\begin{aligned}
		\frac{\rm d}{{\rm d}t}\|v_\varepsilon\|_{H^1}^2+2\varepsilon\|\partial_x v_\varepsilon\|_{H^1}^2&=\frac{\rm d}{{\rm d}t}\int_{\mathbb{S}}v_\varepsilon\cdot n_\varepsilon\,{\rm d}x-\varepsilon\int_{\mathbb{S}}\partial_x^2v_\varepsilon\cdot n_\varepsilon+\partial_x^2n_\varepsilon\cdot v_\varepsilon\,{\rm d}x\\
		&=-\eta(\omega,t)\int_{\mathbb{S}}\left(v_\varepsilon\partial_x v_\varepsilon+\partial_xP[v_\varepsilon]\right)\cdot n_\varepsilon+\left(v_\varepsilon\partial_x n_\varepsilon+n_\varepsilon\partial_x v_\varepsilon\right)\cdot v_\varepsilon\,{\rm d}x=0
	\end{aligned}
	$$
	Since $\mathbb{E}\|u_0\|_{H^1}^2<\infty$, then for fixed $\omega\in\Omega$, $\|v_0\|_{H^1}=\|u_0\|_{H^1}\le C(\omega)$. Therefore we can conclude that for all $t\ge 0$,
	\begin{equation}\label{v epsilon H1 conservation law}
		\|v_\varepsilon(t)\|_{H^1}^2+2\varepsilon\int_0^t\|\partial_x v_\varepsilon\|_{H^1}^2\,{\rm d}t=\|v_0\|_{H^1}^2.
	\end{equation}
	Thus we can deduce that for a.e. $\omega\in\Omega$ and all $t>0$,
	$$\|u_\varepsilon(\omega,t)\|_{H^1}=\eta(\omega,t)\|v_\varepsilon(\omega,t)\|_{H^1}\le\eta(\omega,t)\|v_0(\omega,t)\|_{H^1}=\eta(\omega,t)\|u_0(\omega,t)\|_{H^1},$$ 
	which implies \eqref{viscous estimate 2 conservation}.
\end{proof}

\begin{prop}[Temporal $L^2$ continuity]\label{viscous estimate Holder}
	Let $u_\varepsilon$ be a solution to \eqref{CH viscous u} with $\mathbb{E}\Vert u_0\Vert_{H^1 }^p<\infty$ for $p>2$. For any $\theta\in[0,\frac {p-2}{4p})$, for any $T>0$, there exists a constant
	$$C=C(T,p,\theta,\mathbb{E}\|u_0\|_{H^1}^{4p})>0,$$
	independent of $\varepsilon$, such that
	\begin{equation}
		\mathbb{E}\|u_\varepsilon\|_{C^\theta([0,T];L^2)}^{2p}\le C.
	\end{equation}
\end{prop}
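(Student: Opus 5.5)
The plan is to mirror the proof of Lemma \ref{Galerkin temporal continuity}, now for the pathwise solution $u_\varepsilon$ of \eqref{CH viscous u}, using the $\varepsilon$-uniform bounds \eqref{viscous estimate 2}--\eqref{viscous estimate p} of Proposition \ref{viscous estimate} in place of Proposition \ref{Galerkin estimate}. Fix $0\le s<t\le T$. Since $u_\varepsilon\in C([0,T];H^1)$ with $\partial_x^2u_\varepsilon\in L^2_tL^2_x$ a.s., the equation \eqref{CH viscous u} holds as an identity in $L^2$, and
$$\int_\mathbb{S}(u_\varepsilon(t)-u_\varepsilon(s))^2\,{\rm d}x=\int_\mathbb{S}(u_\varepsilon(t)-u_\varepsilon(s))\int_s^t{\rm d}u_\varepsilon(r)\,{\rm d}x=J_1+J_2+J_3+J_4 ,$$
where $J_1,J_2,J_3,J_4$ carry respectively the Burgers term $-u_\varepsilon\partial_xu_\varepsilon$, the nonlocal term $-\partial_xP[u_\varepsilon]$, the viscous term $\varepsilon\partial_x^2u_\varepsilon$, and the stochastic integral $\int_s^tu_\varepsilon\,{\rm d}W$.

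For the drift terms I bound the prefactor by $2\|u_\varepsilon\|_{L^\infty_TL^2}$ and treat each integrand separately. For $J_1$, the embedding $H^1\hookrightarrow L^\infty$ gives $\|u_\varepsilon\partial_xu_\varepsilon\|_{L^2}\le C\|u_\varepsilon\|_{H^1}^2$. For $J_2$, Young's convolution inequality with $\partial_xK\in L^\infty$ gives $\|\partial_xP[u_\varepsilon]\|_{L^2}\le C\|u_\varepsilon\|_{H^1}^2$. For $J_3$, I integrate by parts in $x$ so only $\partial_xu_\varepsilon$ appears on each factor. In all three cases
$$|J_i|\le C|t-s|\,\|u_\varepsilon\|_{L^\infty_TH^1}^3 \quad(\text{with }\varepsilon\le1\text{ for }J_3).$$
Raising to the $p$-th power and applying Cauchy--Schwarz in $\omega$ yields $\mathbb{E}|J_i|^p\le C|t-s|^p$. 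Here the constant is controlled through \eqref{viscous estimate p} by $\mathbb{E}\|u_0\|_{H^1}^{4p}$ and $\mathbb{E}\|u_0\|_{H^1}^{2p}$, which is why the constant depends on the $4p$-th moment.

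For the martingale term $J_4$, I use Cauchy--Schwarz in $x$ and then in $\omega$. Pointwise in $x$, the BDG inequality gives $\mathbb{E}|\int_s^tu_\varepsilon\,{\rm d}W|^{2p}\le C\,\mathbb{E}(\int_s^tu_\varepsilon^2\,{\rm d}r)^p\le C|t-s|^p\,\mathbb{E}\|u_\varepsilon\|_{L^\infty_TH^1}^{2p}$. Integrating in $x$ and using Jensen's inequality, I get $\mathbb{E}|J_4|^p\le C|t-s|^{p/2}$, which is the worst rate.

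Collecting the four estimates gives $\mathbb{E}\|u_\varepsilon(t)-u_\varepsilon(s)\|_{L^2}^{2p}\le C|t-s|^{p/2}=C|t-s|^{1+\frac{p-2}{2}}$, with $C$ independent of $\varepsilon$. Kolmogorov's continuity criterion, in the version that controls the moment of the Hölder seminorm, then gives $\mathbb{E}[u_\varepsilon]_{C^\theta([0,T];L^2)}^{2p}\le C$ for $\theta<\frac{p-2}{4p}$. The sup part of the norm is handled directly by \eqref{viscous estimate p}. The main obstacle is only bookkeeping: checking that every constant is $\varepsilon$-uniform, which holds because \eqref{viscous estimate p} is uniform in $\varepsilon$. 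A secondary point is justifying the pointwise-in-$x$ BDG step and the stochastic Fubini interchange; both are legitimate because $u_\varepsilon\in L^{p_0}(\Omega;C([0,T];H^1))$.
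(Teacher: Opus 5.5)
Your proposal is correct and is essentially the paper's argument: the paper's proof only refers back to Lemma \ref{Galerkin temporal continuity}. That lemma uses the same four-term splitting of $\|u_\varepsilon(t)-u_\varepsilon(s)\|_{L^2}^2$, drift bounds of order $|t-s|^p$, a BDG bound of order $|t-s|^{p/2}$ for the martingale term, and Kolmogorov's criterion, with $\varepsilon$-uniformity coming from \eqref{viscous estimate p}. The only slip is cosmetic: the viscous term actually gives $|J_3|\le 2\varepsilon|t-s|\,\|u_\varepsilon\|_{L^\infty_TH^1}^2$ (quadratic, not cubic), which changes nothing.
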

\begin{proof}
	The proof is similar to Lemma \ref{Galerkin temporal continuity}, thus omitted.
\end{proof}

\begin{lemm}\label{viscous estimate infty}
	Let $u_\varepsilon$ be a solution to \eqref{CH viscous u} with $\mathbb{E}\|u_0\|_{H^m}^{p_0}<\infty$ for some $m>1$ and $p_0>4$. Then for any $T>0$, there exists a constant $C=C(T,\ \|u_0\|_{L^{p_0}(\Omega;H^1)})>0$ independent of $\varepsilon$, such that for any $p\in[1,p_0/2]$
	$$
	\mathbb{E}\|u_\varepsilon\|_{L^\infty([0,\infty)\times\mathbb{S})}^{p_0}\le C,\quad \mathbb{E}\|P_\varepsilon\|_{L^\infty([0,\infty)\times\mathbb{S})}^{p}\le C,
	$$
	where $P_\varepsilon=P[u_\varepsilon]$ defined in \eqref{CH u}.
\end{lemm}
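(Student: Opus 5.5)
The plan is to reduce everything to the pathwise bound \eqref{viscous estimate 2 conservation} from Proposition \ref{viscous estimate}, namely $\|u_\varepsilon(\omega,t)\|_{H^1}\le \eta(\omega,t)\|u_0\|_{H^1}$ with $\eta(\omega,t)=e^{W(t)-t/2}$, combined with the one-dimensional embedding $H^1(\mathbb{S})\hookrightarrow L^\infty(\mathbb{S})$. This bound was derived for smooth solutions, so it is available here under the assumption $u_0\in H^m$; if needed, we first approximate $u_0$ in $H^m$ and pass to the limit using Theorem \ref{viscous Hm global}. The embedding gives, pointwise in $\omega$,
$$\|u_\varepsilon\|_{L^\infty([0,\infty)\times\mathbb{S})}\le C\sup_{t\ge0}\|u_\varepsilon(t)\|_{H^1}\le C\,\|u_0\|_{H^1}\,\sup_{t\ge0}e^{W(t)-t/2},$$
and this holds uniformly in $\varepsilon$. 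The whole question is therefore to control the moments of the random variable $M:=\sup_{t\ge 0}e^{W(t)-t/2}$.

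The key probabilistic input is the classical fact that for a Brownian motion with drift, $\sup_{t\ge0}(W(t)-t/2)$ is exponentially distributed with parameter $1$. Equivalently, since $\eta$ is a nonnegative martingale starting at $1$, Doob's maximal inequality (Ville's inequality) gives $\mathbb{P}(M>\lambda)\le 1/\lambda$ for $\lambda\ge1$. \textbf{This is the main obstacle.} The tail bound $\mathbb{P}(M>\lambda)\le 1/\lambda$ shows that $M$ has moments of every order strictly below $1$, but $\mathbb{E}M=\infty$, so one cannot simply take expectations of the displayed bound raised to the power $p_0$. The way out is to replace the whole half-line by the finite horizon $[0,T]$. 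On $[0,T]$ we have $\sup_{t\le T}e^{W(t)-t/2}\le e^{\sup_{t\le T}W(t)}$, and by the reflection principle this has finite moments of every order, bounded by $C(q,T)$. The constant in the statement is allowed to depend on $T$, which indicates that the sup should indeed be taken over $[0,T]$.

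With this in hand, the two estimates follow as below.

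\emph{Estimate for $u_\varepsilon$.} Apply Hölder's inequality to the product $\|u_0\|_{H^1}\cdot\sup_{t\le T}\eta$. To do so with only $\mathbb{E}\|u_0\|_{H^1}^{p_0}<\infty$, we would rather not have to split exponents. Two routes are available. The first is to use the independence of $u_0$ (which is $\mathcal{F}_0$-measurable) from $W$, giving
$$\mathbb{E}\|u_\varepsilon\|_{L^\infty}^{p_0}\le C\,\mathbb{E}\|u_0\|_{H^1}^{p_0}\;\mathbb{E}\sup_{t\le T}\eta^{p_0}.$$
The second is to bypass the pathwise bound and argue directly from \eqref{viscous estimate p} with $p=p_0$ together with the embedding. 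I would use the independence route, and fall back on \eqref{viscous estimate p} if it causes trouble.

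\emph{Estimate for $P_\varepsilon$.} Since $K\ge0$ is bounded, $\|P_\varepsilon\|_{L^\infty}\le\|K\|_{L^\infty}\,\|u_\varepsilon^2+\tfrac12(\partial_xu_\varepsilon)^2\|_{L^1}\le C\|u_\varepsilon\|_{H^1}^2$. Therefore
$$\mathbb{E}\|P_\varepsilon\|_{L^\infty}^{p}\le C\,\mathbb{E}\sup_t\|u_\varepsilon\|_{H^1}^{2p},$$
which is finite for $2p\le p_0$ by the same argument (or by \eqref{viscous estimate p}). This explains the restriction $p\le p_0/2$ in the statement. Finally, all constants depend only on $T$ and $\|u_0\|_{L^{p_0}(\Omega;H^1)}$, and not on $\varepsilon$, because the $\varepsilon$-dissipation term in \eqref{v epsilon H1 conservation law} enters only with a favourable sign.
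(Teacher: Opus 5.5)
Your proof is correct, and for $P_\varepsilon$ it is the paper's argument: $\|P_\varepsilon\|_{L^\infty}\le\|K\|_{L^\infty}\|u_\varepsilon^2+\frac12(\partial_xu_\varepsilon)^2\|_{L^1}\lesssim\|u_\varepsilon\|_{H^1}^2$, whence the restriction $2p\le p_0$.

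For $u_\varepsilon$ the paper takes what you call the fallback. It invokes the moment bound \eqref{viscous estimate p} with $p=p_0$ (energy identity, BDG, Gronwall) together with $H^1\hookrightarrow L^\infty$. That route needs neither independence nor any restriction on $m$.

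Your primary route through the Girsanov-type bound \eqref{viscous estimate 2 conservation} is also valid. The independence step is legitimate, since $u_0$ is $\mathcal{F}_0$-measurable and $W$ is an $\{\mathcal{F}_t\}$-Wiener process. What this route buys is an explicit constant, $C\,\mathbb{E}\sup_{t\le T}\eta(t)^{p_0}\,\mathbb{E}\|u_0\|_{H^1}^{p_0}$, and a transparent explanation of why the time horizon must be finite.

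One step needs a better justification. The paper proves \eqref{viscous estimate 2 conservation} only for $m>3$. Extending it by approximating $u_0$ in $H^m$ would require continuous dependence on the data, which Theorem \ref{viscous Hm global} does not provide. A clean fix, valid for every $m\ge1$, is to apply the Itô product rule to the energy balance of Theorem \ref{viscous H^m uniformly bound}(1). With $E=\|u_\varepsilon\|_{H^1}^2$ one finds $d(\eta^{-2}E)=-2\varepsilon\,\eta^{-2}\|\partial_xu_\varepsilon\|_{H^1}^2\,dt\le0$, which gives the pathwise bound directly.

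Your replacement of $[0,\infty)$ by $[0,T]$ is not merely suggested by the $T$-dependence of the constant; it is forced. The paper's own proof likewise only controls $[0,T]$. Integrating the viscous equation over $\mathbb{S}$ gives $\int_{\mathbb{S}}u_\varepsilon(t)\,dx=\eta(t)\int_{\mathbb{S}}u_0\,dx$. Hence $\|u_\varepsilon\|_{L^\infty([0,\infty)\times\mathbb{S})}\ge|\mathbb{S}|^{-1}\bigl|\int_{\mathbb{S}}u_0\,dx\bigr|\sup_{t\ge0}\eta(t)$. By the exponential-law fact you quote, this has infinite expectation whenever $\int_{\mathbb{S}}u_0\,dx\neq0$ with positive probability. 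So the half-line statement is false as written.
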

\begin{proof}
	The first part is a direct consequence of Proposition \ref{viscous estimate} and the one-dimensional embedding $H^1\hookrightarrow L^\infty$. By the Young inequality, we have
	$$
	\|P_\varepsilon\|_{L^\infty([0,T]\times\mathbb{S})}\lesssim \|u_\varepsilon\|_{L^\infty([0,T];L^2)}^2+\Vert \partial_x u_\varepsilon\Vert_{L^\infty([0,T];L^2)}^2\lesssim\|u_\varepsilon\Vert_{L^\infty([0,T];H^1)}^2,
	$$
	similarly, we obtain the third part by Proposition \ref{viscous estimate}.
\end{proof}

For the sake of convenience, in the sequel we denote $q_\varepsilon=\partial_x u_\varepsilon$, which satisfies the equation: 
\begin{equation}\label{viscous q}
	0={\rm d}q_\varepsilon-\varepsilon\partial_x^2 q_\varepsilon\,{\rm d}t+\left(\partial_x(u_\varepsilon q_\varepsilon)-\frac 1 2 q_\varepsilon^2-u_\varepsilon^2+P[u_\varepsilon]\right)\,{\rm d}t-q_\varepsilon\,{\rm d}W.
\end{equation}
In order to prove the space-time higher integrability estimate for $q_\varepsilon$ we examine the equation governing the composite function $S(q_\varepsilon)$. Applying It\^{o}'s formula yields
\begin{equation}\label{viscous S(q)}
	\begin{aligned}
		0=&{\rm d}S(q_\varepsilon)-\varepsilon\partial_x^2 S(q_\varepsilon)\,{\rm d}t+\varepsilon S''(q_\varepsilon)|\partial_x q_\varepsilon|^2+\partial_x(u_\varepsilon S(q_\varepsilon))\,{\rm d}t\\
		&-\left(S(q_\varepsilon)q_\varepsilon-\frac 1 2S'(q_\varepsilon)q_\varepsilon^2\right)\,{\rm d}t+\left(S'(q_\varepsilon)(P_\varepsilon-u_\varepsilon^2)-\frac 1 2S''(q_\varepsilon)q_\varepsilon^2\right)\,{\rm d}t-S'(q_\varepsilon)q_\varepsilon\,{\rm d}W.
	\end{aligned}
\end{equation}

\begin{prop}[Space-time higher integrability]\label{viscous q higher}
	Let $u_\varepsilon$ be the $H^m$ regular solution of the viscous SPDE \eqref{CH viscous u} with initial data $u(0) = u_0$ satisfying with $\mathbb{E}\|u_0\|_{H^m}^{p_0}<\infty$ for some $m>1$ and $p_0>4$, and denote by $q_\varepsilon = \partial_x u_\varepsilon$ the spatial gradient of $u_\varepsilon$. For fixed $\alpha \in (0, 1)$, there exists a constant $C = C \bigl( \alpha, T, \| u_0 \|_{L^{p_0}(\Omega; H^1(\mathbb{S}))} \bigr)$, independent of $\varepsilon > 0$, such that
	$$
	\mathbb{E} \| q_\varepsilon \|_{L^{2+\alpha}([0,T] \times \mathbb{S})}^{2+\alpha} \leq C.
	$$
\end{prop}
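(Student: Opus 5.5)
We follow the deterministic argument of Xin--Zhang \cite{Xin2000}, adapted to the stochastic setting as in \cite{Holden2024JDE}. The plan is to apply the renormalized equation \eqref{viscous S(q)} with a test function $S$ for which the zeroth-order term $S(q)q-\frac12 S'(q)q^2$ behaves like $|q|^{2+\alpha}$ with a sign. We then integrate in space and time and take expectations, so that the stochastic integral vanishes. Concretely, we take $S(q)=q\,(1+q^2)^{\alpha/2}$, or a smooth version of $q|q|^\alpha$. This is odd and increasing, with $|S(q)|\lesssim 1+|q|^{1+\alpha}$, $0<S'(q)\lesssim (1+q^2)^{\alpha/2}$ and $|S''(q)|\lesssim |q|^{\alpha-1}\wedge 1$. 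A direct computation gives
\begin{equation*}
S(q)q-\tfrac12 S'(q)q^2=\tfrac12 q^2(1+q^2)^{\alpha/2}-\tfrac{\alpha}{2}\,\frac{q^4}{(1+q^2)^{1-\alpha/2}}\ \ge\ \tfrac{1-\alpha}{2}\,q^2(1+q^2)^{\alpha/2},
\end{equation*}
which is bounded below by $c_\alpha|q|^{2+\alpha}$. This is the one place where $\alpha<1$ is used.

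Next we integrate \eqref{viscous S(q)} over $[0,T]\times\mathbb{S}$. The terms $\varepsilon\partial_x^2S(q_\varepsilon)$ and $\partial_x(u_\varepsilon S(q_\varepsilon))$ integrate to zero on the torus. The term $\varepsilon S''(q_\varepsilon)|\partial_xq_\varepsilon|^2$ has no sign because $S$ is odd, so we do not discard it. Instead we bound it by $\varepsilon\int_0^T\|\partial_xq_\varepsilon\|_{L^2}^2\,{\rm d}t$ using $|S''|\le C$, and its expectation is uniformly bounded by \eqref{viscous estimate 2}. If one prefers, it can be written as $\varepsilon\,\partial_x(S'(q))\partial_xq$ and handled in the same way. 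This yields
\begin{equation*}
c_\alpha\int_0^T\!\!\int_{\mathbb{S}}|q_\varepsilon|^{2+\alpha}
\le \int_{\mathbb{S}}|S(q_\varepsilon(T))|+|S(q_\varepsilon(0))|\,{\rm d}x+\int_0^T\!\!\int_{\mathbb{S}}|S'(q_\varepsilon)|\big(|P_\varepsilon|+u_\varepsilon^2\big)+\tfrac12|S''(q_\varepsilon)|q_\varepsilon^2
+\varepsilon\int_0^T\|\partial_xq_\varepsilon\|_{L^2}^2+\Big|\int_0^T\!\!\int_{\mathbb{S}}S'(q_\varepsilon)q_\varepsilon\,{\rm d}x\,{\rm d}W\Big|,
\end{equation*}
up to a harmless additive constant.

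Each term on the right is then bounded uniformly in $\varepsilon$, after taking expectations.
\begin{itemize}
\item The boundary terms satisfy $\int|S(q)|\lesssim 1+\int|q|^{1+\alpha}\lesssim 1+\|u\|_{H^1}^{1+\alpha}$, since $1+\alpha<2$. This is controlled by \eqref{viscous estimate p}.
\item For the source term, $|S'(q)|\lesssim 1+q^2$ and $\|P_\varepsilon\|_{L^\infty}+\|u_\varepsilon\|_{L^\infty}^2\lesssim\|u_\varepsilon\|_{H^1}^2$. Hence it is bounded by $C\,T\,\|u_\varepsilon\|_{L^\infty_tH^1}^4$, whose expectation is finite by Lemma \ref{viscous estimate infty} because $p_0>4$.
\item The Itô correction satisfies $|S''(q)|q^2\lesssim 1+q^2$, so it is bounded by the $H^1$ norm.
\item The stochastic integral has zero mean. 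Its integrand $\int S'(q)q\,{\rm d}x$ is bounded by $1+\int|q|^{1+\alpha}$, and hence by the $H^1$ norm, so the integral is a true martingale.
\end{itemize}

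The main obstacle is justifying the Itô formula for $\int S(q_\varepsilon)$ and the martingale property. This requires enough regularity of $q_\varepsilon$, which we get from the $H^m$ solution with $m>1$ through Theorem \ref{viscous Hm global}, and if necessary a stopping time $\eta_R$ as in Proposition \ref{Galerkin estimate Hm} followed by $R\to\infty$ using monotone convergence. The second delicate point is the viscous term, where uniformity in $\varepsilon$ relies entirely on \eqref{viscous estimate 2}.
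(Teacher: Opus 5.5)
Your proposal is correct and takes essentially the same approach as the paper. The paper uses $S(v)=v(|v|+1)^\alpha$, which is only $W^{2,\infty}_{\rm loc}$, where you use the smooth variant $q(1+q^2)^{\alpha/2}$; otherwise it also integrates the renormalized equation \eqref{viscous S(q)}, keeps the sign-indefinite term $\varepsilon S''(q_\varepsilon)|\partial_x q_\varepsilon|^2$ bounded via \eqref{viscous estimate 2}, controls the source and It\^{o}-correction terms through the uniform $H^1$ and $L^\infty$ bounds, and removes the stochastic integral in expectation. One small slip: either keep that stochastic integral signed, or bound its absolute value by BDG, rather than writing absolute values and then invoking zero mean.
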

\begin{proof}
	Consider the function $S(v):=v(|v|+1)^\alpha$, which satisfies
	$$S'(v)=(|v|+1)^\alpha+\alpha|v|(|v|+1)^{\alpha-1}\ \ {\rm and}\ \  S''(v)=\alpha \,{\rm sgn}(v)(|v|+1)^{\alpha-2}(2+(\alpha+1)|v|).$$
	Obviously, $|S'(v)|\le C(|v|+1)$ and $|S''(v)|\le C$ for all $v\in\mathbb{R}$. 
	then integrating the SPDE \eqref{viscous S(q)} over $x\in\mathbb{S}$ gives
	\begin{equation}\label{L}
		L\,{\rm d}t={\rm d}\int_{\mathbb{S}} S(q_\varepsilon)\,{\rm d}x+I_1\,{\rm d}x+I_2\,{\rm d}W,
	\end{equation}
	where
	$$
	\begin{aligned}
		&L=\int_\mathbb{S}\left(S(q_\varepsilon)q_\varepsilon-\frac 1 2S'(q_\varepsilon)q_\varepsilon^2\right)\,{\rm d}x,\\
		& I_1=\int_{\mathbb{S}} S'(q_\varepsilon)(P_\varepsilon-u_\varepsilon^2)+\varepsilon S''(q_\varepsilon)|\partial_x q_\varepsilon|^2-\frac 1 2S''(q_\varepsilon)q_\varepsilon^2\,{\rm d}x,\\
		&I_2=-\int_\mathbb{S}S'(q_\varepsilon)q_\varepsilon\,{\rm d}x.
	\end{aligned}
	$$
	Since $|S'(v)v|\lesssim 1+|v|^2$,so that
	$$\mathbb{E}\int_0^T|I_2|^2\,{\rm d}t\le C(\alpha,T)(1+\mathbb{E}\|q_\varepsilon\|_{L^\infty([0,T];L^2)}^4)\le C(\alpha,T,u_0),
	$$
	by \eqref{viscous estimate 2}, $I_2\in L^2(\Omega\times[0,T])$, so that $\mathbb{E}\int_0^T I_2\,{\rm d}W=0$. 
	
	Continuing, by $|S''(v)| \lesssim 1$, \eqref{viscous estimate 2} and Lemma \ref{viscous estimate infty}, we thus arrive at
	$$
	I_1 \leq \int_{\mathbb{S}} \varepsilon S''(q_\varepsilon) |\partial_x q_\varepsilon|^2\,{\rm d}x + \frac{1}{2} \int_{\mathbb{S}} \bigl| S'(q_\varepsilon) \bigr|^2 \,{\rm d}x + \frac{1}{2} \int_{\mathbb{S}} \bigl( P_\varepsilon - u^2_\varepsilon \bigr)^2 \,{\rm d}x+\frac 1 2\int_{\mathbb{S}}|S''(q_\varepsilon)|q_\varepsilon^2\,{\rm d}x\le C(\alpha,T,u_0).
	$$
	
	Finally, note that
	$$
	S(v)v - \frac{1}{2} S'(v)v^2 = \frac{1}{2} v^2 (|v|+1)^\alpha - \frac{\alpha}{2} |v|^3 (|v|+1)^{\alpha-1} \geq \frac{1-\alpha}{2} |v|^{2+\alpha};
	$$
	hence
	$$
	L \geq \frac{1-\alpha}{2} \int_{\mathbb{S}} |q_\varepsilon|^{2+\alpha} \,{\rm d}x.
	$$
	After integrating \eqref{L} in time, making use of the estimates and also $S(v) \lesssim_\alpha 1+|v|^2$, we arrive at
	$$
	\frac{1-\alpha}{2} \mathbb{E} \int_0^T \int_{\mathbb{S}} |q_\varepsilon|^{2+\alpha} \,{\rm d}x \lesssim C(\alpha,T)\left(1 + \mathbb{E} \|q_\varepsilon\|_{L^\infty([0,T];L^2)}^2\right)\le C(\alpha,T,u_0).
	$$
	This concludes the proof.
\end{proof}

In the following proof, we consider the composite function $S\in W^{2,\infty}_{\rm loc}(\mathbb{R})$ satisfies
\begin{equation}\label{S bound}
	|S(v)|\lesssim|v|^2,\ |S'(v)|\lesssim|v|,\ |S''(v)|\lesssim1,\ {\rm and}\ \left|S(v)v-\frac 1 2S'(v)v^2\right|\lesssim|v|^2,\quad\forall\ v\in\mathbb{R}.
\end{equation}

\begin{prop}[Temporal translation estimate]\label{temporal translation estimate}
	Let $u_\varepsilon$ be the $H^m$ regular solution of the viscous SPDE \eqref{CH viscous u} with initial data $u(0) = u_0$ satisfying with $\mathbb{E}\|u_0\|_{H^m}^{p_0}<\infty$ for some $m>1$ and $p_0>4$, and denote by $q_\varepsilon = \partial_x u_\varepsilon$ the spatial gradient of $u_\varepsilon$. Fix a nonlinear function $S\in W^{2,\infty}_{\rm loc}(\mathbb{R})$ that satisfies \eqref{S bound}. Set $Q_\varepsilon=S(q_\varepsilon)$, then for all $\vartheta\in(0,T\wedge1)$ and $\varphi\in C^\infty(\mathbb{S})$,
	\begin{equation}\label{S temporal estimate}
		\mathbb{E}\sup_{\tau\in(0,\vartheta)}\int_0^{T-\tau}\left|\int_\mathbb{S}\varphi(x)\left(Q_\varepsilon(t+\tau)-Q_\varepsilon(t)\right)\,{\rm d}x\right|\,{\rm d}t\lesssim\|\varphi\|_{C^2(\mathbb{S})}\vartheta^{1/2}.
	\end{equation}
\end{prop}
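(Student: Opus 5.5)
We will test the renormalized equation \eqref{viscous S(q)} against $\varphi$ and integrate in time from $t$ to $t+\tau$. This gives the decomposition
$$
\int_\mathbb{S}\varphi\,(Q_\varepsilon(t+\tau)-Q_\varepsilon(t))\,{\rm d}x=\int_t^{t+\tau}\big(J_1+J_2+J_3+J_4\big)\,{\rm d}s+\int_t^{t+\tau}\int_\mathbb{S}\varphi\,S'(q_\varepsilon)q_\varepsilon\,{\rm d}x\,{\rm d}W(s).
$$
The terms are as follows.
\begin{itemize}
\item $J_1=\varepsilon\int_\mathbb{S}\partial_x^2\varphi\, S(q_\varepsilon)\,{\rm d}x-\varepsilon\int_\mathbb{S}\varphi S''(q_\varepsilon)|\partial_xq_\varepsilon|^2\,{\rm d}x$ is the viscous part, with the Laplacian moved onto $\varphi$.
\item $J_2=\int_\mathbb{S}\partial_x\varphi\, u_\varepsilon S(q_\varepsilon)\,{\rm d}x$ is the transport part.
\item $J_3=\int_\mathbb{S}\varphi\big(S(q_\varepsilon)q_\varepsilon-\frac12 S'(q_\varepsilon)q_\varepsilon^2\big)\,{\rm d}x$.
\item $J_4=-\int_\mathbb{S}\varphi\big(S'(q_\varepsilon)(P_\varepsilon-u_\varepsilon^2)-\frac12S''(q_\varepsilon)q_\varepsilon^2\big)\,{\rm d}x$.
\end{itemize}
Since $m>1$ the solution is regular enough for It\^o's formula and for these integrations by parts to be legitimate.

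Using the growth bounds \eqref{S bound}, each drift integrand is controlled pointwise in time by $\|\varphi\|_{C^2}$ times a quantity with $\varepsilon$-uniform moments.
\begin{itemize}
\item $|J_1|\lesssim\|\varphi\|_{C^2}\big(\varepsilon\|q_\varepsilon\|_{L^2}^2+\varepsilon\|\partial_x q_\varepsilon\|_{L^2}^2\big)$.
\item $|J_2|\lesssim\|\varphi\|_{C^1}\|u_\varepsilon\|_{L^\infty}\|q_\varepsilon\|_{L^2}^2$.
\item $|J_3|\lesssim\|\varphi\|_{C^0}\|q_\varepsilon\|_{L^2}^2$; this uses the last condition in \eqref{S bound}, which is exactly what prevents a cubic term.
\item $|J_4|\lesssim\|\varphi\|_{C^0}\big(\|q_\varepsilon\|_{L^2}(\|P_\varepsilon\|_{L^\infty}+\|u_\varepsilon\|_{L^\infty}^2)+\|q_\varepsilon\|_{L^2}^2\big)$.
\end{itemize}

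For the drift part, we bound the integral over $(t,t+\tau)$ crudely and take $\sup_{\tau<\vartheta}$. For $J_2,J_3,J_4$ this gives at most $\vartheta\, T\,\|\varphi\|_{C^2}\,\mathcal{M}$, where
$$
\mathcal{M}=\big(1+\|u_\varepsilon\|_{L^\infty([0,T];H^1)}\big)\|u_\varepsilon\|_{L^\infty([0,T];H^1)}^2+\|u_\varepsilon\|_{L^\infty([0,T];H^1)}\|P_\varepsilon\|_{L^\infty}.
$$
Its expectation is bounded uniformly in $\varepsilon$ by Proposition \ref{viscous estimate}, Lemma \ref{viscous estimate infty} and H\"older's inequality, since $p_0>4$ gives enough moments for products of order three. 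The viscous term $J_1$ is handled by Fubini instead of a pointwise bound:
$$
\int_0^{T-\tau}\int_t^{t+\tau}\varepsilon\|\partial_xq_\varepsilon\|_{L^2}^2\,{\rm d}s\,{\rm d}t\le\vartheta\,\varepsilon\int_0^T\|\partial_xq_\varepsilon\|_{L^2}^2\,{\rm d}s,
$$
whose expectation is bounded by \eqref{viscous estimate 2}. So the drift contributes $O(\vartheta)\le O(\vartheta^{1/2})$ because $\vartheta<1$.

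The stochastic term is the main obstacle, because the supremum over $\tau$ sits inside the expectation. We will write it as $M(t+\tau)-M(t)$, where
$$
M(r)=\int_0^r\int_\mathbb{S}\varphi S'(q_\varepsilon)q_\varepsilon\,{\rm d}x\,{\rm d}W.
$$
Then
$$
\sup_{\tau<\vartheta}\int_0^{T-\tau}|M(t+\tau)-M(t)|\,{\rm d}t\le\int_0^{T}\sup_{\tau<\vartheta}|M((t+\tau)\wedge T)-M(t)|\,{\rm d}t.
$$
By Fubini and the BDG inequality applied to the martingale $r\mapsto M(t+r)-M(t)$ on $[0,\vartheta]$, the expectation of the inner supremum is bounded by
$$
C\,\mathbb{E}\Big(\int_t^{t+\vartheta}\Big|\int_\mathbb{S}\varphi S'(q_\varepsilon)q_\varepsilon\,{\rm d}x\Big|^2{\rm d}s\Big)^{1/2}\lesssim\|\varphi\|_{C^0}\,\vartheta^{1/2}\,\mathbb{E}\|u_\varepsilon\|_{L^\infty([0,T];H^1)}^2,
$$
where we used $|S'(v)v|\lesssim|v|^2$. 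Integrating in $t$ gives the $\vartheta^{1/2}$ rate. Combining the drift and martingale parts yields \eqref{S temporal estimate}, with implicit constant depending on $T$ and $\|u_0\|_{L^{p_0}(\Omega;H^1)}$ but not on $\varepsilon$.
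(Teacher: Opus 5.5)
Your proposal is correct and takes essentially the same route as the paper: you test \eqref{viscous S(q)} against $\varphi$, bound the drift contributions by $O(\vartheta)$ using \eqref{S bound} together with the $\varepsilon$-uniform $H^1$ moments and the $\varepsilon$-weighted dissipation bound, and control the martingale increment by BDG to get the $\vartheta^{1/2}$ rate. Your handling of the stochastic term is cleaner than the paper's looser write-up of the same step: you move the supremum inside the $t$-integral via $(t+\tau)\wedge T$ and apply BDG with exponent one while keeping the square root.
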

\begin{proof}
	The nonlinear composition $Q_\varepsilon=S(q_\varepsilon)$ satisfies \eqref{viscous S(q)}, then $\forall\ \vartheta\in(0,T\wedge 1)$, $\varphi\in C^\infty(\mathbb{S})$, we have
	$$
	\left|\int_\mathbb{S}\varphi\left(Q_\varepsilon(t+\tau)-Q_\varepsilon(t)\right)\,{\rm d}x\right|\le \sum_{i=1}^7\int_t^{t+\tau}\int_\mathbb{S} I_i\,{\rm d}x\,{\rm d}t+\left|\int_t^{t+\tau}\int_{\mathbb{S}}\varphi S'(q_\varepsilon)q_\varepsilon\,{\rm d}x\,{\rm d}W\right|,
	$$
	where
	$$
	\begin{aligned}
		&I_1=|\partial_x\varphi| |S(q_\varepsilon)| |u_\varepsilon|,\quad I_2=\varepsilon|\partial_x^2\varphi| |S(q_\varepsilon)|,\quad I_3=\varepsilon|\varphi| |S''(q_\varepsilon)| |\partial_x q_\varepsilon|^2,\\
		&I_4=|\varphi| |S(q_\varepsilon)q_\varepsilon-\frac 1 2 S'(q_\varepsilon)q_\varepsilon^2|,\quad I_5=|\varphi| |S'(q_\varepsilon)| |P_\varepsilon|,\quad I_6=|\varphi| |S'(q_\varepsilon)| |u_\varepsilon|^2,\\
		&I_7=\frac 1 2|\varphi| |S''(q_\varepsilon)| |q_\varepsilon|^2.
	\end{aligned}
	$$
	This implies that 
	$$
		\begin{aligned}
			&\mathbb{E}\sup_{\tau\in(0,\vartheta)}\int_t^{T-\tau}\left|\int_\mathbb{S}\varphi\left(Q_\varepsilon(t+\tau)-Q_\varepsilon(t)\right)\,{\rm d}x\right|\,{\rm d}t\\
			\le& \vartheta \sum_{i=1}^7\mathbb{E}\Vert I_i\Vert_{L^1([0,t]\times\mathbb{S})}+\int_0^{T-\vartheta}\mathbb{E}\sup_{\tau\in(0,\vartheta)}\left|\int_t^{t+\tau}\int_{\mathbb{S}}\varphi S'(q_\varepsilon)q_\varepsilon\,{\rm d}x\,{\rm d}W(s)\right|\,{\rm d}t.
		\end{aligned}
	$$
	
	By the H\"{o}leder inequality, \eqref{S bound}, Proposition \ref{viscous estimate}, and Proposition \ref{viscous estimate infty}, we have
	$$
	\begin{aligned}
		\mathbb{E}\Vert I_1\Vert_{L^1([0,T]\times\mathbb{S})}
		&\lesssim\Vert\varphi\Vert_{C^2(\mathbb{S})}\mathbb{E}\left(\Vert q_\varepsilon^2\Vert_{L^1([0,T]\times\mathbb{S})}\Vert u_\varepsilon\Vert_{L^\infty([0,T]\times\mathbb{S})}\right)\\
		&\lesssim \Vert\varphi\Vert_{C^2(\mathbb{S})}\left(\mathbb{E}\Vert u_\varepsilon\Vert_{L^\infty([0,T]\times\mathbb{S})}^2\right)^{1/2}\left(\mathbb{E}\Vert q_\varepsilon\Vert_{L^2([0,T]\times\mathbb{S})}^4\right)^{1/2}
		\lesssim \Vert\varphi\Vert_{C^2(\mathbb{S})},\quad\quad\quad\quad
	\end{aligned}
	$$
	$$
	\begin{aligned}
		&\mathbb{E}(\Vert I_2\Vert_{L^1([0,T]\times\mathbb{S})}+\Vert I_4\Vert_{L^1([0,T]\times\mathbb{S})}+\Vert I_7\Vert_{L^1([0,T]\times\mathbb{S})})
		\lesssim \varepsilon\Vert\varphi\Vert_{C^2(\mathbb{S})}\mathbb{E}\Vert q_\varepsilon\Vert_{L^2([0,T]\times\mathbb{S})}^2
		\lesssim \Vert\varphi\Vert_{C^2(\mathbb{S})},\\
		&\mathbb{E}\Vert I_3\Vert_{L^1([0,T]\times\mathbb{S})}
		\lesssim \Vert\varphi\Vert_{C^2(\mathbb{S})}\cdot\varepsilon\mathbb{E}\Vert \partial_x^2 u_\varepsilon\Vert_{L^2([0,T]\times\mathbb{S})}^2
		\lesssim \Vert\varphi\Vert_{C^2(\mathbb{S})},\\
	\end{aligned}
	$$
	According to the Young inequality and the definition of $P_\varepsilon$, we have
	$$
	\begin{aligned}
		\Vert P_\varepsilon\Vert_{L^2([0,T]\times\mathbb{S})}
		&\le C(T)\Vert K\Vert_{L^2(\mathbb{S})}\Vert u_\varepsilon^2+\frac 1 2(\partial_x u_\varepsilon)^2\Vert_{L^\infty([0,T];L^1))}\\
		&\le C(\Vert u_\varepsilon\Vert_{L^\infty([0,T];L^2)}^2+\Vert \partial_x u_\varepsilon\Vert_{L^\infty([0,T];L^2)}^2),
	\end{aligned}	
	$$
	thus for $I_5$ and $I_6$, we have
	$$
	\mathbb{E}\left(\Vert I_5\Vert_{L^1([0,T]\times\mathbb{S})}+\Vert I_6\Vert_{L^1([0,T]\times\mathbb{S})}\right)\lesssim \Vert\varphi\Vert_{C^2(\mathbb{S})}\mathbb{E}\left(\Vert q_\varepsilon\Vert_{L^2([0,T]\times\mathbb{S})}\Vert u_\varepsilon\Vert_{L^\infty([0,T];H^1)}^2\right)\lesssim \Vert\varphi\Vert_{C^2(\mathbb{S})}.
	$$
	
	Finally, we turn to the stochastic integral, by the BDG inequality,
	$$
	\begin{aligned}
		\mathbb{E}\sup_{\tau\in(0,\vartheta)}\left|\int_t^{t+\tau}\int_{\mathbb{S}}\varphi S'(q_\varepsilon)q_\varepsilon\,{\rm d}x\,{\rm d}W(s)\right|
		&\lesssim \Vert\varphi\Vert_{C^2(\mathbb{S})}\mathbb{E}\sup_{\tau\in(0,\vartheta)}\left|\int_t^{t+\tau}\int_\mathbb{S}q_\varepsilon^2\, {\rm d}x\,{\rm d}W\right|\\
		&\lesssim \Vert\varphi\Vert_{C^2(\mathbb{S})}\mathbb{E}\sup_{\tau\in(0,\vartheta)}\left(\int_t^{t+\tau}\left(\int_\mathbb{S}q_\varepsilon^2\, {\rm d}x\right)^2\,{\rm d}s\right)\\
		&\lesssim \Vert\varphi\Vert_{C^2(\mathbb{S})}\vartheta^{1/2}\mathbb{E}\Vert q_\varepsilon\Vert_{L^\infty([0,T];L^2)}^2
		\lesssim
		\vartheta^{1/2}\Vert \varphi\Vert_{C^2(\mathbb{S})}.
	\end{aligned}
	$$
	The above estimates yield \eqref{S temporal estimate}.
\end{proof}

\begin{prop}[One-sided supernorm estimate]\label{one side estimate}
	Let $u_\varepsilon$ be the $H^m$ regular solution of the viscous SPDE \eqref{CH viscous u} with initial data $u(0) = u_0$ satisfying with $\mathbb{E}\|u_0\|_{H^m}^{p_0}<\infty$ for some $m>3$ and $p_0>4$, and denote by $q_\varepsilon = \partial_x u_\varepsilon$ the spatial gradient of $u_\varepsilon$. Then the following one-sided $L^\infty$ norm estimate on the first-order spatial derivative holds:
	\begin{equation}\label{one side estimate q}
		\mathbb{P}\left\{\lim_{t\rightarrow\infty}\partial_x u_\varepsilon(t,x)\le0,\ \forall x\in\mathbb{S}\right\}=1.
	\end{equation}
\end{prop}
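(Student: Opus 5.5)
The plan is to work pathwise through the Girsanov-type transform already used in the proof of Proposition \ref{viscous estimate}. Set $\eta(\omega,t)=e^{W(t)-\frac t2}$ and $v_\varepsilon=\eta^{-1}u_\varepsilon$. Since $m>3$, $v_\varepsilon$ is a classical solution of the random (pathwise deterministic) parabolic equation \eqref{v epsilon}. Differentiating \eqref{v epsilon} in $x$, the function $r_\varepsilon:=\partial_x v_\varepsilon=\eta^{-1}q_\varepsilon$ satisfies, for each fixed $\omega$,
$$\partial_t r_\varepsilon+\eta v_\varepsilon\partial_x r_\varepsilon-\varepsilon\partial_x^2 r_\varepsilon=\eta\Big(-\tfrac12 r_\varepsilon^2+v_\varepsilon^2-P[v_\varepsilon]\Big).$$
Here I use $\partial_x^2P[v]=P[v]-v^2-\tfrac12(\partial_xv)^2$, which follows from $K-\partial_x^2K=\delta$. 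Since $K>0$, we have $P[v_\varepsilon]\ge0$. By $H^1\hookrightarrow L^\infty$ and the conservation law \eqref{v epsilon H1 conservation law},
$$v_\varepsilon^2-P[v_\varepsilon]\le\|v_\varepsilon\|_{L^\infty}^2\le C\|u_0(\omega)\|_{H^1}^2=:\kappa(\omega)^2,$$
uniformly in $t\ge0$ and $\varepsilon$. Thus the right-hand side is bounded above by $\eta(\kappa^2-\tfrac12 r_\varepsilon^2)$.

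Next comes a maximum-principle/Riccati comparison, as in Xin--Zhang. Let $M(t)=\max_{x\in\mathbb S} r_\varepsilon(t,x)$. At a maximum point we have $\partial_x r_\varepsilon=0$ and $\partial_x^2 r_\varepsilon\le0$, so the upper Dini derivative satisfies $D^+M\le\eta(t)\big(\kappa^2-\tfrac12M^2\big)$. Changing time to $s(t)=\int_0^t\eta(\tau)\,{\rm d}\tau$ (strictly increasing, since $\eta>0$) turns this into the autonomous Riccati inequality $M'\le\kappa^2-\tfrac12M^2$. Comparing with the explicit solution $\sqrt2\kappa\coth(\kappa s/\sqrt2)$, which blows up at $s=0$ and therefore dominates any finite initial value, gives
$$M(t)\le\sqrt2\,\kappa+\frac{2}{s(t)},\qquad t>0.$$
Hence, for all $x$ and $t>0$,
$$q_\varepsilon(t,x)=\eta(t)\,r_\varepsilon(t,x)\le\eta(t)\Big(\sqrt2\,\kappa(\omega)+\frac{2}{\int_0^t\eta\,{\rm d}\tau}\Big).$$

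Finally I pass to $t\to\infty$. By the strong law of large numbers for Brownian motion, $W(t)/t\to0$ a.s., so $\eta(t)=e^{t(W(t)/t-1/2)}\to0$ a.s. Moreover, for $t\ge1$, $\int_0^t\eta\ge\int_0^1\eta>0$ a.s., and $\kappa(\omega)<\infty$ a.s. because $\mathbb E\|u_0\|_{H^1}^{p_0}<\infty$. Therefore the right-hand side tends to $0$ uniformly in $x$, which gives $\limsup_{t\to\infty}\sup_x\partial_xu_\varepsilon(t,x)\le0$ on a full-measure set independent of $x$. This is \eqref{one side estimate q}, with the limit read as $\limsup$.

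The main obstacle is justifying the differential inequality for $M(t)$ rigorously, since $M$ is only Lipschitz. I would handle it with the standard Dini-derivative argument, using that $r_\varepsilon\in C^1_tC^2_x$ pathwise for $m>3$. One can alternatively compare $r_\varepsilon$ directly with the spatially constant supersolution $\sqrt2\kappa\coth(\kappa s(t)/\sqrt2)$ via the classical parabolic comparison principle on $\mathbb S$. A secondary point is checking that the transformed equation holds pathwise for all $t\ge0$ (global in time), which follows from the global $H^m$ well-posedness established above.
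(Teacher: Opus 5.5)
Your proposal is correct. Its skeleton matches the paper's: the Girsanov transform $v_\varepsilon=\eta^{-1}u_\varepsilon$, the pathwise equation for $\partial_x v_\varepsilon$, the bound on $\eta\,(v_\varepsilon^2-P[v_\varepsilon])$ from \eqref{v epsilon H1 conservation law}, a comparison principle, and finally $\eta\to0$. The comparison step, however, is genuinely different.

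\textbf{The paper's comparison.} The paper discards the term $-\frac{\eta}{2}(\partial_x v_\varepsilon)^2\le 0$. It bounds the forcing by a constant $R(\omega)$, using $\sup_{t\ge0}\eta(\omega,t)<\infty$. It then compares with the linearly growing ODE solution $G_\varepsilon(t)=\max(0,\partial_x u_0)+Rt$, where the initial value should really be $\max_x$, and concludes from $t\,\eta(t)\to0$.

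\textbf{Your comparison.} You keep the Riccati term. After the time change $s=\int_0^t\eta$, you compare with $\sqrt2\kappa\coth(\kappa s/\sqrt2)$. This gives the Oleinik-type bound $\partial_x v_\varepsilon\le\sqrt2\kappa+2/s(t)$.

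\textbf{What each buys.} The paper's route is shorter and avoids the Riccati and Dini-derivative analysis. But its bound grows linearly in $t$, needs the extra fact $t\,\eta(t)\to0$, and depends on $\|\partial_x u_0\|_{L^\infty}$, which is finite only because $m>3$. Your bound is bounded for $t\ge1$ and only needs $\eta(t)\to0$. It also depends only on $\|u_0\|_{H^1}$ and the Brownian path, independently of the initial slope. This is the Xin--Zhang form of the estimate. It stays uniform under the $H^1$ approximation $z_n\to u_0$ used in Section 4, which is what one needs to carry the one-sided bound over to the limit in Theorem \ref{main theorem}.

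\textbf{Minor points.}
\begin{itemize}
\item When $\kappa=0$, replace $\kappa$ by any positive upper bound, or use $2/s$ directly.
\item For $3<m\le 7/2$, the $C^2_x$ regularity of $r_\varepsilon$ at positive times needed in the maximum-point argument does not follow from $u_\varepsilon(t)\in H^m$ alone. It comes from parabolic smoothing of the pathwise equation, or from $u_\varepsilon\in L^2_tH^{m+1}$.
\item Reading the limit in \eqref{one side estimate q} as a $\limsup$ is also exactly what the paper's argument proves.
\end{itemize}
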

\begin{proof}
	Consider $v_\varepsilon=\eta^{-1}(\omega,t)u_\varepsilon$ which satisfies \eqref{v epsilon}, with $\eta(\omega,t)=e^{W(t)-\frac t 2}$.
	Since $v_0=u_0$, according to \eqref{v epsilon H1 conservation law}, the following inequality can be proved analogously to Proposition \ref{viscous estimate infty}:
	$$\|P[v_\varepsilon]\|_{L^\infty([0,\infty)\times\mathbb{S})}\le C(\omega,u_0),\quad \|v_\varepsilon\|_{L^\infty([0,\infty)\times\mathbb{S})}\le C(\omega,u_0).
	$$
	
	Consider the equation of $\partial_x v_\varepsilon$,
	\begin{equation}\label{vx epsilon}
		\partial_t\partial_xv_\varepsilon+\eta v_\varepsilon\partial_x(\partial_xv_\varepsilon)-\varepsilon\partial_x^2(\partial_x v_\varepsilon)+\frac \eta 2(\partial_x v_\varepsilon)^2=-\eta(P[v_\varepsilon]-v_\varepsilon^2),\quad\forall\ t\in\mathbb{R}^+,\ x\in\mathbb{S}.
	\end{equation}
	Obviously, $0\le\eta(\omega,t)\le C(\omega)$, then there exists a positive constant $R$, depending only on $\omega$ and $\|u_0(\omega)\|_{H^1}$, so that
	$$
	\partial_t\partial_xv_\varepsilon+\eta v_\varepsilon\partial_x(\partial_xv_\varepsilon)-\varepsilon\partial_x^2(\partial_x v_\varepsilon)\le R,\quad\forall\ t\in\mathbb{R}^+,\ x\in\mathbb{S}.
	$$ 
	Now let $G_\varepsilon$ solve the following ODE system:
	$$
	\left\{
	\begin{array}{l}
		\frac{\rm d}{{\rm d}t}G_\varepsilon=R,\\
		G_\varepsilon(t=0)=\max(0,\partial_x u_0).
	\end{array}
	\right.
	$$     
	Then the comparison principle for a parabolic equation yields for fixed $\omega\in\Omega$,
	$$
	\partial_xv_\varepsilon(\omega,t,x)\le G_\varepsilon= \max(0,\partial_x  u_0(\omega,x))+Rt,\quad \forall\ t\ge 0,\ x\in\mathbb{S}.
	$$
	Therefore, for all $(t,x)\in[0,\infty)\times\mathbb{S}$,
	$$\partial_x u_\varepsilon(\omega,t,x)=\eta(\omega,t)\partial_xv_\varepsilon(\omega,t,x)\le \eta(\omega,t)\big(\max(0,\partial_x  u_0(\omega,x))+Rt\big),
	$$
	since $\lim_{t\rightarrow\infty}\eta(\omega,t)\cdot t=0$, $\mathbb{P}$-a.s., we obtain \eqref{one side estimate q} by passing the limit to $t\rightarrow\infty$ on both sides of the above inequality.
\end{proof}

\begin{theo}[Strong well-posedness of viscous SPDE]\label{viscous H^m uniformly bound}
	Fix $m\ge 1$ and $p_0>4$. Suppose $u_0\in L^{p_0}(\Omega,H^m(\mathbb{S}))$, then there exists a unique $H^m$ pathwise solution to \eqref{CH viscous u} with initial data condition $u_0$. Denoting this solution by $u_\varepsilon$, the following properties and $\varepsilon$-uniform bounds hold:
	\begin{enumerate}
		\item Total energy balance: for any $0\le s\le t\le T$,
		\begin{equation}
			\begin{aligned}
				&\int_\mathbb{S}u_\varepsilon^2+|\partial_x u_\varepsilon|^2\,{\rm d}x\,\bigg|_s^t+2\varepsilon\int_s^t\int_{\mathbb{S}}|\partial_x u_\varepsilon|^2+|\partial_x^2 u_\varepsilon|^2\,{\rm d}x\,{\rm d}t'\\
				=&\int_s^t \int_\mathbb{S}u_\varepsilon^2+|\partial_x u_\varepsilon|^2\,{\rm d}x\,{\rm d}t'+2\int_s^t\int_\mathbb{S}u_\varepsilon^2+|\partial_x u_\varepsilon|^2\,{\rm d}x\,{\rm d}W,\quad \mathbb{P}-a.s.
			\end{aligned}
		\end{equation}
		Furthermore, there exists an $\varepsilon$-independnt positive constant
		$$C=C(p_0,T,\|u_0\Vert_{L^{p_0}(\Omega;H^1(\mathbb{S}))})$$
		such that
		$$\mathbb{E}\Vert u_\varepsilon\Vert_{L^\infty([0,T];H^1)}^p+\varepsilon^{p/2}\left( \mathbb{E}\int_0^T\Vert\partial_xu_\varepsilon(t)\Vert_{H^1}^2 \,{\rm d}t \right)^{p/2}\leq C.$$
		\item For any $\theta\in[0,\frac{p-2}{4p})$, $p\in[2,p_0]$, there exists an $\varepsilon$-independent positive constant $C=C(\theta,T$, $\Vert u_0\Vert_{L^2(\Omega;H^1(\mathbb{S}))})$ such that
		$$
		\mathbb{E}\|u_\varepsilon\|_{C^\theta([0,T];L^2)}^{2/(1-4\theta)}\le C.$$
	\end{enumerate}
\end{theo}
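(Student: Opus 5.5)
This theorem collects results already established in this section, so the plan is to assemble them rather than prove anything new. Existence and uniqueness of the $H^m$ pathwise solution $u_\varepsilon$ come from Theorem~\ref{Galerkin H^1} for $m=1$ and from the subsequent theorem for $m>1$. Those theorems combine:
\begin{itemize}
\item the Galerkin bounds of Proposition~\ref{Galerkin estimate} and Proposition~\ref{Galerkin estimate Hm};
\item tightness via Lemma~\ref{Galerkin temporal continuity};
\item Skorokhod--Jakubowski representations;
\item pathwise uniqueness from Proposition~\ref{pathwise uniqueness}, upgraded to a pathwise solution through the Gy\"ongy--Krylov argument.
\end{itemize}

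For the total energy balance (1), I would apply It\^o's formula directly to $\|u_\varepsilon\|_{H^1}^2$. When $m$ is large the solution is regular enough to do this; otherwise I would first mollify by $J_\delta$ as in the proof of Proposition~\ref{pathwise uniqueness}. The structure mirrors the Galerkin computation:
\begin{itemize}
\item the nonlinear terms cancel, using $K-\partial_x^2K=\delta$, exactly as $I_1^n=0$;
\item the It\^o correction contributes $\int_s^t\|u_\varepsilon\|_{H^1}^2\,{\rm d}t'$;
\item the martingale term is $2\int_s^t\|u_\varepsilon\|_{H^1}^2\,{\rm d}W$;
\item the viscous terms give $2\varepsilon\int_s^t\|\partial_xu_\varepsilon\|_{H^1}^2$.
\end{itemize}
In the mollified version, the commutator errors $E_\delta$ vanish as $\delta\to0$ (Lemma 7.1 of \cite{Holden2023}). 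To make the stochastic integral a true martingale, I would localize with the stopping time $\eta_R$ and then let $R\to\infty$.

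The $\varepsilon$-uniform $L^p$ bound is \eqref{viscous estimate p} of Proposition~\ref{viscous estimate}. One raises the balance to the power $p/2$, takes the supremum in $t$, and applies BDG and Gronwall. The constant is independent of $\varepsilon$ because the viscous term has a favorable sign.

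Part (2) follows from Proposition~\ref{viscous estimate Holder}, equivalently Lemma~\ref{Galerkin temporal continuity}. Its proof gives the moment bound $\mathbb{E}\|u_\varepsilon(t)-u_\varepsilon(s)\|_{L^2}^{2p}\lesssim|t-s|^{p/2}$, and Kolmogorov's criterion then yields a H\"older bound. The one thing to check is the exponent bookkeeping. For a given $\theta<\frac{p-2}{4p}$, the Kolmogorov moment exponent has to be matched to $2/(1-4\theta)$. Equivalently, $\theta<\frac14-\frac1{2q}$ with $q=p/2\cdot$(the moment index). I would choose $p\le p_0$ so that $\theta$ stays admissible and so that the required moments of $\|u_0\|_{H^1}$ are finite under $p_0>4$.

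I expect this exponent matching to be the main obstacle. Two points need care:
\begin{itemize}
\item the constant must depend only on $\|u_0\|_{L^{p_0}(\Omega;H^1)}$, not on higher moments such as the $4p$-th moment appearing in Lemma~\ref{Galerkin temporal continuity};
\item the estimate of $I_1$ there used a $4p$-moment, so that step should be redone via H\"older using only $p\le p_0$, possibly at the cost of a smaller H\"older exponent.
\end{itemize}
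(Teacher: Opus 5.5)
Your treatment of existence, uniqueness, the pathwise energy balance and the $\varepsilon$-uniform moment bound matches how the paper obtains this theorem. The paper gives no separate proof: the theorem summarizes Theorem~\ref{Galerkin H^1}, the $H^m$ theorem after it, Proposition~\ref{viscous estimate} and Proposition~\ref{viscous estimate Holder}. Your It\^o computation for $\|u_\varepsilon\|_{H^1}^2$ is the same cancellation as $I_1^n=0$ in Proposition~\ref{Galerkin estimate}. The gap is part (2), which you leave open, and the obstacle is not where you place it. The exponent matching is trivial. Indeed $\theta<\frac{p-2}{4p}$ is equivalent to $\frac{2}{1-4\theta}<p$, so the required moment lies below the $2p$-th moment of Proposition~\ref{viscous estimate Holder}, and Jensen's inequality finishes. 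What actually fails is the moment hypothesis on $u_0$.

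Your repair (redo $I_1$ by H\"older using only $p\le p_0$) cannot close this. Since $|I_1|+|I_2|\lesssim |t-s|\,\|u_\varepsilon\|_{L^\infty([0,T];H^1)}^3$ is cubic, any bound $\mathbb{E}|I_1|^p\lesssim|t-s|^p$ costs the $3p$-th moment of $\|u_0\|_{H^1}$; the paper's $4p/2p$ split is just Cauchy--Schwarz on that moment. Meanwhile $\theta$ near $\frac{p_0-2}{4p_0}$ forces $p$ near $p_0$. Interpolating with the trivial bound $\|u_\varepsilon(t)-u_\varepsilon(s)\|_{L^2}^2\le 4\|u_\varepsilon\|_{L^\infty([0,T];L^2)}^2$ saves moments only by lowering the power of $|t-s|$, which shrinks the $\theta$-range.

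Some shrinking is unavoidable. CH is invariant under $u\mapsto\lambda u(\lambda t,\cdot)$, the noise only multiplies the solution by $\eta=e^{W-t/2}$, and the viscosity rescales to order $\varepsilon/\lambda$. Take $u_0=\xi\phi$ with $\phi$ smooth and not a CH steady state, and $\xi$ independent of $W$. Then the solution moves by $\gtrsim\xi$ in $L^2$ within time $\sim 1/\xi$, so $\|u_\varepsilon\|_{C^\theta([0,T];L^2)}\gtrsim\xi^{1+\theta}$ with positive probability. If $\xi$ has no moments beyond order $p_0$, then $\mathbb{E}\|u_\varepsilon\|_{C^\theta([0,T];L^2)}^{2/(1-4\theta)}=\infty$ as soon as $\frac{2(1+\theta)}{1-4\theta}>p_0$, which holds for admissible $\theta$ close to $\frac{p_0-2}{4p_0}$. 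Likewise, for $\theta>0$ no constant depending only on $\|u_0\|_{L^2(\Omega;H^1)}$ can work, because $\|u_\varepsilon\|_{C^\theta([0,T];L^2)}\ge\|u_0\|_{L^2}$ and $\frac{2}{1-4\theta}>2$.

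What you can actually prove is a restricted version. Run the proof of Proposition~\ref{viscous estimate Holder} (which really needs only the $3p$-th moment) with $2<p\le p_0/3$. This gives part (2) for $0<\theta<\frac14-\frac{3}{2p_0}$, nonempty only when $p_0>6$, with $C$ depending on $\|u_0\|_{L^{p_0}(\Omega;H^1)}$; the case $\theta=0$ follows directly from the $L^\infty_tH^1$ bound. The paper's cited proposition assumes $\mathbb{E}\|u_0\|_{H^1}^{4p}<\infty$ and does not reach the stated range either. So you should state the restricted range explicitly rather than expect the bookkeeping to close.
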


\section{Existence of Martingale Solutions}\label{4}
\quad In what follows, we fixed a sequence $\{\varepsilon_n\}_{n=1}^\infty$ of positive numbers such that $\varepsilon_n\rightarrow 0$ as $n\rightarrow\infty$.

To establish the existence of an $H^1$ martingale solution for equation \eqref{CH u}, we aim to obtain the tightness of the probability laws of the nonlinear terms in the equation by utilizing a family of solutions constructed from the viscous SPDE \eqref{CH viscous u} and their uniform prior estimates with respect to $\varepsilon$. Among these, the most challenging part is the nonlinear term of the form $q_\varepsilon^2$ involving $q_\varepsilon$.In order to verify the tightness of the different probability laws and the limits of relative random variables, we introduce the random mappings:
\begin{equation}\label{F defi}
	F_{n}^q=\left(q_{\varepsilon_n},(q_{\varepsilon_n})_+,(q_{\varepsilon_n})_-\right),\quad F_n^{q^2}=\left(q_{\varepsilon_n}^2,(q_{\varepsilon_n})_+^2,(q_{\varepsilon_n})_-^2\right).
\end{equation} 
Here we denote by $f_+$ and $f_-$ the positive and negative parts of a function $f$, thus, $f=f_+ + f_-$. And the notation $(q_{\varepsilon_n})_\pm^2$ is a concise representation of $\left((q_{\varepsilon_n})_\pm\right)^2$. 

Following the approach in \cite{Holden2024JDE}, we consider the following sequence of convex functions $\{S_l(v)\}_{l\in\mathbb{N}}$ that approximates $\frac 1 2 v^2$ up to some cut-off $|v|\le l$:
\begin{equation}\label{S_l}
	S_l(v) = 
	\begin{cases}
		\frac{1}{2} v^2, & |v| \leq l,\\
		-\frac{1}{6l} |v|^3 + v^2 - \frac{1}{2}l |v| + \frac{1}{6}l^2, & l < |v| < 2l,\\
		\frac{3}{2} l|v| - \frac{7}{6}l^2, & |v| \geq 2l.
	\end{cases}
\end{equation}

\begin{equation}\label{S_l'}
	S_l'(v) = 
	\begin{cases}
		v, & |v| \leq l,\\
		{\rm sgn}(v) \left(2|v| - \frac{1}{2l} v^2 - \frac{1}{2}l\right), & l < |v| < 2l,\\
		\frac{3}{2} \operatorname{sgn}(v)\,l, & |v| \geq 2l.
	\end{cases}
\end{equation}

\begin{equation}\label{S_l''}
	S_l''(v) = 
	\begin{cases}
		1, & |v| \leq l,\\
		\frac 1 l(2l - |v|), & l < |v| < 2l,\\
		0, & |v| \geq 2l.
	\end{cases}
\end{equation}
In particular, $S_l\in W^{3,\infty}_{\rm loc}$, $|S_l(v)|\lesssim_l |v|$, $|S_l'(v)|\lesssim_l 1$, $|S_l''(v)|\lesssim \mathds{1}_{\{|v|\le 2l\}}$, and $|S_l(v)v-\frac 1 2S_l'(v)v^2|\lesssim_l|v|^2$.

The random mappings that we introduce below are motivated by the need to pass to the weak limit in various nonlinear compositions of $q_{\varepsilon_n}$, based on

\begin{equation}\label{S_l of q}
	S(v_\pm),\; S'(v_\pm),\; S''(v_\pm) v^2,\; S(v_\pm) - S'(v_\pm) v,\; S(v_\pm) v - \frac{1}{2} S'(v_\pm) v^2.
\end{equation}

By direct computation, $ (v_\pm)' = \mathds{1}_{\{|v_\pm| > 0\}} $, $ (v_\pm^2)' = 2v_\pm $, $ (v_\pm^2)'' = 2\mathds{1}_{\{|v_\pm| > 0\}} $, and so $ S(v) = v_\pm^2 $ belongs to $ W^{2,\infty}_{\rm loc}(\mathbb{R}) $ and satisfies \eqref{S bound}. And the nonlinear compositions satisfy:
\begin{equation}\label{S_l of q exact}
	\begin{array}{c}
		S_l(v_\pm)' = S_l'(v_\pm), \quad S_l(v_\pm)'' = S_l''(v_\pm) \mathds{1}_{\{|v_\pm| > 0\}},\\
		\\
		S_l(v_\pm) - S_l'(v_\pm) v =
		\begin{cases}
			-\frac{1}{2} v_\pm^2, & |v_\pm| \leq l, \\
			\frac{1}{3l} |v_\pm|^3 - v_\pm^2 + \frac{1}{6}l^2, & l < |v_\pm| < 2l, \\
			-\frac{7}{6}l^2, & |v_\pm| \geq 2l,
		\end{cases}\\
		\\
		S_l(v_\pm) v - \frac{1}{2} S_l'(v_\pm) v^2 =
		\begin{cases}
			0, & |v_\pm| \leq l, \\
			\frac{1}{12l} |v_\pm|^3 v_\pm - \frac{1}{4} |v_\pm| v_\pm l + \frac{1}{6} v l^2_\pm, & l < |v_\pm| < 2l, \\
			\frac{3}{4} |v_\pm| v_\pm l - \frac{7}{6} v_\pm l^2, & |v_\pm| \geq 2l.
		\end{cases}
	\end{array}
\end{equation}
Thus, the nonlinear compositions $v\mapsto S_l(v_\pm)$ belongs to $W^{2,\infty}_{\rm loc}(\mathbb{R})$ and cater to similar bounds.

According to above computations, one can note that $ v \mapsto S_l''(v_\pm) v^2 = S_l''(v_\pm) v^2$ is a continuous function, but $ S_l''(v_\pm)$ is not. Also, the function $ \beta(v) = S_l'(v_\pm) v $ belongs to $ W^{2,\infty}_{\rm loc}(\mathbb{R}) $ although $ S_l'(v_\pm) $ does not, and $\beta(v)$ satisfies \eqref{S bound}. Indeed,
\begin{equation}
	\begin{aligned}
		&\beta'(v) = S_l''(v_\pm) \mathds{1}_{\{|v_\pm| > 0\}} v + S_l'(v_\pm),\\
		&\beta''(v) = S_l'''(v_\pm) \mathds{1}_{\{|v_\pm| > 0\}} v + 2 S_l''(v_\pm) \mathds{1}_{\{|v_\pm| > 0\}},
	\end{aligned}
\end{equation}
so that $|\beta(v)| \lesssim_l |v| $, $ |\beta'(v)| \lesssim_l 1 $, and $ |\beta''(v)| \lesssim_l 1 $.

Equipped with the above renormalization functions, we now consider the following random mappings:
\begin{equation}\label{map Fn l}
	F^{\xi_l,\pm}_n=\xi_l|_{v=q_{\varepsilon_n}},\quad\xi_l\in\mathbf{S}_l^\pm,
\end{equation}
where $\mathcal{S}_l^\pm$ denote the collections of nonlinear functions
\begin{equation}\label{collection Sl +-}
	\mathbf{S}_l^\pm=\left\{S_l(v_\pm),S_l'(v_\pm)v,S_l''(v_\pm)v^2,S_l(v_\pm)v,S_l'(v_\pm)v^2,S_l'(v_\pm)\right\}.
\end{equation}
And also we make use of $F_n^{\mathbf{S}}$ as a notation for the gathering of all these $l$-dependent mappings:
\begin{equation}\label{map Fn}
	F^\mathbf{S}_n=\left\{ \{F^{\xi_l,+}_n,\ \xi_l\in\mathbf{S}_l^+\}_{l\in\mathbb{N}}, \quad\{F^{\xi_l,-}_n,\ \xi_l\in\mathbf{S}_l^-\}_{l\in\mathbb{N}} \right\}.
\end{equation}
Finally, we use $X_n$ as a collective symbol for all the countable mappings:
\begin{equation}\label{map Xn}
	X_n=\left(u_{\varepsilon_n}, F^q_n, F^{q^2}_n, W, z_n, F^{\mathbf{S}}_n\right),
\end{equation}
where $W$ is the Wiener process in \eqref{CH viscous u} and $\{z_n\}_{n=1}^\infty$ is a sequence of $C^\infty$ approximations of initial data $u_0$ satisfying
\begin{equation}\label{u0 approximation}
	z_n\in L^{p_0}(\Omega;C^\infty(\mathbb{S})),\ z_n\xrightarrow{n\rightarrow\infty}u_0\ {\rm in}\ L^{p_0}(\Omega;H^1(\mathbb{S})).
\end{equation}
For convenience, we denote the factors of the infinite vector $X_n$ by $X_n^{(k,l)}$ and the corresponding factor spaces by $\mathcal{X}_{k,l},\ k=1,\dots,21,\ l\in\mathbb{N}$:
$$
\left(X_n^{(k,l)}\right)_{k=1,\dots,9}=\left(u_{\varepsilon_n},F_n^q, F_n^{q^2}, W, z_n\right),\quad \forall\ l\in\mathbb{N}
$$
$$
\left(X_n^{(k,l)}\right)_{k=10,\dots,21}=\left(F^{\xi_l,+}_n,F^{\xi_l,-}_n\right).
$$
Also, we denote that for all $l\in\mathbb{N}$, $\mu_k$ is the corresponding marginals of $\mu_n^{(k,l)}=\mathcal{L}(X_n^{(k,l)})$ defined on $ (\mathcal{X}_k,\mathcal{B}_{\mathcal{X}_k}) :=(\mathcal{X}_{k,l},\mathcal{B}_{\mathcal{X}_{k,l}})$.

The goal is to establish the tightness of the joint probability laws $\mu_n=\mathcal{L}(X_n)$ of random mappings $X_n:(\Omega,\mathcal{F},\mathbb{P})\rightarrow(\mathcal{X},\mathcal{B}_\mathcal{X})$ for the suitable quasi-Polish space $\mathcal{X}$. For fixed $r\in[1,3/2)$, denote the following spaces for the marginals for all $l\in\mathbb{N}$:
\begin{equation}\label{tight quasi Polish}
	\begin{aligned}
		&\mathcal{X}_u=C_tL^2_x,\quad \mathcal{X}_q=\mathcal{X}_{q_\pm}=L^{2r}_{t,x}-w,\quad\mathcal{X}_{q^2}=\mathcal{X}_{q_\pm^2}=L^{r}(L^r_w),
		\\
		&\mathcal{X}_W=C_t,\quad \mathcal{X}_{u_0}=H^1_x,\quad
		\mathcal{X}_{\xi}=L^{2r}_{t,x}-w,\ \xi=S_l(v_\pm)',\\
		&\mathcal{X}_{\xi}=L^{2r}(L^{2r}_w),\ \xi=S_l(v_\pm),\ S_l(v_\pm)'v, \\
		&\mathcal{X}_{\xi}=L^{r}_{t,x}-w,\ \xi=S_l(v_\pm)''v^2,\ S_l(v_\pm)v,\ S_l(v_\pm)'v^2.
	\end{aligned}
\end{equation}
Here the $L^r$ space equipped with weak topology is defined as \eqref{weak topology}.

The path space $\mathcal{X}$ for the joint laws $\{\mu_n\}$ is taken as
$$\mathcal{X}=\left(\prod_{i=1}^9\mathcal{X}_i\right)\bigotimes \left(\prod_{l=1}^\infty(\prod_{i=10}^{21}\mathcal{X}_i)\right),\quad \mathcal{B}_\mathcal{X}=\mathcal{B}(\mathcal{X}),
$$
which carries the product topology for its infinitely many factors. By Lemma \ref{quasi Polish product}, $\mathcal{X}$ is a quasi-Polish space.

\subsection{Tightness and a.s. representation}
\begin{lemm}[Tightness]\label{tightness}
	Consider the random variables $X_n:(\Omega,\mathcal{F},\mathbb{P})\rightarrow(\mathcal{X},\mathcal{B}_\mathcal{X})$ as defined in the preceding definition. Then the sequence $\{\mu_n=\mathcal{L}(X_n)\}_{n\in\mathbb{N}}$ of joint laws is tight, as probability measures defined on $(\mathcal{X},\mathcal{B}_\mathcal{X})$.
\end{lemm}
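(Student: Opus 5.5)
Since $\mathcal{X}$ carries the product topology and a countable product of tight families is tight (Tychonoff plus a diagonal choice of compacts with masses $1-\delta 2^{-k-l}$), I will prove tightness of each marginal family $\{\mu_n^{(k,l)}\}_n$ separately. Fixing $\delta>0$, I will find a compact $K_{k,l}\subset\mathcal{X}_{k,l}$ with $\mu_n^{(k,l)}(K_{k,l}^c)\le \delta 2^{-k-l}$ uniformly in $n$. The product $\prod K_{k,l}$ is then compact in $\mathcal{X}$ and has measure at least $1-\delta$ under every $\mu_n$.

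\textbf{The "easy" marginals.} For $W$ the law is a single Radon measure on the Polish space $C_t$, hence tight. For $z_n$, the convergence \eqref{u0 approximation} in $L^{p_0}(\Omega;H^1)$ implies convergence in law in $H^1_x$, and a convergent sequence of laws on a Polish space is tight. For $u_{\varepsilon_n}$ I will use the compact embedding
$$
\{ \|u\|_{L^\infty_tH^1_x}+\|u\|_{C^\theta_tL^2_x}\le R\}\Subset C_tL^2_x,
$$
which follows from Arzel\`a--Ascoli because $H^1\Subset L^2$. Markov's inequality together with Theorem \ref{viscous H^m uniformly bound} (the $L^\infty_tH^1$ bound and the $C^\theta_tL^2$ bound) then gives $\mu_n(K_R^c)\lesssim R^{-1}$.

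\textbf{Weak-topology factors.} For the factors in $L^{2r}_{t,x}-w$ or $L^r_{t,x}-w$ (namely $q$, $q_\pm$, $S_l(v_\pm)'$, $S_l''(v_\pm)v^2$, $S_l(v_\pm)v$, $S_l'(v_\pm)v^2$), closed balls are compact in the weak topology by reflexivity (Banach--Alaoglu, metrizable on bounded sets), so a uniform moment bound suffices. Since $2r<3$, Proposition \ref{viscous q higher} with $\alpha=2r-2$ bounds $\mathbb{E}\|q_{\varepsilon_n}\|_{L^{2r}}^{2r}$, which handles $q$ and $q_\pm$. The $S_l$-compositions are dominated pointwise by $C_l(1+|q|)$, $C_l|q|^2$, and so on, by the bounds listed after \eqref{S_l} and \eqref{S_l of q exact}. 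Those with quadratic growth land in $L^r$ because $2r<3$ again, and the remaining ones land in $L^{2r}$.

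\textbf{Factors in $L^{p_1}(L^{p_2}_w)$.} For $q^2,q_\pm^2$ (with $p_1=p_2=r$) and for $S_l(v_\pm)$, $S_l'(v_\pm)v$ (with $p_1=p_2=2r$) I will apply Lemma \ref{tightness criterion}.

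\emph{Condition (1)} follows from Proposition \ref{viscous q higher}, since $|q|^{2r}\in L^1$ uniformly in expectation and the $l$-dependent functions grow at most linearly.

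\emph{Condition (2)} uses $\bar p_1=\infty$ (or any large finite index), via $\|S(q)\|_{L^1_x}\lesssim \|q\|_{L^2_x}^2\le\|u\|_{H^1}^2$ and the $L^\infty_tH^1$ bound with exponent $p_0/2>2$.

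\emph{Condition (3)} is Proposition \ref{temporal translation estimate} with $\alpha=1/2$. It requires each $S$ to lie in $W^{2,\infty}_{\rm loc}$ and to satisfy \eqref{S bound}. For $v_\pm^2$ this was checked in the text. For $S_l(v_\pm)$ and $\beta(v)=S_l'(v_\pm)v$ it follows from \eqref{S_l of q exact} and the computation of $\beta',\beta''$: their growth is even linear, so \eqref{S bound} holds trivially.

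I expect the main obstacle to be this last verification, namely checking that the renormalized compositions genuinely fit \eqref{S bound}, in particular the bound on $S(v)v-\tfrac12 S'(v)v^2$, and that the Itô formula \eqref{viscous S(q)} used in Proposition \ref{temporal translation estimate} remains valid for merely $W^{2,\infty}_{\rm loc}$ functions. For the latter I would first try mollifying $S$ and passing to the limit in \eqref{viscous S(q)}, using that $q_\varepsilon$ is smooth in $x$ for $m>3$. A secondary subtle point is that the tightness criterion is being applied in a quasi-Polish rather than Polish space, so compacts must be taken to be sequentially compact sets of the kind furnished by Lemma \ref{compactness criterion}.
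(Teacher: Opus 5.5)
Your proposal is correct and follows essentially the same route as the paper: reduction to the marginals via Tychonoff, Arzel\`{a}--Ascoli plus Markov for $u_{\varepsilon_n}$ in $C_tL^2_x$, Banach--Alaoglu with Proposition \ref{viscous q higher} for the weakly topologized $L^{2r}_{t,x}$ and $L^r_{t,x}$ factors, and Lemma \ref{tightness criterion} fed by Proposition \ref{temporal translation estimate} for the $L^{p_1}(L^{p_2}_w)$ factors. Your choice of summable masses $\delta 2^{-k-l}$ makes explicit a step the paper leaves implicit for the countable product. Your treatment of $z_n$, via convergence in law on the Polish space $H^1$, is cleaner than the paper's bare appeal to Chebyshev's inequality, which by itself does not yield norm-compact subsets of $H^1$.
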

\begin{proof}
	According to Tychonoff's theorem, the tightness of the joint laws on $\mathcal{X}$ can be deduced from the tightness of the product measures defined on the corresponding qausi-Polish spaces. Hence, it is sufficient to prove that for arbitrary $\delta>0$, for each $k=1,\cdot,21$, and $l\in\mathbb{N}$,there exists a compact sets $\mathcal{K}_{k,\delta}\subset\mathcal{X}_{k}$ such that
	$$\mu_n^{(k,l)}(\mathcal{K}_{k,\delta})>1-\delta\ \Longleftrightarrow\ \mu_n^{(k,l)}(\mathcal{K}_{k,\delta}^c)=\mathbb{P}(X_n^{(k,l)}\in\mathcal{K}_{k,\delta}^c)\leq \delta,
	$$
	uniformly in $n$.
	
	\noindent$\bullet$ Tightness of $\mu_n^u$ in $C_tL^2_x$.
	
	According to the Arzel\`{a}-Ascoli theorem \cite{Simon1987}, we consider the ball $B_M^u$ defined as
	$$B_M^u=\left\{\Vert f\Vert_{C^\theta([0,T];L^2(\mathbb{S}))}+\Vert f\Vert_{L^\infty([0,T];H^1(\mathbb{S}))}\le M\right\},
	$$
	where $\theta\in(0,1/4)$ is fixed. $B_M
	^u$ is relatively compact in $C([0,T];L^2(\mathbb{S}))$. By the Chebyshev's inequality, Proposition \ref{viscous estimate} and Poposition \ref{viscous estimate Holder},
	$$
	\mathbb{P}(u_{\varepsilon_n}\in (B_M^u)^c)<\frac 1 M\left(\mathbb{E}\Vert u_{\varepsilon_n}\Vert_{L^\infty([0,T];H^1(\mathbb{S}))}+\mathbb{E}\Vert u_{\varepsilon_n}\Vert_{C^\theta([0,T];L^2(\mathbb{S}))}\right)\lesssim\frac 1 M,
	$$
	choosing $M$ sufficiently large enough, we obtain the tightness of $\mu_n^u$ in $C_tL^2_x$.
	
	\noindent$\bullet$ Tightness of $\mu_n^q$, $\mu_n^{q_\pm}$ and $\mu_n^{S_l(q_\pm)'}$ in $L^{2r}_{t,x}-w$.
	
	According to Proposition \ref{viscous q higher}, there exists $r\in(1,3/2)$, such that
	$$\mathbb{E}\Vert q_{\varepsilon_n}\Vert_{L^{2r}([0,T]\times\mathbb{S})}^{2r}\le C.$$
	Consider $B_M^q$ defined as
	$$B_M^q=\left\{\Vert f\Vert_{L^{2r}([0,T]\times\mathbb{S})}^{2r}\le M\right\}.$$
	By the Banach-Alaoglu theorem and the reflexivity of $L^r_{t,x}$, $B_M^q$ is a compact subset of $L^r_{t,x}-w$, also by the Chebyshev inequality,
	$$\mathbb{P}\left(q_{\varepsilon_n}\in (B_M^q)^c\right) <\frac 1 M\mathbb{E}\Vert q_{\varepsilon_n}\Vert_{L^{2r}([0,T]\times\mathbb{S})}^{2r}\lesssim \frac 1 M.$$
	Similarly, since $|S_l(v_{\pm})'|\lesssim_l |v|$, the probability laws of $(q_{\varepsilon_n})_\pm$ and $S_l\left((q_{\varepsilon_n})_\pm\right)'$ also tight on $L^{2r}_{t,x}-w$.
	
	\noindent$\bullet$ Tightness of $\mu_n^W$ in $C_t$.
	
	Since the law of $W$ is tight as a Radon measure on the Polish space $C([0,T])$, then there is a compact subset $\mathcal{K}_{W,\delta}$ of $C([0,T])$ such that $\mu_n^W(\mathcal{K}_{W,\delta}^c)\le\delta$.
	
	\noindent$\bullet$ Tightness of $\mu_n^{u_0}$ in $H^1_x$.
	
	By the hypothesis \eqref{u0 approximation} and Chebyshev's inequality, we can easily obtain the tightness of $\mu_n^{u_0}$ on $H^1_x$. 
	
	\noindent$\bullet$ Tightness of $\mu_n^{S_l(q_\pm)q}$, $\mu_n^{S_l(q_\pm)'q^2}$, and $\mu_n^{S_l(q_\pm)''q^2}$ in $L^{r}_{t,x}-w$.
	
	By the definition of $S_l$ \eqref{S_l}, $|S_l(v_+)v|\lesssim_l |v|^2$. Therefore, also by Proposition \ref{viscous q higher},
	$$\mathbb{E}\Vert S_l\left((q_{\varepsilon_n})_+\right)q_{\varepsilon_n}\Vert_{L^r([0,T]\times\mathbb{S})}^r\lesssim_l 1,$$
	here the integrability index $r$ is same as the one one mentioned earlier. Along the same line in the proof of the tightness of $\mu_n^q$ in $L^{2r}_{t,x}-w$, we conclude the tightness of $\mu_n^{S_l(q_\pm)q}$, $\mu_n^{S_l(q_\pm)'q^2}$, and $\mu_n^{S_l(q_\pm)''q^2}$ in $L^{r}_{t,x}-w$.
	
	\noindent$\bullet$ Tightness of $\mu_n^{q^2}$ and $\mu_n^{q_\pm^2}$ in $L^r(L^r_w)$.
	
	Denote $Q_n=q_{\varepsilon_n}^2$, by Propostion \ref{viscous q higher}, 
	$$\mathbb{E}\|Q_n\|_{L^r([0,T]\times\mathbb{S})}\lesssim 1,$$
	uniformly in $n\in\mathbb{N}$.
	
	According to \eqref{viscous estimate 2}, for any $p\in[1,p_0/2]$ with fixed $p_0>4$, 
	$$\left(\mathbb{E}\|Q_n\|_{L^p([0,T];L^1(\mathbb{S}))}\right)^p\lesssim \mathbb{E}\|Q_n\|_{L^p([0,T];L^1(\mathbb{S}))}^p\lesssim_T\mathbb{E}\|q_{\varepsilon_n}\|_{L^\infty([0,T];L^2(\mathbb{S}))}^{2p}\lesssim 1.$$
	Thus, by Proposition \ref{temporal translation estimate} and tightness criterion Lemma \ref{tightness criterion}, the sequence $\{\mathcal{L}(q_{\varepsilon_n}^2)\}_{n\in\mathbb{N}}$ of probability laws is tight on $L^r(L^r_w)$. We can handle $\mu_n^{q_\pm^2}$ the same way.
	
	\noindent$\bullet$ The tightness of $\mu_n^{S_l(q_\pm)}$ and $\mu_n^{S_l(q_\pm)'q}$ in $L^{2r}(L^{2r}_w)$.
	
	Also consider the definition of $S_l$, $S_l(v_\pm),\ S_l(v_\pm)'v\in W^{2,\infty}_{\rm loc}$ and
	$|S_l(v_\pm)|,\ |S_l(v_\pm)'v|\lesssim_l |v|$, the remainder of the proof is similar and thus omitted.
\end{proof}

With the tightness of the joint laws of $\mu_n$ established, we can directly apply the Skorokhod-Jakubowski a.s. theorem to derive the following result:
\begin{prop}[Skorokhod-Jakubowski representations]\label{SJ representations}
	Fix a sequence $\{\varepsilon_n\}$ of positive numbers with $\varepsilon_n\rightarrow0$ as $n\rightarrow\infty$, and consider the corresponding $H^m$ pathwise solution $u_{\varepsilon_n}$ of the viscous SPDE \eqref{CH viscous u} with initial data $z_n$ defined in \eqref{u0 approximation}. Denote the spatial gradient by $q_{\varepsilon_n}=\partial_xu_{\varepsilon_n}$.
	
	Consider the mappings $X_n:(\Omega,\mathcal{F},\mathbb{P})\rightarrow(\mathcal{X},\mathcal{B}_\mathcal{X})$ defined by \eqref{map Xn}. There exists a new probability space $(\tilde{\Omega},\tilde{\mathcal{F}},\tilde{\mathbb{P}})$ and $\mathcal{X}$-valued random variables
	\begin{equation}\label{tilde X}
		\tilde{X}_n = \bigl( \tilde{u}_n, \tilde{F}^q_n, \tilde{F}^{q^2}_n, \tilde{W}_n, \tilde{u}_{0,n}, \tilde{F}^\mathbf{S}_n \bigr), \quad
		\tilde{X} = \bigl( \tilde{u},\overline{\tilde{F}^q}, \overline{\tilde{F}^{q^2}}, \tilde{W}, \tilde{u}_0, \overline{\tilde{F}^\mathbf{S}}  \bigr)
	\end{equation}
	defined on $(\tilde{\Omega}, \tilde{\mathcal{F}}, \tilde{\mathbb{P}})$, with
	\begin{equation}\label{tilde F}
		\begin{aligned}
			&\tilde{F}_n^{\xi_l,\pm}=\xi_l|_{v=\tilde{q}_{\varepsilon_n}},\quad \xi_l\in\mathbf{S}_l^\pm,\ l\in\mathbb{N},\\
			&\tilde{F}_n^\mathbf{S}=\left\{
			\left\{\tilde{F}_n^{\xi_l,+},\ \xi_l\in\mathbf{S}_l^+\right\},\ 
			\left\{\tilde{F}_n^{\xi_l,-},\ \xi_l\in\mathbf{S}_l^-\right\}
			\right\},
		\end{aligned}
	\end{equation}
	where $\mathbf{S}_l^\pm$ denote the collections of nonlinearities given by \eqref{collection Sl +-}.
	Then along a subsequence (notationally not relabelled) the joint laws of $\tilde{X}_n$ and $X_n$ coincide for all $n$, and $\tilde{X}_n\xrightarrow{n\rightarrow\infty}\tilde{X}$ a.s. in the product topology on $\mathcal{X}$. 
	More explicitly, we have the following $\tilde{\mathbb{P}}$-a.s. convergences:
	\begin{equation}\label{SJ convergence}
		\begin{aligned}
			&\tilde{u}_n\rightarrow\tilde{u}\ {\text in}\ C_tL^2_x,\quad \tilde{W}_n\rightarrow\tilde{W}\ {\text in}\ C_t,\quad \tilde{u}_{0,n}\rightarrow\tilde{u}_0\ {\text in}\ H^1_x,\\
			&\tilde{q}_n\rightarrow\tilde{q},\ (\tilde{q}_n)_\pm\rightarrow\overline{\tilde{q}_\pm},\ S_l\left((\tilde{q}_n)_\pm\right)'\rightarrow\overline{S_l\left((\tilde{q})_\pm\right)'}\ {\text in}\ L^{2r}_{t,x},\\
			&\tilde{q}_n^2\rightarrow\overline{\tilde{q}^2},\ (\tilde{q}_n)_\pm^2\rightarrow\overline{\tilde{q}_\pm^2}\ {\text in}\ L^r(L^r_w),\\
			&S_l\left((\tilde{q}_n)_\pm\right)\rightarrow\overline{S_l\left((\tilde{q})_\pm\right)},\ S_l\left((\tilde{q}_n)_\pm\right)'\tilde{q}_n\rightarrow\overline{S_l\left((\tilde{q})_\pm\right)'\tilde{q}}\ {\text in}\ L^{2r}(L^{2r}_w),\\
			&S_l\left((\tilde{q}_n)_\pm\right)''\tilde{q}_n^2\rightarrow\overline{S_l\left((\tilde{q})_\pm\right)''\tilde{q}^2},\ 
			S_l\left((\tilde{q}_n)_\pm\right)\tilde{q}_n\rightarrow\overline{S_l\left((\tilde{q})_\pm\right)\tilde{q}}\ {\text in}\ L^r_{t,x}\\
			&S_l\left((\tilde{q}_n)_\pm\right)'\tilde{q}_n^2\rightarrow\overline{S_l\left((\tilde{q})_\pm\right)'\tilde{q}^2}\ {\text in}\ L^r_{t,x},
		\end{aligned}
	\end{equation}
	as $n\rightarrow\infty$, for all $l\in\mathbb{N}$. In particular, $\overline{\tilde{q}}=\tilde{q}$.
\end{prop}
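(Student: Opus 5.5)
The plan is to apply the Skorokhod–Jakubowski representation theorem on the quasi-Polish product space $\mathcal{X}$. Tightness of $\{\mu_n\}$ is given by Lemma \ref{tightness}, and $\mathcal{X}$ is quasi-Polish by Lemma \ref{quasi Polish product}. The Jakubowski version of the theorem needs a countable family of continuous real functions on $\mathcal{X}$ that separates points. I would check this factor by factor. For the Polish factors $C_tL^2_x$, $C_t$ and $H^1_x$ it is standard. For a weak factor $L^{p}_{t,x}-w$ or $L^{p_1}(L^{p_2}_w)$ I would use the functionals $f\mapsto\int\!\!\int f\,\psi_j$, with $\{\psi_j\}$ a countable dense subset of $C^\infty([0,T]\times\mathbb{S})$. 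These are continuous for the weak topology \eqref{weak topology} and separate points. A countable product of such spaces inherits a countable separating family.

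The theorem then gives a subsequence, a new space $(\tilde\Omega,\tilde{\mathcal F},\tilde{\mathbb P})$, and random variables $\tilde X_n,\tilde X$ with $\mathcal L(\tilde X_n)=\mathcal L(X_n)$ and $\tilde X_n\to\tilde X$ a.s. in $\mathcal{X}$. Reading off the components gives the list \eqref{SJ convergence}.

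The next step is to show that the components of $\tilde X_n$ are still the corresponding nonlinear functions of $\tilde q_n:=\partial_x\tilde u_n$, as required in \eqref{tilde F}. On the original space $X_n$ lies in the Borel set
$$\mathcal{A}=\{(u,F^q,F^{q^2},\dots): F^q=(\partial_x u,(\partial_x u)_+,(\partial_x u)_-),\ F^{q^2}=(\ldots)^2,\ F^{\xi_l,\pm}=\xi_l(\partial_x u)\ \forall l\}$$
with probability one. Measurability of $\mathcal A$ follows by testing against the countable family $\psi_j$: each defining identity becomes the statement that countably many real-valued Borel maps agree. Hence $\tilde X_n\in\mathcal A$ a.s. as well.

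Equivalently, the identities $\int\!\!\int \tilde q_n\psi_j=-\int\!\!\int\tilde u_n\partial_x\psi_j$ and $\int\!\!\int F\psi_j=\int\!\!\int\xi_l(\tilde q_n)\psi_j$ hold a.s. because their laws coincide with those on $\Omega$. This gives \eqref{tilde F}. It also shows that $\tilde u_n$ has the regularity and bounds of Theorem \ref{viscous H^m uniformly bound} in law, since $L^\infty_tH^1_x$ norms are lower semicontinuous, hence Borel, on $C_tL^2_x$.

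The identification $\overline{\tilde q}=\tilde q:=\partial_x\tilde u$ is then a passage to the limit. Fix $\psi\in C^\infty$. Then $\int\!\!\int\tilde q_n\psi=-\int\!\!\int\tilde u_n\partial_x\psi$. The left side tends to $\int\!\!\int\overline{\tilde q}\psi$ by weak convergence in $L^{2r}_{t,x}$. The right side tends to $-\int\!\!\int\tilde u\partial_x\psi$ by convergence in $C_tL^2_x$. Hence $\overline{\tilde q}=\partial_x\tilde u$ in distributions, and therefore in $L^{2r}$.

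The main obstacle I expect is the measurability and separation issue in the infinite product with weak topologies. The Borel $\sigma$-algebra of a weak-topology space can differ from the one generated by the separating functionals. I would handle this by working with the $\sigma$-algebra generated by the separating family, as Jakubowski's theorem allows, and by noting that $X_n$ is measurable with respect to it. This holds because each $\omega\mapsto\int\!\!\int\xi_l(q_{\varepsilon_n})\psi_j$ is a random variable.

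A second point to watch is the diagonal subsequence over all $l\in\mathbb N$. This is automatic, because the product over $l$ is treated as a single quasi-Polish space, so one subsequence works for all $l$ at once.
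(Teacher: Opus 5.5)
Your proposal is correct and follows the paper's route. The paper's proof simply invokes the Skorokhod--Jakubowski theorem, with tightness from Lemma~\ref{tightness} and quasi-Polishness from Lemma~\ref{quasi Polish product}, together with Lemma~\ref{representation nonlinear}; your Borel-graph argument with the set $\mathcal{A}$ is essentially the proof of that lemma, and your test-function identification of the weak limit of $\tilde q_n$ with $\partial_x\tilde u$ is the argument the paper gives in Lemma~\ref{priori estimate}(1).

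One small precision: on the $L^{p_1}(L^{p_2}_w)$ factors, take the separating functionals from finite sums of tensor products $\chi(t)\phi(x)$ (e.g.\ rational polynomials in $t$ times trigonometric polynomials in $x$). With a general $\psi(t,x)$, the map $z\mapsto\int_0^T\!\int_{\mathbb{S}}z\,\psi\,{\rm d}x\,{\rm d}t$ need not be continuous for the topology \eqref{weak topology}.
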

\begin{proof}
	This is an application of Theorem \ref{SJ reprensentation theorem} and Lemma \ref{representation nonlinear}.
\end{proof}

For the sake of convenience, we drop the tilde under the overline indicating a weak limit in the following sections, for example, $\overline{S_l(\tilde{q}_\pm)}$ we write as $\overline{S_l(q_\pm)}$.

\subsection{Properties of a.s. representation}
\quad At the moment, we do not have the SPDE satisfied by the Skorokhod-Jakubowski representations $\tilde{u}_n$ and $\tilde{u}$. To derive the SPDE satisfied by the limit $\tilde{u}$ of $\tilde{u}_n$, we need a priori regularity estimates for $\tilde{u}_n$ to justify the SPDE that $\tilde{u}_n$ satisfies. The law shared by $u_{\varepsilon_n}$ and $\tilde{u}_n$ can be then integrated against over better function spaces to derive bounds for $\tilde{u}_n$ from $u_{\varepsilon_n}$.

Using the KLS theorem, we can relate the prior estimates of the a.s. representations $\tilde{u}_n$ and $\tilde{q}_n$ in the new probability space $(\tilde{\Omega},\tilde{F},\tilde{\mathbb{P}})$ to those of $u_{\varepsilon_n}$ and $q_{\varepsilon_n}=\partial_x u_{\varepsilon_n}$ in the original probability space respectively, and establish the corresponding prior estimates $\tilde{u}$ via the convergence between $\tilde{u}_n$ and $\tilde{u}$. Since this argument relies solely on the prior estimates of the viscous equation \eqref{CH viscous u} and some basic functional analysis methods, the detailed proof is presented in a concise manner.

\begin{lemm}[A priori estimates]\label{priori estimate}
	Let $p_0>4$ be as specified in Theorem \ref{viscous H^m uniformly bound} and $r\in[1,3/2)$ as fixed in \eqref{tight quasi Polish}. Let $\tilde{u}_n$, $\tilde{q}_n$ be the Skorokhod-Jakubowski representations and $\tilde{u}$, $\tilde{q}$, $\overline{q^2}$ be the a.s. limits from Proposition \ref{SJ representations}. 
	\begin{enumerate}
		\item Then $\tilde{u}_n\in L^2([0,T];H^m(\mathbb{S}))$, $\mathbb{P}$-a.s., for any $m\ge 1$, and $\tilde{q}_n(t,x)=\partial_x\tilde{u}_n(t,x)$ pointwise in $(t,x)$, $\mathbb{P}$-a.s. And
		there exists a constant $C$ independent of $n$, such that
		$$\tilde{\mathbb{E}}\Vert \tilde{u}_n\Vert_{L^\infty([0,T];H^1(\mathbb{S}))}^{p_0}\le C,\quad \tilde{\mathbb{E}}\Vert \tilde{q}_n\Vert_{L^\infty([0,T];L^2(\mathbb{S}))}^{p_0}\le C,\quad \tilde{\mathbb{E}}\Vert \tilde{q}_n\Vert_{L^{2r}([0,T]\times\mathbb{S})}^{2r}\le C.
		$$ 
		Besides, there exists an event $\tilde{F}$ with $\tilde{\mathbb{P}}(\tilde{F})=1$, such that for any $\tilde{{\omega}}\in\tilde{F}$ there exists $E(\tilde{\omega})\subset[0,T]\times\mathbb{S}$ of full measure, then
		$$\partial_x\tilde{u}(\tilde{{\omega}},t,x)=\tilde{q}(\tilde{{\omega}},t,x)\text{ for a.e }(t,x)\in E(\tilde{{\omega}}).$$
		\item There exists a constant of $n$, such that
		$$
		\tilde{\mathbb{E}}\Vert \tilde{u}_n\Vert_{L^\infty([0,T]\times\mathbb{S})}^{p_0}\le C,\quad \tilde{\mathbb{E}}\Vert P[\tilde{u}_n]\Vert_{L^\infty([0,T]\times\mathbb{S})}^{p_0}\le C.
		$$
		In particular, we have
		$$
		\tilde{\mathbb{E}}\left\Vert K\ast\left(\tilde{u}_n^2+\frac 1 2\tilde{q}_n^2\right)\right\Vert_{L^\infty([0,T]\times\mathbb{S})}^{p}\le C,\quad p\in [1,p_0/2].
		$$
		\item The limit $\tilde{u}\in C([0,T];H^1(\mathbb{S})-w)$, $\tilde{\mathbb{P}}$-a.s., and there exists a constant $C$ such that
		$$\tilde{\mathbb{E}}\Vert \tilde{u}\Vert_{L^\infty([0,T];H^1(\mathbb{S}))}^{p_0}\le C,\quad \tilde{\mathbb{E}}\Vert \tilde{q}\Vert_{L^\infty([0,T];L^2(\mathbb{S}))}^{p_0}\le C,\quad \tilde{\mathbb{E}}\Vert \tilde{q}\Vert_{L^{2r}([0,T]\times\mathbb{S})}^{2r}\le C.
		$$
		\item For $\overline{q^2}$, there exists a constant $C$ such that for any $p\in[1,p_0/2]$,
		\begin{equation}\label{priori estimate q}
			\tilde{\mathbb{E}}\int_0^T\int_{\mathbb{S}}\left|\overline{q^2}\right|^r\,{\rm d}x\,{\rm d}t\le C,\quad \tilde{\mathbb{E}}\int_0^T\left\|\overline{q^2}\right\|^p_{H^{-1}(\mathbb{S})}\,{\rm d}t\le C.
		\end{equation}
		Furthermore,
		\begin{equation}\label{priori estimate q convolution}
			\tilde{\mathbb{E}}\int_0^T\left\|K\ast\left(\tilde{u}^2+\frac 1 2\overline{q^2}\right)(t)\right\|^p_{L^\infty(\mathbb{S})}\,{\rm d}t\le C.
		\end{equation}
	\end{enumerate}
\end{lemm}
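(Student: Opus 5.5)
The plan is to transfer every bound from the original probability space to the new one through equality of laws. After that, the estimates for the limits follow from lower semicontinuity of norms under the a.s. convergences of Proposition~\ref{SJ representations}.

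\textbf{Part (1).} The law of $\tilde X_n$ equals that of $X_n$. The sets
\[
\{f\in C_tL^2_x:\ f\in L^2([0,T];H^m)\},\qquad \{(f,g):\ g=\partial_x f\ \text{a.e.}\}
\]
are Borel in the relevant (quasi-Polish) product spaces; this is the Kuratowski--Lusin--Souslin point. Since $u_{\varepsilon_n}$ lies in these sets a.s., so does $\tilde u_n$, and in particular $\tilde q_n=\partial_x\tilde u_n$.

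The functionals $f\mapsto\|f\|_{L^\infty_tH^1}$ and $g\mapsto\|g\|_{L^{2r}_{t,x}}$ are Borel (lower semicontinuous) on $C_tL^2_x$ and $L^{2r}_{t,x}\text{-}w$ respectively. Hence
\[
\tilde{\mathbb E}\|\tilde u_n\|^{p_0}_{L^\infty_tH^1}=\mathbb E\|u_{\varepsilon_n}\|^{p_0}_{L^\infty_tH^1},
\]
which is bounded by Theorem~\ref{viscous H^m uniformly bound}. The bound on $\tilde q_n$ in $L^{2r}_{t,x}$ likewise comes from Proposition~\ref{viscous q higher}.

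For the identification $\partial_x\tilde u=\tilde q$, pass to the limit in
\[
\int\!\!\int \tilde u_n\,\partial_x\varphi\,{\rm d}x\,{\rm d}t=-\int\!\!\int\tilde q_n\,\varphi\,{\rm d}x\,{\rm d}t,
\]
using the strong convergence $\tilde u_n\to\tilde u$ in $C_tL^2_x$ and the weak convergence $\tilde q_n\rightharpoonup\tilde q$ in $L^{2r}_{t,x}$. Taking a countable dense family of test functions $\varphi$ gives a single full-measure event $\tilde F$.

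\textbf{Part (2).} This follows from (1) exactly as in Lemma~\ref{viscous estimate infty}. One uses $H^1\hookrightarrow L^\infty$ and Young's inequality
\[
\|K\ast g\|_{L^\infty}\le\|K\|_{L^\infty}\|g\|_{L^1},
\]
applied with $g=\tilde u_n^2+\tfrac12\tilde q_n^2$. This gives a pathwise bound by $\|\tilde u_n\|^2_{L^\infty_tH^1}$, and taking the $p$-th moment with $2p\le p_0$ yields the estimate.

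\textbf{Part (3).} Weak-$*$ lower semicontinuity handles the limit. Almost surely, $\tilde u_n$ is bounded in $L^\infty_tH^1$ along the sequence (after passing to $\omega$-dependent subsequences if needed) and $\tilde u_n\to\tilde u$ in $C_tL^2_x$. It follows that
\[
\|\tilde u\|_{L^\infty_tH^1}\le\liminf_n\|\tilde u_n\|_{L^\infty_tH^1},
\]
and Fatou's lemma gives the moment bound. Weak continuity in $H^1$ follows because $\tilde u\in C_tL^2_x\cap L^\infty_tH^1$ and $L^2$-continuity plus $H^1$-boundedness imply $H^1$-weak continuity. The bounds for $\tilde q$ follow in the same way from $\tilde q=\partial_x\tilde u$ and from weak lower semicontinuity of the $L^{2r}$ norm.

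\textbf{Part (4).} This is the main obstacle, since $\overline{q^2}$ is only a limit in $L^r(L^r_w)$. The convergence $\tilde q_n^2\to\overline{q^2}$ in $L^r(L^r_w)$ implies weak convergence in $L^r_{t,x}$, so
\[
\|\overline{q^2}\|_{L^r_{t,x}}\le\liminf_n\|\tilde q_n^2\|_{L^r_{t,x}},
\]
and Fatou's lemma with (1) gives the first bound. For the $H^{-1}$ bound, use $L^1(\mathbb S)\hookrightarrow H^{-1}(\mathbb S)$ in one dimension to get $\|\tilde q_n^2(t)\|_{H^{-1}}\lesssim\|\tilde q_n(t)\|^2_{L^2}$. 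Then $\tilde q_n^2$ is bounded in $L^p([0,T];H^{-1})$ and converges weakly to $\overline{q^2}$ there (test against $\psi\in L^{p'}_t H^1_x$). Lower semicontinuity, Fatou's lemma and $2p\le p_0$ conclude.

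For \eqref{priori estimate q convolution}, I would use that $\overline{q^2}\ge0$ as a weak limit of nonnegative functions. Then
\[
\big\|K\ast\big(\tilde u^2+\tfrac12\overline{q^2}\big)(t)\big\|_{L^\infty}\le\|K\|_{L^\infty}\Big(\|\tilde u(t)\|_{L^2}^2+\tfrac12\int_{\mathbb S}\overline{q^2}(t)\,{\rm d}x\Big).
\]
For a.e.\ $t$, the last integral is bounded by $\liminf_n\|\tilde q_n\|_{L^\infty_tL^2}^2$; this follows from weak lower semicontinuity of $t\mapsto\int_{\mathbb S}\tilde q_n^2\,{\rm d}x$ tested against nonnegative time functions. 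Raising to the $p$-th power, integrating in time and taking expectations via Fatou's lemma yields the claim.
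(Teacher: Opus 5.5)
Your proposal is correct. Parts (1)--(2) follow the paper: equality of joint laws with the Kuratowski--Lusin--Souslin point, a countable family of test functions to get $\partial_x\tilde u=\tilde q$ on one full-measure event, and Young's inequality for $K\ast$. Parts (3)--(4), however, take a genuinely shorter route.

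For the bound on $\tilde u$, the paper extracts a weak limit $v$ of $\tilde u_n$ in $L^{p_0}(\tilde\Omega;L^{\bar r}([0,T];H^1(\mathbb S)))$. It bounds $v$ by weak lower semicontinuity, sends $\bar r\to\infty$ by monotone convergence, and then identifies $v=\tilde u$ through Vitali's theorem and separability. You instead use that $f\mapsto\sup_t\|f(t)\|_{H^1}$ is lower semicontinuous on $C_tL^2_x$, being a supremum of the continuous maps $f\mapsto\int_{\mathbb S}f(t)(1-\partial_x^2)\phi\,{\rm d}x$. So the bound holds pathwise and Fatou's lemma concludes, with no identification step and no $\omega$-dependent subsequences.

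For $\tilde u\in C([0,T];H^1(\mathbb S)-w)$, the paper uses the tightness compact sets $K_L$ and a $\limsup$-of-events argument. This gives the stronger fact $\tilde u_n\to\tilde u$ in $C_tH^1_x-w$ a.s., which it later quotes in Lemma \ref{continuity t=0}. Your Strauss-type argument ($C_tL^2_x\cap L^\infty_tH^1_x\subset C_tH^1_x-w$) proves exactly what the lemma states. The only consequence used later, $\|\tilde u(t)\|_{H^1}\le\liminf_n\|\tilde u_n(t)\|_{H^1}$, also follows from $L^2$ convergence.

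In (4) the paper only sketches a compactness argument in $L^r(\tilde\Omega\times[0,T]\times\mathbb S)$ plus ``H\"older, Young and interpolation''. Your lower-semicontinuity argument is more explicit. For \eqref{priori estimate q convolution}, your use of $\overline{q^2}\ge0$ together with $\int_{\mathbb S}\overline{q^2}(t)\,{\rm d}x\le\liminf_n\|\tilde q_n\|^2_{L^\infty_tL^2_x}$ is what is really needed. (Alternatively, use $\|K\ast f\|_{L^\infty}\le\|K\|_{H^1}\|f\|_{H^{-1}}$ with your $H^{-1}$ bound.) An $L^r$ bound on $\overline{q^2}$ alone reaches only $p\le r$.

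Two minor points. First, pathwise you do not know that $\tilde q_n^2$ is bounded in $L^r_{t,x}$ or in $L^p_tH^{-1}_x$; you only know this in expectation. So instead of ``weak convergence'', say that these norms are suprema of functionals $z\mapsto\int_0^T\psi(t)\int_{\mathbb S}\phi\,z\,{\rm d}x\,{\rm d}t$, which are continuous on $L^r(L^r_w)$. Lower semicontinuity is all your Fatou argument uses.

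Second, in (2) your argument bounds $P[\tilde u_n]$ in $L^p(\tilde\Omega)$ only for $p\le p_0/2$, not with the $p_0$-th moment written in the statement. That is the correct version (compare Lemma \ref{viscous estimate infty}). Since $K\ge 1/(2\sinh\pi)$, one has $\sup_t\|P[\tilde u_n](t)\|_{L^\infty}\gtrsim\|\tilde u_{0,n}\|_{H^1}^2$. A uniform $p_0$-th moment would therefore require $2p_0$ moments of the initial data, and the paper's one-line proof does not supply it either.
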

\begin{proof}
	\textbf{(1)} Since the continuous injection from $C([0,T];H^1(\mathbb{S}))\hookrightarrow C([0,T];L^2(\mathbb{S}))$ and $L^{2r}([0,T]\times\mathbb{S})\hookrightarrow L^{2r}([0,T]\times\mathbb{S})-w$, the Kuratowski–Lusin–Souslin theorem \cite{Ondrejat2011,Ondrejat2010} and the equality of laws implies the a priori estimates given in (1).
	
	Let $\{\Psi_j\}_{j=1}^\infty$ be a countable dense subset of $C([0,T];C^1(\mathbb{S}))$, denote $G_j(u,q)=\int_0^T\int_\mathbb{S}u\,\partial_x\Psi_j\,{\rm d}x\,{\rm d}t+\int_0^T\int_{\mathbb{S}}q\,\Psi_j\,{\rm d}x\,{\rm d}t$. Since $G_j$ is continuous, by the equality of joint laws,
	$$\tilde{\mathbb{P}}\left(\left\{G_j(\tilde{u}_n,\tilde{q}_n)=0\right\}\right)=\mathbb{P}\left(\left\{G_j(u_{\varepsilon_n},q_{\varepsilon_n}=0)\right\}\right)=1.$$
	
	Due to the pairs $(n,j)$ is countable, there exists a set $\tilde{\mathcal{G}}$ of full $\tilde{\mathbb{P}}$-measure such that $G_j(\tilde{u}_n(\tilde{{\omega}}),\tilde{q}_n(\tilde{\omega}))$ $=0$ for all $(n,j)$, $\tilde{\omega}\in\tilde{\mathcal{G}}$. According to the convergence \eqref{SJ convergence}, we can deduce that
	$$\int_0^T\int_{\mathbb{S}}\Psi\,\tilde{q}\,{\rm d}x\,{\rm d}t=-\int_0^T\int_{\mathbb{S}}\partial_x\Psi\,\tilde{u}\,{\rm d}x\,{\rm d}t,\quad\tilde{\mathbb{P}}-{\rm a.s.}$$
	
	Since $u_{\varepsilon_n}$ is the strong solution of viscous SPDE \eqref{CH viscous u} with initial data $u_{\varepsilon_n}(0)=z_n$, thus, $u_{\varepsilon_n}\in L^2([0,T];H^m(\mathbb{S}))$ for any $m> 1$. Moreover, by the equality of laws, $\tilde{u}_n$ also belongs to $L^2([0,T];H^m(\mathbb{S}))$, and $\tilde{q}_n$ is the weak $x$-derivative of $\tilde{u}_n$. Since $H^m(\mathbb{S})\hookrightarrow C^{m-1/2}(\mathbb{S})$, the $x$-derivative of $\tilde{u}_n$ is classical, thus, $\tilde{q}(t,x)=\partial_x\tilde{u}_n(t,x)$ pointwise in $(t,x)$, $\tilde{\mathbb{P}}$-a.s.
	
	\noindent\textbf{(2)} This can be obtained by the standard H\"{o}lder and Young inequalities, and estimates in (1).
	
	\noindent\textbf{(3)} By the estimate in (1), $\tilde{\mathbb{E}}\Vert \tilde{u}_n\Vert_{L^{\bar{r}}([0,T];H^1(\mathbb{S}))}^{p_0}\le C T^{p_0/\bar{r}}$, for any $\bar{r}\in[1,\infty)$,i.e., $\tilde{u}_n$ uniformly bounded in $L^{p_0}\left(\tilde{\Omega};L^{\bar{r}}([0,T];H^1(\mathbb{S}))\right)$ for any finite $\bar{r}$. Since $L^{p_0}\left(\tilde{\Omega};L^{\bar{r}}([0,T];H^1(\mathbb{S}))\right)$ is reflexive, according to Kakutani' theorem, up to a subsequences
	\begin{equation}
		\tilde{u}_n\rightharpoonup v^{(\bar{r})}\quad {\rm in}\ L^{p_0}\left(\tilde{\Omega};L^{\bar{r}}([0,T];H^1(\mathbb{S}))\right).
	\end{equation}
	Besides, for any $\bar{r}<\infty$,
	$$\tilde{\mathbb{E}}\left\Vert v\right\Vert_{L^{\bar{r}}([0,T];H^1(\mathbb{S}))}^{p_0}\le\liminf_{n\rightarrow\infty}\tilde{\mathbb{E}}\left\Vert\tilde{u}_n\right\Vert_{L^{\bar{r}}([0,T];H^1(\mathbb{S}))}^{p_0}\le C T^{p_0/\bar{r}}.$$
	By the monotone convergence theorem,
	$$\tilde{\mathbb{E}}\lim_{\bar{r}\rightarrow\infty}\Vert v\Vert_{L^{\bar{r}}([0,T];H^1(\mathbb{S}))^{p_0}}\le C.$$
	Since the $L^r$ norm depends continuously on the index $r$ for any measurable function $f : [0,T] \to H^1(\mathbb{S})$ for which  $\lim_{r \to \infty} \left( \int_0^T \| f(t) \|^r_{H^1(\mathbb{S})} \, dt \right)^{1/r} < \infty,$
	it follows that  
	\begin{equation}\label{v estimate}
		\tilde{\mathbb{E}} \| v \|^{p_0}_{L^\infty([0,T]; H^1(\mathbb{S}))} \le C
		\text{ and }  
		v \in L^\infty([0,T]; H^1(\mathbb{S})), \quad \tilde{\mathbb{P}}\text{-a.s.}
	\end{equation}

	Consider test functions:
	$$\phi(\tilde{\omega},t,x)=\psi(\tilde{\omega})\varphi(t,x),\quad \psi\in L^\infty(\tilde{\Omega}),\varphi\in L^{\bar{r}'}([0,T];L^2(\mathbb{S})).$$
	Therefore,
	$$\tilde{\mathbb{E}}\left(\psi\int_0^T\int_\mathbb{S}\varphi(t,x)\left(\tilde{u}_n(t,x)-v(t,x)\right)\,{\rm d}x\,{\rm d}t\right)\rightarrow 0.$$
	On the other hand, by \eqref{SJ convergence}
	$$\psi\int_0^T\int_{\mathbb{E}}\varphi(t,x)\left(\tilde{u}_n(t,x)-\tilde{u}(t,x)\right)\,{\rm d}x\,{\rm d}t\rightarrow0,\quad\tilde{\mathbb{P}}-{\rm a.s.}$$
	then by the estimate of $\tilde{u}_n$ claimed in (1) and Vitali's convergence theorem, for any $1\le p<p_0$,
	$$\tilde{\mathbb{S}}\left|\psi\int_0^T\int_{\mathbb{S}}\varphi(t,x)\left(\tilde{u}_n(t,x)-\tilde{u}(t,x)\right)\,{\rm d}x\,{\rm d}t\right|^p\rightarrow0,$$
	consequently,
	\begin{equation}\label{tilde u=v}
		\tilde{\mathbb{E}}\left(\psi\int_0^T\int_{\mathbb{S}}\varphi(t,x)\left(\tilde{u}(t,x)-v(t,x)\right)\,{\rm d}x\,{\rm d}t\right)=0.
	\end{equation}
	
	Denote $I_z(\varphi)=\int_0^T\int_{\mathbb{S}}\varphi(t,x)\,z(\cdot,t,x)\,{\rm d}x\,{\rm d}t$, \eqref{v estimate} and \eqref{tilde u=v} yield that for any fixed $\varphi$, $I_{\tilde{u}}(\varphi)=I_v(\varphi)\in L^p(\tilde{\Omega})$, for any $1\le p<p_0$. For any $1<\bar{r}'<\infty$ there exists a full $\tilde{\mathbb{P}}$-measure set on which $I_{\tilde{u}}(\varphi)=I_v(\varphi)$ holds for all $\varphi\in L^{\bar{r}'}([0,T];L^2(\mathbb{S}))$. By separability of $L^{\bar{r}'}([0,T];L^2(\mathbb{S}))$, we can deduce that $\tilde{u}=v$, $\tilde{\mathbb{P}}\otimes {\rm d}t\otimes{\rm d}x$-almost everywhere. Therefore, we conclude that
	$$\tilde{\mathbb{E}} \| \tilde{u} \|^{p_0}_{L^\infty([0,T]; H^1(\mathbb{S}))} \le C.$$
	
	By Lemma \ref{tightness}, for any $L\in\mathbb{N}$, there exists a compact set $K_L$ such that $\inf_{n\in\mathbb{N}}\mathbb{P}\left(\left\{u_{\varepsilon_n}\in K_L\right\}\right)>1-1/L$. Thus, by the equality of laws,
	$$ \inf_{n\in\mathbb{N}}\tilde{\mathbb{P}}\left(\left\{\tilde{u}_n\in K_L\right\}\right)>1-1/L.$$
	Pick an arbitrary subsequence $\{n_j\}_{j\in\mathbb{N}}$ and set $A_{j,L}=\left\{\tilde{u}_{n,j}\in K_L\right\}$. Then
	$$\tilde{\mathbb{P}}\left(\limsup_j A_{j,L}\right)>1-1/L,$$
	where $\limsup_j A_{j,L}=\cup_{J=1}^\infty\cap_{j>J} A_{j,L}$. 
	
	Introduce the $C_t L^2_x$ convergence set $F=\{\tilde{\omega}\in\tilde{\Omega}:\tilde{u}_n(\tilde{\omega})\rightarrow\tilde{u}(\tilde{\omega})\ {\rm in}\ C_t L^2_x\}$. According to \eqref{SJ convergence}, $\tilde{\mathbb{P}}(F)=1$. Therefore,
	$$\tilde{\mathbb{P}}\left(F\cap\limsup_j A_{j,L}\right)>1-1/L.$$
	
	Select an arbitrary $\tilde{\omega}_0 \in F \cap \limsup_{j} A_{j,L}$, by construction, there is a subsequence $\tilde{u}_{n_{j_k}}(\tilde{\omega}_0) \in K_L$ for all $k \in \mathbb{N}$. Moreover,
	$
	\tilde{u}_{n_{j_k}}(\tilde{\omega}_0) \rightarrow \tilde{u}(\tilde{\omega}_0) \quad \text{in } C_t L_x^2
	$
	and whence 
	$
	\tilde{u}_{n_{j_k}}(\tilde{\omega}_0) \rightarrow\tilde{u}(\tilde{\omega}_0) \quad \text{in } C_t H_x^1-w.
	$
	Since $\{n_j\}_{j \in \mathbb{N}}$ was arbitrary, and $K_L$ is metrisable in $C_t H_x^1-w$, then
	$$
	\tilde{u}_n(\tilde{\omega}_0) \rightarrow \tilde{u}(\tilde{\omega}_0) \quad \text{in } C_t H_x^1-w
	\quad
	\text{and}\quad\tilde{u}(\tilde{\omega}_0) \in K_L.$$ 
	In other words, the convergence set 
	$$
	M_L = \left\{ \tilde{\omega} \in \tilde{\Omega} : \tilde{u}_n(\tilde{\omega}) \rightarrow \tilde{u}(\tilde{\omega}) \text{ in } C_t H_x^1-w,\; \tilde{u}(\tilde{\omega}) \in K(a_L) \right\}
	$$
	satisfies $M_L \supseteq F \cap \limsup_j A_{j,L}$, and so $\tilde{P}(M_L) > 1 - 1/L$, for any $L \in \mathbb{N}$. Denote 
	$$
	M = \left\{ \tilde{\omega} \in \tilde{\Omega} : \tilde{u}_n(\tilde{\omega}) \rightarrow \tilde{u}(\tilde{\omega}) \text{ in } C_t H_x^1-w \right\}.
	$$
	Then $M \supseteq M_L \supseteq F\cap\limsup_j A_{j,L} \in \tilde{\mathcal{F}}$, for any $L \in \mathbb{N}$, and therefore we obtain 
	$
	M \supseteq \bigcup_{L=1}^\infty \bigg( F \cap $ 
	$\limsup_j A_{j,L} \bigg) \in \tilde{\mathcal{F}}.
	$
	This implies that 
	$$
	\tilde{P}\left( \bigcup_{L=1}^\infty \left( F \cap \limsup_j A_{j,L} \right) \right) \ge \tilde{P}\left( F \cap \limsup_j A_{j,L} \right) > 1 - \frac{1}{L},
	$$
	for each $L \in \mathbb{N}$, i.e., $\tilde{P}\left( \bigcup_{L=1}^\infty \left( F \cap \limsup_j A_{j,L} \right) \right) = 1$. By the completeness of $(\tilde{\Omega}, \tilde{\mathcal{F}}, \tilde{P})$, it follows that $M \in \tilde{\mathcal{F}}$ and $\tilde{P}(M) = 1$; thus $\tilde{u} \in C_t H_x^1-w$, $\tilde{P}$–a.s.
	
	The remaining claims in (3) can be deduce from the estimate of $\tilde{u}$ and the corresponding inequality in (1), here we omit the relevant computational proofs.

	\noindent\textbf{(4)} Since $\tilde{q}_n^2$ uniformly bounded in $L^r(\tilde{\Omega}\times[0,T]\times\mathbb{S})$, by compactness argument we can assume $\overline{q^2}L^r(\tilde{\Omega}\times[0,T]\times\mathbb{S})$, according to the embedding $L^r_w(\mathbb{S})\hookrightarrow H^{-1}(\mathbb{S})$ and
	$\Vert f\Vert_{H^{-1}(\mathbb{S})}\le \Vert f\Vert_{L^2(\mathbb{S})}$, this can also be obtained by the standard H\"{o}lder and Young inequalities, interpolation inequality and estimates in (3).
\end{proof}

We denote by $\sum(E)$ the smallest $\sigma$-algebra containing a collection $E$ of subsets of $\tilde{\Omega}$. Given $\{\tilde{\mathcal{F}_t^n}\}_{t\in[0,T]}$ and $\{\tilde{\mathcal{F}}_t\}_{t\in[0,T]}$ to be the $\tilde{\mathbb{P}}$-argumented canonical filtrations of the processes $\tilde{X}_n$ and $\tilde{X}$, respectively. More specifically, for example, the filtration and corresponding stochastic basis of $\tilde{X}$ can be written in the following form:
\begin{equation}\label{filtration}
	\begin{aligned}
		\tilde{F}_t&=\sum\left(\sum(\tilde{X}|[0,t])\cup\left\{N\in\tilde{F}:\tilde{\mathbb{P}}(N)=0\right\}\right),\ t\in [0,T],\\
		&{\rm and}\  \tilde{S}=(\tilde{\Omega},\tilde{F},\{\tilde{\mathcal{F}_t}\}_t\in[0,T],\tilde{\mathbb{P}}).
	\end{aligned}
\end{equation}
 Similarly, we can also define $\{\tilde{\mathcal{F}_t^n}\}_t\in[0,T]$ and $\tilde{S}^n$.

Next, by L\'{e}vy's characterization theorem, under the given filtration, $\tilde{W}_n$ and $\tilde{W}$ are verified to be a Wiener processes relative to the filtrations $\{\tilde{\mathcal{F}_t^n}\}_{t\in[0,T]}$ and $\{\tilde{\mathcal{F}}_t\}_{t\in[0,T]}$, respectively.

\begin{lemm}[Wiener process]\label{Wiener process}
	The a.s. representations $\tilde{W}_n$ and $\tilde{W}$ from Proposition \ref{SJ representations} are Wiener processes defined on the stochastic basis $\tilde{\mathcal{S}}^n$ and $\tilde{\mathcal{S}}$ respectively, cf. \eqref{filtration}.
\end{lemm}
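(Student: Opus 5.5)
We argue via Lévy's characterization of Brownian motion: a continuous, square-integrable $\{\tilde{\mathcal F}_t\}$-adapted process starting at $0$ with $\tilde W(t)$ and $\tilde W(t)^2-t$ both martingales is a standard Wiener process. Adaptedness and continuity are immediate. By construction \eqref{filtration}, $\tilde W_n$ is adapted to $\{\tilde{\mathcal F}^n_t\}$ and $\tilde W$ to $\{\tilde{\mathcal F}_t\}$. Moreover $\tilde W_n\in C_t$, and $\tilde W\in C_t$ as an a.s. limit in $C_t$. Since $\mathcal L(\tilde W_n)=\mathcal L(W)$, each $\tilde W_n$ is a Brownian motion in law; hence $\tilde W_n(0)=0$ and $\tilde{\mathbb E}|\tilde W_n(t)|^4\lesssim t^2$ uniformly in $n$.

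\textbf{Martingale property for $\tilde W_n$.} Fix $0\le s<t\le T$, times $0\le s_1<\dots<s_k\le s$, and a bounded continuous function $h$ on the finite-dimensional (or $\mathcal X$-restricted) path data $X|_{[0,s]}$. Since $u_{\varepsilon_n}$, $z_n$ and the other components of $X_n$ restricted to $[0,s]$ are $\mathcal F_s$-measurable, and $W$ is an $\{\mathcal F_t\}$-Wiener process on the original basis, we have
\[
\mathbb E\bigl[h(X_n|_{[0,s]})\,(W(t)-W(s))\bigr]=0,\qquad \mathbb E\bigl[h(X_n|_{[0,s]})\,(W(t)^2-W(s)^2-(t-s))\bigr]=0 .
\]
These are expectations of measurable functionals of $X_n$. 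Equality of joint laws transfers them to $\tilde X_n$, $\tilde W_n$. A monotone class argument then yields the martingale identities with respect to $\tilde{\mathcal F}^n_s$, so Lévy's theorem applies.

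\textbf{Passage to the limit.} The same identities must hold for $\tilde X,\tilde W$. We use the a.s. convergence $\tilde X_n\to\tilde X$ in $\mathcal X$ from Proposition \ref{SJ representations} together with uniform integrability. The fourth-moment bound for $\tilde W_n$ makes $h(\tilde X_n|_{[0,s]})(\tilde W_n(t)-\tilde W_n(s))$ and its quadratic analogue uniformly integrable, so Vitali's theorem lets us pass to the limit. Finally, the augmentation by null sets in \eqref{filtration} does not affect conditional expectations.

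\textbf{Main obstacle.} The hard step is the restriction map $X\mapsto X|_{[0,s]}$. Many components live in non-metrizable weak spaces such as $L^{2r}_{t,x}$-w or $L^r(L^r_w)$, whose "restriction to $[0,s]$" is not pointwise in time. There are two concrete difficulties:
\begin{enumerate}
\item We must choose test functionals $h$ that are continuous on $\mathcal X$ and generate $\tilde{\mathcal F}_s$. We take $h$ of the form $h\bigl(\tilde u(s_i),\tilde W(s_i),\int_0^s\!\int_{\mathbb S}\tilde F\,\varphi\,dx\,dt\bigr)$, with $\varphi$ smooth and supported in $[0,s]\times\mathbb S$. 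These pairings are continuous in the weak topologies, and they generate the canonical $\sigma$-algebra of the restrictions; we would justify the generation claim via Lemma \ref{map of quasi Polish} and the countable separating families of quasi-Polish spaces.
\item Continuity at the endpoint must be handled. If pairings near $t=s$ cause trouble, first prove the identity for $s'<s$, then let $s'\uparrow s$, using continuity of $\tilde W$ and right-continuity of the filtration generated.
\end{enumerate}
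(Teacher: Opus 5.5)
Your proposal is correct and follows essentially the paper's route: you transfer the martingale identities for $W$, tested against bounded continuous functionals of $X_n|_{[0,s]}$, to $\tilde X_n$ by equality of laws, pass to the limit via the a.s.\ convergence in $\mathcal{X}$ and Vitali's theorem, and conclude with L\'evy's characterization. You are in fact a bit more careful than the paper. The paper only checks the martingale property of $\tilde W$, leaving implicit that $\langle \tilde W\rangle_t=t$ because $\tilde W$ has Wiener law, and it takes the generating test functionals for granted. One small correction: on the $L^{p_1}(L^{p_2}_w)$ factors you should use tensor-product tests $\varphi=\psi(t)\phi(x)$, because a general smooth $\varphi(t,x)$ does not give a continuous pairing in that topology.
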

\begin{proof}
	Since $\tilde{W}_n$ and $W$ share the same law, and $W(t)$ is $\mathcal{F}_t$-measurable and $W(t)-W(s)$ is independent of $\mathcal{F}_s$ for all $s<t$, thus, it is easy to check that $\tilde{W}_n$ is a Wiener process on $\tilde{S}^n$. By L\'{e}vy's characterization theorem \cite{Karatzas1991}, it remains to prove that $\tilde{W}$ is a $\tilde{F}_t$ martingale.
	
	Let $\gamma:\mathcal{X}|_{[0,s]}\rightarrow[0,1]$ be a continuous function. Clearly, $\mathcal{X}|_{[0,s]}$ is quasi-Polish space, and the restriction operator $\mathcal{R}_s:Z\rightarrow Z|_{[0,s]}$ is continuous. Hence, $X_n|_{[0,s]}=\mathcal{R}_s\circ X_n$ is $\mathcal{F}_s/\otimes_{l\in\mathbb{N}}\mathcal{B}_{\mathcal{X}_l}|_{[0,s]}$ measurable, where the conutable vector $X_n$ defined in \eqref{representation nonlinear} and $\mathcal{B}_{\mathcal{X}_l}|_{[0,s]}$ denotes the Borel $\sigma$-algebra of the restricted space $\mathcal{X}_l|_{[0,s]}$.
	
	According to the equality of laws $\tilde{W}_n$ and $W$, for any $0\le s\le t\le T$ and for any $n\in \mathbb{N}$,
	$$\tilde{\mathbb{E}}\left[\gamma\left(\tilde{X}_n|_{[0,s]}\right) \left(\tilde{W}_n(t)-\tilde{W}_n(s)\right)\right]=\mathbb{E}\left[\gamma\left(X_n|_{[0,s]}\right) \left(W(t)-W(s)\right)\right].$$
	Since for any finite $p$, by the BDG inequality, $\tilde{\mathbb{E}}\Vert\tilde{W}_n\Vert_{C([0,T])}^p=\mathbb{E}\|W\|_{C([0,T])}\le C(T,p)$.
	
	Hence, by Vitali's convergence theorem \cite{Kuo2006} and \eqref{SJ convergence}, we can pass to the limit $n\rightarrow\infty$ in the left-hand side of the above inequality, $$\tilde{\mathbb{E}}\left[\gamma\left(\tilde{X}|_{[0,s]}\right)\left(\tilde{W}(t)-\tilde{W}(s)\right)\right]=0.$$
\end{proof}

\subsection{Martingale solutions}
\quad Next, we claim that the Skorokohod-Jakubowski representation $\tilde{u}_n$ satisfies the same SPDE of $u_{\varepsilon_n}$ on the new probability space. Here we are going to a general method discovered by Brze\'{z}niank and Ondrej\'{a}t \cite{Ondrejat2011,Ondrejat2010}, and used in several other works  \cite{Breit2016,Debussche2016,Holden2024JDE,Hofmanov2013,Tang2018}.

To facilitate the proof, we first define the functional notation required in the proof as follows, defined for $(u,v,z)\in\mathcal{X}_u\times\mathcal{X}_{q^2}\times\mathcal{X}_{u_0}$ and $t\in [0,T]$:
\begin{equation}\label{Mn}
	\begin{array}{c}
		M_n[u,v,z](t)=\int_{\mathbb{S}}\varphi\,u(t)\,{\rm d}x-\int_{\mathbb{S}}\varphi\,z\,{\rm d}x-\varepsilon_n\int_0^t\int_{\mathbb{S}}\partial_x^2\varphi\,u\,{\rm d}x\,{\rm d}s-\int_0^t\int_{\mathbb{S}}\partial_x\varphi\left(\frac 1 2 u^2+P[u,v]\right),\\[2ex]
		M[u,v,z](t)=\int_{\mathbb{S}}\varphi\,u(t)\,{\rm d}x-\int_{\mathbb{S}}\varphi\,z\,{\rm d}x-\int_0^t\int_{\mathbb{S}}\partial_x\varphi\left(\frac 1 2 u^2+P[u,v]\right),
	\end{array}
\end{equation}
\begin{equation}\label{Dn}
		D_n[u,v,z](t)=M_n[u,v,z](t)-\int_0^t\int_\mathbb{S}\varphi\,u\,{\rm d}x\,{\rm d}s,\quad D[u,v,z]=M[u,v,z](t)-\int_0^t\int_\mathbb{S}\varphi\,u\,{\rm d}x\,{\rm d}s,
\end{equation}
\begin{equation}\label{R}
	R[u](t)=\int_0^t\left|\int_\mathbb{S}\varphi\, u\,{\rm d}x\right|^2\,{\rm d}t,
\end{equation}
\begin{equation}\label{N}
	N[u](t)=\int_0^t\int_\mathbb{S}\varphi\, u\,{\rm d}x\,{\rm d}t,
\end{equation}
here $P[u,v]=K\ast(u^2+\frac 1 2 v)$, and the test function $\varphi\in C^\infty(\mathbb{S})$ is fixed.

Besides, we denote $\langle X\rangle(t)$ as the quadratic variation $\langle X,X\rangle(t)$ of a process $X$, $\langle X,Y\rangle(t)$ for the co-variation between two processes $X$ and $Y$.

\begin{lemm}[SPDE of $\tilde{u}_n$]\label{SPDE tilde un}
	Let $\tilde{u}_n$, $\tilde{q}_n$, $\tilde{W}_n$, $\tilde{u}_{0,n}$ be the Skorokhod-Jakubowski representations from Poprosition \ref{SJ representations}. Then for any $\varphi\in C^\infty(\mathbb{S})$ and $t\in[0,T]$,
	\begin{equation}\label{equation tilde un}
		\begin{aligned}
			&\int_\mathbb{S}\varphi\,\tilde{u}_n\,{\rm d}x-\int_{\mathbb{S}}\varphi\,\tilde{u}_{0,n}\,{\rm d}x\\
			=&\int_0^t\int_\mathbb{S}\partial_x\varphi\left(\frac 1 2\tilde{u}_n^2+\tilde{P}_n\right)\,{\rm d}x\,{\rm d}t+\varepsilon_n\int_0^t\int_\mathbb{S}\partial_x^2\varphi\,\tilde{u}_n\,{\rm d}x\,{\rm d}t+\int_0^t\int_\mathbb{S}\varphi\,\tilde{u}_n\,{\rm d}x\,{\rm d}\tilde{W}_n,
		\end{aligned}		
	\end{equation}
	$\tilde{\mathbb{P}}$-a.s., where $\tilde{P}_n=P[\tilde{u}_n,\tilde{q}_n]$.
\end{lemm}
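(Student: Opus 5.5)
We prove the identity by the Brze\'zniak--Ondrej\'at martingale-identification method, using the functionals $M_n$, $D_n$, $R$, $N$ from \eqref{Mn}--\eqref{N} with $v=q^2$. On the original space, $u_{\varepsilon_n}$ is a pathwise solution of \eqref{CH viscous u} with $u_{\varepsilon_n}(0)=z_n$. Testing \eqref{CH integral viscous u} against $\varphi$ (after integrating the viscous term by parts twice) shows that
\[
M_n[u_{\varepsilon_n},q_{\varepsilon_n}^2,z_n](t)=\int_0^t\int_\mathbb{S}\varphi\,u_{\varepsilon_n}\,{\rm d}x\,{\rm d}W .
\]
This process is a square-integrable $\{\mathcal{F}_t\}$-martingale by the moment bounds of Theorem \ref{viscous H^m uniformly bound}. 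Its quadratic variation is $R[u_{\varepsilon_n}]$, and its covariation with $W$ is $N[u_{\varepsilon_n}]$. Equivalently, the three processes
\[
M_n,\qquad M_n^2-R[u_{\varepsilon_n}],\qquad M_nW-N[u_{\varepsilon_n}]
\]
are martingales.

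For $0\le s<t\le T$ and any continuous bounded $\gamma:\mathcal{X}|_{[0,s]}\to[0,1]$, we express each martingale property as a vanishing expectation; for instance
\[
\mathbb{E}\bigl[\gamma(X_n|_{[0,s]})\,(M_n(t)-M_n(s))\bigr]=0,
\]
and similarly for the other two processes. The integrands are Borel functionals of $X_n$. In particular $P[u,v]$ depends continuously on $(u,v)\in C_tL^2_x\times L^r(L^r_w)$, because $K$ is Lipschitz, so convolution turns weak $L^r$ convergence into uniform convergence. Since $\tilde X_n$ and $X_n$ have the same law (Proposition \ref{SJ representations}), the same three identities hold for
\[
\tilde M_n:=M_n[\tilde u_n,\tilde q_n^2,\tilde u_{0,n}]
\]
with $R[\tilde u_n]$, $N[\tilde u_n]$, $\tilde W_n$ and $\gamma(\tilde X_n|_{[0,s]})$. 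Integrability on $\tilde\Omega$ follows from Lemma \ref{priori estimate}(1)--(2), which give moments of order $p_0>4$ for $\|\tilde u_n\|_{L^\infty_tH^1_x}$ and $\|\tilde P_n\|_{L^\infty}$. This makes $\tilde M_n^2$ and $\tilde M_n\tilde W_n$ integrable. Since $\gamma$ is arbitrary and $\tilde{\mathcal F}^n_s$ is generated by $\tilde X_n|_{[0,s]}$ together with null sets, we conclude the following: $\tilde M_n$ is a continuous square-integrable $\tilde{\mathcal F}^n_t$-martingale with $\langle\tilde M_n\rangle=R[\tilde u_n]$ and $\langle\tilde M_n,\tilde W_n\rangle=N[\tilde u_n]$. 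By Lemma \ref{Wiener process}, $\tilde W_n$ is an $\tilde{\mathcal F}^n_t$-Wiener process.

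Set $\tilde I_n(t)=\int_0^t\int_\mathbb{S}\varphi\,\tilde u_n\,{\rm d}x\,{\rm d}\tilde W_n$, which is well defined since $\tilde u_n$ is adapted to $\tilde{\mathcal F}^n_t$ and bounded in $L^2$. We compute
\[
\langle\tilde M_n-\tilde I_n\rangle=\langle\tilde M_n\rangle-2\langle\tilde M_n,\tilde I_n\rangle+\langle\tilde I_n\rangle .
\]
Using $\langle\tilde M_n,\tilde W_n\rangle=N[\tilde u_n]$, the covariation term is
\[
\langle\tilde M_n,\tilde I_n\rangle(t)=\int_0^t\int_\mathbb{S}\varphi\,\tilde u_n\,{\rm d}x\;{\rm d}\langle\tilde M_n,\tilde W_n\rangle
=\int_0^t\Bigl|\int_\mathbb{S}\varphi\,\tilde u_n\,{\rm d}x\Bigr|^2{\rm d}s=R[\tilde u_n](t).
\]
Also $\langle\tilde I_n\rangle=R[\tilde u_n]$ and $\langle\tilde M_n\rangle=R[\tilde u_n]$, so $\langle\tilde M_n-\tilde I_n\rangle=R-2R+R=0$. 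Hence $\tilde M_n=\tilde I_n$ $\tilde{\mathbb{P}}$-a.s. for all $t$, which is \eqref{equation tilde un} after undoing the integration by parts of the viscous term.

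It remains to replace $P[\tilde u_n,\tilde q_n^2]$ by $P[\tilde u_n,(\partial_x\tilde u_n)^2]$, which holds because $\tilde q_n=\partial_x\tilde u_n$ pointwise by Lemma \ref{priori estimate}(1).

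The main obstacle is the second paragraph: checking that $M_n$, $R$, $N$ are Borel (ideally continuous) on the quasi-Polish product space, so that the equality of laws transfers the martingale identities. The delicate term is $\int_0^t\int\partial_x\varphi\,P[u,v]$ with $v$ taken in the weak topology $L^r(L^r_w)$. I would handle it via the Lipschitz kernel $K$ and measurability through the Kuratowski--Lusin--Souslin argument already used in Lemma \ref{priori estimate}. Uniform integrability is then supplied by the $p_0>4$ moments.
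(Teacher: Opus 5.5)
Your proposal is correct and is essentially the paper's proof. You transfer the martingale properties of $M_n$, $M_n^2-R$ and $M_nW-N$ to $\tilde\Omega$ through the equality of joint laws, read off $\langle\tilde M_n\rangle=R[\tilde u_n]$ and $\langle\tilde M_n,\tilde W_n\rangle=N[\tilde u_n]$, and conclude that $\tilde M_n-\int_0^\cdot\int_{\mathbb{S}}\varphi\,\tilde u_n\,{\rm d}x\,{\rm d}\tilde W_n$ is a continuous martingale with zero quadratic variation. Your explicit Kunita--Watanabe computation is what the paper compresses into its appeal to Doob--Meyer, and you rightly subtract the ${\rm d}\tilde W_n$-integral where the paper's definition of $D_n$ has a ${\rm d}s$ misprint.

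The pressure term you single out as the main obstacle is handled in the paper by moving the convolution onto the test function (using that $K$ is even):
\[
\int_0^t\int_{\mathbb{S}}\partial_x\varphi\,K\ast v\,{\rm d}x\,{\rm d}s=\int_0^t\int_{\mathbb{S}}(K\ast\partial_x\varphi)\,v\,{\rm d}x\,{\rm d}s .
\]
The right-hand side is controlled, uniformly in $t$, by a single seminorm of $L^r(L^r_w)$, which gives continuity of $M_n$ directly without any appeal to Lipschitz bounds on $K$.
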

\begin{proof}
	For notation brevity, let $\tilde{M}_n=M_n[\tilde{u}_n,\tilde{q}_n^2,\tilde{u}_{0,n}]$, $\tilde{D}_n=D_n[\tilde{u}_n,\tilde{q}_n^2,\tilde{u}_{0,n}]$, $\tilde{R}_n=R[\tilde{u}_n]$, and $\tilde{N}_n=N[\tilde{u}_n]$. 
	It is easy to claim that $M_n$, $D_n$, $R$ and $N$ are measurable on corresponding space defined above by checking the continuity of these maps and $\int_0^t\int_\mathbb{S}\varphi\,u\,{\rm dx}\,{\rm d} W$ is square integrable martingale under $\mathbb{P}$ on the corresponding stochastic basis $\mathcal{S}=(\Omega,\mathcal{F},\mathbb{P})$ for any $u\in\mathcal{X}_u$. Since the second condition above evidently holds, it suffices to prove the continuity of $M_n$, $R$ and $N$.
	
	Fix $u_i\in\mathcal{X}_u$, $v_i\in\mathcal{X}_{q^2}$ and $z_i\in\mathcal{X}_{u_0}$, $i=1,2$. By repeated applications of H\"{o}lder's inequality and integration by parts,
	$$\begin{aligned}
		&|M_n[u_1,v_1,z_1]-M_n[u_2,v_2,z_2]|\\
		\lesssim& \Vert u_1-u_2\Vert_{C([0,T];L^2(\mathbb{S}))}\Vert\varphi\Vert_{L^2(\mathbb{S})}+\Vert z_1-z_2\Vert_{L^2(\mathbb{S})}\Vert\varphi\Vert_{L^2(\mathbb{S})}+\varepsilon_n\Vert\partial_x^2\varphi\Vert_{L^2(\mathbb{S})}\Vert u_1-u_2\Vert_{L^2([0,T]\times\mathbb{S})}\\
		&+\Vert\varphi\Vert_{L^\infty(\mathbb{S})}\Vert u_1+u_2\Vert_{L^2([0,T]\times\mathbb{S})}\Vert u_1-u_2\Vert_{L^2([0,T]\times\mathbb{S})}+\left|\int_0^T\int_{\mathbb{S}}\varphi(-x)\ast \partial_x K(v_1-v_2)\,{\rm d}x\,{\rm dt}\right|,
	\end{aligned}
	$$
	since $\varphi(-x)\ast \partial_x K(v_1-v_2)\in L^{r'}([0,T]\times\mathbb{S})$ with $1/r+1/r'=1$ is easy to obtained, by the weak topology on $L^r([0,T]\times\mathbb{S})-w$ we can deduce that $M_n$ is continuous on $\mathcal{X}_u\times\mathcal{X}_{q^2}\times\mathcal{X}_{u_0}$. The continuity of $R$ and $N$ can be proved in the same way.
	
	For any c\`{a}dl\`{a}g process $X$ on ${0,T}$ and $0\le s<t\le T$, denote $\Delta_{s,t} X=X(t)-X(s)$. Let $\gamma:\mathcal{X}|_{[0,s]}\rightarrow[0,1]$ be an arbitrary continuous function. By the martingale property of $M_n:=M_n[u_{\varepsilon_n},q_{\varepsilon_n}^2,z_{0,n}]$, $M_n^2-R[u_{\varepsilon_n}]$ and $M_n\,W-N[u_{\varepsilon_n}]$ and the equality of laws in Proposition \ref{SJ representations}, we obtain that
	$$
	\tilde{E}\left[\gamma\left(\tilde{X}_n|_{[0,s]}\right)\Delta_{s,t}\tilde{M}_n\right]=0,\quad \tilde{E}\left[\gamma\left(\tilde{X}_n|_{[0,s]}\right)\Delta_{s,t}(\tilde{M}_n^2-\tilde{R}_n)\right]=0,
	$$
	$$ \tilde{E}\left[\gamma\left(\tilde{X}_n|_{[0,s]}\right)\Delta_{s,t}(\tilde{M}_n\,\tilde{W}_n-\tilde{N}_n)\right]=0,
	$$
	which implies that $\tilde{M}_n$, $\tilde{M}_n^2-\tilde{R}_n$ and $\tilde{M}_n\,\tilde{W}_n-\tilde{N}_n$ are $\{\tilde{\mathcal{F}}_t^n\}$-martingale.
	According to the modified martingale representation theorem \cite{Hofmanov2013}, $\langle \tilde{M}_n\rangle=\tilde{R}_n$ and $\langle \tilde{M}_n,\tilde{W}_n\rangle=\tilde{N}_n$. Therefore, $\tilde{D}_n$ has vanishing quadratic variation, the Doob–Meyer decomposition theorem yields $\tilde{D}_n=0$, which is the sought-after equation \eqref{equation tilde un}.
\end{proof}

Now we have obtained the SPDE satisfied by $\tilde{u}_n$, we hope to obtain the SPDE satisfied by $\tilde{u}$ through the convergence \eqref{SJ convergence} of the Skorokhod--Jakubowski representations. However, we have not yet established the connection between the limits of the nonlinear terms, more precisely, we lack the identification $\overline{q^2}=\tilde{q}^2$, which is equivalent to the strong convergence of $\tilde{q}_n$ towards $\tilde{q}$ in $L^2_{\tilde{\omega},t,x}$. This identification, which will be proved in detail in the next section, would allow us to obtain the SPDE satisfied by $\tilde{u}$.

\begin{prop}[SPDE of $\tilde{u}$]\label{SPDE tilde u}
	Suppose the assumption of Theorem \ref{main theorem} hold. Let $\tilde{u}$, $\tilde{q}$ ,$\overline{q^2}$, $\tilde{W}$, $\tilde{u}_0$ be the Skorokhod-Jakubowski representations from Proposition \ref{SJ representations}, and let $\tilde{S}$ be the stochastic basis defined in \eqref{filtration}. Suppose the following identification holds:
	\begin{equation}\label{q^2 identification}
		\overline{q^2}=\tilde{q}^2,\quad \tilde{\mathbb{P}}\otimes{\rm d}t\otimes{\rm d}x-\text{a.e. in } \tilde{\Omega}\times[0,T]\times\mathbb{S}.
	\end{equation}
	Then $(\tilde{S},\tilde{u},\tilde{W})$ is a weak martingale solution of
	$$
	\begin{aligned}
		&{\rm d}\tilde{u}+(\tilde{u}\partial_x\tilde{u}+\partial_x P[\tilde{u}])\,{\rm d}t=\tilde{u}\,{\rm d}\tilde{W},\\
		&P[\tilde{u}]=K\ast\left(\tilde{u}^2+\frac 1 2\tilde{q}^2\right),\quad\tilde{u}(0)=\tilde{u}_0\sim u_0,
	\end{aligned}
	$$
	in the sense of Definition \ref{CH H^1}.
\end{prop}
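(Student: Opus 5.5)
We pass to the limit $n\to\infty$ in the equation \eqref{equation tilde un} for $\tilde u_n$ from Lemma \ref{SPDE tilde un}, using the a.s. convergences \eqref{SJ convergence} together with the identification \eqref{q^2 identification}. Fix $\varphi\in C^\infty(\mathbb{S})$ and use the functionals $M$, $R$, $N$ from \eqref{Mn}--\eqref{N}. Set $\tilde M=M[\tilde u,\overline{q^2},\tilde u_0]$, where, by \eqref{q^2 identification}, $P[\tilde u,\overline{q^2}]=P[\tilde u]$.

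\textbf{Step 1 (a.s. convergence of the functionals).} We show $\tilde M_n(t)\to\tilde M(t)$ for every $t$, $\tilde{\mathbb P}$-a.s. The terms $\int\varphi\tilde u_n$, $\int\varphi\tilde u_{0,n}$ and $\int_0^t\int\partial_x\varphi\,\tfrac12\tilde u_n^2$ converge by the $C_tL^2_x$ and $H^1_x$ convergence. The viscous term is bounded by $\varepsilon_n\|\partial_x^2\varphi\|_{L^2}\|\tilde u_n\|_{L^2_{t,x}}\to0$. For the pressure, $\int_0^t\int\partial_x\varphi\,K\ast\tilde u_n^2$ converges strongly. For the remaining piece we write $\int_0^t\int\partial_x\varphi\,K\ast\tfrac12\tilde q_n^2=\int_0^t\int (K\ast\partial_x\varphi)\tfrac12\tilde q_n^2$, since $K$ is even. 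We then use the $L^r(L^r_w)$ convergence $\tilde q_n^2\to\overline{q^2}$: the test function $\mathbf 1_{[0,t]}(K\ast\partial_x\varphi)$ is smooth in $x$, so it is admissible, and the limit is $\int_0^t\int\partial_x\varphi\, K\ast\tfrac12\overline{q^2}$. The same argument gives $\tilde R_n\to\tilde R$ and $\tilde N_n\to\tilde N$ a.s., and $\tilde W_n\to\tilde W$ in $C_t$.

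\textbf{Step 2 (martingale identities in the limit).} From the proof of Lemma \ref{SPDE tilde un} we have, for every continuous $\gamma:\mathcal X|_{[0,s]}\to[0,1]$,
\[\tilde{\mathbb E}\big[\gamma(\tilde X_n|_{[0,s]})\Delta_{s,t}\tilde M_n\big]=\tilde{\mathbb E}\big[\gamma(\tilde X_n|_{[0,s]})\Delta_{s,t}(\tilde M_n^2-\tilde R_n)\big]=\tilde{\mathbb E}\big[\gamma(\tilde X_n|_{[0,s]})\Delta_{s,t}(\tilde M_n\tilde W_n-\tilde N_n)\big]=0 .\]
To pass to the limit we use Vitali's theorem. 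Uniform integrability follows from Lemma \ref{priori estimate}: $|\tilde M_n|\lesssim_\varphi \|\tilde u_n\|_{L^\infty_tH^1}^2+\|\tilde u_{0,n}\|_{L^2}+1$. Hence $\tilde M_n^2$ and $\tilde M_n\tilde W_n$ are bounded in $L^{p}$ for some $p>1$, because $p_0>4$ gives moments of order $p_0$ for $\|\tilde u_n\|_{L^\infty_tH^1}$, and the $\tilde W_n$ have moments of every order. The $\sup_t$ inside $\|\tilde u_n\|_{L^\infty_tH^1}^{4}$ is exactly where $p_0>4$ is needed. Passing to the limit, $\tilde M$, $\tilde M^2-\tilde R$ and $\tilde M\tilde W-\tilde N$ are $\{\tilde{\mathcal F}_t\}$-martingales. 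Here we also use that $\tilde M$ is continuous and adapted, since it is a continuous functional of $\tilde X|_{[0,t]}$. By Lemma \ref{Wiener process}, $\tilde W$ is a Wiener process on $\tilde{\mathcal S}$. The argument of Lemma \ref{SPDE tilde un} (quadratic variation of $\tilde M-\int_0^\cdot\int\varphi\tilde u\,{\rm d}x\,{\rm d}\tilde W$ vanishes) then gives \eqref{CH integral u} for smooth $\varphi$; density extends it to $\varphi\in C^2(\mathbb S)$.

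\textbf{Step 3 (remaining requirements of Definition \ref{CH H^1}).} Item (3) follows from Lemma \ref{priori estimate}(3): $\tilde u\in C_tL^2_x\cap C_tH^1_w$ and $\tilde{\mathbb E}\|\tilde u\|^{p_0}_{L^\infty_tH^1}<\infty$. Progressive measurability comes from continuity plus adaptedness. Item (4) holds because $\tilde u_{0,n}\to\tilde u_0$ in $H^1$ and $\mathcal L(\tilde u_{0,n})=\mathcal L(z_n)\to\Lambda$. Item (6), right-continuity in $H^1$, is the main obstacle. The plan is to first pass to the limit in the energy balance of Theorem \ref{viscous H^m uniformly bound}(1) to obtain the energy inequality \eqref{energy inequality}. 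This uses weak lower semicontinuity of the $H^1$ norm, and \eqref{q^2 identification} to identify $\int\overline{q^2}$ with $\int\tilde q^2$ on the right-hand side, where the stochastic integral converges by Vitali as above. The inequality yields $\limsup_{t\downarrow t_0}\|\tilde u(t)\|_{H^1}\le\|\tilde u(t_0)\|_{H^1}$ for a.e. $t_0$. Combined with weak continuity in $H^1$, this gives strong right-continuity at those $t_0$.
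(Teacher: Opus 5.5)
Your proposal is correct and takes essentially the paper's route. You pass to the limit in the martingale identities for $\tilde M_n$, $\tilde M_n^2-\tilde R_n$ and $\tilde M_n\tilde W_n-\tilde N_n$ using the a.s.\ convergences (moving $K$ onto $\partial_x\varphi$ to test the $L^r(L^r_w)$ convergence of $\tilde q_n^2$) and Vitali's theorem, then conclude from the vanishing quadratic variation of $\tilde M-\int_0^\cdot\int_{\mathbb S}\varphi\,\tilde u\,{\rm d}x\,{\rm d}\tilde W$. Your explicit checks of items (3), (4) and (6) of Definition \ref{CH H^1} match what the paper does separately in Lemma \ref{priori estimate}(3) and Lemma \ref{energy and one-sided temporal} (right-continuity via the energy inequality). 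Your moment count is also more accurate than the paper's bound $\tilde{\mathbb E}|\tilde M_n(t)|^{p_0}\lesssim 1$: $\tilde M_n$ is quadratic in $\Vert\tilde u_n\Vert_{L^\infty_tH^1}$, so $p_0>4$ is exactly what makes $\tilde M_n^2$ uniformly integrable.
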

\begin{proof}
	Consider the functionals $M[u,v,z]$, $D[u,v,z]$, $R[u]$ and $N[u]$ defined as above. To simplify the notation, set $\tilde{M}=M[\tilde{u},\tilde{q}^2,\tilde{u}_0]$, $\tilde{D}=D[\tilde{u},\tilde{q}^2,\tilde{u}_0]$ $\tilde{R}=R[\tilde{u}]$ and $\tilde{N}=N[\tilde{u}]$.
	
	Since $\gamma$ is bounded and continuous, and $\tilde{X}_n\rightarrow\tilde{X}$ a.s., it follows that
	\begin{equation}\label{gamma convergence}
		\gamma\left(\tilde{X}_n|_{[0,s]}\right) \rightarrow \gamma\left(\tilde{X}|_{[0,s]}\right)\ \text{in}\ L^p(\tilde{\Omega}),\ \text{for any finite}\ p.
	\end{equation} 
	
	According to Proposition \ref{SJ representations}, we have $\tilde{u}_{0,n}\rightarrow\tilde{u}_0$ in $H^1(\mathbb{S})$ and $\tilde{u}_n\rightarrow\tilde{u}$ in $C([0,T];L^2(\mathbb{S}))$, $\tilde{\mathbb{P}}$-a.s. This implies, for fixed $\varphi\in C^\infty(\mathbb{S})$,
	$$
	\begin{aligned}
		&\left|\int_\mathbb{S}\varphi\,(\tilde{u}_{0,n}-\tilde{u}_0)\,{\rm d}x\right|\rightarrow0,\quad \tilde{\mathbb{P}}-{\rm a.s.},\\
		&\sup_{t\in[0,T]}\left|\int_\mathbb{S}\varphi\,(\tilde{u}_n-\tilde{u})(t)\,{\rm d}x\right|\rightarrow0,\quad \tilde{\mathbb{P}}-{\rm a.s.},\\
		&\varepsilon_n\left|\int_0^t\int_\mathbb{S}\partial_x^2\varphi\, \tilde{u}_n\,{\rm d}x\,{\rm d}s\right|\rightarrow0,\quad \tilde{\mathbb{P}}-{\rm a.s.},\\
		&\left|\int_0^t\int_\mathbb{S}\partial_x\varphi\,(\tilde{u}_n^2-\tilde{u}^2)\,{\rm d}x\,{\rm d}s\right|\rightarrow0,\quad \tilde{\mathbb{P}}-{\rm a.s.},\\
	\end{aligned}
	$$
	
	Using $\tilde{u}_n^2\rightarrow\tilde{u}^2$ in $C([0,T];L^1(\mathbb{S}))$, $\tilde{\mathbb{P}}$-a.s., $\tilde{q}_n^2\rightarrow\overline{q^2}$ in $L^r([0,T];L^r_w(\mathbb{S}))$, $\tilde{\mathbb{P}}$-a.s., and the weak limit identification \eqref{q^2 identification}, we have
	$$
	\begin{aligned}
		&\left|\int_0^t\int_\mathbb{S}\partial_x\varphi\,\left(
		P[\tilde{u}_n,\tilde{q}_n^2]-P[\tilde{u},\overline{q^2}]\right)\,{\rm d}x\,{\rm d}s\right|\\
		\le&\|\partial_x\varphi\|_{L^\infty(\mathbb{S})}\|K\|_{L^1(\mathbb{S})}\left\|\tilde{u}_n^2-\tilde{u}^2\right\|_{L^1([0,T]\times\mathbb{S})}\\
		&\qquad\qquad\qquad\qquad\qquad\qquad+\left|\int_0^t\int_{\mathbb{S}}\left(\int_\mathbb{S}\partial_x\varphi(x)\, K(x-y)\,{\rm d}x\right)(y)\left(\tilde{q}_n^2-\tilde{q}^2\right)(s,y)\,{\rm d}y\,{\rm d}s\right|,
	\end{aligned}
	$$
	since $\int_\mathbb{S}\partial_x\varphi(x)\, K(x-y)\,{\rm d}x\in L^{r'}([0,T]\times\mathbb{S})$, we conclude that
	$$\left|\int_0^t\int_\mathbb{S}\partial_x\varphi\,\left(
	P[\tilde{u}_n,\tilde{q}_n^2]-P[\tilde{u},\overline{q^2}]\right)\,{\rm d}x\,{\rm d}s\right|\rightarrow 0\quad\tilde{\mathbb{P}}-{\rm a.s.}
	$$
	And
	$$
	\begin{aligned}
		&\left|
		\int_0^t\left(\int_\mathbb{S}\varphi\,\tilde{u}_n\,{\rm d}x\right)^2-\left(\int_\mathbb{S}\varphi\,\tilde{u}\,{\rm d}x\right)^2
		\,{\rm d}s\right|\\
		\le&
		\left|
		\int_0^t
		\left(\int_\mathbb{S}\left|\varphi\,\tilde{u}_n-\varphi\,\tilde{u}\right|\,{\rm d}x\right)
		\left(\int_\mathbb{S}\left|\varphi\,\tilde{u}_n+\varphi\,\tilde{u}\right|\,{\rm d}x\right)
		\,{\rm d}s\right|\\
		\le&
		\|\varphi\|_{L^2(\mathbb{S})}^2\|\tilde{u}_n+\tilde{u}\|_{L^2([0,T]\times\mathbb{S})}\|\tilde{u}_n-\tilde{u}\|_{L^2([0,T]\times\mathbb{S})}\rightarrow 0,\quad\tilde{\mathbb{P}}-{\rm a.s.}
	\end{aligned}
	$$
	
	By Proposition \ref{priori estimate}, $\tilde{E}|\tilde{M}_n(t)|^{p_0}\lesssim_\varphi 1$, $\tilde{E}|\tilde{R}_n(t)|^{p_0/2}\lesssim_\varphi 1$, $\tilde{E}|\tilde{N}_n(t)|^{p_0/2}\lesssim_\varphi 1$, and by Cauchy-Schwarz's inequality
	$$\tilde{E}\left|\tilde{M}_n(t)\tilde{W}_n(t)\right|^{p_0/2}\le(\tilde{E}\left|\tilde{M}_n(t)\right|^{p_0})^{1/2}(\tilde{E}\left|\tilde{W}_n(t)\right|^{p_0})^{1/2}\lesssim_\varphi 1,$$
	thus, by Vitali's convergence theorem, we obtain that for all $t\in[0,T]$,
	\begin{equation}\label{tilde n and tilde}
		\begin{array}{c}
			\tilde{M}_n(t)\rightarrow\tilde{M}(t)\ {\rm in}\ L^p(\tilde{\Omega}),\ \forall\ p\in[1,p_0),\\[2ex]
			\tilde{M}_n^2(t)\rightarrow\tilde{M}^2(t)\ {\rm in}\ L^p(\tilde{\Omega}),\ \forall\ p\in[1,p_0/2),\\[2ex]
			\tilde{R}_n(t)\rightarrow\tilde{M}^2(t)\ {\rm in}\ L^p(\tilde{\Omega}),\ \forall\ p\in[1,p_0/2)\\[2ex]
			\tilde{M}_n(t)\tilde{W}_n(t)\rightarrow\tilde{M}(t)\tilde{W}(t)\ {\rm in}\ L^p(\tilde{\Omega}),\ \forall\ p\in[1,p_0/2),\\[2ex]
			\tilde{N}_n(t)\rightarrow\tilde{N}(t)\ {\rm in}\ L^p(\tilde{\Omega}),\ \forall\ p\in[1,p_0).
		\end{array}
	\end{equation}
	
	According to \eqref{tilde n and tilde} and the martingale properties of $\tilde{M}_n$, $\tilde{M}_n^2-\tilde{R}_n$, $\tilde{M}_n\,\tilde{W}_n-\tilde{N}_n$, follow the same line in the proof of Lemma \ref{SPDE tilde un}, we verify that $\tilde{M}$, $\tilde{M}^2-\tilde{R}$ and $\tilde{M}\,\tilde{W}-\tilde{N}$ are all $\{\tilde{F}_t\}$-martingales.
	
	Similarly, we finally obtain that $\tilde{D}=0$, which is the sought-after equation \eqref{SPDE tilde u}.
\end{proof}

\section{Identification of a Weak Limit}\label{5}
\subsection{Energy inequalities and right-continuity}
\begin{lemm}\label{tilde u energy}
	Let $\tilde{u}$, $\tilde{q}$, $\overline{q^2}$, $\tilde{W}$ and $\tilde{u}_0$ be the a.s. limits from Proposition \ref{SJ representations}, and $\tilde{S}$ be the stochastic basis defined in \eqref{filtration}. Then the energy inequality holds:
	\begin{equation}\label{tilde u energy inequality}
		\begin{aligned}
			&\frac{\rm d}{{\rm d}t}\int_\mathbb{S}\tilde{u}^2+\overline{q^2}\,{\rm d}x\le \int_{\mathbb{S}}\tilde{u}^2+\overline{q^2}\,{\rm d}x+2\int_{\mathbb{S}}\tilde{u}^2+\overline{q^2}\,{\rm d}x\,{\rm d}\dot{\tilde{W}},\quad{\rm in}\ \mathcal{D}'([0,T)),\ \tilde{\mathbb{P}}-{\rm a.s.,}\\
			&\int_{\mathbb{S}}(\tilde{u}^2+\overline{q^2})(0)\,{\rm d}x=\int_{\mathbb{S}}\tilde{u}_0^2+|\partial_x\tilde{u}_0|^2\,{\rm d}x.
		\end{aligned}
	\end{equation}
	i.e., for all non-negative $\varphi\in C^\infty([0,T))$,
	$$
	\begin{aligned}
		&-\int_0^T\partial_t\varphi\int_\mathbb{S}\tilde{u}^2+\overline{q^2}\,{\rm d}x\,{\rm d}t-\varphi(0)\int_{\mathbb{S}}(\tilde{u}^2+\overline{q^2})(0)\,{\rm d}x\\
		\le &\int_0^T\varphi\int_{\mathbb{S}}\tilde{u}^2+\overline{q^2}\,{\rm d}x\,{\rm d}t+2\int_0^T\varphi\int_{\mathbb{S}}\tilde{u}^2+\overline{q^2}\,{\rm d}x\,{\rm d}\tilde{W},\quad\tilde{\mathbb{P}}-{\rm a.s.}
	\end{aligned}
	$$
\end{lemm}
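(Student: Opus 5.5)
We start from the total energy balance of Theorem \ref{viscous H^m uniformly bound} for $u_{\varepsilon_n}$. By equality of laws, the same identity holds for the representations $\tilde{u}_n$, $\tilde{q}_n$, $\tilde{W}_n$, using the argument of Lemma \ref{SPDE tilde un} (martingale identification through continuous functionals of $\tilde{X}_n$). We test it against a non-negative $\varphi\in C^\infty([0,T))$ and integrate by parts in time. Writing $E_n(t)=\int_\mathbb{S}\tilde{u}_n^2+\tilde{q}_n^2\,{\rm d}x$, this gives, $\tilde{\mathbb{P}}$-a.s.,
$$
-\int_0^T\partial_t\varphi\,E_n\,{\rm d}t-\varphi(0)E_n(0)+2\varepsilon_n\int_0^T\varphi\int_\mathbb{S}\tilde{q}_n^2+|\partial_x\tilde{q}_n|^2\,{\rm d}x\,{\rm d}t
=\int_0^T\varphi E_n\,{\rm d}t+2\int_0^T\varphi E_n\,{\rm d}\tilde{W}_n .
$$
The viscous term is non-negative, so dropping it turns the identity into the desired inequality at level $n$. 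It remains to pass to the limit $n\to\infty$ in each of the other terms.

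\textbf{Deterministic terms.} By \eqref{SJ convergence}, $\tilde{u}_n\to\tilde{u}$ in $C_tL^2_x$ and $\tilde{q}_n^2\to\overline{q^2}$ in $L^r(L^r_w)$, a.s. Testing against the functions $\partial_t\varphi(t)\cdot 1$ and $\varphi(t)\cdot 1$, which lie in $L^{r'}_{t,x}$, shows that
$$
\int_0^T\partial_t\varphi\,E_n\,{\rm d}t\to\int_0^T\partial_t\varphi\int_\mathbb{S}\tilde{u}^2+\overline{q^2}\,{\rm d}x\,{\rm d}t,
$$
and similarly for $\int_0^T\varphi E_n\,{\rm d}t$. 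For the initial term, $\tilde{u}_{0,n}\to\tilde{u}_0$ in $H^1_x$ gives $E_n(0)\to\int_\mathbb{S}\tilde{u}_0^2+|\partial_x\tilde{u}_0|^2\,{\rm d}x$. This identifies the initial energy and produces the second line of \eqref{tilde u energy inequality}: the value $(\tilde{u}^2+\overline{q^2})(0)$ is understood as the limit that appears in the weak time formulation.

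\textbf{Stochastic integral (main obstacle).} The difficulty is that $E_n$ converges only weakly in time, while $\tilde{W}_n\to\tilde{W}$ only in $C_t$. To handle this, we integrate by parts in time using $\varphi(T)=0$ and the It\^o product rule for $\varphi(t)\,E_n(t)$, which has the semimartingale structure given by the energy identity. Alternatively, and this is what we would try first, we use the convergence lemma for stochastic integrals of Debussche--Glatt-Holtz--Temam type. That lemma requires convergence in probability of the integrands in $L^2(0,T)$, which we lack, so we weaken the target: by the martingale functional method of Lemma \ref{SPDE tilde un} and Proposition \ref{SPDE tilde u}, define
$$
\tilde{M}^E_n(t)=E_n(t)-E_n(0)-\int_0^t E_n\,{\rm d}s+2\varepsilon_n\int_0^t\|\tilde{q}_n\|_{H^1}^2\,{\rm d}s .
$$
This is a martingale with quadratic variation $4\int_0^t E_n^2\,{\rm d}s$ and covariation $2\int_0^t E_n\,{\rm d}s$ with $\tilde{W}_n$. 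Passing to the limit with Vitali's theorem, using the moment bounds of Lemma \ref{priori estimate} (here $p_0>4$ is what makes $E_n^2$ uniformly integrable), we obtain that the limit of $E_n-E_n(0)-\int_0^\cdot E_n$ is a supermartingale-type object: the defect, which is the weak limit of the dissipation $2\varepsilon_n\int\|\tilde{q}_n\|_{H^1}^2$, is a non-negative increasing process, and the martingale part is identified as $2\int(\tilde{u}^2+\overline{q^2})\,{\rm d}\tilde{W}$.

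The quadratic term is the delicate point: $E_n^2$ converges only weakly, and the weak limit of $E_n^2$ dominates $(\int_\mathbb{S}\tilde{u}^2+\overline{q^2}\,{\rm d}x)^2$. To identify it, we first show that $E_n(t)$ converges for a.e.\ $t$. The approach is a temporal translation estimate for $\int_\mathbb{S}\tilde{q}_n^2\,{\rm d}x$, as in Proposition \ref{temporal translation estimate} with $\varphi\equiv1$. Together with the uniform $L^\infty_t$ bound, this gives strong $L^2_t$ compactness of $E_n$ via Simon/Aubin--Lions. The limit must then equal $\int_\mathbb{S}\tilde{u}^2+\overline{q^2}\,{\rm d}x$, and with this strong convergence both the product terms and the stochastic integral pass to the limit. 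Finally, testing the resulting inequality against $\varphi\ge0$ yields the stated distributional inequality.
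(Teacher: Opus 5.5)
Your outline matches the paper's. You obtain the energy identity for $\tilde u_n$ (the paper gets it by applying It\^o's formula to the SPDE of Lemma \ref{SPDE tilde un}; you transfer it by equality of laws), drop the non-negative viscous dissipation, and pass to the limit with \eqref{SJ convergence}. The mechanism in your last paragraph is also the one that makes the stochastic term converge. However, the ``main obstacle'' you describe comes from a misreading of the path space, and the detour you build around it does not close as written.

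The space $L^r(L^r_w)$ in which $\tilde q_n^2\to\overline{q^2}$ is weak \emph{only in} $x$. By \eqref{weak topology} its seminorms are $z\mapsto(\int_0^T|\int_{\mathbb{S}}\phi\,z\,{\rm d}x|^r\,{\rm d}t)^{1/r}$. Taking $\phi\equiv1$ therefore already gives $\int_{\mathbb{S}}\tilde q_n^2\,{\rm d}x\to\int_{\mathbb{S}}\overline{q^2}\,{\rm d}x$ strongly in $L^r(0,T)$, $\tilde{\mathbb{P}}$-a.s. Hence $E_n\to E:=\int_{\mathbb{S}}\tilde u^2+\overline{q^2}\,{\rm d}x$ in $L^r(0,T)$ a.s. The temporal translation estimate you propose to re-run with $\varphi\equiv1$ is exactly what Lemma \ref{tightness} used to obtain tightness in this space, so this convergence is already built into the representation.

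Now interpolate with the uniform moment bounds of Lemma \ref{priori estimate}, which bound $E_n$ and $E$ in $L^{p}(\tilde\Omega\times[0,T])$ for some $p>2$. This yields $\varphi E_n\to\varphi E$ in $L^2(0,T)$ in probability. So, contrary to your claim, the hypothesis of Lemma \ref{convergence of stochastic integrals} holds. Since $\varphi$ has compact support in $[0,T)$, $L^2(0,T)$-convergence of the integral processes gives convergence of $\int_0^T\varphi E_n\,{\rm d}\tilde W_n$ in probability, hence a.s.\ along a subsequence. The level-$n$ inequality then passes to the limit.

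The martingale-functional/supermartingale route should be dropped, for three reasons.
\begin{itemize}
\item It evaluates $E_n(t)$ at fixed times. That is not even a well-defined functional on $\mathcal{X}_{q^2}=L^r(L^r_w)$, let alone a continuous one, so the identification method of Lemma \ref{SPDE tilde un} does not apply to it.
\item It needs a limit of $2\varepsilon_n\int_0^t\|\tilde q_n\|_{H^1}^2\,{\rm d}s$, which is not a component of $\tilde X_n$.
\item Identifying its quadratic variation would require the strong temporal convergence above anyway.
\end{itemize}

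If you do re-derive compactness from Proposition \ref{temporal translation estimate}, note that it is an expectation bound. It gives tightness of the laws of $E_n$ in $L^2(0,T)$, not compactness for a fixed $\tilde\omega$. You would therefore still need the subsequence argument before invoking the stochastic-integral lemma: tightness, together with a.s.\ convergence in a weaker topology to an already identified limit, implies strong convergence in probability.
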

\begin{proof}
	According to Lemma \ref{priori estimate}, $\tilde{u}_n$ lies on $L^2([0,T];H^m(\mathbb{S}))\cap C([0,T];H^1(\mathbb{S}))$, $\tilde{\mathbb{P}}$-a.s., $\tilde{u}_n$ satisfies \eqref{SPDE tilde un} and $\tilde{q}_n=\partial_x\tilde{u}_n$, It\^{o}'s formula yields the total energy equation:
	\begin{equation}\label{tilde un energy equation}
		\begin{aligned}
			{\rm d}&\left(\frac{\tilde{u}_n^2+\tilde{q}_n^2}{2}\right)+\partial_x\left(\frac 1 2\tilde{u}_n\,\tilde{q}_n^2+\tilde{u}_n\,P[\tilde{u}_n]\right)\,{\rm d}t\\
			&-\varepsilon\partial_x^2\left(\frac{\tilde{u}_n^2+\tilde{q}_n^2}{2}\right)\,{\rm d}t+\varepsilon\left(|\partial_x\tilde{u}_n|^2+|\partial_x\tilde{q}_n|^2\right)\,{\rm d}t=\left(\frac{\tilde{u}_n^2+\tilde{q}_n^2}{2}\right)\,{\rm d}t
			+\left(\tilde{u}_n^2+\tilde{q}_n^2\right)\,{\rm d}W
		\end{aligned}
	\end{equation}
	Integrating this equation in $x$ and expressing the temporal differential as a time-derivative in $\mathcal{D}'([0,T))$, we arrive at
	\begin{equation}\label{tilde un energy inequality}
		\frac{\rm d}{{\rm d}t}\int_{\mathbb{S}}\tilde{u}_n^2+\tilde{q}_n^2\,{\rm d}x\le \int_{\mathbb{S}}\tilde{u}_n^2+\tilde{q}_n^2\,{\rm d}x+2\int_{\mathbb{S}}\tilde{u}_n^2+\tilde{q}_n^2\,{\rm d}x\,\dot{\tilde{W}}_n,\ {\rm in}\ \mathcal{D}'([0,T)),\ \tilde{\mathbb{P}}-{\rm a.s.,}
	\end{equation}
	where $\dot{\tilde{W}}_n=\frac{\rm d}{{\rm d}t}\tilde{W}_n$, $\int_{\mathbb{S}}(\tilde{u}_n^2+\tilde{q}_n^2)(0)\,{\rm d}x=\int_{\mathbb{S}}\tilde{u}_{n,0}^2+|\partial_x\tilde{u}_{n,0}|^2\,{\rm d}x$.
	
	By \eqref{priori estimate q}, we deduce that 
	$$
	\tilde{\mathbb{E}}\int_0^T\left|\int_{\mathbb{S}}\overline{q^2}\,{\rm d}x\right|^2\,{\rm d}t\le\tilde{\mathbb{E}}\int_0^T\Vert\overline{q^2}(t)\Vert_{H^{-1}(\mathbb{S})}^2\,{\rm d}t<\infty,
	$$
	thus, the process $\int_0^t\int_{\mathbb{S}}\overline{q^2}\,{\rm d}x\,{\rm d}\tilde{W}$ is a square-integrable martingale. Equipped with the a.s. convergences in \eqref{SJ convergence}, we obtain \eqref{tilde u energy inequality}.
\end{proof}

\begin{lemm}[One-sided temporal continuity at $t=0$]\label{continuity t=0}
	Let $\tilde{u}$, $\tilde{q}$, $\tilde{u}_0$ and $\overline{q^2}$ be the Skorokhod-Jakubowski representations from Proposition \ref{SJ representations}. Then, for $\tilde{u}$ and the nonlinearities $S(v)=S_l(v_\pm)$ defined as \eqref{S_l},
	\begin{equation}\label{tilde u continuity t=0}
		\lim_{t\downarrow0}\|\tilde{u}(t)-\tilde{u}_0\|_{H^1(\mathbb{S})}=0,\quad\tilde{\mathbb{P}}-{a.s.,}
	\end{equation}
	\begin{equation}\label{tilde S continuity t=0}
		\lim_{t\downarrow0}\|S\left(\tilde{q}(t)\right)-S(\partial_x\tilde{u}_0)\|_{L^1(\mathbb{S})}=0,\quad\tilde{\mathbb{P}}-{a.s.}
	\end{equation}
	Moreover,
	$$
	\lim_{t\downarrow0}\tilde{\mathbb{E}}\|\tilde{u}(t)-\tilde{u}_0\|_{H^1(\mathbb{S})}^2=0,\quad\lim_{t\downarrow0}\tilde{\mathbb{E}}\|S\left(\tilde{q}(t)\right)-S(\partial_x\tilde{u}_0)\|_{L^1(\mathbb{S})}=0.
	$$
\end{lemm}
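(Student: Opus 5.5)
We would follow the standard route of \cite{Holden2024JDE}: combine weak continuity at $t=0$ with convergence of norms. Strong $H^1$ convergence then follows, and the statement for $S(\tilde q)$ is derived from it.

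\textbf{Step 1 (weak continuity).} By Lemma \ref{priori estimate}(3), $\tilde u\in C([0,T];H^1(\mathbb{S})-w)$ $\tilde{\mathbb{P}}$-a.s. Moreover $\tilde u(0)=\tilde u_0$, since $\tilde u_n(0)=\tilde u_{0,n}\to\tilde u_0$ in $H^1$ while $\tilde u_n\to\tilde u$ in $C_tL^2_x$. Hence $\tilde u(t)\rightharpoonup\tilde u_0$ in $H^1$ as $t\downarrow0$, and
\[
\|\tilde u_0\|_{H^1}^2\le\liminf_{t\downarrow0}\|\tilde u(t)\|_{H^1}^2 .
\]

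\textbf{Step 2 (upper bound via the energy inequality).} We use Lemma \ref{tilde u energy}. Define $E(t)=\int_{\mathbb{S}}\tilde u^2+\overline{q^2}\,{\rm d}x$. By convexity (weak lower semicontinuity), $\tilde q^2\le\overline{q^2}$, so $\|\tilde u(t)\|_{H^1}^2\le E(t)$ for a.e. $t$. In Lemma \ref{tilde u energy} we take test functions $\varphi_k\ge0$ approximating $\mathds 1_{[0,t]}$. The martingale term $\int_0^t E\,{\rm d}\tilde W$ is continuous in $t$ and vanishes at $0$; it is square integrable by \eqref{priori estimate q}. Letting $k\to\infty$ then gives, for a.e. $t$,
\[
E(t)\le E(0)+\int_0^tE\,{\rm d}s+2\int_0^tE\,{\rm d}\tilde W,\qquad E(0)=\|\tilde u_0\|_{H^1}^2 .
\]
Because $E\in L^1_t$ a.s., the right-hand side tends to $E(0)$ as $t\downarrow0$. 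Therefore $\limsup_{t\downarrow0}\|\tilde u(t)\|_{H^1}^2\le\|\tilde u_0\|_{H^1}^2$, at least along a set of times $t$ of full measure.

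The time-a.e. restriction is removed with Step 1. For every $t$ and every sequence $t_j\downarrow t$ of good times, weak continuity gives $\|\tilde u(t)\|_{H^1}\le\liminf_j\|\tilde u(t_j)\|_{H^1}$. The upper bound therefore holds for all small $t$. In a Hilbert space, weak convergence together with convergence of norms yields \eqref{tilde u continuity t=0}.

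\textbf{Step 3 (the $S$ statement).} By Lemma \ref{priori estimate}(1), $\tilde q=\partial_x\tilde u$ a.e. Step 2 gives $\tilde q(t)\to\partial_x\tilde u_0$ in $L^2(\mathbb{S})$. For $S=S_l(v_\pm)$ we have $|S(a)-S(b)|\lesssim_l|a-b|$, because $S_l'$ is bounded. Consequently
\[
\|S(\tilde q(t))-S(\partial_x\tilde u_0)\|_{L^1}\lesssim_l\|\tilde q(t)-\partial_x\tilde u_0\|_{L^1}\to0,
\]
which is \eqref{tilde S continuity t=0}.

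\textbf{Step 4 (expectations).} The expectation statements follow from dominated convergence. Let $Z=\|\tilde u\|_{L^\infty_tH^1}$. Then $\|\tilde u(t)-\tilde u_0\|_{H^1}^2\le 2Z^2+2\|\tilde u_0\|_{H^1}^2$, and this bound is integrable by Lemma \ref{priori estimate}(3) with $p_0>4$. Likewise, $\|S(\tilde q(t))-S(\partial_x\tilde u_0)\|_{L^1}\lesssim_l Z+\|\tilde u_0\|_{H^1}$.

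The main obstacle is Step 2, specifically extracting a pointwise-in-time upper bound from the distributional inequality involving the stochastic integral, and controlling $\overline{q^2}$ rather than $\tilde q^2$. Our plan is to use the square-integrability of $\int\overline{q^2}\,{\rm d}x$ to secure path continuity of the martingale term. We would then take Lebesgue points of $E$ and pass from a.e. $t$ to all $t$ through weak lower semicontinuity.
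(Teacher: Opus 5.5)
Your proof is correct, but Step 2 takes a different route from the paper.

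\textbf{Step 2.} The paper does not use the limit inequality of Lemma \ref{tilde u energy} here. It starts from the energy inequality for the smooth approximations, which holds for \emph{every} $t$:
$\|\tilde u_n(t)\|_{H^1}^2\le\|\tilde u_{0,n}\|_{H^1}^2+I_n(t)$. It then lets $n\to\infty$. On the left it uses weak lower semicontinuity of $\|\cdot\|_{H^1}^2$ along $\tilde u_n(t)\rightharpoonup\tilde u(t)$, which follows from convergence in $C([0,T];H^1(\mathbb{S})-w)$. On the right it uses $I_n(t)\to I(t)$. This gives $\|\tilde u(t)\|_{H^1}^2\le\|\tilde u_0\|_{H^1}^2+I(t)$ directly for each $t>0$. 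No Lebesgue-point argument is needed, and no pointwise inequality $\tilde q^2\le\overline{q^2}$: the defect is absorbed by the lower semicontinuity of the norm.

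You work only with the already-established distributional limit inequality. This requires three extra ingredients:
\begin{itemize}
\item the $\beta_\delta$/Lebesgue-point device, the same one the paper uses later in Lemma \ref{energy and one-sided temporal};
\item the bound $\|\tilde u(t)\|_{H^1}^2\le E(t)$, coming from $\tilde q^2\le\overline{q^2}$;
\item weak lower semicontinuity together with continuity of $t\mapsto\int_0^tE\,{\rm d}s+2\int_0^tE\,{\rm d}\tilde W$, to pass from a.e.\ $t$ to all $t$.
\end{itemize}
In exchange, you avoid justifying the fixed-$t$ convergence of the stochastic integrals $\int_0^t\int_{\mathbb{S}}\tilde q_n^2\,{\rm d}x\,{\rm d}\tilde W_n$ that underlies $I_n(t)\to I(t)$. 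The paper simply takes that convergence from the proof of Lemma \ref{tilde u energy}. So your version relies only on what that lemma states.

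\textbf{Step 3.} You use the $l$-dependent Lipschitz bound $|S(a)-S(b)|\le\frac{3l}{2}|a-b|$. The paper uses the $l$-uniform bound $|S_l(b)-S_l(a)|\le(|a|+|b|)|a-b|$ combined with Cauchy--Schwarz. For fixed $l$ both suffice.

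\textbf{Step 4.} Your dominated-convergence argument is the paper's Vitali argument in another form.
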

\begin{proof}
	Since $\tilde{u}_n\rightarrow\tilde{u}$ in $C([0,T];H^1(\mathbb{S})-w)$, $\mathbb{\tilde{P}}$-a.s., employing the weak lower semicontinuity of $v\mapsto\|v\|_{H^1(\mathbb{S})}^2$, 
	\begin{equation}\label{lower semicontinuity inequality}
		\Vert\tilde{u}(t)\Vert_{H^1(\mathbb{S})}^2\le \liminf_{n\rightarrow \infty}\Vert\tilde{u}_n(t)\Vert_{H^1(\mathbb{S})}^2\le\limsup_{n\rightarrow\infty}\Vert\tilde{u}_n(t)\Vert_{H^1(\mathbb{S})}^2,\quad t>0.
	\end{equation}
	Define
	$$I_n(t)=\int_0^T\int_{\mathbb{S}}\tilde{u}_n^2+\tilde{q}_n^2\,{\rm d}x\,{\rm d}t+2\int_0^T\int_{\mathbb{S}}\tilde{u}_n^2+\tilde{q}_n^2\,{\rm d}x\,{\rm d}\tilde{W}_n,
	$$
	$$I(t)=\int_0^T\int_{\mathbb{S}}\tilde{u}^2+\overline{q^2}\,{\rm d}x\,{\rm d}t+2\int_0^T\int_{\mathbb{S}}\tilde{u}^2+\overline{q^2}\,{\rm d}x\,{\rm d}\tilde{W},
	$$
	According to the proof of Lemma \ref{tilde u energy}, $I_n(t)\rightarrow I(t)$ a.s., with $t>0$ fixed. By standard deterministic argument, we turn the distributional inequality \eqref{tilde un energy inequality} into the pointwise inequality
	$$\Vert\tilde{u}_n\Vert_{H^1(\mathbb{S})}^2\le\Vert\tilde{u}_{0,n}\Vert_{H^1(\mathbb{S})}^2+I_n(t),\quad{\rm a.s.,}\ \forall\,t>0.$$
	In the view of \eqref{lower semicontinuity inequality}, as $n\rightarrow\infty$, we conclude that
	\begin{equation}\label{tilde u pointwise inequality}
		\Vert\tilde{u}\Vert_{H^1(\mathbb{S})}^2\le\Vert\tilde{u}_0\Vert_{H^1(\mathbb{S})}^2+I(t),\quad{\rm a.s.,}\ \forall\,t>0.
	\end{equation}
	
	Since $\tilde{u}\in C([0,T];H^1(\mathbb{S})-w)$ a.s., i.e. $\tilde{u}(t)\rightharpoonup\tilde{u}_0$ in $H^1(\mathbb{S})$ as $t\downarrow 0$, whence, by the weak lower semicontinuity of $\Vert\cdot\Vert_{H^1(\mathbb{S})}^2$ and \eqref{tilde u pointwise inequality},
	$$\Vert\tilde{u}_0\Vert_{H^1(\mathbb{S})}^2\le \liminf_{t\downarrow0}\Vert\tilde{u}(t)\Vert_{H^1(\mathbb{S})}^2\le\limsup_{t\downarrow0}\Vert\tilde{u}(t)\Vert_{H^1(\mathbb{S})}^2\le\limsup_{t\downarrow0}\left(\Vert\tilde{u}_0\Vert_{H^1(\mathbb{S})}^2+I(t)\right)=\Vert\tilde{u}_0\Vert_{H^1(\mathbb{S})}^2.$$
	Thus, we have
	$\lim_{t\downarrow0}\Vert\tilde{u}(t)\Vert_{H^1(\mathbb{S})}^2=\Vert\tilde{u}_0\Vert_{H^1(\mathbb{S})}^2$, from weak convergence and norm convergence, we can obtain that \eqref{tilde u continuity t=0} holds.
	
	By the definition of $S_l(v)$ \eqref{S_l}, we can compute that for any $a,\ b\in\mathbb{R}$,
	$$|S_l(b)-S_l(a)|\le(|a|+|b|)|a-b|.$$
	Thus, for fixed $\omega\in\tilde{\Omega}$, according to \eqref{tilde u continuity t=0}
	$$
	\begin{aligned}
		\Vert S(\tilde{q}(t))-S(\partial_x\tilde{u}_0)\Vert_{L^1(\mathbb{S})}
		&\le\int_{\mathbb{S}}\left(|\tilde{q}(t,x)+|\partial_x\tilde{u}_0(x)|\right)|\tilde{q}(t,x)-\partial_x\tilde{u}_0(x)|\,{\rm d}x\\
		&\lesssim \Vert\tilde{q}\Vert_{L^\infty([0,T];L^2(\mathbb{S}))}\Vert\tilde{q}(t)-\partial_x\tilde{u}_0\Vert_{L^\infty([0,T];L^2(\mathbb{S}))}\\
		&\lesssim_\omega\Vert\tilde{q}(t)-\partial_x\tilde{u}_0\Vert_{L^\infty([0,T];L^2(\mathbb{S}))}\rightarrow0.
	\end{aligned}$$
	This concludes the proof of \eqref{tilde S continuity t=0}.
	
	The third part of this lemma can be deduce by \eqref{tilde u continuity t=0}, \eqref{tilde S continuity t=0} and Vitali's convergence theorem.
\end{proof}

If we have the identification \eqref{q^2 identification}, the energy inequality \eqref{tilde u energy inequality} becomes
\begin{equation}\label{tilde u energy inequality with identification}
	\begin{aligned}
		&\frac{\rm d}{{\rm d}t}\int_\mathbb{S}\tilde{u}^2+\tilde{q}^2\,{\rm d}x\le \int_{\mathbb{S}}\tilde{u}^2+\tilde{q}^2\,{\rm d}x+2\int_{\mathbb{S}}\tilde{u}^2+\tilde{q}^2\,{\rm d}x\,{\rm d}\dot{\tilde{W}},\quad{\rm in}\ \mathcal{D}'([0,T)),\ \tilde{\mathbb{P}}-{\rm a.s.,}\\
		&\int_{\mathbb{S}}(\tilde{u}^2+\tilde{q}^2)(0)\,{\rm d}x=\int_{\mathbb{S}}\tilde{u}_0^2+|\partial_x\tilde{u}_0|^2\,{\rm d}x.
	\end{aligned}
\end{equation}
with $\int_\mathbb{S}\tilde{u}^2+\tilde{q}^2\,{\rm d}x=\Vert\tilde{u}\Vert_{H^1(\mathbb{S})}^2$

\begin{lemm}[Energy inequality and one-sided temporal continuity]\label{energy and one-sided temporal}
	Suppose \eqref{tilde u energy inequality with identification} holds, then the total energy inequality \eqref{energy inequality} holds a.s., for a.e. $s\in[0,T)$ and every $t$ with $s<t\le T$. Consequently, a.s., for a.e. $t_0\in[0,T)$,
	\begin{equation}\label{tilde u continuity}
		\lim_{t\downarrow t_0}\Vert\tilde{u}(t)-\tilde{u}(t_0)\Vert_{H^1(\mathbb{S})}=0.
	\end{equation}
\end{lemm}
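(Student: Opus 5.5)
The plan is to follow the standard deterministic argument that converts a distributional differential inequality into a pointwise one at Lebesgue points. This is then combined with weak continuity to obtain right-continuity. Write $E(t)=\|\tilde u(t)\|_{H^1(\mathbb{S})}^2$ and
$$A(t)=\int_0^t E(r)\,{\rm d}r+2\int_0^t E(r)\,{\rm d}\tilde W(r).$$
By Lemma \ref{priori estimate}(3), $E\in L^\infty([0,T])$ a.s. with $\tilde{\mathbb{E}}\|E\|_{L^\infty}^{p_0/2}<\infty$. Hence the stochastic integral is a continuous square-integrable martingale, and $A$ has a continuous version on $[0,T]$ a.s. I will fix an event of full probability on which $A$ is continuous, $\tilde u\in C([0,T];H^1(\mathbb{S})-w)$, and \eqref{tilde u energy inequality with identification} holds for all nonnegative test functions in a countable dense family (hence, by density, for all of them).

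\textbf{Step 1 (pointwise inequality).} The distributional inequality says that $E-A$ has nonpositive distributional derivative on $(0,T)$. I will test it with $\varphi_h(r)$, a piecewise linear function equal to $0$ before $s$, rising linearly to $1$ on $[s,s+h]$, equal to $1$ on $[s+h,t]$, and decreasing linearly to $0$ on $[t,t+h]$. These can be approximated by smooth nonnegative functions, with the stochastic term handled by the Itô/product rule $\int\varphi\,{\rm d}A=-\int\varphi' A\,{\rm d}r$, valid since $\varphi$ is deterministic and $C^1$. This gives
$$\frac1h\int_t^{t+h}(E-A)\,{\rm d}r\le\frac1h\int_s^{s+h}(E-A)\,{\rm d}r.$$
Let $h\downarrow0$ with $s$ a Lebesgue point of $E$; the set of such $s$ has full measure for each $\omega$, which gives the "a.e. $s$" in the statement. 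Continuity of $A$ handles the $A$ terms.

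For the left side, I will use the fact that $\tilde u(r)\rightharpoonup\tilde u(t)$ weakly in $H^1(\mathbb{S})$ as $r\downarrow t$. Weak lower semicontinuity then gives $E(t)\le\liminf_{r\downarrow t}E(r)$, and hence $E(t)\le\liminf_h\frac1h\int_t^{t+h}E\,{\rm d}r$, for every $t$, not just Lebesgue points. This yields
$$E(t)-E(s)\le A(t)-A(s),$$
which is exactly \eqref{energy inequality}. For $s=0$ I will use instead the initial term in the weak formulation, which carries $\|\tilde u_0\|_{H^1}^2$; this is already consistent with Lemma \ref{continuity t=0}.

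\textbf{Step 2 (right-continuity).} Let $t_0$ be any admissible $s$ from Step 1, so a.e. $t_0$. Taking $s=t_0$ and letting $t\downarrow t_0$, continuity of $A$ gives $\limsup_{t\downarrow t_0}E(t)\le E(t_0)$. Combined with the weak lower semicontinuity bound $E(t_0)\le\liminf_{t\downarrow t_0}E(t)$, this gives $\|\tilde u(t)\|_{H^1}\to\|\tilde u(t_0)\|_{H^1}$. Together with weak convergence in the Hilbert space $H^1(\mathbb{S})$, norm convergence yields \eqref{tilde u continuity}.

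\textbf{Main obstacle.} The delicate point is the justification of testing against the non-smooth $\varphi_h$ when the right-hand side contains a stochastic integral. I plan to handle this by writing the stochastic term pathwise as $-\int\varphi' (A-\int_0^\cdot E)$-type expressions. These are well defined once a continuous version of the martingale is fixed, and I will pass to $\varphi_h$ by uniform convergence of smooth approximations together with their derivatives in $L^1$. A second point is making the null sets independent of $s,t$: I will use a countable dense family of test functions and the continuity of $A$, so that a single full-measure event serves all $s<t$.
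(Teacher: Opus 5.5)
Your proposal is correct and follows essentially the paper's route: piecewise-linear cut-offs in the distributional inequality, Lebesgue differentiation at $s$, weak lower semicontinuity, and then norm plus weak convergence for right-continuity. Your two variations are small improvements. The second ramp on $[t,t+h]$ gives the inequality at every $t<T$ directly, and the pathwise integration by parts against a continuous version of $A$ gives a single null set for all $s,t$. The paper instead averages on $[t-\delta,t]$, approximates an arbitrary $t$ by Lebesgue points $t_l\to t$, and passes to the limit in $\int_0^T\Psi_\delta\,{\rm d}\tilde W$ via BDG and dominated convergence.

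The one case you must add is $t=T$, where $\varphi_h$ is not supported in $[0,T)$. There, apply your inequality for $t<T$ and let $t\uparrow T$, using $\tilde u(t)\rightharpoonup\tilde u(T)$ in $H^1(\mathbb{S})$, lower semicontinuity of the norm, and continuity of $A$. This is the case the paper's $t_l\to t$ step covers.
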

\begin{proof}
	According standard deterministic argument \cite{Evans1998}, consider the continuous piecewise linear test functions $0\le\beta_\delta\in W^{1,\infty}([0,T])$ with $\delta>0$ taking values in a sequence converging to 0. For given $s$ and $t$ with $0\le s<t\le T$,consider $\delta>0$ such that $s+\delta<t-\delta$. Define $\beta_\delta$ equals $1$ on $[s+\delta,t-\delta]$, $0$ on $[0,s]$ and $[t,T]$, and is linear on $[s,s+\delta]$ and $[t-\delta,t]$. It is obvious that $\beta_\delta\rightarrow\mathds{1}_{[s,t]}(t')$ for a.e. $t'\in[0,T]$ as $\delta\rightarrow0$. And define
	$$\Psi_\delta(t')=\int_\mathbb{S}\left(\tilde{u}^2+\tilde{q}^2\right)(t')\beta_\delta(t')\,{\rm d}x.$$
	
	Using $\beta_\delta$ as the test function in \eqref{tilde u energy inequality with identification}, we obtain that
	$$
	\frac 1 \delta\int_{t-\delta}^t\Vert\tilde{u}(t')\Vert_{H^1(\mathbb{S})}^2\,{\rm d}t'-\frac 1 \delta\int_s^{s+\delta}\Vert\tilde{u}(t')\Vert_{H^1(\mathbb{S})}^2\,{\rm d}t'\le \int_0^T\Psi_\delta(t')\,{\rm d}t'+2\int_0^T\Psi_\delta(t')\,{\rm d}\tilde{W}(t').
	$$
	According to Lebesgue's differentiation theorem, as $\delta$ tends to $0$, for fixed $\tilde{\omega}$ from a set $F$ of full $\tilde{\mathbb{P}}$-measure, there exists a subset $N(\tilde{\omega})\subset[0,T]$ of zero Lebesgue measure such that for every $t\in[0,T]\backslash N(\tilde{\omega})$,
	\begin{equation}\label{energy expect lebesgue 0}
		\Vert\tilde{u}(\tilde{\omega},t)\Vert_{H^1(\mathbb{S})}^2-\Vert\tilde{u}(\tilde{\omega},s)\Vert_{H^1(\mathbb{S})}^2\le \int_s^t \Psi(t')\,{\rm d}t'+2\int_s^t\Psi(t')\,{\rm d}\tilde{W}(t'),
	\end{equation}
	with $\Psi(t)=\int_{\mathbb{S}}\tilde{u}^2(t)+\tilde{q}^2(t)\,{\rm d}x$. More precisely, the convergence of the stochastic integrals $\int_0^T\Psi_\delta\,{\rm d}\tilde{W}$ can be obtain by Lemma \ref{priori estimate}, the BDG inequality and the Lebesgue dominated convergence theorem.
	
	Fix an arbitrary $t>s$ with $s\in[0,T)\backslash N(\tilde{\omega})$, $\tilde{\omega}\in F$. Then there exists a sequence $\{t_l\}_{l=1}^\infty\subset [0,T)\backslash N(\tilde{\omega})$ such that $t_l>s$ and $t_l$ tends to $t$ as $l\rightarrow\infty$. Since $\tilde{u}\in C([0,T];H^1(\mathbb{S})-w)$, then $\tilde{u}$ is a.s. weakly lower semicontinuous in $H^1(\mathbb{S})$,
	$$\Vert\tilde{u}(\tilde{\omega},t)\Vert_{H^1(\mathbb{S})}^2\le\liminf_{l\rightarrow\infty}\Vert\tilde{u}(\tilde{\omega},t_l)\Vert_{H^1(\mathbb{S})}^2
	\le\Vert\tilde{u}(\tilde{\omega},s)\Vert_{H^1(\mathbb{S})}^2+\int_s^t\Psi(t')\,{\rm d}t'+2\int_s^t\Psi(t')\,{\rm d}\tilde{W}(t').$$
	Similarly, for arbitrary $0\le s<t\le T$ we can obtain in the same way, thus \eqref{energy inequality} is proved.
	
	For fixed $\tilde{\omega}\in F$, the right-continuity of $\tilde{u}$ in $H^1(\mathbb{S})$ at Lebesgue point $s=t_0\in [0,T]\backslash N(\tilde{\omega})$ can be inferred from \eqref{energy expect lebesgue 0} and the a.s. weak lower semicontinuity of $\tilde{u}$. As a result, we can employ a similar reasoning as in the proof of Lemma \ref{continuity t=0} to conclude that the right-continuity claim \eqref{tilde u continuity} holds. In particular, for $s=0$, we utilizing the strong initial trace result \eqref{tilde u continuity t=0}.
\end{proof}

\subsection[Equation of the weak limit S(q)]{Equation of the weak limit $\overline{S(q)}$}
We need to know that products like $S'(\tilde{q}_n)\tilde{P}_n$ converges weakly. To achieve this, we need to refine the convergence of $\tilde{q}_n^2$ in \eqref{SJ convergence}.

By Lemma \ref{priori estimate}, $\tilde{q}_n^2$ is uniformly bounded in $L^r(\tilde{\Omega}\times[0,T]\times\mathbb{S})$, thus by weak compactness argument and \eqref{SJ convergence}, we may assume 
$$\overline{q^2}\in L^r(\tilde{\Omega}\times[0,T]\times\mathbb{S})\quad{\rm and}\quad \tilde{q}_n^2\rightharpoonup\overline{q^2}\quad{\rm in}\ L^r(\tilde{\Omega}\times[0,T]\times\mathbb{S}),$$
also, since $\tilde{q}_n$ is uniformly bounded in $L^{2p}_\omega(L^\infty(\times[0,T];L^2(\mathbb{S})))$, for any $p\in[1,p_0/2]$. We can easily deduce that $\tilde{q}_n^2$ is uniformly bounded in $L^p(\tilde{\Omega}\times[0,T];H^{-1}(\mathbb{S}))$, according to weak compactness argument and \eqref{SJ convergence} similarly,
$$\overline{q^2}\in L^p(\tilde{\Omega}\times[0,T];H^{-1}(\mathbb{S}))\quad{\rm and}\quad \tilde{q}_n^2\rightharpoonup\overline{q^2}\quad{\rm in}\ L^p(\tilde{\Omega}\times[0,T];H^{-1}(\mathbb{S})).$$

Therefore, we have for $p_0>4$,
$$\tilde{\mathbb{E}}\int_0^T\left\Vert\tilde{q}_n^2(t)-\overline{q^2}(t)\right\Vert_{H^{-1}(\mathbb{S})}^p\,{\rm d}t\lesssim 1,\quad \text{for any }p\in[1,p_0/2].$$
Since $\tilde{q}_n\rightarrow\overline{q^2}$ in $L^r(L^r_w)$, and $L^r_w(\mathbb{S})\hookrightarrow H^{-1}(\mathbb{S})$ for fixed $r\in[1,3/2)$, this yields $\tilde{q}_n\rightarrow\overline{q^2}$ in $L^r(H^{-1}(\mathbb{S}))$. Thus, by Vitali's convergence theorem,
$$\tilde{q}_n^2\rightarrow\overline{q^2}\quad{\rm in}\ L^r_{\tilde{\omega},t}(H^{-1}_x).$$
Finally, by the Lebesgue interpolation, we cam improve above convergence to
\begin{equation}\label{tilde qn convergence}
	\tilde{q}_n^2\rightarrow\overline{q^2}\quad{\rm in}\ L^p_{\tilde{\omega},t}(H^{-1}_x),\quad p\in[1,p_0/2).
\end{equation}

\begin{lemm}\label{tilde Pn convergence}
	Let $\tilde{u}_n$, $\tilde{u}$, $\tilde{q}_n$, $\overline{q^2}$ be the Skorokhod-Jakubowski representations form Proposition \ref{SJ representations}. Setting
	$$\tilde{P}_n= K\ast \left(\tilde{u}_n^2+\frac 1 2\tilde{q}_n^2\right),\quad \tilde{P}=K\ast\left(\tilde{u}^2+\frac 1 2\overline{q^2}\right),
	$$
	the following strong convergence holds:
	\begin{equation}\label{tilde P convergence r}
		\tilde{P}_n\rightarrow\tilde{P}\quad{\rm in}\ L^r([0,T]\times\mathbb{
		S}),\ \tilde{\mathbb{P}}-{\rm a.s.,}
	\end{equation}
	where $r\in[1,3/2)$ fixed in \eqref{tight quasi Polish}. In particular, for any $p\in[1,p_0/2)$,
	\begin{equation}\label{tilde P convergence p}
		\tilde{P}_n\rightarrow\tilde{P}\quad{\rm in}\ L^p(\tilde{\Omega}\times[0,T]\times\mathbb{
			S}),\ \tilde{\mathbb{P}}{\rm a.s.,}
	\end{equation}
	where $p_0>4$ is fixed.
\end{lemm}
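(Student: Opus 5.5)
The plan is to split $\tilde{P}_n-\tilde{P}$ into the $u^2$-part and the $q^2$-part:
$$\tilde{P}_n-\tilde{P}=K\ast\left(\tilde{u}_n^2-\tilde{u}^2\right)+\frac 1 2 K\ast\left(\tilde{q}_n^2-\overline{q^2}\right).$$
Each part is treated pathwise, and the $L^p(\tilde{\Omega}\times[0,T]\times\mathbb{S})$ statement is then deduced from uniform moment bounds by Vitali's theorem.

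\textbf{The $u^2$-part.} By Proposition \ref{SJ representations}, $\tilde{u}_n\rightarrow\tilde{u}$ in $C([0,T];L^2(\mathbb{S}))$ $\tilde{\mathbb{P}}$-a.s., so $\tilde{u}_n^2\rightarrow\tilde{u}^2$ in $C([0,T];L^1(\mathbb{S}))$. Since $K$ is bounded on $\mathbb{S}$, Young's inequality gives
$$\|K\ast(\tilde{u}_n^2-\tilde{u}^2)\|_{L^\infty([0,T]\times\mathbb{S})}\le\|K\|_{L^\infty}\|\tilde{u}_n^2-\tilde{u}^2\|_{C([0,T];L^1(\mathbb{S}))}\rightarrow0$$
almost surely. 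This is stronger than needed.

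\textbf{The $q^2$-part.} Here only $\tilde{q}_n^2\rightarrow\overline{q^2}$ in $L^r([0,T];L^r(\mathbb{S})-w)$ a.s. is available. The key point is that convolution with $K$ is compact: $K\in W^{1,\infty}(\mathbb{S})$, so $f\mapsto K\ast f$ maps $L^r(\mathbb{S})$ (indeed $L^1(\mathbb{S})$) compactly into $C(\mathbb{S})$. For a.e. fixed $t$, weak convergence in $L^r(\mathbb{S})$ then upgrades to $K\ast(\tilde{q}_n^2-\overline{q^2})(t)\rightarrow0$ uniformly in $x$. More concretely, $K\ast f(x)=\int_\mathbb{S}K(x-y)f(y)\,{\rm d}y$. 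For each fixed $x$, weak convergence gives pointwise convergence because $K(x-\cdot)\in L^{r'}$. Equicontinuity in $x$ follows from $\|K\|_{W^{1,\infty}}\|f\|_{L^1}$, which gives uniformity by Arzel\`a--Ascoli.

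To integrate in time I would use dominated convergence in $t$. The bound $\|K\ast(\tilde{q}_n^2-\overline{q^2})(t)\|_{L^\infty}\lesssim\|\tilde{q}_n(t)\|_{L^2}^2+\|\overline{q^2}(t)\|_{L^1}$ is dominated in $L^r_t$ only along subsequences. The precise topology of $L^r(L^r_w)$ also only gives $t$-a.e. weak convergence along subsequences. I therefore expect this step to be the main obstacle: converting convergence in $L^r([0,T];L^r_w)$ into time-integrated convergence of the compact image.

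I would try the following. By the definition \eqref{weak topology} of the quasi-Polish topology, convergence in $L^r(L^r_w)$ means $\int_0^T|\int_\mathbb{S}\varphi_j(\tilde{q}_n^2-\overline{q^2})\,{\rm d}x|^r\,{\rm d}t\rightarrow0$ for a countable dense family $\{\varphi_j\}$. Approximate $K(x-\cdot)$ uniformly in $x$ by finite combinations of the $\varphi_j$, using a finite $\delta$-net in $x$ together with Lipschitz continuity of $K$. Combined with the uniform $L^r_tL^1_x$ bound on $\tilde{q}_n^2$, this gives $\|K\ast(\tilde{q}_n^2-\overline{q^2})\|_{L^r_tL^\infty_x}\rightarrow0$ a.s., and hence \eqref{tilde P convergence r}. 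Alternatively, \eqref{tilde qn convergence} already gives convergence in $H^{-1}$. Since $K\ast$ maps $H^{-1}$ boundedly into $H^1\hookrightarrow L^\infty$, this yields $\tilde{P}_n\rightarrow\tilde{P}$ in $L^p_{\tilde{\omega},t}(L^\infty_x)$ directly, which is a clean route to \eqref{tilde P convergence p}.

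\textbf{The moment statement.} Finally, for \eqref{tilde P convergence p} with $p<p_0/2$, Lemma \ref{priori estimate} (2) and (4) give $\tilde{\mathbb{E}}\|\tilde{P}_n\|_{L^\infty}^{p_0/2}+\tilde{\mathbb{E}}\int_0^T\|\tilde{P}\|_{L^\infty}^{p_0/2}\,{\rm d}t\le C$. Hence $\|\tilde{P}_n-\tilde{P}\|_{L^p_{t,x}}^p$ is uniformly integrable on $\tilde{\Omega}$. Combining this with the a.s. convergence from the previous steps, Vitali's convergence theorem yields the claim.
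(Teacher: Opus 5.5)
Your proposal is correct. Through your alternative route for the $q^2$-part (the $L^p_{\tilde{\omega},t}(H^{-1}_x)$ convergence \eqref{tilde qn convergence} combined with $K\ast=(1-\partial_x^2)^{-1}:H^{-1}(\mathbb{S})\to H^1(\mathbb{S})\hookrightarrow L^\infty(\mathbb{S})$), plus Young's inequality for the $\tilde{u}^2$-part and Vitali for the moments, it is essentially the paper's omitted ``H\"older + Young + \eqref{tilde qn convergence}'' argument. Your $\delta$-net argument for the a.s. statement is a concrete form of the compact embedding $L^r(\mathbb{S})\hookrightarrow H^{-1}(\mathbb{S})$ that the paper uses to get a.s. convergence in $L^r(H^{-1})$; the pathwise bound $\sup_n\int_0^T\|\tilde{q}_n^2(t)\|_{L^1(\mathbb{S})}^r\,{\rm d}t<\infty$ that both arguments need, and which you assert without justification, follows by testing the $L^r(L^r_w)$ convergence with $\phi\equiv1$, since $\tilde{q}_n^2\ge0$.
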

\begin{proof}
	This is a direct consequence of the standard H\"{o}lder and Young inequalities combined with \eqref{tilde qn convergence}, the proof is omitted.
\end{proof}

The remaining part of this section is devoted to the study of the defect measure $\mathbb{D}$ defined in \eqref{defect measure}, which will be done by analyzing the related defects $\overline{S(q)}-S(\tilde{q})$, for an appropriate class of nonlinearities $S$. We want the a.s. weak limit $\overline{S(q)}$ to satisfy the following pathwise inequality in the sense of distributions:
$$
\partial_t \overline{S(q)}+\partial_x(\tilde{u}\,\overline{S(q)})+\left[
\overline{S'(q)}\left(P[\tilde{u},\overline{q^2}]-\tilde{u}^2\right)
-\left(\overline{S(q)\,q}-\frac 1 2\overline{S'(q)\,q^2}\right)
-\frac 1 2\overline{S''(q)\,q^2}
\right]
-\overline{S'(q)\,q}\,\dot{\tilde{W}}\le 0,
$$
along with the initial data $\overline{S(q)}(0)=S(\partial_x\tilde{u}_0)$. However, we cannot establish above inequality along the lines of Proposition \ref{SPDE tilde u}, the main obstacle is the lack of strong temporal compactness, which prevents us from passing to the limit in some terms. Instead we will furnish a “direct” weak convergence proof, relying on Lemma \ref{convergence of stochastic integrals} to establish the convergence of stochastic integrals of processes like $\int_{\mathbb{S}}S'(\tilde{q}_n)\tilde{q}_n\,{\rm d}x$.

\begin{lemm}\label{S(q) eneregy}
	Denote by $S=S(v)$ any of the function $S_l(v_\pm)$ defined as \eqref{S_l}, or $\frac 1 2v^2$, $\frac 1 2v_\pm^2$. Let $\overline{S(q)},\ \overline{S'(q)},\ \overline{S(q)q},\ \overline{S'(q)q},\ \overline{S'(q)q^2}$ and $\overline{S''(q)q^2}$ be the Skorokhod-Jakubowski representations in Proposition \ref{SJ representations}, and let $\tilde{P}$ defined by Lemma \ref{tilde Pn convergence}. Then 
	\begin{equation}\label{Sq equation}
		\begin{aligned}
			\partial_t \overline{S(q)}&+\partial_x(\tilde{u}\,\overline{S(q)})\\
			&+\left[
			\overline{S'(q)}\left(P[\tilde{u},\overline{q^2}]-\tilde{u}^2\right)
			-\left(\overline{S(q)\,q}-\frac 1 2\overline{S'(q)\,q^2}\right)
			-\frac 1 2\overline{S''(q)\,q^2}
			\right]
			-\overline{S'(q)\,q}\,\dot{\tilde{W}}\le 0,
		\end{aligned}
	\end{equation}
	that is for any $0\le \varphi\in C^\infty_c([0,T)\times\mathbb{S})$,
	\begin{equation}\label{Sq inequality}
		\begin{aligned}
			\int_0^T&\int_{\mathbb{S}}\overline{S(q)}\,\partial_t\varphi\,{\rm d}x\,{\rm d}t+\int_{\mathbb{S}}S(\partial_x\tilde{u}_0)\,\varphi(0,x)\,{\rm d}x
			\\
			&+\int_0^T\int_{\mathbb{S}}\left[\overline{S'(q)}\left(\tilde{u}^2-\tilde{P}\right)+\overline{H(q)}+\frac 1 2\overline{S''(q)\,q^2}\right]\varphi\,{\rm d}x\,{\rm d}t\\
			&+\int_0^T\int_{\mathbb{S}}\tilde{u}\,\overline{S(q)}\,\partial_x\varphi\,{\rm d}x\,{\rm d}t+\int_0^T\int_{\mathbb{S}}\overline{S'(q)\, q}\,\varphi\,{\rm d}x\,{\rm d}\tilde{W}\ge 0,\quad\tilde{\mathbb{P}}-{\rm a.s.,}
		\end{aligned}
	\end{equation}
	with $H(v)=S(v)v-\frac 1 2S'(v)v^2$. By the linearity of weak limits, $\overline{H(q)}=\overline{S(q)q}-\frac 1 2\overline{S'(q)q^2}$.
\end{lemm}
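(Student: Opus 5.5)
We derive the inequality for the viscous Skorokhod--Jakubowski representations and then pass to the limit $n\to\infty$ term by term, using only the weak and a.s. convergences of Proposition \ref{SJ representations}.

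\textbf{Step 1: the renormalized equation for $\tilde q_n$.} By Lemma \ref{priori estimate}(1), $\tilde u_n$ is regular and $\tilde q_n=\partial_x\tilde u_n$ pointwise. Lemma \ref{SPDE tilde un} and the equality of laws (the same martingale identification argument, applied to the differentiated equation) show that $\tilde q_n$ solves \eqref{viscous q} driven by $\tilde W_n$. It\^o's formula then gives the analogue of \eqref{viscous S(q)} for $S(\tilde q_n)$, where $S\in W^{2,\infty}_{\rm loc}$ is one of $S_l(v_\pm)$, $\frac12v^2$, $\frac12v_\pm^2$. For $S_l(v_\pm)$, where $S''$ jumps at $0$, we first mollify $S$, use that $S''\ge0$, and pass to the limit with the formulas \eqref{S_l of q exact}.

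Testing with $0\le\varphi\in C_c^\infty([0,T)\times\mathbb{S})$ and integrating by parts yields \eqref{Sq inequality} with $n$-quantities and initial term $S(\partial_x\tilde u_{0,n})$. There are two differences. First, it carries the extra terms $\varepsilon_n\int\!\!\int S(\tilde q_n)\partial_x^2\varphi$ and $-\varepsilon_n\int\!\!\int S''(\tilde q_n)|\partial_x\tilde q_n|^2\varphi$. Second, it is an equality. Because $S''\ge0$ (convexity) and $\varphi\ge0$, the dissipative term is nonpositive. Dropping it produces the inequality, and this is the only place the ``$\le$'' enters.

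\textbf{Step 2: the deterministic terms.} We pass to the limit a.s., term by term.
\begin{itemize}
\item $\varepsilon_n\int\!\!\int S(\tilde q_n)\partial_x^2\varphi\to0$, since $|S(v)|\lesssim|v|^2$ and Lemma \ref{priori estimate} bounds $\tilde q_n$ in $L^\infty_tL^2_x$.
\item The initial term converges because $\tilde u_{0,n}\to\tilde u_0$ in $H^1$ and $|S(a)-S(b)|\le(|a|+|b|)|a-b|$.
\item The terms linear in weak limits, $\overline{S(q)}\partial_t\varphi$, $\overline{H(q)}\varphi$ and $\overline{S''(q)q^2}\varphi$, converge directly from \eqref{SJ convergence}, using the weak topologies against the fixed test function.
\item The products $\tilde u_n S(\tilde q_n)\partial_x\varphi$ and $S'(\tilde q_n)\tilde u_n^2$ converge because $\tilde u_n\to\tilde u$ uniformly in $(t,x)$. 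This follows by interpolating the $C_tL^2_x$ convergence with the $L^\infty_tH^1_x$ bound in one dimension. Strong-times-weak then gives the limits.
\item $S'(\tilde q_n)\tilde P_n\to\overline{S'(q)}\tilde P$ follows from the strong convergence $\tilde P_n\to\tilde P$ in $L^r$ (Lemma \ref{tilde Pn convergence}) together with the weak $L^{2r}$ convergence of $S'(\tilde q_n)$. This needs $1/r+1/(2r)\le1$, which holds after possibly lowering $r$. Alternatively, one can use $\tilde P_n$ bounded in $L^\infty$ and interpolate.
\end{itemize}
For the quadratic case $S=\frac12v^2$, the $S'$ terms are linear in $\tilde q_n$ and the same reasoning applies.

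\textbf{Step 3: the stochastic integral (main obstacle).} We must show
\[
\int_0^T\!\!\int_{\mathbb{S}}S'(\tilde q_n)\tilde q_n\varphi\,{\rm d}x\,{\rm d}\tilde W_n\;\to\;\int_0^T\!\!\int_{\mathbb{S}}\overline{S'(q)q}\,\varphi\,{\rm d}x\,{\rm d}\tilde W
\]
in probability. The integrands $g_n(t)=\int S'(\tilde q_n)\tilde q_n\varphi\,{\rm d}x$ converge only weakly in time, so the standard lemma requiring $g_n\to g$ in $L^2_t$ in probability does not apply directly.

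The plan is to use that $S'(v)v$ belongs to the class \eqref{S bound} (it is the function $\beta$ discussed above). Proposition \ref{temporal translation estimate} therefore gives a uniform temporal translation estimate for $g_n$. Combined with the $L^{2r}(L^{2r}_w)$ convergence of $S'(\tilde q_n)\tilde q_n$ from \eqref{SJ convergence}, this upgrades $g_n\to g$ to strong convergence in $L^2(0,T)$ a.s., by an Aubin--Simon type argument for scalar functions: weak convergence plus uniform time-equicontinuity in $L^1_t$ plus a uniform $L^\infty_t$ bound. Lemma \ref{convergence of stochastic integrals}, together with $\tilde W_n\to\tilde W$ in $C_t$, then gives convergence of the stochastic integrals in probability, and hence a.s. along a subsequence. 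Since $\tilde u_n,\tilde q_n$ are adapted to the filtration of Lemma \ref{Wiener process}, the integrals are well defined.

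Collecting all limits in Steps 2 and 3 yields \eqref{Sq inequality} $\tilde{\mathbb P}$-a.s. Each test function is handled on a full-measure set. We first take $\varphi$ from a countable dense set, and a density argument then extends the inequality to all $\varphi$.
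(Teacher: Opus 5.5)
Your skeleton matches the paper's. You write the $n$-level identity for $S(\tilde q_n)$ tested against $\varphi\ge0$, drop the dissipative term $\varepsilon_n S''(\tilde q_n)|\partial_x\tilde q_n|^2\varphi\ge0$ as the sole source of the inequality, pass to the limit term by term, and use Lemma \ref{convergence of stochastic integrals} for the stochastic integral.

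The main flaw is in Step 3: its premise is mistaken. The spaces $L^{2r}(L^{2r}_w)$ and $L^r(L^r_w)$ carry a topology that is strong in time. By \eqref{weak topology}, convergence there means $t\mapsto\int_{\mathbb S}\phi\,z_n(t)\,{\rm d}x$ converges in $L^{p_1}(0,T)$ for every fixed $\phi$. Proposition \ref{temporal translation estimate} was already used, via Lemma \ref{tightness criterion}, to obtain tightness in exactly this topology. So \eqref{SJ convergence} directly gives $\tilde J_n\to\tilde J$ in $L^{2r}(0,T)$ a.s. when $S=S_l(v_\pm)$; this is what the paper uses. For $S=\frac12v^2,\frac12v_\pm^2$ the paper instead uses \eqref{tilde qn convergence} in $L^2_{\tilde\omega,t}(H^{-1}_x)$.

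Your substitute, a pathwise Aubin--Simon argument, does not work as stated. The translation estimate bounds an expectation, so it gives no modulus of continuity uniform in $n$ for a fixed $\tilde\omega$. Likewise, your ``uniform $L^\infty_t$ bound'' on $g_n$ holds only in moments. What is true, and all Lemma \ref{convergence of stochastic integrals} needs, is convergence in probability in $L^2(0,T)$. You also skip a step: that lemma controls the process $t\mapsto\int_0^t\cdots{\rm d}\tilde W_n$ in $L^2(0,T)$, not its value at $T$. This is immediate here, since $\varphi$ vanishes near $t=T$ and the processes are constant there. The paper uses a longer $D_n$/Arzel\`a--Ascoli argument for this.

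Step 2 has the same pathwise-bound issue. Lemma \ref{priori estimate} gives moment bounds, not $\sup_n\|\tilde u_n\|_{L^\infty_tH^1_x}<\infty$ a.s. So ``$\tilde u_n\to\tilde u$ uniformly a.s.'' must be replaced by convergence in probability. In $\|\tilde u_n-\tilde u\|_{L^\infty_{t,x}}\lesssim\|\tilde u_n-\tilde u\|_{C_tL^2_x}^{1/2}\|\tilde u_n-\tilde u\|_{L^\infty_tH^1_x}^{1/2}$, the first factor tends to $0$ a.s. and the second is bounded in $L^{p_0}(\tilde\Omega)$. Convergence in probability of every term suffices once you extract an a.s. subsequence. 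The paper sidesteps this by proving weak $L^1(\tilde\Omega)$ convergence of $\tilde I^{(3)}_n,\tilde I^{(6)}_n,\tilde I^{(7)}_n$ and testing $\tilde I_n+\tilde M_n\ge0$ against $\mathds{1}_A$.

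Your first argument for $S'(\tilde q_n)\tilde P_n$ is wrong. The condition $\frac1r+\frac1{2r}\le1$ means $r\ge\frac32$, which Proposition \ref{viscous q higher} excludes ($2r<3$), and lowering $r$ makes it worse. Your alternative is fine: interpolate the $L^r$ convergence with the $L^\infty$ moment bound. Equivalently, as the paper does, pair \eqref{tilde P convergence p} with $p=(2r)'\le2<p_0/2$ against the weak $L^{2r}_{\tilde\omega,t,x}$ limit of $S'(\tilde q_n)$.

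Finally, your closing density argument is unnecessary, because the statement is for each fixed $\varphi$. It is also not obviously valid, since $\varphi\mapsto\int_0^T\!\int_{\mathbb S}\overline{S'(q)q}\,\varphi\,{\rm d}x\,{\rm d}\tilde W$ has no a priori pathwise continuous version.
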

\begin{proof}
	Fix a nonnegative test function $\varphi\in C^\infty_c([0,T)\times\mathbb{S})$. Multiply \eqref{viscous S(q)} by $\varphi$, integrate over $(t,x)$, since $S''(v)\ge 0$, dropping the dissipation term, we obtain
	\begin{equation}\label{In Mn inequality}
		\tilde{I}_n(\tilde{\omega})+\tilde{M}_n(\tilde{\omega})\ge 0,\quad {\rm for}\ \tilde{\mathbb{P}}-{\rm a.e.}\ \tilde{\omega}\in\tilde{\Omega}
	\end{equation}
	where
	\begin{equation}\label{tilde Mn def}
		\tilde{M}_n(\omega)=\int_0^T\int_{\mathbb{S}}S'(\tilde{q}_n)\,\tilde{q}_n\,\varphi\,{\rm d}x\,{\rm d}\tilde{W}_n,
	\end{equation}
	and $\tilde{I}_n=\sum_{i=1}^8\tilde{I}_n^{(i)}$ with
	\begin{equation}\label{tilde In def}
		\begin{array}{ll}
			\tilde{I}_n^{(1)}=\int_0^T\int_{\mathbb{S}}S(\tilde{q}_n)\,\partial_t\varphi\,{\rm d}x\,{\rm d}t,\quad 
			&\tilde{I}_n^{(2)}=\int_{\mathbb{S}}S(\partial_x\tilde{u}_{0,n})\,\varphi(0,x)\,{\rm d}x,\\[2ex]
			\tilde{I}_n^{(3)}=\int_0^T\int_{\mathbb{S}}\tilde{u}_n\,S(\tilde{q}_n)\,\partial_x\varphi\,{\rm d}x\,{\rm d}t,\quad &\tilde{I}_n^{(4)}=\varepsilon_n\int_0^T\int_{\mathbb{S}} S(\tilde{q}_n)\,\partial_x^2\varphi\,{\rm d}x\,{\rm d}t,\\[2ex]
			\tilde{I}_n^{(5)}=\int_0^T\int_{\mathbb{S}}H(\tilde{q}_n)\,\varphi \,{\rm d}x\,{\rm d}t,\quad
			&\tilde{I}_n^{(6)}=-\int_0^T\int_{\mathbb{S}}S'(\tilde{q}_n)\,\tilde{P}_n\,\varphi \,{\rm d}x\,{\rm d}t,\\[2ex]
			\tilde{I}_n^{(7)}=\int_0^T\int_{\mathbb{S}}S'(\tilde{q}_n)\,\tilde{u}_n^2\,\varphi\,{\rm d}x\,{\rm d}t,\quad
			&\tilde{I}_n^{(8)}=\frac 1 2\int_0^T\int_{\mathbb{S}}S''(\tilde{q}_n)\,\tilde{q}_n\,\varphi \,{\rm d}x\,{\rm d}t,
		\end{array}
	\end{equation}
	
	We can also write the \eqref{S(q) eneregy} in the form 
	$$\tilde{I}(\tilde{\omega})+\tilde{M}(\tilde{\omega})\ge 0\quad {\rm for}\ \tilde{\mathbb{P}}-{\rm a.e.}\ \tilde{\omega}\in\tilde{\Omega},$$
	where $\tilde{M}(\tilde{{\omega}})$ and $\tilde{I}(\tilde{\omega})\triangleq\sum_{i=1}^8 \tilde{I}^{(i)}$ are as define in \eqref{tilde Mn def} and \eqref{tilde In def} via the corresponding limit terms identified in \eqref{S(q) eneregy}.
	
	\noindent $\circ$ Convergence of deterministic integrals $\tilde{I}^{(i)}_n$.
	
	Here, for the deterministic integrals $\tilde{I}^{(i)}_n$,  we only consider the most challenging choices of $S$, namely $S(v)=\frac 1 2 v^2,\frac 1 2 v_\pm^2$. The argument is same for $S=S_l$ since $S_l(v)\lesssim_l |v|$. According to \eqref{SJ convergence} and \eqref{tilde qn convergence}, it is obvious that
	\begin{equation}\label{Ii convergence 12458}
		\tilde{I}_n^{(i)}\rightarrow\tilde{I}^{(i)},\quad\tilde{\mathbb{P}}-{\rm a.s.}\ \text{and strongly in }L^2(\tilde{\Omega}),\ \forall\ i\notin\{3,6,7\}.
	\end{equation}
	
	For $\tilde{I}_n^{(3)}$, in the view of \eqref{SJ convergence}, $S(\tilde{q}_n)\rightharpoonup\overline{S(q)}$ in $L^r_{t,x}$ a.s., where $r\in[1,3/2)$ is fixed. And by Lemma \ref{priori estimate}, $\tilde{\mathbb{E}}\Vert S(\tilde{q}_n)\Vert_{L^r([0,T]\times\mathbb{S})}^p\lesssim1$. Hence, by a weak compactness argument, we can suppose $\overline{S(q)}\in L^r(\tilde{\Omega}\times[0,T]\times\mathbb{S})$ and 
	\begin{equation}\label{tilde Sn convergence r}
		S(\tilde{q}_n)\rightharpoonup\overline{S(q)}\quad{\rm in}\ L^r(\tilde{\Omega}\times[0,T]\times\mathbb{S}).
	\end{equation}
	Also, $\tilde{u}_n\rightarrow\tilde{u}$ in $C_tL^2_x$ a.s. and $\tilde{E}\Vert\tilde{u}_n\Vert_{L^\infty([0,T];H^1(\mathbb{S}))}^{p_0}\lesssim 1$, by Vitali's convergence theorem we thus obtain $\tilde{u}_n\rightarrow\tilde{u}$ in $L^{r'}(\tilde{\Omega}\times[0,T]\times\mathbb{S})$. Finally, by interpolation for $3<r'<p_0$
	\begin{equation}\label{tilde un convergence r'}
		\tilde{u}_n\rightarrow\tilde{u}\text{ in } L^{r'}(\tilde{\Omega}\times[0,T]\times\mathbb{S}).
	\end{equation}
	Given the strong convergence of $\tilde{u}_n$ \eqref{tilde un convergence r'} and the weak convergence of $S(\tilde{q}_n)$ \eqref{tilde Sn convergence r}, we can deduce that
	$$\tilde{u}_n\,S(\tilde{q}_n)\rightharpoonup\tilde{u}\,\overline{S(q)}\quad{\rm in}\ L^1 (\tilde{\Omega}\times[0,T]\times\mathbb{S}),$$
	therefore,
	\begin{equation}\label{Ii convergence 3}
		\tilde{I}_n^{(3)}\rightharpoonup I^{(3)}\quad{\rm in}\ L^1(\tilde{\Omega}).
	\end{equation}
	
	For $I_n^{(6)}$, by \eqref{SJ convergence} and Lemma \ref{priori estimate}, $S'(\tilde{q}_n)\rightharpoonup \overline{S'(q)}$ in $L^{2r}_{t,x}$ a.s. and $\tilde{\mathbb{E}}\Vert S'(\tilde{q}_n)\Vert_{L^{2r}([0,T]\times\mathbb{S})}^{2r}\lesssim 1$. We can assume $\overline{S'(q)}\in L^{2r}(\tilde{\Omega}\times[0,T]\times\mathbb{S})$ similarly by a weak compactness argument and
	\begin{equation}\label{tilde Sn' convergence 2r}
		S(\tilde{q}_n)\rightharpoonup\overline{S'(q)}\quad{\rm in}\ L^{2r}(\tilde{\Omega}\times[0,T]\times\mathbb{S}).
	\end{equation}
	Then the dual index $3/2<(2r)'<2$, by the Cauchy-Schwarz inequality and \eqref{tilde un convergence r'}, we can deduce that 
	$$\tilde{u}_n\rightarrow\tilde{u}\text{ in } L^{2r'}(\tilde{\Omega}\times[0,T]\times\mathbb{S}).$$
	Combining with \eqref{tilde Sn' convergence 2r}, also
	\begin{equation}\label{Ii convergence 7}
		\tilde{I}_n^{(7)}\rightharpoonup I^{(7)}\quad{\rm in}\ L^1(\tilde{\Omega}).
	\end{equation}
	
	Following the same line, Lemma \ref{tilde Pn convergence} yields 
	\begin{equation}\label{Ii convergence 6}
		\tilde{I}_n^{(6)}\rightharpoonup I^{(6)}\quad{\rm in}\ L^1(\tilde{\Omega}).
	\end{equation}

	\noindent $\circ$ Convergence of stochastic integrals $\tilde{M}_n$
	
	Finally, we deal with the remaining stochastic integral term $\tilde{M}_n$. Denote 
	$$\tilde{J}_n=\int_{\mathbb{S}} S'(\tilde{q}_n)\,\tilde{q}_n\,\varphi\,{\rm d}x.$$
	We divide the argument into two cases: $S=S_l(v_\pm)$ and $S=\frac 1 2 v^2,\frac 1 2v_\pm^2$.
	
	Begin with the case $S(v)=S_l(v_\pm)$, by \eqref{SJ convergence}, $S'(\tilde{q}_n)\,\tilde{q}_n\rightarrow\overline{S'(q)\,q}$ in $L^{2r}(L^{2r}_w)$ a.s. with $2r>2$, since $\varphi\in L^\infty_{t,x}$,
	$$\tilde{J}_n\rightarrow\int_{\mathbb{S}} \overline{S'(q)\,q}\,\varphi\,{\rm d}x=:\tilde{J}\quad{\rm in}\ L^{2r}([0,T]),\text{ a.s.}
	$$
	This implies that $\tilde{J}_n\rightarrow\tilde{J}$ in $L^2([0,T])$ in probability. Besides, $\tilde{W}_n\rightarrow\tilde{W}$ in $C([0,T])$ a.s. in the view of \eqref{SJ convergence} and thus in probability. According to Lemma \ref{convergence of stochastic integrals}, we conclude that 
	$$
		\tilde{\mathcal{M}}_n(\cdot):=\int_0^\cdot\int_{\mathbb{S}} S'(\tilde{q}_n)\,\tilde{q}_n\,\varphi\,{\rm d}x\,{\rm d}\tilde{W}_n\rightarrow\int_0^\cdot\int_{\mathbb{S}} \overline{S'(q)\,q}\,\varphi\,{\rm d}x\,{\rm d}\tilde{W}=:\tilde{\mathcal{M}}(\cdot)
	$$
	in $L^2([0,T])$, in probability. By passing to a subsequence, we may assume this convergence holds $\tilde{\mathbb{P}}$-a.s. 
	
	Next, consider $S(v)=\frac 1 2 v^2$. Thus, $S'(\tilde{q_n})\,\tilde{q}_n=\tilde{q}_n^2\rightarrow\overline{q^2}$ in $L^2_{\tilde{\omega},t}(H^{-1}_x)$ according to \eqref{tilde qn convergence}. This implies $\tilde{J}_n\rightarrow\tilde{J}$ in $L^2([0,T])$ in probability. As above, $\tilde{W}_n$ tends to $\tilde{W}$ in $C([0,T])$ and in probability, by extracting a subsequence, we can also assume the above convergence holds $\tilde{P}$-a.s for $S(v)=\frac 1 2 v^2$.
	
	The case $S(v)=\frac 1 2 v^2_\pm$ can be obtained in the same way and is therefore omitted. In conclusion,
	\begin{equation}\label{tilde mathcal Mn convergence}
		\left\Vert
		\tilde{\mathcal{M}}_n-\tilde{\mathcal{M}}
		\right\Vert_{L^2([0,T])}^2\rightarrow0\quad\tilde{\mathbb{P}}-{\rm a.s.},
	\end{equation}
	and, for any $p\in[2,p_0/2]$, by the BDG inequality and the a priori estimate in Lemma \ref{priori estimate},
	\begin{equation}\label{tilde mathcal Mn control}
		\begin{aligned}
			\tilde{\mathbb{E}}\left\Vert
			\tilde{\mathcal{M}}_n-\tilde{\mathcal{M}}
			\right\Vert_{L^2([0,T])}^p
			&\lesssim_T\tilde{\mathbb{E}}\sup_{t\in[0,T]}\left|\tilde{\mathcal{M}}_n(t)\right|^p+\tilde{\mathbb{E}}\sup_{t\in[0,T]}\left|\tilde{\mathcal{M}}(t)\right|^p\\
			&\lesssim\tilde{\mathbb{E}}\left[\left(\int_0^T\left|\tilde{J_n}(t)\right|^2\,{\rm d}t\right)^{p/2}
			+\left(\int_0^T\left|\tilde{J}(t)\right|^2\,{\rm d}t\right)^{p/2}\right]\\
			&\lesssim_T\tilde{\mathbb{E}}\int_0^T\left|\int_{\mathbb{S}}\tilde{q}_n^2(t)\,{\rm d}x\right|^p\, {\rm d}t
			+\tilde{\mathbb{E}}\int_0^T\left|\int_{\mathbb{S}}\overline{q^2}(t)\,{\rm d}x\right|^p\, {\rm d}t\\
			&\lesssim\tilde{\mathbb{E}}\Vert\tilde{q}_n\Vert_{L^\infty([0,T];L^2(\mathbb{S}))}^{2p}+\tilde{\mathbb{E}}\int_0^T\Vert\overline{q^2}(t)\Vert_{H^{-1}(\mathbb{S})}^p\,{\rm d}t\lesssim 1.
		\end{aligned}		
	\end{equation}
	Given \eqref{tilde mathcal Mn convergence} and \eqref{tilde mathcal Mn control}, Vitali's convergence theorem yields
	$$
	\tilde{\mathbb{E}}\int_0^T\left|\tilde{\mathcal{M}}_n(t)-\tilde{\mathcal{M}}(t)\right|^2\,{\rm d}t\rightarrow0.
	$$
	By Riesz's lemma, passing to a subsequence,
	\begin{equation}\label{Dn convergence}
		D_n(t)=\tilde{\mathbb{E}}\left|\tilde{\mathcal{M}}_n(t)-\tilde{\mathcal{M}}(t)\right|^2\rightarrow 0,\quad\text{for a.e. in } t\in[0,T].
	\end{equation}
	
	Next, we claim that $D_n(t)$ depends continuously on $t\in[0,T]$, uniformly on $n$. Since the remaining cases can be treated analogously, we shall consider only the case 
	$S(v)=\frac 1 2 v^2$ in what follows. By straightforward computation
	$$\left|D_n(t_2)-D_n(t_1)\right|^2\lesssim\tilde{\mathbb{E}}\left|\tilde{\mathcal{M}}_n(t_2)-\tilde{\mathcal{M}}_n(t_1)\right|^2+\tilde{\mathbb{E}}\left|\tilde{\mathcal{M}}(t_2)-\tilde{\mathcal{M}}(t_1)\right|^2,$$
	for any $0\le t_1\le t_2\le T$. The It\'{o} isometry and Lemma \ref{priori estimate} implies
	$$
	\begin{aligned}
		\tilde{\mathbb{E}}\left|\tilde{\mathcal{M}}_n(t_2)-\tilde{\mathcal{M}}_n(t_1)\right|^2
		&=\tilde{\mathbb{E}}\int_{t_1}^{t_2}|\tilde{J}_n(t)|^2\,{\rm d}t\\
		&\lesssim\tilde{\mathbb{E}}\int_{t_1}^{t_2}\left|\int_{\mathbb{S}}\tilde{q}_n^2(t)\,{\rm d}x\right|^2\,{\rm d}t
		\lesssim|t_2-t_1|\,\tilde{\mathbb{E}}\left\Vert\tilde{q}_n\right\Vert_{L^\infty([0,T];L^2(\mathbb{S}))}^4\lesssim|t_2-t_1|,
	\end{aligned}
	$$
	$$
	\begin{aligned}
		\tilde{\mathbb{E}}\left|\tilde{\mathcal{M}}(t_2)-\tilde{\mathcal{M}}(t_1)\right|^2
		&\lesssim\tilde{\mathbb{E}}\int_{t_1}^{t_2}\left\Vert\overline{q^2}(t)\right\Vert_{H^{-1}(\mathbb{S})}^2\,{\rm d}t\\
		&\lesssim|t_2-t_1|^{\frac 1 {p'}}\left(\tilde{\mathbb{E}}\int_0^T\left\Vert\overline{q^2}(t)\right\Vert_{H^{-1}(\mathbb{S})}^{2p}\,{\rm d}t\right)^{1/p}\lesssim|t_2-t_1|^{\frac 1 {p'}}.
	\end{aligned}
	$$
	Above implies that $D_n\in C([0,T])$. Applying the Arzel\`{a}-Ascoli theorem, we can finally deduce that $D_n(t)\rightarrow D(t)\equiv0$ in $C([0,T])$ up to a subsequence. This yields that
	\begin{equation}\label{Mn convergence}
		\tilde{M}_n=\tilde{\mathcal{M}}_n(T)\rightarrow\tilde{\mathcal{M}}(T)=\tilde{M}\quad{\rm in}\ L^2(\tilde{\Omega}).
	\end{equation}
	
	Given \eqref{In Mn inequality}, \eqref{Ii convergence 12458}, \eqref{Ii convergence 3}, \eqref{Ii convergence 6}, \eqref{Ii convergence 7} and \eqref{Mn convergence} imply that for any measurable set $A\in\tilde{\mathcal{F}}$,
	$$\int_{\tilde{\Omega}}\mathds{1}_A(\tilde{\omega})\,\left(I(\tilde{{\omega}})+M(\tilde{{\omega}})\right)\,{\rm d}\tilde{\mathbb{P}}(\tilde{{\omega}})\ge0,$$
	therefore,
	\begin{equation}\label{I M inequality}
		\tilde{I}(\tilde{{\omega}})+\tilde{M}(\tilde{{\omega}})\ge 0,\quad{\rm for}\ \tilde{\mathbb{P}}-{\rm a.e.}\ \tilde{{\omega}}\in\tilde{\Omega},
	\end{equation}
	the inequality \eqref{Sq inequality} is claimed.
\end{proof}

\subsection[Renormalized equation of the weak limit q]{Renormalized equation of the weak limit $\tilde{q}$}

\quad In a view of Proposition \ref{SPDE tilde u}, the Skorokhod-Jakubowski representation $\tilde{u}$ satisfies the below equation weakly in $x$:
\begin{equation}\label{SPDE tilde u without identification}
	\begin{aligned}
		&{\rm d}\tilde{u}+\left[\tilde{u}\,\partial_x\tilde{u}+\partial_x\tilde{P}\right]\,{\rm d}t=\tilde{u}\,{\rm d}\tilde{W},\\
		&\tilde{P}=K\ast\left(\tilde{u}^2+\frac 1 2\overline{q^2}\right).
	\end{aligned}	
\end{equation}
By Lemma \ref{priori estimate}, $\tilde{q}=\partial_x\tilde{u}$ weakly. Differentiating \ref{SPDE tilde u without identification} with respect to $x$, we thus obtain
\begin{equation}\label{SPDE tilde q without identification}
	{\rm d}\tilde{q}+\left(\partial_x(\tilde{u}\,\tilde{q})-\frac 1 2\overline{q^2}+\tilde{P}-\tilde{u}^2\right)\,{\rm d}t=\tilde{q}\,{\rm d}\tilde{W}
\end{equation}

Consider linear growing $S\in W^{2,\infty}(\mathbb{R})$, formally applying It\'{o}'s formula to $S(\tilde{q})$, expressing the temporal differential as a time-derivative in $\mathcal{D}'([0,T))$, we have
\begin{equation}\label{SPDE Sq without identification}
	\partial_t S(\tilde{q})+\partial_x(\tilde{u}\,S(\tilde{q}))+\left[S'(\tilde{q})\,\left(\tilde{P}-\tilde{u}^2\right)-H(\tilde{q})-\frac 1 2S'(\tilde{q})\,\left(\overline{q^2}-\tilde{q}^2\right)-\frac 1 2S''(\tilde{q})\tilde{q}^2\right]=S'(\tilde{q})\,\tilde{q}\,\dot{\tilde{W}},
\end{equation}
in $\mathcal{D}'([0,T)\times\mathbb{S})$, a.s. with $H(\tilde{q})$ defined as Lemma \ref{S(q) eneregy} and initial data $S(\tilde{q})(0)=S(\partial_x\tilde{u}_0)$. Here, relying crucially on the strong right-continuity at $t=0$ be claimed in Lemma \ref{tilde u continuity t=0}. Note that $S$ is assume linearly growing in order to make sense to the product $S'(\tilde{q})(\overline{q^2}-\tilde{q}^2)$, thus, the $S(v)=\frac 1 2 v^2,\frac 1 2 v_\pm^2$ we assume in Lemma \ref{S(q) eneregy} is excludes. Different from \cite{Holden2024JDE}, equation \eqref{SPDE Sq without identification} does not contain a second-order derivative term and is therefore easier to derive. It can be obtained by standard mollification, the real-valued It\'{o} formula, and the DiPerna–Lions estimates \cite{DiPerna1989,Holden2023,Lions1998}, therefore, we omit the detail.

Next, we give the derivation of the SPDE for the renormalization $S(\tilde{q})=S_l(\tilde{q}_\pm)$ of $\frac 1 2 \tilde{q}^2$ and $\frac 1 2 \tilde{q}_\pm^2$. 

\begin{lemm}[Renormalization of limit SPDE]\label{SPDE of Sl}
	Denote by $S(v)$ any one of the $S_l(v_\pm)$ defined in \ref{S_l}. Let $\tilde{u}$, $\tilde{q}=\partial_x\tilde{u}$ and $\overline{q^2}$ be the Skorokhod-Jakubowski representations from Proposition \ref{SJ representations}, $H(v)$ defined in Lemma \ref{S(q) eneregy}. The SPDE \eqref{SPDE Sq without identification} holds weakly in $(t,x)$, almost surely, that is, for all $\varphi\in C^\infty_c([0,T)\times\mathbb{S})$,
	\begin{equation}\label{SPDE Sq integral without identification}
		\begin{aligned}
			&\int_0^T\int_{\mathbb{S}}S(\tilde{q})\,\partial_t\varphi\,{\rm d}x\,{\rm d}t
			+\int_{\mathbb{S}}S(\partial_x\tilde{u}_0)\,\varphi(0,x)\,{\rm d}x
			+\int_0^T\int_{\mathbb{S}}\tilde{u}\,S(\tilde{q})\partial_x\varphi\,{\rm d}x\,{\rm d}t\\
			+&\int_0^T\int_{\mathbb{S}}\left[S'(\tilde{q})\,\left(\tilde{u}^2-\tilde{P}\right)+H(\tilde{q})+\frac 1 2S'(\tilde{q})\,\left(\overline{q^2}-\tilde{q}^2\right)+\frac 1 2S''(\tilde{q})\tilde{q}^2\right]\,\varphi\,{\rm d}x\,{\rm d}t\\
			+&\int_0^T\int_{\mathbb{S}}S'(\tilde{q})\,\tilde{q}\,\varphi\,{\rm d}x\,{\rm d}\tilde{W}=0,\quad\tilde{\mathbb{P}}-{\rm a.s.}
		\end{aligned}
	\end{equation}
\end{lemm}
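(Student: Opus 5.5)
We establish \eqref{SPDE Sq integral without identification} by mollifying in $x$ and then applying the real-valued It\^{o} formula pointwise in $x$. This is the DiPerna--Lions commutator argument, done in a form that avoids any second-order term.

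\textbf{Mollified equation.} Let $J_\delta$ be the standard Friedrichs mollifier on $\mathbb{S}$ and set $q_\delta=J_\delta\ast\tilde{q}$. Testing \eqref{SPDE tilde q without identification} against $J_\delta(x-\cdot)$ shows that, for every $x\in\mathbb{S}$, $q_\delta(t,x)$ is a real It\^{o} process:
\begin{equation*}
{\rm d}q_\delta+\Big(\partial_x(\tilde{u}\,q_\delta)-\tfrac 1 2 J_\delta\ast\overline{q^2}+J_\delta\ast(\tilde{P}-\tilde{u}^2)\Big)\,{\rm d}t=q_\delta\,{\rm d}\tilde{W}+r_\delta\,{\rm d}t,
\end{equation*}
where $r_\delta=\partial_x(\tilde{u}\,q_\delta)-J_\delta\ast\partial_x(\tilde{u}\,\tilde{q})$ is the commutator. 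The drift is well defined by Lemma \ref{priori estimate}: $\tilde{u}\in L^\infty_tH^1_x$, $\tilde{q}\in L^{2r}_{t,x}$, $\overline{q^2}\in L^r_{t,x}$ and $\tilde{P}\in L^p_tL^\infty_x$.

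\textbf{It\^{o} formula and passage $\delta\to0$.} Since $S=S_l(v_\pm)$ lies in $W^{2,\infty}_{\rm loc}$ with $S'$ bounded and $S''$ bounded with compact support, It\^{o}'s formula applies to $S(q_\delta(t,x))$; we then multiply by $\varphi$ and integrate in $(t,x)$. The It\^{o} correction is $\frac12S''(q_\delta)q_\delta^2$, and the transport term rearranges as
\begin{equation*}
S'(q_\delta)\partial_x(\tilde{u}q_\delta)=\partial_x(\tilde{u}S(q_\delta))+\tilde{q}\big(S'(q_\delta)q_\delta-S(q_\delta)\big).
\end{equation*}
We then let $\delta\to0$ term by term, using $q_\delta\to\tilde{q}$ a.e. and in $L^{2r}_{t,x}$, the boundedness of $S',S''$, and dominated convergence. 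The quadratic terms converge as follows:
\begin{itemize}
\item $-\frac12S'(q_\delta)\,J_\delta\ast\overline{q^2}+\tilde{q}\big(S'(q_\delta)q_\delta-S(q_\delta)\big)$ tends to $-\frac12S'(\tilde{q})\overline{q^2}+\tilde{q}\big(S'(\tilde{q})\tilde{q}-S(\tilde{q})\big)$;
\item this limit equals $-H(\tilde{q})-\frac12S'(\tilde{q})(\overline{q^2}-\tilde{q}^2)$, which matches the bracket in \eqref{SPDE Sq without identification};
\item the term $S'(q_\delta)q_\delta$ is bounded by $C_l|q_\delta|$, so the corresponding products converge in $L^1$.
\end{itemize}
For the stochastic integral $\int_0^T\int_{\mathbb{S}}S'(q_\delta)q_\delta\varphi\,{\rm d}x\,{\rm d}\tilde{W}$, the integrand converges in $L^2(\tilde{\Omega}\times[0,T])$ by the $L^{p_0}_\omega L^\infty_tL^2_x$ bound on $\tilde{q}$. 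The It\^{o} isometry then gives convergence in $L^2(\tilde{\Omega})$, and a.s. convergence along a subsequence.

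\textbf{Main obstacle: the commutator.} We expect the main difficulty to be showing that $\int\!\!\int S'(q_\delta)r_\delta\varphi\to0$ a.s. The DiPerna--Lions lemma gives $r_\delta\to0$ in $L^1_{t,x}$ whenever $\partial_x\tilde{u}=\tilde{q}\in L^{2}_{t,x}$ and $\tilde{q}\in L^2_{t,x}$; this holds a.s. since $\tilde{q}\in L^\infty_tL^2_x$. Because $S'$ is bounded, $S'(q_\delta)r_\delta\to0$ in $L^1$, which is precisely why linear growth of $S$ is essential here.

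\textbf{Initial datum.} The term $\int S(\partial_x\tilde{u}_0)\varphi(0)$ is recovered as follows. We first derive the identity with test functions supported in $(0,T)$. We then pass to $\varphi\in C^\infty_c([0,T)\times\mathbb{S})$ by multiplying by a cut-off in time that vanishes near $0$, and letting the cut-off tend to $1$. In this limit we use the strong initial trace \eqref{tilde S continuity t=0} from Lemma \ref{continuity t=0}, together with the continuity of the stochastic integral at $t=0$.
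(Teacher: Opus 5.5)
Your proposal is correct and follows essentially the same route as the paper: mollify in $x$, apply the real-valued It\^{o} formula to $S(q_\delta)$, remove the DiPerna--Lions commutator using that $S'$ is bounded for $S=S_l(v_\pm)$, and let $\delta\to0$ term by term, with the stochastic integral handled by the It\^{o} isometry. The differences are cosmetic: you leave $\tilde{u}$ unmollified in the transport term (commutator $r_\delta$), where the paper uses $\tilde{u}_\delta\tilde{q}_\delta$ and the error $\partial_x E_\delta$; and you recover the initial term through a time cut-off and the strong trace \eqref{tilde S continuity t=0}, whereas the paper passes to the limit directly in $S(\tilde{q}_\delta(0))$. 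Since $\tilde{q}_\delta(0)=J_\delta\ast\partial_x\tilde{u}_0$ follows straight from the weak formulation of the $\tilde{u}$-equation, the trace result is not actually needed for this lemma.
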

\begin{proof}
	Let $J_\delta$ be a standard Friedrichs mollifier on $\mathbb{S}$. Denote $f_\delta=f\ast J_\delta$. Mollifying the limit SPDE \eqref{SPDE tilde u}, we have
	\begin{equation}\label{mollify SPDE u}
		{\rm d}\tilde{u}_\delta+\tilde{u}_\delta\,\tilde{q}_\delta\,{\rm d}t+\partial_x K\ast J_\delta\ast\left(\tilde{u}^2+\frac 1 2\overline{q^2}\right)\,{\rm d}t+E_\delta\,{\rm d}t=\tilde{u}_\delta\,{\rm d}\tilde{W},\quad E_\delta=J_\delta\ast(\tilde{u}\,\tilde{q})-\tilde{u}_\delta\,\tilde{q}_\delta.
	\end{equation}
	Differentiating \eqref{mollify SPDE u} with respect to $x$, we deduce that
	\begin{equation}\label{mollify SPDE q}
		{\rm d}\tilde{q}_\delta+\left(\partial_x(\tilde{u}_\delta\,\tilde{q}_\delta)-J_\delta\ast\left(\tilde{u}^2+\frac 1 2\overline{q^2}\right)+K\ast J_\delta\ast\left(\tilde{u}^2+\frac 1 2\overline{q^2}\right)\right)\,{\rm d}t+\partial_xE_\delta\,{\rm d}t=\tilde{q}_\delta\,{\rm d}\tilde{W}.
	\end{equation}
	Consider $S(v)=S_l(v_\pm)$, given \eqref{mollify SPDE q}, by It\'{o}'s formula, we obtain
	$$0=\int_0^T\int_{\mathbb{S}}S(\tilde{q}_\delta)\,\partial_t\varphi\,{\rm d}x\,{\rm d}t+\int_{\mathbb{S}}S(\tilde{q}_\delta(0))\,\varphi(0,x)\,{\rm d}x+\sum_{i=1}^5I_\delta^{(i)},$$
	where
	$$\begin{aligned}
		&I_\delta^{(1)}=\int_0^T\int_{\mathbb{S}}H(\tilde{q}_\delta)\,\varphi+\tilde{u}_\delta\,S(\tilde{q}_\delta)\,\partial_x\varphi\,{\rm d}x\,{\rm d}t,\\
		&I_\delta^{(2)}=-\int_0^T\int_{\mathbb{S}}S'(\tilde{q}_\delta)\, \bar{I}_\delta^{(2)}\,\varphi\,{\rm d}x\,{\rm d}t,\quad I_\delta^{(3)}=\frac 1 2\int_0^T\int_{\mathbb{S}}S''(\tilde{q}_\delta)\,\tilde{q}_\delta^2\,\varphi\,{\rm d}x\,{\rm d}t,\\
		&I_\delta^{(4)}=\int_0^T\int_{\mathbb{S}}S'(\tilde{q}_\delta)\,\tilde{q}_\delta\,\varphi\,{\rm d}x\,{\rm d}\tilde{W},\quad I_\delta^{(5)}\int_0^T\int_{\mathbb{S}}S'(\tilde{q}_\delta)\,\partial_x E_\delta\,\varphi\,{\rm d}x\,{\rm d}t,
	\end{aligned}$$
	with
	$$\bar{I}_\delta^{(2)}=K\ast J_\delta\ast\left(\tilde{u}^2+\frac 1 2\overline{q^2}\right)-J_\delta\ast\left(\tilde{u}^2+\frac 1 2\overline{q^2}\right)+\frac 1 2\tilde{q}_\delta^2.$$
	
	Let $I^{(i)}$ denote the expression obtained by formally taking $\delta$ to zero in$I_\delta^{(i)}$ for $i=1,\dots,5$, and the same for $\bar{I}^{(2)}$. Recall that for $r\in[1,3/2)$ and $p_0>4$, $\tilde{q}\in L^{p_0}_{\tilde{\omega}}L^\infty_tL^2_x\cap L^{2r}_{\tilde{\omega},t,x}$, $\overline{q^2}\in L^r_{\tilde{\omega},t,x}$ and $\tilde{u}\in L^{p_0}_{\tilde{\omega}}L^\infty_{t,x}$. By standard property of mollifiers, $\tilde{q}_\delta\rightarrow\tilde{q}$ a.e. in $\tilde{\Omega}\times[0,T]\times\mathbb{S}$ as $\delta\rightarrow0$. Assume $f(v)$ is nonlinear continuous function of $v$, it is obvious that
	\begin{equation}\label{f continuous convergence a.e.}
		f(\tilde{q}_\delta)\rightarrow f(\tilde{q})\quad\text{a.e. in }\tilde{\Omega}\times[0,T]\times\mathbb{S}.
	\end{equation}
	In particular, according to the definition of $S_l(v_\pm)$ \eqref{S_l}, $S(v)$, $S'(v)$, $S''(v)v^2$ and $H(v)$ are continuous, therefore, \eqref{f continuous convergence a.e.} remains valid for functions taking the aforementioned values.
	
	According to \eqref{S bound}, \eqref{S_l} and Lemma \ref{priori estimate}, we can conclude that
	$$
		\begin{array}{ll}
			\Vert S'(\tilde{q}_\delta)\Vert_{L^\infty_{t,x}}\lesssim_l 1,&\Vert S''(\tilde{q}_\delta) \tilde{q}_\delta^2\Vert_{L^\infty_{t,x}}\lesssim_l 1,\\[2ex]
			\Vert S(\tilde{q}_\delta)\Vert_{L^\infty_t L^2_x}\lesssim_{l,\tilde{{\omega}}} 1,&\Vert S(\tilde{q}_\delta)\Vert_{L^{2r}_{t,x}}\lesssim_{l,\tilde{{\omega}}} 1,\\[2ex]
			\Vert \tilde{q}_\delta^2\Vert_{L^r_{t,x}}\lesssim_{l,\tilde{{\omega}}}1,&\Vert H(\tilde{q}_\delta)\Vert_{L^r_{t,x}}\lesssim_{l,\tilde{{\omega}}}1.
		\end{array}
	$$
	Thus, applying \eqref{f continuous convergence a.e.}, the interpolation inequality and Vitali's convergence theorem, we obtain that
	\begin{equation}\label{f convergence p}
		\begin{aligned}
			&S'(\tilde{q}_\delta)\rightarrow S'(\tilde{q})\quad{\rm in}\ L^p_{t,x}\text{ a.s., for }1\le p<\infty,\\ &S''(\tilde{q}_\delta)\,\tilde{q}_\delta^2\rightarrow S''(\tilde{q})\,\tilde{q}^2\quad{\rm in}\ L^p_{t,x}\text{ a.s., for }1\le p<\infty,\\
			&S(\tilde{q}_\delta)\rightarrow S(\tilde{q})\quad{\rm in}\ L^{p_1}_t L^{p_2}_x\text{ and }L^p_{t,x}\text{ a.s., for }1\le p_1<\infty,1\le p_2<2,1\le p<2r,\\
			&\tilde{q}_\delta^2\rightarrow\tilde{q}^2\quad{\rm in}\ L^p_{t,x}\text{ a.s., for }1\le p<r,\\
			&H(\tilde{q}_\delta)\rightarrow H(\tilde{q})\quad{\rm in}\ L^p_{t,x}\text{ a.s., for }1\le p<r.
		\end{aligned}
	\end{equation}
	
	Besides, $\overline{q^2}\in L^r_{t,x}$ and $\tilde{u}\in L^\infty_{t,x}$, we have
	\begin{equation}\label{tilde u and q mollifier convergenve}
		\begin{aligned}
			&\overline{q^2}\ast J_\delta\rightarrow\overline{q^2}\quad{\rm in}\ L^r_{t,x},\text{ a.s.},\\
			&\tilde{u}_\delta\rightarrow\tilde{u}\quad{\rm in}\ L^p_{t,x}\text{, a.s., for any }p\in[1,\infty),\\
			&\tilde{u}^2\ast J_\delta\rightarrow\tilde{u}^2\quad{\rm in}\ L^p_{t,x}\text{, a.s., for any }p\in[1,\infty).
		\end{aligned}
	\end{equation}
	In view of Young's convolution inequality and \eqref{tilde u and q mollifier convergenve}, it is easily to obtain that for $p\in[1,r)$, 
	\begin{equation}\label{tilde P mollifier convergenve}
		\bar{I}_\delta^{(2)}\longrightarrow K\ast\left(\tilde{u}^2+\frac 1 2\overline{q^2}\right)-\tilde{u}^2-\frac 1 2\left(\overline{q^2}-\tilde{q}^2\right)=:\bar{I}^{(2)}\quad{\rm in}\ L^p_{t,x}\text{, a.s.}
	\end{equation}
	 
	 Consider the term of initial data, according to \eqref{S temporal estimate} and $S(\tilde{q}_\delta(0))\rightarrow S(\tilde{q}(0))$ in $L^1_x$ a.s. similarly, we deduce that
	 $$
	 \begin{aligned}
	 	&\int_{\mathbb{S}} S(\tilde{q}_\delta(0))\,\varphi(0,x)\,{\rm d}x\\
	 	=&\int_{\mathbb{S}}S(\partial_x\tilde{u}_0)\,\varphi(0,x)\,{\rm d}x+\int_{\mathbb{S}}\left(S(\tilde{q}_\delta(0))-S(\tilde{q}(0)) \right)\,\varphi(0,x)\,{\rm d}x+\int_{\mathbb{S}}\left(S(\tilde{q}(0))-S(\partial_x\tilde{u}_0) \right)\,\varphi(0,x)\,{\rm d}x\\
	 	=&\int_{\mathbb{S}}S(\partial_x\tilde{u}_0)\,\varphi(0,x)\,{\rm d}x+\int_{\mathbb{S}}\left(S(\tilde{q}_\delta(0))-S(\tilde{q}(0)) \right)\,\varphi(0,x)\,{\rm d}x\longrightarrow\int_{\mathbb{S}}S(\partial_x\tilde{u}_0)\,\varphi(0,x)\,{\rm d}x,\quad{\rm a.s.}
	 \end{aligned}
	 $$
	 Since $\varphi\in C^\infty_c([0,T)\times\mathbb{S})$, by \eqref{f convergence p}, \eqref{tilde u and q mollifier convergenve} and \eqref{tilde P mollifier convergenve}, the following convergences follows directly:
	 $$I_\delta^{(i)}\rightarrow I^{(i)},\quad i=1,2,3.$$
	 
	 Finally, it remains to handle the error term $I_\delta^{(5)}$ and the stochastic integral term $I_\delta^{(4)}$. By the It\'{o} isometry and the Cauchy-Schwarz inequality, since $\tilde{q}_\delta\rightarrow\tilde{q}$ in $L^{2r}_{t,x}$ a.s.,
	 $$\tilde{\mathbb{E}}\left|I_\delta^{(4)}-I^{(4)}\right|^2\le \tilde{\mathbb{E}}\int_0^t\int_{\mathbb{S}}\left|S'(\tilde{q}_\delta)\,\tilde{q}_\delta-S'(\tilde{q})\,\tilde{q}\right|^2\,|\varphi|^2\,{\rm d}x\,{\rm d}t\xrightarrow{\eqref{f convergence p}}0.$$
	 Therefore, up to a subsequence, $I_\delta^{(4)}\rightarrow I^{(4)}$, a.s. And according to the standard commutator estimate \cite{Debussche2016,Holden2024JDE,Lions1998}, $\tilde{\mathbb{E}}\left\Vert \partial_x E_\delta\right\Vert_{L^1([0,T]\times\mathbb{S})}\rightarrow0$, similarly, we conclude that up to a subsequence, $I_\delta^{(5)}\rightarrow 0$, a.s.
	 
	 This concludes the proof of the lemma.
\end{proof}

According to Lemma \ref{S(q) eneregy}, substituting $S(v)=\frac 1 2 v_+^2$,
\begin{equation}\label{inequality overline q+^2}
	\begin{aligned}
		&\int_0^T\int_{\mathbb{S}}\frac 1 2\,\overline{q_+^2}\,\partial_t\varphi\,{\rm d}x\,{\rm d}t+\int_{\mathbb{S}}\frac 1 2\,(\partial_x\tilde{u}_0)_+^2\,\varphi(0,x)\,{\rm d}x
		\\
		+&\int_0^T\int_{\mathbb{S}}\left[\overline{q_+}\left(\tilde{u}^2-\tilde{P}\right)+\frac 1 2\,\overline{q_+^2}\right]\varphi\,{\rm d}x\,{\rm d}t+\int_0^T\int_{\mathbb{S}}\frac 1 2\,\overline{q_+^2}\,\tilde{u}\,\partial_x\varphi\,{\rm d}x\,{\rm d}t\\
		+&\int_0^T\int_{\mathbb{S}}\overline{q_+^2}\,\varphi\,{\rm d}x\,{\rm d}\tilde{W}\ge 0,\quad\tilde{\mathbb{P}}-{\rm a.s.,}
	\end{aligned}
\end{equation}

Lemma \ref{SPDE of Sl} is applicable to to the linearly growing approximations $S_l(v_\pm)$ of $v_\pm^2$, but not to $v_\pm$ themselves. However, exploiting some structural properties of SPDE \eqref{Sq equation}, we can obtain an inequality of the positive part $\tilde{q}_+^2$. Together with \eqref{inequality overline q+^2}, it is possible to control the positive part $\frac 1 2\left(\overline{q^2_+}-\tilde{q}_+^2\right)$ of defect measure.

\begin{lemm}[Characterization of $\tilde{q}_+^2$]\label{characterization of tilde q+^2}
	Let $\tilde{u}$, $\tilde{q}=\partial_x\tilde{u}$ and $\overline{q^2}$ be the Skorokhod-Jakubowski representations from Proposition \ref{SJ representations}. Then for any nonnegative $\varphi\in C^\infty_c([0,T)\times\mathbb{S})$,
	\begin{equation}\label{inequality tilde q+^2}
		\begin{aligned}
			&\int_0^T\int_\mathbb{S}\frac 1 2\,\tilde{q}_+^2\,\partial_t\varphi\,{\rm d}x\,{\rm d}t+\int_{\mathbb{S}}\frac 1 2\,(\partial_x\tilde{u}_0)_+^2\,\varphi(0,x)\,{\rm d}x\\
			+&\int_0^T\int_{\mathbb{S}}\left[\tilde{q}_+\left(\tilde{u}^2-\tilde{P}\right)+\frac 1 2\,\tilde{q}_+^2\right]\,\varphi\,{\rm d}x
			,{\rm d}t+\int_0^T\int_{\mathbb{S}}\frac 1 2\,\tilde{q}_+^2\,\tilde{u}\,\partial_x\varphi\,{\rm d}x\,{\rm d}t\\
			+&\int_0^T\int_{\mathbb{S}}\tilde{q}_+^2\,\varphi\,{\rm d}x\,{\rm d}\tilde{W}\le0,\quad\tilde{\mathbb{P}}-{\rm a.s.,}
		\end{aligned}
	\end{equation}
	where $\tilde{P}$ is defined in \eqref{SPDE tilde u without identification}.
\end{lemm}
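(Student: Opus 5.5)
The plan is to derive \eqref{inequality tilde q+^2} from the renormalized identity of Lemma \ref{SPDE of Sl} with $S=S_l(v_+)$, and then send $l\to\infty$. The extra term $\frac12 S_l'(\tilde q_+)(\overline{q^2}-\tilde q^2)$ carries a sign, and this is what will produce the inequality.

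Fix a nonnegative $\varphi\in C_c^\infty([0,T)\times\mathbb{S})$ and write \eqref{SPDE Sq integral without identification} for $S(v)=S_l(v_+)$. Since $S_l'(v_+)\ge 0$ and the defect satisfies $\overline{q^2}-\tilde q^2\ge0$ (weak lower semicontinuity of the square, as in \eqref{defect measure}), the term $\int\!\!\int\frac12 S_l'(\tilde q_+)(\overline{q^2}-\tilde q^2)\varphi$ is nonnegative. Dropping it turns the equality into
\[
\int_0^T\!\!\int_{\mathbb{S}} S_l(\tilde q_+)\partial_t\varphi+\int_{\mathbb{S}}S_l((\partial_x\tilde u_0)_+)\varphi(0)+\int_0^T\!\!\int_{\mathbb{S}}\Big[S_l'(\tilde q_+)(\tilde u^2-\tilde P)+H_l(\tilde q)+\tfrac12 S_l''(\tilde q_+)\tilde q_+^2\Big]\varphi+\int_0^T\!\!\int_{\mathbb{S}}\tilde u S_l(\tilde q_+)\partial_x\varphi+\int_0^T\!\!\int_{\mathbb{S}}S_l'(\tilde q_+)\tilde q\,\varphi\,{\rm d}\tilde W\le 0 ,
\]
where $H_l(v)=S_l(v_+)v-\frac12S_l'(v_+)v^2$. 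By \eqref{S_l of q exact}, $H_l$ vanishes for $|v_+|\le l$.

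Next we let $l\to\infty$ term by term. Pointwise we have $S_l(v_+)\to\frac12 v_+^2$, $S_l'(v_+)\to v_+$ and $S_l''(v_+)v_+^2\to v_+^2$, with the $l$-uniform bounds $S_l(v_+)\le \frac12 v_+^2$, $|S_l'(v_+)|\le |v|$ and $|S_l''|\le1$.

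The terms involving $S_l$, $S_l'$ and $S_l''$ are handled by dominated convergence. Lemma \ref{priori estimate} gives $\tilde q\in L^\infty_tL^2_x\cap L^{2r}_{t,x}$, $\tilde u\in L^\infty_{t,x}$ and $\tilde P\in L^p_tL^\infty_x$ a.s. The initial term converges because $\partial_x\tilde u_0\in L^2$.

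For $H_l$, on $\{l<q_+<2l\}$ and $\{q_+\ge 2l\}$ the explicit formulas in \eqref{S_l of q exact} give $|H_l(q)|\lesssim |q|^2\mathds{1}_{\{|q|>l\}}$. There is no $l$-independent cubic bound, so I would only use this quadratic bound times an indicator. Since $\tilde q^2\in L^1_{t,x}$, the term $\int\!\!\int H_l(\tilde q)\varphi$ tends to $0$. Note that the limit of $H_l$ is zero, consistent with $H(v)=0$ for $S=\frac12 v_+^2$, which is why no $H$ term appears in \eqref{inequality tilde q+^2}.

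For the stochastic integral, It\^o's isometry gives
\[
\tilde{\mathbb{E}}\Big|\int_0^T\!\!\int_{\mathbb{S}} (S_l'(\tilde q_+)\tilde q-\tilde q_+^2)\varphi\,{\rm d}x\,{\rm d}\tilde W\Big|^2\lesssim \tilde{\mathbb{E}}\int_0^T\Big(\int_{\mathbb{S}}|S_l'(\tilde q_+)\tilde q-\tilde q_+^2|\,{\rm d}x\Big)^2{\rm d}t .
\]
The integrand is dominated by $2\tilde q^2$ and tends to zero, and $\tilde{\mathbb{E}}\|\tilde q\|_{L^\infty_tL^2_x}^4<\infty$. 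Hence this tends to $0$, and along a subsequence the convergence holds a.s. Collecting the limits gives \eqref{inequality tilde q+^2}.

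The main obstacle is a term-matching issue in the final inequality. The $\frac12\tilde q_+^2$ coefficient must arise as the limit of $H_l+\frac12 S_l''(\tilde q_+)\tilde q_+^2$, but $H_l\to 0$, so it comes entirely from the $S''$ term. This has to be checked carefully against \eqref{Sq equation} with $S=\frac12 v_+^2$, where $-H+\frac12 S''q^2$ gives $\frac12 q_+^2$. The sign of the dropped defect term must also be verified: it appears with a plus sign in \eqref{SPDE Sq integral without identification}, so dropping it yields a ``$\le0$'' bound for the remainder. Positivity of $\overline{q^2}-\tilde q^2$ a.e. comes from convexity, via weak convergence $\tilde q_n\rightharpoonup\tilde q$ together with $\tilde q_n^2\rightharpoonup\overline{q^2}$ (test against nonnegative $\tilde\omega,t,x$-functions and use $\tilde q_n^2\ge 2\tilde q_n g-g^2$).
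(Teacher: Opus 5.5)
Your route is the paper's: write \eqref{SPDE Sq integral without identification} for $S=S_l(v_+)$, discard the nonnegative defect term $\frac12 S_l'(\tilde q_+)(\overline{q^2}-\tilde q^2)\varphi$, and let $l\to\infty$. The limit is taken by dominated convergence, with It\^o's isometry and a subsequence for the stochastic integral. The paper instead tests against $\mathds{1}_A$ and works in expectation, which is equivalent. Your handling of the $\partial_t\varphi$, transport, pressure, $S_l''$, initial and stochastic terms is fine.

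\textbf{The gap is the term $H_l$.} Here $H_l(v)=S_l(v_+)v-\frac12 S_l'(v_+)v^2$, and from \eqref{S_l of q exact}
\[
H_l(v)=0\ \ (v\le l),\qquad H_l(v)=\frac{v}{12l}(v-l)^2(v+2l)\ \ (l<v<2l),\qquad H_l(v)=lv\Big(\frac34 v-\frac76 l\Big)\ \ (v\ge 2l).
\]
\begin{itemize}
\item On $\{v\ge 2l\}$ this gives $H_l(v)\ge \frac16\, l\, v^2$. So your bound $|H_l(q)|\lesssim |q|^2\mathds{1}_{\{|q|>l\}}$ with an $l$-independent constant is false.
\item What holds uniformly in $l$ is the cubic bound $0\le H_l(v)\lesssim |v|^3\mathds{1}_{\{v>l\}}$; you have this reversed.
\item Dominated convergence would therefore need $\tilde q_+\in L^3_{t,x}$. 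Only $\tilde q\in L^{2r}_{t,x}$ with $2r<3$ is available.
\item The term $\int_0^T\int_{\mathbb S}H_l(\tilde q)\varphi$ is comparable to $l\int\int_{\{\tilde q\ge 2l\}}\tilde q^2\varphi$, and nothing in the estimates forces this to vanish. For example, a tail $|\{\tilde q_+>\lambda\}|\sim\lambda^{-3}$ is compatible with $\tilde q\in L^p$ for every $p<3$ and keeps this quantity of order one.
\end{itemize}
So your claim that $\int\int H_l(\tilde q)\varphi\to 0$ is unjustified.

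\textbf{The fix.} The formula above shows $H_l\ge 0$. With $\varphi\ge0$ you can drop $\int\int H_l(\tilde q)\varphi$ together with the defect term instead of passing to its limit. This is what the paper does: it records both $H(\tilde q)\ge 0$ and $S'(\tilde q)(\overline{q^2}-\tilde q^2)\ge 0$ in \eqref{H tilde q>0} and discards them. The remaining terms involve only $S_l(v_+)$, $S_l'(v_+)$, $S_l''(v_+)v_+^2$ and $S_l'(v_+)v$. Their deviations from $\frac12 v_+^2$, $v_+$, $v_+^2$ and $v_+^2$ are bounded by $|v|+v^2$ uniformly in $l$. Your limit passage then yields \eqref{inequality tilde q+^2}, with the $\frac12\tilde q_+^2$ coefficient coming entirely from the $S''$ term, as you observed.
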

\begin{proof}
	Similar to the notations in Lemma \ref{S(q) eneregy}, denote the left-hand side of \eqref{inequality tilde q+^2} by $I+M$, where $I$ is the sum of the deterministic integral terms and $M$ is the stochastic integral term. Consider $A\in\tilde{\mathcal{F}}$ is an arbitrary measurable set. Given $S(v)=S_l(v_+)$ in \eqref{SPDE Sq integral without identification}, according to the definition of $S_l(v_+)$ in \eqref{S_l}, by direct computation and the weak lower semicontinuity of convex function $v^2$, we observe that
	\begin{equation}\label{H tilde q>0}
			H(\tilde{q})=S(\tilde{q})\,\tilde{q}-\frac 1 2\,S'(\tilde{q})\,\tilde{q}^2\ge 0,\quad S'(\tilde{q})\,\left(\overline{q^2}-\tilde{q}^2\right)\ge0.
	\end{equation}
	
	In particular, 
	\begin{equation}\label{q+^2 error term}
		\begin{array}{ll}
			S(\tilde{q})=\frac 1 2\,\tilde{q}_+^2+e_l^{(1)}(t,x), &S'(\tilde{q})=\tilde{q}_+ +e_l^{(2)}(t,x),\\[2ex]
			\frac 1 2\,S''(\tilde{q})\,\tilde{q}^2=\frac 1 2\,\tilde{q}_+^2+e_l^{(3)}(t,x),
			&S'(\tilde{q})\,\tilde{q}=\tilde{q}_+^2+e_l^{(4)}(t,x),
		\end{array}
	\end{equation}
	where
	$$
	\begin{aligned}
		&e_l^{(1)}=-\frac 1 {6l}\,(\tilde{q}-l)^3\,\mathds{1}_{\{l<\tilde{q}<2l\}}-\frac 1 6\left(3\tilde{q}^2-9l\tilde{q}+7l^2\right)\,\mathds{1}_{\{\tilde{q}\ge 2l\}},\\
		&e_l^{(2)}=-\frac 1 {2l}\,\left(\tilde{q}-l\right)^2\,\mathds{1}_{\{l<\tilde{q}<2l\}}+\frac 1 2(3l-2\tilde{q})\,\mathds{1}_{\{\tilde{q}\ge 2l\}},\\
		&e_l^{(3)}=-\frac 1 {2l}\,\tilde{q}^2\,(\tilde{q}-l)\,\mathds{1}_{\{l<\tilde{q}<2l\}}-\frac 1 2\,\tilde{q}^2\,\mathds{1}_{\{\tilde{q}>2l\}},\\
		&e_l^{(4)}=-\frac 1 {2l}\,\tilde{q}\,\left(\tilde{q}-l\right)^2\,\mathds{1}_{\{l<\tilde{q}<2l\}}+\frac 1 2\tilde{q}\,(3l-2\tilde{q})\,\mathds{1}_{\{\tilde{q}\ge 2l\}}.
	\end{aligned}
	$$
	By Lemma \ref{priori estimate}, $\tilde{q}\in L^{p_0}\left(\tilde{\Omega};L^\infty([0,T];L^2(\mathbb{S}))\right)$, with $p_0>4$ fixed in Theorem \ref{main theorem}. Due to the integrability of $\tilde{q}$,
	$$\mathds{1}_{\{\tilde{q}>l\}}\,\tilde{q}\rightarrow 0\quad\text{and}\quad \mathds{1}_{\{\tilde{q}>l\}}\,\tilde{q}^2\rightarrow 0\quad\text{a.e. in }(\tilde{{\omega}},t,x).$$
	As a result, for $1\le i\le4$,
	$$\left|e_l^{(i)}\right|\lesssim \mathds{1}_{\{\tilde{q}>l\}}\,\tilde{q}+\mathds{1}_{\{\tilde{q}>l\}}\,\tilde{q}^2\rightarrow0\quad\text{a.e. in }(\tilde{\omega},t,x).$$
	
	Inserting \eqref{H tilde q>0} and \eqref{q+^2 error term} into \eqref{SPDE Sq integral without identification}, we arrive at
	$$
	\begin{aligned}
		&\int_{\tilde{\Omega}}\mathds{1}_A(\tilde{{\omega}})\left(I(\tilde{{\omega}})+M(\tilde{{\omega}})\right)\,{\rm d}\tilde{\mathbb{P}}(\tilde{\omega})\\
		\lesssim\ \,& \tilde{\mathbb{E}}\int_{\mathbb{S}}\left|e_l^{(1)}(0)\right|\,{\rm d}x +\tilde{\mathbb{E}}\int_0^T\int_{\mathbb{S}}\left|e_l^{(1)}\right|+\left|e_l^{(3)}\right|\,{\rm d}x\,{\rm d}t +\tilde{\mathbb{E}}\int_0^T\int_{\mathbb{S}}|\tilde{u}| \,|\left|e_l^{(1)}\right|\,{\rm d}x\,{\rm d}t\\ +&\tilde{\mathbb{E}}\int_0^T\int_{\mathbb{S}}\left|\tilde{P}-\tilde{u}^2\right|\,\left|e_l^{(2)}\right|\,{\rm d}x\,{\rm d}t +\tilde{\mathbb{E}}\left|\int_0^T\int_{\mathbb{S}}e_l^{(4)}\,\varphi\,{\rm d}x\,{\rm d}\tilde{W}\right|=:\sum_{i=1}^5 E_{i,l},
	\end{aligned}
	$$
	for all $l\in\mathbb{N}$.
	
	Similarly, $e_l^{(1)}(0)$ refers to
	$$-\frac 1 {6l}\,(\partial_x\tilde{u}_0-l)^3\,\mathds{1}_{\{l<\partial_x\tilde{u}_0<2l\}}-\frac 1 6\left(3(\partial_x\tilde{u}_0)^2-9l\partial_x\tilde{u}_0+7l^2\right)\,\mathds{1}_{\{\partial_x\tilde{u}_0\ge 2l\}},$$
	according to Lemma \ref{priori estimate}, it is easy to obtain that
	$$
	\begin{aligned}
		&\left|e_l^{(1)}(0)\right|\lesssim (\partial_x \tilde{u}_0)^2\in L^1_{\tilde{{\omega}},t,x},\quad\quad \left|e_l^{(1)}\right|\lesssim\tilde{q}^2\in L^1_{\tilde{{\omega}},t,x},\quad
		\left|e_l^{(3)}\right|\lesssim\tilde{q}^2\in L^1_{\tilde{{\omega}},t,x},\\ &\left|e_l^{(1)}\right|\,|\tilde{u}|\lesssim |\tilde{u}|\,\tilde{q}^2\in L^{\frac {p_0} 2}_{\tilde{{\omega}}}L^\infty_tL^1_x,\quad
		\left|\tilde{P}-\tilde{u}^2\right|\,\left|e_l^{(2)}\right|\lesssim |\tilde{P}|\,\tilde{q}^2+\tilde{u}^2\tilde{q}^2\in L^{\frac {p_0} 4}_{\tilde{{\omega}}}L^\infty_tL^1_x.
	\end{aligned}
	$$
	Therefore, following the Lebesgue dominated theorem, $\sum_{i=1}^4E_{i,l}\rightarrow0$ as $l$ tends to $\infty$.
	
	Finally, let us consider the stochastic term. Utilizing the Cauchy-Schwarz inequality, the It\'{o} isometry and the Lebesgue dominated convergence theorem,
	$$|E_{5,l}|^2\le\tilde{\mathbb{E}}\int_0^T \left|\int_{\mathbb{S}}e_l^{(4)}\,\varphi\,{\rm d}x \right|^2\,{\rm d}t\lesssim_{\varphi}\tilde{\mathbb{E}}\int_0^T\left|\int_\mathbb{S}\mathds{1}_{\{\tilde{q}_+>l\}}\,\tilde{q}_+^2\,{\rm d}x\right|^2\,{\rm d}t\longrightarrow0.$$
	
	Therefore, we conclude that
	$$\int_{\tilde{\Omega}}\mathds{1}_A(\tilde{{\omega}})\left(I(\tilde{{\omega}})+M(\tilde{{\omega}})\right)\,{\rm d}\tilde{\mathbb{P}}(\tilde{\omega})\\
	\le0,$$
	the arbitrariness of $A\in\tilde{\Omega}$ completes the proof of \eqref{inequality tilde q+^2}.
\end{proof}

\subsection{Control of the defect measure}
\quad Denote the positive part of the defect measure \eqref{defect measure}
\begin{equation}\label{defect measure positive}
	\mathbb{D}^+=\frac 1 2\left(\overline{q_+^2}-\tilde{q}_+^2\right)\ge 0.
\end{equation}
by directly subtracting \eqref{inequality overline q+^2} and \eqref{inequality tilde q+^2},
\begin{equation}\label{inequality D+}
	\partial_t\mathbb{D}^+ 
	+\left[\partial_x\left(\tilde{u}\,\mathbb{D}^+\right)+\left(\overline{q_+}-\tilde{q}_+\right)\left(\tilde{P}-\tilde{u}\right)-\mathbb{D}^+
	\right] -\mathbb{D}^+\dot{\tilde{W}}\le 0\quad \text{in }\mathcal{D}'([0,T)\times\mathbb{S}),\ \tilde{\mathbb{P}}-{\rm a.s.},
\end{equation}
with zero initial data in the sense of distributions. This formulation is weak in $(t,x)$. Similar to the method used in the proof of Lemma \ref{energy and one-sided temporal}, it can be transform into pointwise in $(\tilde{\omega},t)$ and integrated in $x$.

\begin{lemm}[Positive part of defect measure]\label{positive part of defect measure}
	Let $\mathbb{D}^+$ be defined as \eqref{defect measure positive}, then for a.e. $(\tilde{{\omega}},t)\in\tilde{\Omega}\times[0,T]$,
	\begin{equation}\label{inequality D+ integral}
		\begin{aligned}
			\int_{\mathbb{S}}\mathbb{D}^+(t)\,{\rm d}x+\int_0^t\int_{\mathbb{S}}(\overline{q_+}-\tilde{q}_+)\left(\tilde{P}-\tilde{u}^2\right)-\mathbb{D}^+\,{\rm d}x\,{\rm d}s-\int_0^t\int_{\mathbb{S}}\mathbb{D}^+\,{\rm d}x\,{\rm d}\tilde{W}\le 0.
		\end{aligned}
	\end{equation}
	Moreover, the stochastic integral is a square-integrable martingale.
\end{lemm}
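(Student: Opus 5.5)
The plan is to turn the distributional inequality \eqref{inequality D+}, which is weak in $(t,x)$, into the pointwise-in-$(\tilde\omega,t)$ integrated form \eqref{inequality D+ integral}. The argument follows the one used for Lemma \ref{energy and one-sided temporal}.

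First, the spatial dependence is removed. In the subtracted inequality (the difference of \eqref{inequality overline q+^2} and \eqref{inequality tilde q+^2}) I will take test functions of the form $\varphi(t,x)=\beta(t)$, with $0\le\beta\in C_c^\infty([0,T))$ independent of $x$. A constant in $x$ is admissible on the torus $\mathbb{S}$. The flux term $\int_0^T\int_{\mathbb{S}}\tilde u\,\mathbb{D}^+\partial_x\varphi$ then vanishes. The initial terms cancel exactly, since both inequalities carry the same datum $\frac12(\partial_x\tilde u_0)_+^2$. This leaves
\[
-\int_0^T\beta'\int_{\mathbb{S}}\mathbb{D}^+\,{\rm d}x\,{\rm d}t+\int_0^T\beta\int_{\mathbb{S}}\Big[(\overline{q_+}-\tilde q_+)(\tilde P-\tilde u^2)-\mathbb{D}^+\Big]{\rm d}x\,{\rm d}t-\int_0^T\beta\int_{\mathbb{S}}\mathbb{D}^+\,{\rm d}x\,{\rm d}\tilde W\le0
\]
$\tilde{\mathbb{P}}$-a.s. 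Subtracting inequalities in opposite directions is legitimate because each holds a.s. for a fixed test function. To cover all test functions at once, I restrict $\beta$ to a countable dense family and pass to the limit in the deterministic integrals by dominated convergence; for the stochastic integrals I use the It\^o isometry.

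Second, I take $\beta=\beta_\delta$, the piecewise-linear cutoff from the proof of Lemma \ref{energy and one-sided temporal}, equal to $1$ on $[\delta,t-\delta]$ and decaying linearly to $0$ on $[t-\delta,t]$. Since the initial value is zero, there is no ramp at $s=0$. Lipschitz test functions are reached from smooth ones by approximation. The first term becomes $\frac1\delta\int_{t-\delta}^t\int_{\mathbb{S}}\mathbb{D}^+\,{\rm d}x\,{\rm d}s$ plus a contribution near $0$ that vanishes as $\delta\to0$ by the zero initial trace. This trace comes from Lemma \ref{continuity t=0} together with the weak-in-time initial datum in Lemma \ref{S(q) eneregy}; alternatively, I can use a test function equal to $1$ near $0$, so that the initial terms are already encoded and cancel. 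Letting $\delta\to0$ along a sequence, Lebesgue's differentiation theorem applied to $s\mapsto\int_{\mathbb{S}}\mathbb{D}^+(s)\,{\rm d}x\in L^1(0,T)$ gives the inequality for every $t$ outside a null set $N(\tilde\omega)$. This set depends on $\tilde\omega$, which is why the statement reads "for a.e. $(\tilde\omega,t)$". The deterministic integrals converge by dominated convergence, using the bounds below.

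Third, the integrability needed for the martingale claim and the limits. By Lemma \ref{priori estimate}(4), $\overline{q_+^2}\le\overline{q^2}$ satisfies $\tilde{\mathbb{E}}\int_0^T\|\overline{q^2}\|_{H^{-1}}^p\,{\rm d}t<\infty$ for $p\le p_0/2$, hence $\tilde{\mathbb{E}}\int_0^T|\int_{\mathbb{S}}\overline{q_+^2}\,{\rm d}x|^2\,{\rm d}t<\infty$, as in the proof of Lemma \ref{tilde u energy}. Together with $\tilde q\in L^{p_0}_{\tilde\omega}L^\infty_tL^2_x$ and $p_0>4$, this gives $\int_{\mathbb{S}}\mathbb{D}^+\,{\rm d}x\in L^2(\tilde\Omega\times[0,T])$. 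The integrand is adapted, so $\int_0^t\int_{\mathbb{S}}\mathbb{D}^+\,{\rm d}x\,{\rm d}\tilde W$ is a square-integrable martingale. The It\^o isometry then gives $\int_0^T\beta_\delta(\cdot)\,{\rm d}\tilde W\to\int_0^t(\cdot)\,{\rm d}\tilde W$ in $L^2(\tilde\Omega)$, and a.s. along a subsequence. The term $(\overline{q_+}-\tilde q_+)(\tilde P-\tilde u^2)$ is controlled by $\overline{q_+},\tilde q_+\in L^{2r}$, together with $\tilde u\in L^\infty$ and \eqref{priori estimate q convolution}.

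I expect the main obstacle to be the handling of the stochastic integral under the $\delta$-limit and the choice of $\tilde\omega$-dependent exceptional sets. The convergence holds only in $L^2(\tilde\Omega)$, so subsequences have to be extracted along a countable set of times and then combined with the Lebesgue points. A second concern is measurability: one must ensure that $\overline{q_+^2}$, a limit in $L^r(L^r_w)$, is progressively measurable with respect to $\{\tilde{\mathcal F}_t\}$. I would justify this from the definition of the canonical filtration \eqref{filtration}.
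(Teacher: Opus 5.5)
Your proposal is essentially the paper's proof: you test the subtracted inequality with a spatially constant $\psi(t)$, take the cutoff $\beta_\delta$ equal to $1$ on $[0,t-\delta]$ (so the cancelled initial data need no trace argument), pass to Lebesgue points, and get the martingale property from the $H^{-1}$ bound on $\overline{q^2}$ together with $\tilde q\in L^{p_0}_{\tilde\omega}L^\infty_tL^2_x$; your $L^2(\tilde\Omega\times[0,T])$ bound on $\int_{\mathbb{S}}\mathbb{D}^+\,{\rm d}x$ and your handling of exceptional sets are more careful than the paper, which writes only an $L^1$ bound there. One caveat you inherit from the paper's display \eqref{inequality D+}: since $\overline{q_+^2}-\tilde q_+^2=2\mathbb{D}^+$, subtracting the stochastic terms of \eqref{inequality overline q+^2} and \eqref{inequality tilde q+^2} actually produces $-2\int_0^t\int_{\mathbb{S}}\mathbb{D}^+\,{\rm d}x\,{\rm d}\tilde W$ rather than coefficient $1$, which does not affect the later argument because this martingale term disappears once expectations are taken.
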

\begin{proof}
	Consider the test function $\varphi(t,x)=\psi(t)$ with arbitrary nonnegative $\psi\in C^\infty_c([0,T))$, then
	$$
	\frac {\rm d}{{\rm d}t}\int_{\mathbb{S}}\mathbb{D}^+\,{\rm d}x+\int_{\mathbb{S}}(\overline{q_+}-\tilde{q}_+)\left(\tilde{P}-\tilde{u}^2\right)-\mathbb{D}^+\,{\rm d}x-\int_{\mathbb{S}}\mathbb{D}^+\,{\rm d}x\dot{\tilde{W}}\le 0\quad\text{in }\mathcal{D}'([0,T)),\text{ a.s.}
	$$
	
	Following the proof of Lemma \ref{energy and one-sided temporal}, consider the piecewise continuous linear function $\beta_\delta$ equals $1$ on $[0,t-\delta]$, $0$ on $[t,T]$ and is linear on $[t-\delta,t]$, we thus derive that
	$$
	\frac 1 \delta\int_{t-\delta}^t\int_{\mathbb{S}}\mathbb{D}^+(s)\,{\rm d}x\,{\rm d}t+\int_0^T\int_{\mathbb{S}}\left[(\overline{q_+}-\tilde{q}_+)\left(\tilde{P}-\tilde{u}^2\right)-\mathbb{D}^+\right]\,\beta_\delta(s)\,{\rm d}x\,{\rm d}s-\int_0^T\int_{\mathbb{S}}\mathbb{D}^+\,\beta_\delta(s)\,{\rm d}x\,{\rm d}\tilde{W}(s)\le 0.
	$$
	According to Lemme \ref{priori estimate}, since $|\mathbb{D}^+|\lesssim |\overline{q^2}|+\tilde{q}^2$,
	$$\tilde{\mathbb{E}}\int_0^T\left|\int_{\mathbb{S}}\mathbb{D}^+\,{\rm d}x\right|\,{\rm d}t\lesssim \tilde{\mathbb{E}}\int_0^T\left\Vert\overline{q^2}\right\Vert_{H^{-1}(\mathbb{S})}^2\,{\rm d}t+\tilde{\mathbb{E}}\int_0^T\left\Vert\tilde{q}^2\right\Vert_{H^{-1}(\mathbb{S})}^2\,{\rm d}t<\infty,$$
	hence, the stochastic integral is a square-integrable martingale.
	
	Taking $\delta\rightarrow0$ and following the same line in the proof of Lemma \ref{energy and one-sided temporal}, we can similarly deduce \eqref{inequality D+ integral}, the details are omitted here.
\end{proof}

For the negative part of defect measure 
\begin{equation}\label{defect measure negative}
	\mathbb{D}^-=\frac 1 2\left(\overline{q_-^2}-\tilde{q}_-^2\right),
\end{equation} 
we consider the approximates at first:
\begin{equation}\label{defect measure negative approximate}
	\mathbb{D}_l^-=\overline{S_l(q_-)}-S_l(\tilde{q}_-)\ge 0,\quad l\in\mathbb{N}.
\end{equation}
At the beginning, we provide an estimate for the approximation error:
\begin{lemm}\label{convergence Dl-}
	Let $r\in(1,3/2)$, then
	$$\left|\tilde{\mathbb{E}}\int_0^T\int_{\mathbb{S}}\mathbb{D}_l^- -\mathbb{D}^-\,{\rm d}x\,{\rm d}t\right|\lesssim l^{-2(r-1)}.$$
\end{lemm}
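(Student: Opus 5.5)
We will write $\mathbb{D}_l^- -\mathbb{D}^-$ as a difference of two approximation errors, one attached to the weak limits and one to the limit $\tilde q$:
$$
\mathbb{D}_l^- -\mathbb{D}^- = \Big(\overline{S_l(q_-)}-\tfrac12\overline{q_-^2}\Big) - \Big(S_l(\tilde q_-)-\tfrac12\tilde q_-^2\Big).
$$
The whole argument rests on the pointwise bound
$$
0\le \tfrac12 v^2 - S_l(v)\lesssim |v|^2\mathds{1}_{\{|v|>l\}}\le l^{-2(r-1)}|v|^{2r}\mathds{1}_{\{|v|>l\}},
$$
which holds because $S_l(v)=\frac12 v^2$ on $|v|\le l$ and $0\le S_l(v)\le \frac12v^2$ by convexity ($S_l''\le 1$). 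The last step uses $|v|^2=|v|^{2r}|v|^{-2(r-1)}\le l^{-2(r-1)}|v|^{2r}$ on $\{|v|>l\}$. We apply this with $v=v_-$.

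\textbf{Limit term.} Since $\tilde q$ satisfies $\tilde{\mathbb{E}}\|\tilde q\|^{2r}_{L^{2r}_{t,x}}\le C$ by Lemma \ref{priori estimate}(3), we get
$$
\tilde{\mathbb{E}}\int_0^T\!\!\int_{\mathbb{S}}\Big|S_l(\tilde q_-)-\tfrac12\tilde q_-^2\Big|\,{\rm d}x\,{\rm d}t\lesssim l^{-2(r-1)}\,\tilde{\mathbb{E}}\|\tilde q\|^{2r}_{L^{2r}}\lesssim l^{-2(r-1)}.
$$

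\textbf{Weak-limit term.} Here we pass through the approximations $\tilde q_n$. The $\tilde{\mathbb{P}}$-a.s. convergences $S_l((\tilde q_n)_-)\rightharpoonup\overline{S_l(q_-)}$ and $(\tilde q_n)_-^2\rightharpoonup \overline{q_-^2}$ in $L^r(L^r_w)$ hold by \eqref{SJ convergence}. We test against the constant function $1$ on $[0,T]\times\mathbb{S}$, which is admissible since $1\in L^{r'}$. This gives, a.s.,
$$
\int_0^T\!\!\int_{\mathbb{S}}\Big(\tfrac12(\tilde q_n)_-^2-S_l((\tilde q_n)_-)\Big)\,{\rm d}x\,{\rm d}t\ \longrightarrow\ \int_0^T\!\!\int_{\mathbb{S}}\Big(\tfrac12\overline{q_-^2}-\overline{S_l(q_-)}\Big)\,{\rm d}x\,{\rm d}t.
$$
To take expectations we need uniform integrability in $\tilde\omega$. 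It holds because these random variables are bounded by $\|\tilde q_n\|^2_{L^\infty_tL^2_x}$, whose $p_0/2$-th moments are uniform in $n$ by Lemma \ref{priori estimate}(1) with $p_0>4$. Vitali's theorem then lets us interchange limit and expectation. Applying the pointwise bound to $\tilde q_n$ gives
$$
\tilde{\mathbb{E}}\int\!\!\int\Big(\tfrac12(\tilde q_n)_-^2-S_l((\tilde q_n)_-)\Big)\lesssim l^{-2(r-1)}\,\tilde{\mathbb{E}}\|\tilde q_n\|^{2r}_{L^{2r}}\lesssim l^{-2(r-1)},
$$
uniformly in $n$ by Lemma \ref{priori estimate}(1). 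Passing to the limit bounds the limit term by $l^{-2(r-1)}$ as well.

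Combining the two estimates with the triangle inequality gives the claim. Both terms are nonnegative, and the weak limit preserves sign, so the absolute values cause no difficulty.

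The only delicate point is justifying that $\tilde{\mathbb{E}}$ commutes with the a.s. weak limit. This is exactly the Vitali step, carried out through the uniform moment bound above. Alternatively, one can use the $L^r(\tilde\Omega\times[0,T]\times\mathbb{S})$ weak-compactness identification employed in the proof of Lemma \ref{S(q) eneregy}.
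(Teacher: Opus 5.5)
Your proposal is correct and follows essentially the paper's argument: pass through $\tilde q_n$, test the weak limits against the constant $1$, and combine the pointwise bound $|S_l(v_-)-\frac12 v_-^2|\lesssim l^{-2(r-1)}|v|^{2r}$ on $\{|v|\ge l\}$ with the uniform $L^{2r}$ bounds on $\tilde q_n$ and $\tilde q$. The differences are minor: you justify exchanging $\tilde{\mathbb{E}}$ with the limit by a.s.\ convergence plus Vitali (using the uniform moments of $\|\tilde q_n\|_{L^\infty_tL^2_x}$), whereas the paper invokes weak convergence in $L^r(\tilde\Omega\times[0,T]\times\mathbb{S})$, and you keep the factor $\frac12$ that the paper's displayed bound drops.
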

\begin{proof}
	According to \eqref{SJ convergence} and weak compactness argument, $\tilde{q}_n^2\rightharpoonup\overline{q^2}$ in $L^r_{\tilde{\omega},t,x}$ and $S_l(\tilde{q}_n)\rightharpoonup\overline{S_l(q)}$ in $L^{2r}_{\tilde{{\omega}},t,x}$, since $1\in L^{r'}\cap L^{(2r)'}(\tilde{\Omega}\times[0,T]\times\mathbb{S})$,
	$$
	\tilde{\mathbb{E}}\int_0^T\int_{\mathbb{S}}\mathbb{D}_l^- -\mathbb{D}^-\,{\rm d}x\,{\rm d}t=\lim_{n\rightarrow\infty}\tilde{\mathbb{E}}\int_0^T\int_{\mathbb{S}}\left(S_l\left((\tilde{q}_n)_-\right)-(\tilde{q}_n)_-^2\right)+\left(S_l\left(\tilde{q}_-\right)-\tilde{q}_-^2\right)\,{\rm d}x\,{\rm d}t.
	$$
	By direct calculation, for $1<r<3/2$, 
	$$\left|S_l(v_-)-v_-^2\right|\lesssim\frac 1 l\,|v+l|^3\,\mathds{1}_{\{-2l\le v\le -l\}}+v^2\,\mathds{1}_{\{v\le-2l\}}\lesssim v^2\,\mathds{1}_{\{|v|\ge l\}}\lesssim l^{2(1-r)}\,v^{2r}\,\mathds{1}_{\{|v|\ge l\}},
	$$
	therefore, combining with Lemma \ref{priori estimate},
	$$
	\begin{aligned}
		&\left|\tilde{\mathbb{E}}\int_0^T\int_{\mathbb{S}}\left(S_l\left((\tilde{q}_n)_-\right)-(\tilde{q}_n)_-^2\right)+\left(S_l\left(\tilde{q}_-\right)-\tilde{q}_-^2\right)\,{\rm d}x\,{\rm d}t\right|\\
		\le& \ l^{2(1-r)}\,\tilde{\mathbb{E}}\int_0^T\int_{\mathbb{S}}|\tilde{q}_n|^{2r}\,\mathds{1}_{\{|\tilde{q}_n|\ge l\}}+|\tilde{q}|^{2r}\,\mathds{1}_{\{|\tilde{q}|\ge l\}}\,{\rm d}x\,{\rm d}t\lesssim l^{2(1-r)}.
	\end{aligned}
	$$
	Sending $n\rightarrow\infty$ in the above expression establishes the lemma.
\end{proof}

Substituting $S(v)=S_l(v_-)$ into \eqref{Sq equation} and \eqref{SPDE Sq without identification} yields 
\begin{equation}\label{inequality overline Sl-}
	\begin{aligned}
		&\partial_t\overline{S_l(q_-)}+\partial_x(\tilde{u}\,\overline{S_l(q_-)})\\
		+&
		\left[\overline{S_l(q_-)'}\,\left(\tilde{P}-\tilde{u}^2\right)-\overline{H_{l,-}(q)}-\frac 1 2\overline{S_l(q_-)''\, q^2}\right]-\overline{S_l(q_-)'\,q}\,\dot{\tilde{W}}\le 0\quad\tilde{\mathbb{P}}-{\rm a.s.},
	\end{aligned}
\end{equation}
and
\begin{equation}\label{inequality tilde Sl-}
	\begin{aligned}
		&\partial_t S_l(\tilde{q}_-)+\partial_x(\tilde{u}\,S_l(\tilde{q}_-))\\
		+&
		\left[S_l(\tilde{q}_-)\,\left(\tilde{P}-\tilde{u}^2\right)-H_{l,-}(\tilde{q})-\frac 1 2 S_l(\tilde{q}_-)'\left(\overline{q^2}-\tilde{q}^2\right)-\frac 1 2S_l(\tilde{q}_-)''\,\tilde{q}^2\right]\\
		-&
		S_l(\tilde{q}_-)'\,\tilde{q}\,\dot{\tilde{W}}=0\quad\tilde{\mathbb{P}}-{\rm a.s.,}
	\end{aligned}
\end{equation}
in the sense of  $\mathcal{D}'([0,T)\times\mathbb{S})$, where
\begin{equation}\label{def Hl-}
	H_{l,-}(v)=S_l(v_-)\,v-\frac 1 2S_l(v_-)'\,v^2,\quad\overline{H_{l,-}(v)}=\overline{S_l(v_-)\,v}-\frac 1 2 \overline{S_l(v_-)'\,v^2}.
\end{equation}

Subtracting \eqref{inequality overline Sl-} from \eqref{inequality tilde Sl-}, we obtain that 
\begin{equation}\label{inequality Dl-}
	\begin{aligned}
		&\partial_t\mathbb{D}_l^- +\partial_x(\tilde{u}\,\mathbb{D}_l^-)\\
		+&\left[\left(\overline{S_l(q_-)'}-S_l(\tilde{q}_-)'\right)\,\left(\tilde{P}-\tilde{u}^2\right)-\left(\overline{H_{l,-}(q)}-H_{l,-}(\tilde{q})\right)+\frac 1 2 S_l(\tilde{q}_-)'\left(\overline{q^2}-\tilde{q}^2\right)\right]\\
		-&\frac 1 2\left(\overline{S_l(q_-)''\, q^2}-S_l(\tilde{q}_-)''\,\tilde{q}^2\right)-\left( \overline{S_l(q_-)'\,q}-S_l(\tilde{q}_-)'\,\tilde{q}\right)\,\dot{\tilde{W}}\le 0\quad\text{in }\mathcal{D}'([0,T)\times\mathbb{S}),\ \tilde{\mathbb{P}}-{\rm a.s.,}
	\end{aligned}
\end{equation}
with zero initial data $\mathbb{D}_l^-(0)=0$.

\begin{lemm}[Negative part of defect measure]\label{negative part of defect measure}
	Let $\mathbb{D}_l^-$ and $\mathbb{D}^-$ be defined by \eqref{defect measure negative approximate} and \eqref{defect measure negative} respectively. Let $\{\tilde{u}_n\}$ and $\{\tilde{P}_n\}$ be the Skorokhod-Jakubowski representations in Proposition \ref{SJ representations}, then for any $n_0\in\mathbb{N}$ and $L>0$, define the measurable set
	\begin{equation}\label{A_L^n0}
		A_L^{n_0}=\left\{
		\tilde{{\omega}}\in\tilde{\Omega}:\left\Vert\tilde{P}_{n_0}-\tilde{u}_{n_0}^2\right\Vert_{L^\infty([0,T]\times\mathbb{S})}\le L\right\},
	\end{equation} 
	which satisfies $\tilde{\mathbb{P}}(A_L^{n_0})\rightarrow1$ as $L\rightarrow\infty$, uniformly in $n_0$.
	
	For a.e. $t\in[0,T]$ and sufficiently large $l$ depending on $L$, such that
	\begin{equation}\label{inequality integral Dl-}
		\begin{aligned}
			&\int_{\mathbb{S}}\mathbb{D}_l^-(t)\,{\rm d}x+\int_0^t\int_{\mathbb{S}}\left(\overline{q_-}-\tilde{q}_-\right)\,\left(\tilde{P}-\tilde{u}^2\right)\,{\rm d}x\,{\rm d}s\\
			+
			&\int_0^t\int_{\mathbb{S}}\left( \overline{S_l(q_-)'-q_-} - \left(S_l(\tilde{q}_-)' - \tilde{q}_-\right) \right)\,\left(\tilde{P}-\tilde{u}^2-\tilde{P}_{n_0}+\tilde{u}_{n_0}^2\right)\,{\rm d}x\,{\rm d}s\\
			-
			&\int_0^t\int_{\mathbb{S}} \frac {3l} 2\,(\mathbb{D}_l^- +\mathbb{D}^+)+\mathbb{D}_l^-\,{\rm d}x\,{\rm d}s
			+\int_0^t\int_{\mathbb{S}}S_l(\tilde{q}_-)'\,\tilde{q}-\overline{S_l(q_-)'\,q} \,{\rm d}x\,{\rm d}\tilde{W}\le 0,\quad\text{a.s. on }A_L^{n_0}.
		\end{aligned}
	\end{equation}
	In particular, the stochastic integral $\mathcal{M}_l^-(t):=\int_0^t\int_{\mathbb{S}}S_l(\tilde{q}_-)'\,\tilde{q} -\overline{S_l(q_-)'\,q}\,{\rm d}x\,{\rm d}\tilde{W}$ is a square-integrable martingale with $\tilde{\mathbb{E}}|\mathcal{M}_l^-(T)|^2\lesssim_l 1.$
\end{lemm}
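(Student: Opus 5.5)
We start from the distributional inequality \eqref{inequality Dl-} for $\mathbb{D}_l^-$, which has zero initial data, and convert it into the integrated form \eqref{inequality integral Dl-}. This is done exactly as in Lemma \ref{positive part of defect measure}: we test with $\varphi(t,x)=\beta_\delta(t)$, a piecewise linear cut-off equal to $1$ on $[0,t-\delta]$, and let $\delta\to0$ at Lebesgue points. The transport term $\partial_x(\tilde{u}\,\mathbb{D}_l^-)$ integrates to zero over $\mathbb{S}$. Before passing to the limit, we have to bound the bracketed terms from below in terms of $\mathbb{D}_l^-$, $\mathbb{D}^+$ and the $(\overline{q_-}-\tilde{q}_-)$ coupling.

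\textbf{The $\tilde{P}-\tilde{u}^2$ term.} We write
\[
\overline{S_l(q_-)'}-S_l(\tilde{q}_-)'=(\overline{q_-}-\tilde{q}_-)+\Big(\overline{S_l(q_-)'-q_-}-\big(S_l(\tilde{q}_-)'-\tilde{q}_-\big)\Big)
\]
and multiply by $\tilde{P}-\tilde{u}^2$. In the second piece we add and subtract $\tilde{P}_{n_0}-\tilde{u}_{n_0}^2$. This produces the third integral in \eqref{inequality integral Dl-} together with a remainder
\[
\Big(\overline{S_l(q_-)'-q_-}-\big(S_l(\tilde{q}_-)'-\tilde{q}_-\big)\Big)\big(\tilde{P}_{n_0}-\tilde{u}_{n_0}^2\big).
\]
On $A_L^{n_0}$ the factor $\tilde{P}_{n_0}-\tilde{u}_{n_0}^2$ is bounded by $L$. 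The function $v\mapsto S_l(v_-)'-v_-$ vanishes for $|v|\le l$, and it is concave on $v\le0$ (it equals $(v+l)^2/(2l)$-type corrections). Using a convexity comparison, its weak-limit defect is therefore controlled, for $l$ large relative to $L$, by a multiple of the defect of $S_l(v_-)$. We aim for
\[
L\,\Big|\overline{S_l(q_-)'-q_-}-\big(S_l(\tilde{q}_-)'-\tilde{q}_-\big)\Big|\le \mathbb{D}_l^-,
\]
possibly after enlarging $l$. The fact that $\tilde{\mathbb{P}}(A_L^{n_0})\to1$ uniformly in $n_0$ follows from Chebyshev and Lemma \ref{priori estimate}(2).

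\textbf{The $H_{l,-}$ and $S_l''$ terms.} On $\{|v|\ge 2l\}$ we have $H_{l,-}(v)=\tfrac34|v_-|v_-l-\tfrac76 v_-l^2$, so $-H_{l,-}$ is a combination of $l\,v_-^2$ and $l^2 v_-$, while $S_l''$ is supported on $|v|\le 2l$. We compare $\overline{H_{l,-}(q)}-H_{l,-}(\tilde{q})$ and $-\frac12(\overline{S_l(q_-)''q^2}-S_l(\tilde{q}_-)''\tilde{q}^2)$ with $\overline{S_l(q_-)}-S_l(\tilde{q}_-)$ and with $\tfrac12 S_l(\tilde{q}_-)'(\overline{q^2}-\tilde{q}^2)$. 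Since $|S_l'|\le\tfrac32 l$ and $\overline{q^2}-\tilde{q}^2=2(\mathbb{D}^++\mathbb{D}^-)$, this yields a lower bound
\[
-\tfrac{3l}{2}\big(\mathbb{D}_l^-+\mathbb{D}^+\big)-\mathbb{D}_l^-,
\]
after replacing $\mathbb{D}^-$ by $\mathbb{D}_l^-$ on the relevant set, as in \cite{Holden2024JDE}. We expect this pointwise algebra, done via convexity/Jensen inequalities for weak limits of the explicit functions in \eqref{S_l of q exact}, to be the main obstacle; the precise constants $\tfrac{3l}{2}$ and $1$ come out of it.

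\textbf{The martingale claim.} Finally, $|S_l(v_-)'v|\lesssim_l|v|$, and $\overline{S_l(q_-)'q}$ is bounded in $L^{2r}_{\tilde\omega,t,x}$. The Itô isometry and Lemma \ref{priori estimate} then give $\tilde{\mathbb{E}}|\mathcal{M}_l^-(T)|^2\lesssim_l\tilde{\mathbb{E}}\|\tilde{q}\|^2_{L^\infty_tL^2_x}+\tilde{\mathbb{E}}\|\overline{S_l(q_-)'q}\|^2_{L^2_{t,x}}\lesssim_l1$. This bound also justifies the convergence of the stochastic integral as $\delta\to0$ by the BDG inequality and dominated convergence. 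Since $A_L^{n_0}$ is not adapted, we use the inequality only pathwise on $A_L^{n_0}$, testing with $\mathds{1}_{A_L^{n_0}}$ as in Lemma \ref{characterization of tilde q+^2}.
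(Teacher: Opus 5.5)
The step that fails is how you dispose of the remainder $E_l^{(1)}\,(\tilde{P}_{n_0}-\tilde{u}_{n_0}^2)$, where $E_l^{(1)}=\overline{S_l(q_-)'-q_-}-\big(S_l(\tilde{q}_-)'-\tilde{q}_-\big)$.

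\textbf{The convexity claim and the target bound.} The function $f(v)=S_l(v_-)'-v_-$ is not concave. It vanishes for $v\ge -l$, equals $\frac{(v+l)^2}{2l}$ on $[-2l,-l]$, and equals $-v-\frac32 l$ for $v\le -2l$. So it is $C^1$, convex and nonnegative. Hence $E_l^{(1)}\ge 0$, and on $A_L^{n_0}$ the remainder is simply $\ge -L\,E_l^{(1)}$.

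But the bound $L\,E_l^{(1)}\le\mathbb{D}_l^-$ you then aim for is out of reach for every $l$, even with an arbitrary constant in front of $\mathbb{D}_l^-$. On $(-2l,-l)$ one has $f''=1/l$, whereas $S_l(v_-)''=(2l-|v|)/l\to 0$ as $v\to-2l$. For the two-point Young measure $\frac12(\delta_{-2l-\epsilon}+\delta_{-2l+\epsilon})$ one gets $E_l^{(1)}=\epsilon^2/(4l)$ but $\mathbb{D}_l^-=\epsilon^3/(12l)$.

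\textbf{The missing idea.} In the paper, $-L\,E_l^{(1)}$ is paid for not by $\mathbb{D}_l^-$ but by the convex remainder of the term $\frac12 S_l(\tilde{q}_-)'(\overline{q^2}-\tilde{q}^2)-\big(\overline{H_{l,-}(q)}-H_{l,-}(\tilde{q})\big)$. Write $\overline{q^2}-\tilde{q}^2=2(\mathbb{D}^++\mathbb{D}^-)$ and $\mathbb{D}^-=\mathbb{D}_l^-+\tilde{E}_l^{(2)}$ with $\tilde{E}_l^{(2)}\ge 0$, and use $-\frac{3l}{2}\le S_l(\tilde{q}_-)'\le 0$. This gives the lower bound $-\frac{3l}{2}(\mathbb{D}^++\mathbb{D}_l^-)+E_l^{(2)}$.

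Here $E_l^{(2)}$ is the defect of a $C^1$ function. It is linear for $v\le-2l$, because the $l v^2$ growth of $-\frac{3l}{2}\big(\frac12 v_-^2-S_l(v_-)\big)$ and of $-H_{l,-}$ cancel. On $(-2l,-l)$ it equals $\frac{1}{12l}(v^4+3lv^3+6l^2v^2+7l^3v+3l^4)$, with second derivative $\frac1l(v^2+\frac32 lv+l^2)\ge\frac l2$. This dominates the $L/l$ coming from $-Lf$ once $l^2\ge 2L$, and that is where ``$l$ large depending on $L$'' comes from. In your outline these remainders are already used up to produce $-\frac{3l}{2}(\mathbb{D}_l^-+\mathbb{D}^+)-\mathbb{D}_l^-$, so nothing is left to absorb $L\,E_l^{(1)}$.

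\textbf{The deferred computation.} You leave the $H_{l,-}$ and $S_l''$ terms as ``the main obstacle'', but they are the heart of the lemma, and they contain a trap. The map $v\mapsto S_l(v_-)''v^2$ is only Lipschitz at $v=-2l$: its derivative jumps by $4l$ there. Hence the It\^{o}-correction remainder, namely the defect of $S_l(v_-)-\frac12 S_l(v_-)''v^2$, has a concave kink at $-2l$. For the same two-point measure it is $\approx -l\epsilon$. No defect of a $C^{1,1}$ convex function ($\mathbb{D}_l^-$, $E_l^{(2)}$, $\mathbb{D}^-$) can compensate this.

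The paper's assertion that its combined function $G_l$ is $C^1$, hence convex for large $l$, overlooks this kink, so either version needs a further device. One option is a truncation whose second derivative vanishes to higher order at $|v|=2l$. Another is to remove the noise first by working with $\tilde{\eta}^{-1}\tilde{q}$, $\tilde{\eta}=e^{\tilde{W}-t/2}$, as in the one-sided estimate for $u_\varepsilon$; this eliminates the It\^{o} term altogether.

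The rest of your plan matches the paper: the $\beta_\delta$ time localisation, Chebyshev for $A_L^{n_0}$, and the It\^{o}-isometry bound on $\mathcal{M}_l^-$.
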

\begin{proof}
	Taking the test function $\varphi(t,x)=\psi(t)$ with $0\le\psi\in C^\infty_c([0,T))$ in \eqref{inequality Dl-}, we obtain that
	\begin{equation}\label{inequality Dl- dt}
		\frac {\rm d} {{\rm d}t}\int_{\mathbb{S}}\mathbb{D}_l^-\,{\rm d}x+\sum_{i=1}^3 I_l^{(i)}\,{\rm d}x+\int_{\mathbb{S}} I_l^{(4)}\,{\rm d}x\,\dot{\tilde{W}}\le 0,
	\end{equation}
	which holds in $\mathcal{D}'([0,T))$, $\tilde{\mathbb{P}}$-a.s., with zero initial data, where
	$$
	\begin{aligned}
		&I_l^{(1)}=\left(\overline{S_l(q_-)'}-S_l(\tilde{q}_-)'\right)\,\left(\tilde{P}-\tilde{u}^2\right),\\
		&I_l^{(2)}=\frac 1 2 S_l(\tilde{q}_-)'\left(\overline{q^2}-\tilde{q}^2\right)
		-\left(\overline{H_{l,-}(q)}-H_{l,-}(\tilde{q})\right),\\
		&I_l^{(3)}=-\frac 1 2\left(\overline{S_l(q_-)''\, q^2}-S_l(\tilde{q}_-)''\,\tilde{q}^2\right),\\
		&I_l^{(4)}=-\left( \overline{S_l(q_-)'\,q}-S_l(\tilde{q}_-)'\,\tilde{q}\right).
	\end{aligned}
	$$
	\noindent$\circ$ The term $I_l^{(1)}$.
	
	By the definition of $S_l(v)$, we have
	$$I_l^{(1)}=(\overline{q_-}-\tilde{q}_-)\left(\tilde{P}-\tilde{u}^2\right)+E_l^{(1)}\left(\tilde{P}_{n_0}-\tilde{u}_{n_0}^2\right)+E_l^{(1)}\left(\tilde{P}-\tilde{u}^2-\tilde{P}_{n_0}+\tilde{u}_{n_0}^2\right),$$
	where
	$$\begin{aligned}
		E_l^{(1)}=&\ \overline{S_l(q_-)'-q_-}-\left(S_l(\tilde{q}_-)'-\tilde{q}_-\right)\\
		=&\ \overline{\frac 1 {2l}(q+l)^2\,\mathds{1}_{\{-2l<q<-l\}}-\frac 1 2(3l+2q)\,\mathds{1}_{\{q\le -2l\}}}\\
		&-\left( \frac 1 {2l}(\tilde{q}+l)^2\,\mathds{1}_{\{-2l<\tilde{q}<-l\}}-\frac 1 2(3l+2\tilde{q})\,\mathds{1}_{\{\tilde{q}\le -2l\}}\right).
	\end{aligned}$$
	according to the convexity of function $v\mapsto \frac 1 {2l}(v+l)^2\,\mathds{1}_{\{-2l<v<-l\}}-\frac 1 2(3l+2v)\,\mathds{1}_{\{v\le -2l\}}$, by a standard result on the weak convergence of convex function sequences (cf. Corollary 3.33 in \cite{Novotny2004}), we have 
	$$
	I_l^{(1)}\ge (\overline{q_-}-\tilde{q}_-)\left(\tilde{P}-\tilde{u}^2\right)-E_l^{(1)}\left\Vert\tilde{P}_{n_0}-\tilde{u}_{n_0}^2\right\Vert_{L^\infty([0,T]\times\mathbb{S})}+E_l^{(1)}\left(\tilde{P}-\tilde{u}^2-\tilde{P}_{n_0}+\tilde{u}_{n_0}^2\right).$$
	
	\noindent$\circ$ The term $I_l^{(2)}$.
	
	Notice that $\overline{q^2}-\tilde{q}^2=2\mathbb{D}^- + 2\,\mathbb{D}^+$, by direct computation,
	$$\frac 1 2 S_l(\overline{q^2}-\tilde{q}^2)\,\left(\overline{q^2}-\tilde{q}^2\right)=S_l(\tilde{q}_-)'\,\mathbb{D}^+ +S_l(\tilde{q}_-)'\left(\overline{S_l(q_-)}-S_l(\tilde{q}_-)\right)+S_l(\tilde{q}_-)'\,\tilde{E}_l^{(2)}$$
	with
	$$
	\begin{aligned}
		\tilde{E}_l^{(2)}=&\mathbb{D}^- - \left(\overline{S_l(q_-)}-S_l(\tilde{q}_-)\right)\\
		=&\,\overline{-\frac 1 {6l}\,(q+l)^3\,\mathds{1}_{\{-2l<q<-l\}}+\frac 1 6\left(3q^2+9lq+7l^2\right)\,\mathds{1}_{\{q\le -2l\}}}\\
		&-\left(-\frac 1 {6l}\,(\tilde{q}+l)^3\,\mathds{1}_{\{-2l<\tilde{q}<-l\}}+\frac 1 6\left(3\tilde{q}^2+9l\tilde{q}+7l^2\right)\,\mathds{1}_{\{\tilde{q}\le -2l\}}\right).
	\end{aligned}
	$$
	
	It is Obvious that $v\mapsto -\frac 1 {6l}\,(v+l)^3\,\mathds{1}_{\{-2l<v<-l\}}+\frac 1 6\left(3v^2+9lv+7l^2\right)\,\mathds{1}_{\{v\le -2l\}}$ is nonnegative and convex. According to \eqref{S_l'} and \eqref{S_l of q exact}, $S_l(\tilde{q}_-)'\ge -\frac 3 2\,l$, then
	$$
	\frac 1 2 S_l(\overline{q^2}-\tilde{q}^2)\,\left(\overline{q^2}-\tilde{q}^2\right)\ge -\frac {3l} 2\,\left(\mathbb{D}^+ + \mathbb{D}_l^-\right)-\frac {3l} 2\,\tilde{E}_l^{(2)}.$$
	
	Combining above inequality with
	$$
	\overline{H_{l,-}(q)}-H_{l,-}(\tilde{q})=-\frac 1 {12l}(v^4-3l^2\,v^2-2l^3\,v)\mathds{1}_{\{-2l<v<-l\}}-\frac 1{12}(9l\,v^2+14l^2 v)\mathds{1}_{\{v\le -2l\}},$$
	we obtain that
	$$I_l^{(2)}\ge -\frac{3l} 2\left(\mathbb{D}^+ + \mathbb{D}_l^-\right)+E_l^{(2)},$$
	where
	$$
	\begin{aligned}
		E_l^{(2)}=&\ \overline{\frac 1 {12l}\left(q^4+3lq^3+6l^2q^2+7l^3q+3l^4\right)\mathds{1}_{\{-2l<q<-l\}}-\frac 1 {12}\left(13l^2q+21l^3\right)\mathds{1}_{\{q<-2l\}}}\\
		&-\left(\frac 1 {12l}\left(\tilde{q}^4+3l\tilde{q}^3+6l^2\tilde{q}^2+7l^3\tilde{q}+3l^4\right)\mathds{1}_{\{-2l<\tilde{q}<-l\}}-\frac 1 {12}\left(13l^2\tilde{q}+21l^3\right)\mathds{1}_{\{\tilde{q}<-2l\}}\right).
	\end{aligned}
	$$
	
	\noindent$\circ$ The term $I_l^{(3)}$.
	
	Similarly, we can compute that
	$$\frac 1 2S_l(v_-)''v^2=S_l(v_-)+\left(\frac 1 {3l}\,v^3-\frac 1 2\,l\,v-\frac 1 6 l^2\right)\,\mathds{1}_{\{-2l<v<-l\}}+\left(\frac 3 2 l\,v+\frac 7 6l^2\right)\,\mathds{1}_{\{v\le -2l\}},$$
	thus,
	$$I_l^{(3)}=-\mathbb{D}_l^-+ E_l^{(3)}$$
	with 
	$$\begin{aligned}
		E_l^{(3)}=-&\ \overline{\left(\frac 1 {3l}\,q^3-\frac 1 2\,l\,q-\frac 1 6 l^2\right)\,\mathds{1}_{\{-2l<q<-l\}}+\left(\frac 3 2 l\,q+\frac 7 6l^2\right)\,\mathds{1}_{\{q\le -2l\}}}\\
		+&\left[\left(\frac 1 {3l}\,\tilde{q}^3-\frac 1 2\,l\,\tilde{q}-\frac 1 6 l^2\right)\,\mathds{1}_{\{-2l<\tilde{q}<-l\}}+\left(\frac 3 2 l\,\tilde{q}+\frac 7 6l^2\right)\,\mathds{1}_{\{\tilde{q}\le -2l\}}\right].
	\end{aligned}$$
	
	\noindent$\circ$ The term $I_l^{(4)}$.
	
	By \eqref{S_l'}, $|S_l(v_-)'\,v|\lesssim_l |v|$, according to \eqref{SJ convergence}, for each fixed $l$, $I_L^{(4)}\in L^{2r}_{\tilde{\omega},t,x}$ with $2r>2$. Therefore, by the It\'{o} isometry, the stochastic integral
	$$\mathcal{M}_l^-(t)=\int_0^t\int_{\mathbb{S}} I_l^{(4)}\,{\rm d}x\,{\rm d}\tilde{W}$$
	is a square-integrable martingale on $[0,T]$.
	
	Combining the above inequalities together, we deduce that
	$$
			\sum_{i=1}^3 I_l^{(i)}\ge (\overline{q_-}-\tilde{q}_-)\left(\tilde{P}-\tilde{u}^2\right)-\frac{3l} 2\left(\mathbb{D}^+ + \mathbb{D}_l^-\right)-\mathbb{D}_l^-+E_l^{(1)}\left(\tilde{P}-\tilde{u}^2-\tilde{P}_{n_0}+\tilde{u}_{n_0}^2\right)\\
			+(\overline{G_l(q)}-G_l(\tilde{q})),
	$$
	where
	$$
		\overline{G_l(q)}-G_l(\tilde{q})
		=
		-E_l^{(1)}\left\Vert\tilde{P}_{n_0}-\tilde{u}_{n_0}^2\right\Vert_{L^\infty([0,T]\times\mathbb{S})}+E_l^{(2)}+ E_l^{(3)}.
	$$
	
	By the explicit expression of $E_l^{(i)}$ for $i=1,2,3$, $v\mapsto G_l(v)\in C^1(\mathbb{R})$, then we can directly calculate that
	$$G_l''(v)=\frac 1 l\left(-\left\Vert\tilde{P}_{n_0}-\tilde{u}_{n_0}^2\right\Vert_{L^\infty([0,T]\times\mathbb{S})}+v^2+\frac 3 2 lv-2v+l^2\right)\,\mathds{1}_{\{-2l<v<-l\}},$$ 
	and so on $A_L^{n_0}$, let $l$ sufficiently large, then for any $v\in\mathbb{R}$ and a.e. $x\in\mathbb{S}$,
	$$G_l''(v)\ge\frac 1 l\left[v^2+\left(\frac 3 2 l-2\right)v+l^2-L\right]\,\mathds{1}_{\{-2l<v<-l\}}\ge 0,$$
	hence, $G_l(v)$ is convex for sufficiently large $l$. The standard result on the weak convergence of convex function sequences implies that on $A_L^{n_0}$,
	$$\overline{G_l(q)}-G_l(\tilde{q})\le 0\quad\text{a.e. on } [0,T]\times\mathbb{S}.$$
	
	In conclusion, for a sufficiently large $l=l(L)$,
	\begin{equation}\label{I123 inequality}
		\sum_{i=1}^3 I_l^{(i)}\ge (\overline{q_-}-\tilde{q}_-)\left(\tilde{P}-\tilde{u}^2\right)-\frac{3l} 2\left(\mathbb{D}^+ + \mathbb{D}_l^-\right)-\mathbb{D}_l^-+E_l^{(1)}\left(\tilde{P}-\tilde{u}^2-\tilde{P}_{n_0}+\tilde{u}_{n_0}^2\right).
	\end{equation}
	
	Inserting \eqref{I123 inequality} into \eqref{inequality Dl- dt} multiplied by $\mathds{1}_{A_L^{n_0}}$, we deduce that
	\begin{equation}\label{inequality Dl- dt on AL}
		\begin{aligned}
		&\frac {\rm d}{{\rm d}t}\int_{\mathbb{S}}\mathbb{D}_l^-\,{\rm d}x+\int_{\mathbb{S}}\left(\overline{q_-}-\tilde{q}_-\right)\,\left(\tilde{P}-\tilde{u}^2\right)\,{\rm d}x\\
		-&\int_{\mathbb{S}} \frac {3l} 2\,(\mathbb{D}_l^- +\mathbb{D}^+)+\mathbb{D}_l^-\,{\rm d}x
		+\int_{\mathbb{S}}E_l^{(1)}\left(\tilde{P}-\tilde{u}^2-\tilde{P}_{n_0}+\tilde{u}_{n_0}^2\right)\,{\rm d}x\\
		+&\int_{\mathbb{S}}I_l^{(5)}\,{\rm d}x\dot{\tilde{W}},\quad\text{in }\mathcal{D}'([0,T)),\text{ a.s. on }A_l^{n_0},
		\end{aligned}
	\end{equation}
	with zero initial data. Following the same argument used in the proofs of Lemma \ref{continuity t=0} and Lemma \ref{positive part of defect measure}, we can obtain that \eqref{inequality Dl-} holds a.e. in $A_L^{n_0}\times[0,T]$.
	
	Finally, according to the Chebyshev inequality and Lemma \ref{priori estimate}, we can easily deduce that
	\begin{equation}\label{probability AL^c}
		\tilde{P}\left(\left(A_L^{n_0}\right)^c\right)\le\frac 1 L\,\tilde{\mathbb{E}}\left\Vert\tilde{P}_{n_0}-\tilde{u}_{n_0}^2 \right\Vert_{L^\infty([0,T]\times\mathbb{S})}\rightarrow 0,
	\end{equation}
	therefore, the above inequality yields that $\tilde{\mathbb{P}}\left(A_L^{n_0}\right)\rightarrow 1$ as $L\rightarrow\infty$.
\end{proof}

According to above estimates of the positive part of defect measure $\mathbb{D}^+$ and the approximation of negative part $\mathbb{D}_l^-$, we can identify the weak limit $\overline{q^2}$ with $\tilde{q}^2$.

\begin{proof}[Proof of Theorem \ref{main theorem}]
	Notice that $\tilde{q}=\tilde{q}_+ +\tilde{q}_-=\overline{q_+}+\overline{q_-}=\overline{q}$.
	According to Lemma \ref{negative part of defect measure}, fixed $n_0$ and consider sufficiently large $l$ depending on fixed $L$, combining with \eqref{inequality D+ integral} and take an expectation, we obtain that
	\begin{equation}\label{inequality D+ + Dl-}
		\begin{aligned}
			&\tilde{\mathbb{E}}\int_{\mathbb{S}}\mathds{1}_{A_L^{n_0}}\left(\mathbb{D}^+ + \mathbb{D}_l^-\right)(t)\,{\rm d}x
			-\tilde{\mathbb{E}}\int_0^t\int_{\mathbb{S}}\mathds{1}_{A_L^{n_0}}\,\left(\frac {3l} 2+1\right)\,\left(\mathbb{D}^+ +\mathbb{D}_l^-\right)\,{\rm d}x\,{\rm d}s\\
			\le& -\tilde{\mathbb{E}}\int_0^t\int_{\mathbb{S}}\mathds{1}_{A_L^{n_0}}\,\left( \overline{S_l(q_-)'-q_-} - \left(S_l(\tilde{q}_-)' - \tilde{q}_-\right) \right)\,\left(\tilde{P}-\tilde{u}^2-\tilde{P}_{n_0}+\tilde{u}_{n_0}^2\right)\,{\rm d}x\,{\rm d}s.
		\end{aligned}
	\end{equation}
	Applying the Gronwall inequality to \eqref{inequality D+ + Dl-}, for a.e. $t\in[0,T]$,
	$$
	\begin{aligned}
		&\tilde{\mathbb{E}}\left(\mathds{1}_{A_L^{n_0}}\,\left\Vert\mathbb{D}^+(t)+\mathbb{D}_l^-(t)\right\Vert_{L^1(\mathbb{S})}\right)\\
		\le& e^{(3l/2+1)t}\,\left|\,\tilde{\mathbb{E}}\int_0^t\int_{\mathbb{S}}\mathds{1}_{A_L^{n_0}}\,\left( \overline{S_l(q_-)'-q_-} - \left(S_l(\tilde{q}_-)' - \tilde{q}_-\right) \right)\,\left(\tilde{P}-\tilde{u}^2-\tilde{P}_{n_0}+\tilde{u}_{n_0}^2\right)\,{\rm d}x\,{\rm d}s\right|,
	\end{aligned}
	$$
	integrating over $[0,T]$ and adding $\tilde{\mathbb{E}}\int_0^T\mathds{1}_{\left(A_L^{n_0}\right)^c}\,\left\Vert\mathbb{D}^+(t)+\mathbb{D}_l^-(t)\right\Vert_{L^1(\mathbb{S})}\,{\rm d}t$ to both sides,
	\begin{equation}\label{inequality D+ + Dl- 2}
		\begin{aligned}
		&\tilde{\mathbb{E}}\int_0^T\left\Vert\mathbb{D}^+(t)+\mathbb{D}_l^-(t)\right\Vert_{L^1(\mathbb{S})}\,{\rm d}t\\
		\le\ & \tilde{\mathbb{E}}\int_0^T\mathds{1}_{\left(A_L^{n_0}\right)^c}\,\left\Vert\mathbb{D}^+(t)+\mathbb{D}_l^-(t)\right\Vert_{L^1(\mathbb{S})}\,{\rm d}t\\
		&+T\,e^{(3l/2+1)T}\,\tilde{\mathbb{E}}\int_0^T\int_{\mathbb{S}}\left| \overline{S_l(q_-)'-q_-} - \left(S_l(\tilde{q}_-)' - \tilde{q}_-\right) \right|\,\left|\tilde{P}-\tilde{u}^2-\tilde{P}_{n_0}+\tilde{u}_{n_0}^2\right|\,{\rm d}x\,{\rm d}s.
		\end{aligned}
	\end{equation}
	
	Lemma \ref{priori estimate} yields
	$$
	\Vert\mathbb{D}^+ +\mathbb{D}_l^-\Vert_{L^r_{\tilde{\omega},t,x}},\ \left\Vert \tilde{P}-\tilde{u}^2-\tilde{P}_{n_0}+\tilde{u}_{n_0}^2\right\Vert_{L^p_{\tilde{{\omega}},t,x}},\ \left\Vert\overline{S_l(q_-)'-q_-} - \left(S_l(\tilde{q}_-)' - \tilde{q}_-\right)\right\Vert_{L^{2r}_{\tilde{{\omega}},t,x}}\lesssim1
	$$
	for fixed $r\in(1,3/2)$ and $p\in[1,p_0/2]$. By the H\"{o}lder inequality, \eqref{probability AL^c}, and \eqref{SJ convergence}, we can deduce that
	$$\tilde{\mathbb{E}}\int_0^T\mathds{1}_{\left(A_L^{n_0}\right)^c}\,\left\Vert\mathbb{D}^+(t)+\mathbb{D}_l^-(t)\right\Vert_{L^1(\mathbb{S})}\,{\rm d}t\lesssim \Vert\mathbb{D}^+ +\mathbb{D}_l^-\Vert_{L^r_{\tilde{\omega},t,x}}\,\left(\tilde{\mathbb{P}}((A_L^{n_0})^c)\right)^{1/r'}\lesssim L^{-1/r'},$$
	and
	$$
	\begin{aligned}
	&\tilde{\mathbb{E}}\int_0^T\int_{\mathbb{S}}\left| \overline{S_l(q_-)'-q_-} - \left(S_l(\tilde{q}_-)' - \tilde{q}_-\right) \right|\,\left|\tilde{P}-\tilde{u}^2-\tilde{P}_{n_0}+\tilde{u}_{n_0}^2\right|\,{\rm d}x\,{\rm d}s\\[2ex]
	\le\ &  \left\Vert\overline{S_l(q_-)'-q_-} - \left(S_l(\tilde{q}_-)' - \tilde{q}_-\right)\right\Vert_{L^{2r}_{\tilde{{\omega}},t,x}}\,\left\Vert\tilde{P}-\tilde{u}^2-\tilde{P}_{n_0}+\tilde{u}_{n_0}^2\right\Vert_{L^p_{\tilde{{\omega}},t,x}} \xrightarrow{n_0\rightarrow\infty} 0.
	\end{aligned}
	$$
	Inserting above inequality into \eqref{inequality D+ + Dl- 2} and sending $n_0$ to $\infty$,
	\begin{equation}\label{inequality D+ + Dl- 3}
		\tilde{\mathbb{E}}\int_0^T\int_{\mathbb{S}}\mathbb{D}^+ +\mathbb{D}_l^-\,{\rm d}x\,{\rm d}t\lesssim\ L^{-1/r'},
	\end{equation}
	utilizing Lemma \ref{convergence Dl-} and sending $l$ and $L$ to $\infty$ successively in \eqref{inequality D+ + Dl- 3}, we can conclude that
	$$\overline{q_+^2}=\tilde{q}_+^2,\quad\overline{q_-^2}=\tilde{q}_-^2\quad\text{a.e. in }\tilde{\Omega}\times[0,T]\times\mathbb{S},$$
	with Proposition \ref{SPDE tilde u}, the existence of $H^1$ weak martingale solution to \eqref{CH u} is proved.
	
	According to Proposition \ref{one side estimate} and the equality of laws $\partial_x\tilde{u}_n$ and $\partial_xu_\varepsilon$, we can deduce that
	$$\tilde{\mathbb{P}}\left\{\lim_{t\rightarrow\infty}\partial_x \tilde{u}_n\le0,\ \forall x\in\mathbb{S}\right\}=1.$$
	The convergence \eqref{SJ convergence} yields that $\tilde{q}_n\rightarrow\tilde{q}$ a.e. in $\tilde{\Omega}\times[0,T]\times\mathbb{S}$, then we can establish \eqref{main property one side}.
	
	The estimate \eqref{main property space time higher} is a direct consequence of Lemma \ref{priori estimate}.
	
	Finally, according to Proposition \ref{viscous estimate} and the equality of laws similarly, we have for a.e. $\tilde{{\omega}}\in\tilde{{\Omega}}$,
	\begin{equation}\label{large time un}
		\Vert \tilde{u}_n(\tilde{{\omega}},t)\Vert_{L^\infty(\mathbb{S})}\lesssim\Vert \tilde{u}_n(\tilde{{\omega}},t)\Vert_{H^1(\mathbb{S})}\lesssim \tilde{\eta}_n(\tilde{{\omega}},t)\Vert \tilde{u}_0\Vert_{H^1 (\mathbb{S})},
	\end{equation}
	where $\tilde{\eta}_n(\tilde{{\omega}},t)=e^{\tilde{W}_n(t)-\frac t 2}$. According to \eqref{SJ convergence}, $\tilde{W}_n\rightarrow\tilde{W}$ in $C_t$, and it is easy to observe that for a.e. $\tilde{\omega}\in\tilde{\Omega}$, $\lim_{t\rightarrow\infty}\tilde{\eta}_n(\tilde{{\omega}},t)=0$ and $\lim_{t\rightarrow\infty}\tilde{\eta}(\tilde{{\omega}},t)=0$. Therefore, for any $t\ge 0$, a.e. $\tilde{{\omega}}\in\tilde{\Omega}$, $|\tilde{\eta}_n(\tilde{{\omega}},)t|+|\tilde{\eta}(\tilde{{\omega}},)t|\le C(\tilde{\omega})$ and the Constant $C$ independent of $n$.
	
	Due to Proposition \ref{viscous estimate Holder} and the Aubin-Lions lemma, similar to the proof of Lemma \ref{tightness}, up to a subsequence, we can assume $\tilde{u}_n\rightarrow\tilde{u}$ in $C([0,T];L^\infty(S))$.

	Thus, when $n$ tends to $\infty$ in \eqref{large time un}, we deduce that for a.e. $\tilde{{\omega}}\in\tilde{{\Omega}}$,
	$$\Vert \tilde{u}(\tilde{{\omega}},t)\Vert_{L^\infty(\mathbb{S})}\lesssim \tilde{\eta}(\tilde{{\omega}},t)\Vert \tilde{u}_0\Vert_{H^1 (\mathbb{S})},$$
	hence, sending $t$ to $\infty$, we can conclude that
	$$\lim_{t\rightarrow\infty}|\tilde{u}(t,x)|=0,\quad\forall\ x\in\mathbb{S},\ \tilde{\mathbb{P}}-{\rm a.s.}$$
	Hence, Theorem \ref{main theorem} is proved.
\end{proof}

\appendix
\section{Appendix}\label{appendix}
\begin{defi}[Quasi-Polish space]\label{quasi Polish}
	A topological space $(Z,\tau)$ is quasi-Polish if there exists a sequence $\{f_n\}_{n\in\mathbb{N}}$ of continuous functions $f_n:Z\rightarrow[-1,1]$ separating points of $Z$.
\end{defi}

Consider the classical Bochner space of equivalence classes of measurable functions $z:[0,T]\rightarrow L^{p_2}(\mathbb{S})$ for which $\|z(\cdot)\|_{L^{p_2}(\mathbb{S})}\in L^{p_1}([0,T])$. Equipping this space with the locally convex topology generated by the seminorms
\begin{equation}\label{weak topology}
	L^{p_1}(L^{p_2})\ni z\mapsto\left(\int_0^T\left|\int_{\mathbb{S}}\phi(x)z(t,x)\,{\rm d}x\right|^{p_1}\right)^{1/p_1},\quad \phi\in L^{{p_2}'}(\mathbb{S}),
\end{equation}
where $\frac 1 {p_2}+\frac 1{{p_2}'}=1$, we denote the resulting topology space by $L^{p_1}(L^{p_2}_w)$. Moreover, $L^{p_1}(L^{p_2}_w)$ is a quasi-Polish space \cite{Holden2024JDE}.

\begin{lemm}\label{quasi Polish product}
	Let $\{Z_i\}_{i\in\mathbb{N}}$ be countable collection of quasi-Polish spaces, then $Z=\prod_{i\in\mathbb{N}} Z_i$ endowed with the product topology, is a quasi-Polish space.
\end{lemm}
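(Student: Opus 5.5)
The plan is to build a countable separating family of continuous $[-1,1]$-valued functions on the product directly out of the separating families of the factors. By Definition \ref{quasi Polish}, each $Z_i$ admits a sequence $\{f^{(i)}_n\}_{n\in\mathbb{N}}$ of continuous maps $f^{(i)}_n:Z_i\rightarrow[-1,1]$ that separates points of $Z_i$. Let $\pi_i:Z\rightarrow Z_i$ denote the canonical projection, and set
$$g_{i,n}=f^{(i)}_n\circ\pi_i:Z\rightarrow[-1,1],\qquad (i,n)\in\mathbb{N}\times\mathbb{N}.$$
Each $\pi_i$ is continuous by the definition of the product topology, so each $g_{i,n}$ is continuous as a composition of continuous maps, and it takes values in $[-1,1]$. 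The index set $\mathbb{N}\times\mathbb{N}$ is countable, so after fixing any bijection $\mathbb{N}\rightarrow\mathbb{N}\times\mathbb{N}$ the family $\{g_{i,n}\}$ becomes a single sequence, which is exactly the form Definition \ref{quasi Polish} requires.

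It then remains to check that this sequence separates points of $Z$. Take $z=(z_i)_{i\in\mathbb{N}}$ and $w=(w_i)_{i\in\mathbb{N}}$ in $Z$ with $z\neq w$. Two elements of a product differ exactly when some coordinate differs, so there is an index $i_0$ with $z_{i_0}\neq w_{i_0}$ in $Z_{i_0}$. Because $\{f^{(i_0)}_n\}_n$ separates points of $Z_{i_0}$, there is some $n_0$ with $f^{(i_0)}_{n_0}(z_{i_0})\neq f^{(i_0)}_{n_0}(w_{i_0})$. This says precisely that $g_{i_0,n_0}(z)\neq g_{i_0,n_0}(w)$, which proves separation.

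I do not expect a real obstacle here. The only step that needs care is the re-indexing of the doubly indexed family into a single sequence. The argument relies on countability in two places, namely that there are countably many factors and that each factor has a countable separating family. With uncountably many factors the same construction would still separate points, but the resulting family would no longer be a sequence, so the countability hypothesis in the statement is exactly what makes the proof work. No completeness, metrizability or second-countability of the factors is used, which matches the weak definition of quasi-Polish adopted in Definition \ref{quasi Polish}.
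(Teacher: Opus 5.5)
Your proof is correct and complete. Under Definition \ref{quasi Polish}, the family $\{f^{(i)}_n\circ\pi_i\}_{(i,n)\in\mathbb{N}\times\mathbb{N}}$ is a countable family of continuous $[-1,1]$-valued maps that separates points of $Z$, which is all the definition requires. The paper records this lemma in the appendix without a proof. Your argument, which pulls back each factor's separating sequence along the projections and re-indexes over $\mathbb{N}\times\mathbb{N}$, is the standard proof.
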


\begin{lemm}[The quasi-Polish version of the KLS theorem]\label{KLS}
	Let $Z$ be a quasi-Polish space and let $Y$ be a Polish space for which exists a continuous injection $b:X\rightarrow Z$. For any Borel set $B\subset Y$, the set $b[B]$ is Borel in $Z$.
\end{lemm}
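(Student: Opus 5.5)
The plan is to reduce the statement to the classical Lusin--Souslin theorem for Polish spaces. The reduction uses the separating sequence from Definition \ref{quasi Polish} to embed $Z$ continuously and injectively into a Polish space. Throughout, the continuous injection is read as $b:Y\rightarrow Z$ (the ``$X$'' in the statement is evidently a typo for $Y$).

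\textbf{Step 1: the embedding.} Since $Z$ is quasi-Polish, there is a sequence $\{f_n\}_{n\in\mathbb{N}}$ of continuous functions $f_n:Z\rightarrow[-1,1]$ separating the points of $Z$. I define
$$F:Z\rightarrow[-1,1]^{\mathbb{N}},\qquad F(z)=\big(f_n(z)\big)_{n\in\mathbb{N}}.$$
The target is the Hilbert cube with the product topology, which is compact metrizable and hence Polish. The map $F$ is continuous because each coordinate $f_n$ is continuous. It is injective because the $f_n$ separate points: if $z\neq z'$, then $f_n(z)\neq f_n(z')$ for some $n$.

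\textbf{Step 2: classical Lusin--Souslin.} The composition $F\circ b:Y\rightarrow[-1,1]^{\mathbb{N}}$ is a continuous injection between Polish spaces. By the classical Lusin--Souslin theorem, the injective continuous image of a Borel subset of a Polish space into a Polish space is Borel. Hence, for any Borel set $B\subset Y$, the set $(F\circ b)[B]=F\big[b[B]\big]$ is Borel in $[-1,1]^{\mathbb{N}}$.

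\textbf{Step 3: pull back to $Z$.} Injectivity of $F$ gives
$$b[B]=F^{-1}\Big(F\big[b[B]\big]\Big).$$
Since $F$ is continuous, it is Borel measurable from $(Z,\mathcal{B}(Z))$ into the Hilbert cube. The preimage of a Borel set under $F$ is therefore Borel in $Z$, so $b[B]\in\mathcal{B}(Z)$.

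\textbf{Main point to check.} The only delicate part is Step 3. The identity $F^{-1}(F(A))=A$ holds exactly because $F$ is injective, and this is where the point-separating property of quasi-Polish spaces is essential. Everything else is a direct invocation of the classical theorem. No metrizability or separability of $Z$ itself is required, which is precisely why the argument survives in non-metrizable spaces such as $L^{p_1}(L^{p_2}_w)$.
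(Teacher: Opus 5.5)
Your proof is correct. The paper states this lemma without proof and imports it from the literature it cites for the Kuratowski--Lusin--Souslin theorem (Ondreját; Galimberti--Holden--Karlsen--Pang). Your argument is exactly the standard proof given there: embed $Z$ into the Hilbert cube by the separating map $F(z)=(f_n(z))_{n\in\mathbb{N}}$ (the same map as in Lemma \ref{map of quasi Polish}), apply the classical Lusin--Souslin theorem to the continuous injection $F\circ b$ between Polish spaces, and pull back using $b[B]=F^{-1}\big(F[b[B]]\big)$. It in fact shows slightly more: $b[B]$ lies in $\Sigma_f=\sigma(f_n:n\in\mathbb{N})\subset\mathcal{B}(Z)$, which is the $\sigma$-algebra on which laws are considered in Theorem \ref{SJ reprensentation theorem}.
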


\begin{lemm}[Vitali's convergence theorem]\label{Vitali}\cite{Kuo2006}
	Let $p\in[1,\infty)$, $X_n\in L^p$ and $X_n$ converge to $X$ in probability. Then the following are equivalent:
	\begin{enumerate}
		\item $\lim_{n\rightarrow\infty}X_n=X$ in $L^p$;
		\item $|X_n|^p$ is uniformly integrable;
		\item  $\lim_{n\rightarrow\infty}\mathbb{E}(|X_n|^p)=\mathbb{E}(|X|^p)$.
	\end{enumerate}
	In particular, if $\sup_n\mathbb{E}(|X_n|^p)<\infty$ for some $p<q<\infty$, or if there exists a $Y\in L^p$ such that $|X_n|^p\le Y$ for all $n$, then the above properties hold true.
\end{lemm}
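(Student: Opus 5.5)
The last statement is Vitali's convergence theorem (Lemma \ref{Vitali}). My plan is to prove the cycle of implications $(1)\Rightarrow(3)\Rightarrow(2)\Rightarrow(1)$, and then derive the two sufficient conditions at the end.

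For $(1)\Rightarrow(3)$ I will use Minkowski's inequality: $\bigl|\|X_n\|_p-\|X\|_p\bigr|\le\|X_n-X\|_p\to0$, so $\mathbb{E}|X_n|^p\to\mathbb{E}|X|^p$. Note that Fatou along an a.s. subsequence already gives $X\in L^p$.

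For $(3)\Rightarrow(2)$ I will truncate with a continuous cutoff, to avoid discontinuity issues.
\begin{itemize}
\item Fix $M>0$ and set $\psi_M(y)=\min(y,(M-y)_+)$ for $y\ge0$. It is bounded and continuous, vanishes for $y\ge M$, and satisfies $y\mathds{1}_{\{y\le M/2\}}\le\psi_M(y)\le y$.
\item Convergence in probability gives $\psi_M(|X_n|^p)\to\psi_M(|X|^p)$ in probability. Bounded convergence then yields $\mathbb{E}\psi_M(|X_n|^p)\to\mathbb{E}\psi_M(|X|^p)$.
\item Combining with (3),
$$\limsup_n\mathbb{E}\bigl[|X_n|^p\mathds{1}_{\{|X_n|^p>M\}}\bigr]\le\lim_n\bigl(\mathbb{E}|X_n|^p-\mathbb{E}\psi_M(|X_n|^p)\bigr)=\mathbb{E}|X|^p-\mathbb{E}\psi_M(|X|^p),$$
and the right-hand side tends to $0$ as $M\to\infty$ by dominated convergence.
\item This controls the tail for all large $n$. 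The finitely many remaining $n$ are handled individually, since each $X_n\in L^p$. Hence $\{|X_n|^p\}$ is uniformly integrable.
\end{itemize}

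For $(2)\Rightarrow(1)$:
\begin{itemize}
\item Uniform integrability gives $\sup_n\mathbb{E}|X_n|^p<\infty$. Fatou along an a.s.-convergent subsequence then gives $X\in L^p$.
\item Using $|X_n-X|^p\le2^{p-1}(|X_n|^p+|X|^p)$, the family $\{|X_n-X|^p\}$ is uniformly integrable.
\item Fix $\epsilon>0$ and split
$$\mathbb{E}|X_n-X|^p\le\epsilon^p+\mathbb{E}\bigl[|X_n-X|^p\mathds{1}_{\{|X_n-X|>\epsilon\}}\bigr].$$
\item $\mathbb{P}(|X_n-X|>\epsilon)\to0$ by convergence in probability. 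By the uniform absolute continuity equivalent to uniform integrability, the second term tends to $0$. Letting $\epsilon\to0$ gives (1).
\end{itemize}

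For the sufficient conditions:
\begin{itemize}
\item If $\sup_n\mathbb{E}|X_n|^q<\infty$ for some $q>p$, then
$$\mathbb{E}\bigl[|X_n|^p\mathds{1}_{\{|X_n|>K\}}\bigr]\le K^{p-q}\sup_n\mathbb{E}|X_n|^q\to0\quad\text{as }K\to\infty,$$
so (2) holds.
\item If $|X_n|^p\le Y$ with $Y\in L^1$, then $\mathbb{E}\bigl[|X_n|^p\mathds{1}_{A}\bigr]\le\mathbb{E}[Y\mathds{1}_A]$. Since a single integrable variable is uniformly integrable, (2) again follows.
\end{itemize}

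I read the paper's statement "for some $p<q<\infty$" as meaning $\sup_n\mathbb{E}|X_n|^q<\infty$, and "$Y\in L^p$" as meaning $|X_n|^p\le Y$ with $Y$ integrable, i.e. $|X_n|\le Y^{1/p}\in L^p$. These are the standard intended conditions.

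The main obstacle is $(3)\Rightarrow(2)$. Under only convergence in probability one cannot apply a.s. arguments directly, and the truncation must be continuous so that $\psi_M(|X_n|^p)$ converges in probability. Passing to subsequences is harmless, because every subsequence has a further a.s.-convergent subsequence.
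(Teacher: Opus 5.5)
Your proof is correct and is the standard textbook argument; the continuous cutoff $\psi_M$ in $(3)\Rightarrow(2)$ is the usual device. The paper gives no proof of its own and only cites \cite{Kuo2006}.

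One point should be added to the two readings you already make explicit. In $(3)\Rightarrow(2)$, your dominated-convergence step uses $\mathbb{E}|X|^p<\infty$, so (3) must be read as $\mathbb{E}|X_n|^p\to\mathbb{E}|X|^p<\infty$, as in the standard formulation. If $\mathbb{E}|X|^p=\infty$ were allowed, the implication would fail: $X_n=X\mathds{1}_{\{|X|\le n\}}$ with $X\notin L^p$ satisfies (3) in the extended sense, but satisfies neither (1) nor (2).
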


\begin{theo}[Prokhorov theorem]\label{Prokhorov theorem}\cite{Prato2014}
	Let $X$ be a complete, separable metric space. A sequence of probabilty measures $\{\mu_n\}\subset\mathcal{P}_\gamma(X)$ is tight if and only if it is relatively compact, i.e., there is a subsequence $\{\mu_{n_k}\}$ which converges weakly to a probability measure $\mu$ on $X$.
\end{theo}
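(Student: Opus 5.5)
The plan is to prove the two implications separately. Throughout, ``weakly'' means convergence against bounded continuous functions, and I will use the portmanteau characterization freely: $\mu_n\Rightarrow\mu$ iff $\limsup_n\mu_n(F)\le\mu(F)$ for all closed $F$, equivalently iff $\liminf_n\mu_n(G)\ge\mu(G)$ for all open $G$. I read $\mathcal{P}_\gamma(X)$ as the set of Borel probability measures on $X$.

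\emph{Tight $\Rightarrow$ relatively compact.} Since $X$ is separable metric, I fix a homeomorphic embedding $\iota:X\to\bar Y$ into a compact metric space, for instance the Hilbert cube $[0,1]^{\mathbb{N}}$ via $x\mapsto(\min(d(x,x_k),1))_k$ with $\{x_k\}$ dense. Push each $\mu_n$ forward to $\nu_n=\iota_\#\mu_n$ on $\bar Y$. The space $C(\bar Y)$ is separable. By a diagonal argument on a countable dense subset, together with the Riesz representation theorem (equivalently, Banach--Alaoglu in $C(\bar Y)^\ast$), a subsequence satisfies $\nu_{n_k}\Rightarrow\nu$ for some probability measure $\nu$ on $\bar Y$. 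Tightness provides compacts $K_m\subset X$ with $\inf_n\mu_n(K_m)\ge 1-1/m$. Each $\iota(K_m)$ is compact, hence closed in $\bar Y$, so portmanteau gives $\nu(\iota(K_m))\ge\limsup_k\nu_{n_k}(\iota(K_m))\ge 1-1/m$. Thus $\nu$ is concentrated on the Borel set $\bigcup_m\iota(K_m)\subset\iota(X)$, and $\mu:=\nu\circ\iota$ is a probability measure on $X$. To verify $\mu_{n_k}\Rightarrow\mu$, take $F\subset X$ closed. Then $F\cap K_m$ is compact, so $\iota(F\cap K_m)$ is closed in $\bar Y$. Hence
$$\limsup_k\mu_{n_k}(F)\le\limsup_k\nu_{n_k}(\iota(F\cap K_m))+\tfrac1m\le\nu(\iota(F\cap K_m))+\tfrac1m\le\mu(F)+\tfrac1m .$$
Letting $m\to\infty$ and applying portmanteau finishes this direction. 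Completeness is not needed here.

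\emph{Relatively compact $\Rightarrow$ tight.} This is where completeness enters, and the key step, which I expect to be the main obstacle, is a uniform finite-cover claim. Let $\{x_j\}$ be dense, fix $\varepsilon>0$ and $k\in\mathbb{N}$, and let $G_N=\bigcup_{j\le N}B(x_j,1/k)$, an increasing sequence of open sets whose union is $X$. I claim there is an $N_k$ with $\inf_n\mu_n(G_{N_k})>1-\varepsilon2^{-k}$. If not, then for every $N$ there is an $n_N$ with $\mu_{n_N}(G_N)\le 1-\varepsilon2^{-k}$. By relative compactness, a further subsequence converges weakly to some $\mu$. For fixed $M$ and all $N\ge M$ we have $G_M\subset G_N$, so the open-set portmanteau bound gives $\mu(G_M)\le\liminf\mu_{n_N}(G_M)\le 1-\varepsilon2^{-k}$. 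Letting $M\to\infty$ yields $\mu(X)\le 1-\varepsilon2^{-k}$, a contradiction. The subtle point is to index the subsequence so that $N\to\infty$ along it, and to use monotonicity of $G_N$ so that the bound transfers to every fixed $G_M$; if the index of the offending measure stays bounded along a subsequence, the claim for that finitely many $\mu_n$ follows from countable additivity instead.

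Having the claim, set $K=\overline{\bigcap_k G_{N_k}}$. This set is closed and totally bounded, since for each $k$ it is covered by finitely many closed balls of radius $1/k$. As $X$ is complete, $K$ is compact, and $\mu_n(K^c)\le\sum_k\varepsilon2^{-k}=\varepsilon$ uniformly in $n$, which is tightness.
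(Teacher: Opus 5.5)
Your proof is correct. The paper gives no proof of this statement; it is simply quoted from Da Prato--Zabczyk as background for the Skorokhod--Jakubowski step. The relevant comparison is therefore with the classical textbook argument.

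Your converse direction (relatively compact $\Rightarrow$ tight) is exactly the classical one. It runs a uniform finite-cover claim by contradiction through the open-set portmanteau inequality, then takes the closed, totally bounded, hence compact set $K$; this is the only place completeness is used.

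Your direct direction is a genuinely different route. The classical proof (e.g.\ Billingsley) constructs the limit measure on $X$ itself. It uses a diagonal argument along a countable class of finite unions of sets $\bar B\cap K_m$, then a Carath\'eodory outer-measure construction with a check that closed sets are measurable. You instead embed $X$ in the Hilbert cube and obtain a limit $\nu$ for free from separability of $C(\bar Y)$ and the Riesz representation theorem. Tightness is then used only twice: to show $\nu$ does not charge $\bar Y\setminus\iota(X)$, and to pull the closed-set portmanteau bound back to $X$ through the compact sets $\iota(F\cap K_m)$. Your route is shorter and hands all measure construction to Riesz. The classical route avoids compactifications and functional analysis but is more delicate.

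A few points should be tightened.
\begin{itemize}
\item The definition $\mu:=\nu\circ\iota$ needs $\iota(A)$ to be Borel (or at least $\nu$-measurable) for Borel $A\subset X$. Under the stated hypotheses this holds, because $\iota(X)$ is a $G_\delta$ in the cube. To justify your remark that completeness is not needed, define instead $\mu(\iota^{-1}(B)):=\nu(B)$ for Borel $B\subset\bar Y$. This is well defined because $\nu$ is carried by $\bigcup_m\iota(K_m)\subset\iota(X)$.
\item Negating $\inf_n\mu_n(G_N)>1-\varepsilon2^{-k}$ does not give an $n_N$ with $\mu_{n_N}(G_N)\le 1-\varepsilon2^{-k}$, since the infimum need not be attained. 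State the claim as ``$\mu_n(G_{N_k})>1-\varepsilon2^{-k}$ for all $n$''; that is all the final union bound uses.
\item In the converse you correctly use relative compactness in its genuine sense: every subsequence has a weakly convergent further subsequence. The statement's gloss ``i.e., there is a subsequence which converges weakly'' is too weak for that direction; alternating $\delta_0$ and $\delta_n$ on $\mathbb{R}$ is a counterexample.
\item For the direct direction, note explicitly that every subsequence of a tight sequence is tight. Your argument then yields full relative compactness, not just one convergent subsequence.
\end{itemize}
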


\begin{theo}[Skorokhod-Jakubowski a.s. representation]\label{SJ reprensentation theorem}\cite{Jakubowski1997}
	Let $(Z, \tau, \mathcal{B}_\tau)$ be a quasi-Polish space, and denote by $\Sigma_f \subset \mathcal{B}_\tau$ the $\sigma$-algebra generated by the sequence $\{f_n\}$ of continuous functions that separate points. Then  
	\begin{enumerate}
		\item every $\tau$-compact subset of $Z$ is metrisable;
		\item every Borel subset of a sigma compact set in $Z$ belongs to $\Sigma_f$;
		\item every probability measure supported by a sigma compact set in $Z$ has a unique Radon extension to the Borel $\sigma$-algebra $\mathcal{B}_\tau = \mathcal{B}(Z)$.
	\end{enumerate}
	Moreover, if $\{\mu_l\}$ is a tight sequence of probability measures on $(Z, \Sigma_f)$, then there exist a subsequence $\{l_k\}_k$, a probability space $(\tilde{\Omega}, \tilde{\mathcal{F}}, \tilde{\mathbb{P}})$, and Borel measurable $Z$-valued random variables $\tilde{v}_k$, $\tilde{v}$, such that $\mu_{n_k}$ is the law of $\tilde{v}_k$ and $\tilde{v}_k \to \tilde{v}$ $\tilde{\mathbb{P}}$-a.s. in $Z$. Besides, the law $\mu$ of $\tilde{v}$ is a Radon measure on $\mathcal{B}_\tau$.
\end{theo}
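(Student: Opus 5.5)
The plan is to reduce everything to the Hilbert cube through the separating sequence. Set $F:=(f_n)_{n\in\mathbb{N}}:Z\to[-1,1]^{\mathbb{N}}$. This map is continuous for the product topology and injective because the $f_n$ separate points. By construction $\Sigma_f=\{F^{-1}(B):B\in\mathcal{B}([-1,1]^{\mathbb{N}})\}$.

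\textbf{Item (1).} If $K\subset Z$ is $\tau$-compact, then $F|_K$ is a continuous bijection from a compact space onto the Hausdorff space $F(K)$, hence a homeomorphism. So $K$ is metrisable, with metric pulled back from the cube.

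\textbf{Item (2).} Let $B$ be a Borel subset of a $\sigma$-compact set $\bigcup_m K_m$. Write $B=\bigcup_m(B\cap K_m)$. Each $B\cap K_m$ is Borel in $K_m$, so $F(B\cap K_m)$ is Borel in the compact set $F(K_m)$, since $F|_{K_m}$ is a homeomorphism. By injectivity, $B\cap K_m=F^{-1}(F(B\cap K_m))\in\Sigma_f$.

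\textbf{Item (3).} Let $\mu$ be supported by $\bigcup_m K_m$. Define $\bar\mu(B):=\mu(B\cap\bigcup_m K_m)$ for every $B\in\mathcal{B}_\tau$; this is well defined by (2). Inner regularity follows because each $K_m$ is a compact metrisable space, on which finite Borel measures are Radon. Uniqueness holds because any Radon extension is determined by its values on the compacts $K_m$, and these values are already fixed by $\mu$.

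\textbf{Representation.} Tightness gives increasing compacts $K_m$ with $\inf_l\mu_l(K_m)>1-1/m$. The push-forwards $\nu_l:=F_\ast\mu_l$ are then tight on the Polish cube. By Prokhorov's theorem (Theorem \ref{Prokhorov theorem}) there is a subsequence $\nu_{l_k}\rightharpoonup\nu$. Since each $F(K_m)$ is closed, the portmanteau theorem gives $\nu(F(K_m))\ge1-1/m$. Hence $\nu$ is carried by $\bigcup_m F(K_m)$, and by (3) it lifts to a Radon measure $\mu$ on $Z$.

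Classical Skorokhod in the cube yields $Y_k\to Y$ almost surely, with $\mathcal{L}(Y_k)=\nu_{l_k}$ and $\mathcal{L}(Y)=\nu$. The natural candidates are $\tilde v_k:=F^{-1}(Y_k)$ and $\tilde v:=F^{-1}(Y)$, defined on the full-measure set $\{Y_k,Y\in\bigcup_m F(K_m)\}$. These are Borel measurable by (2), and their laws are $\mu_{l_k}$ and $\mu$.

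\textbf{Main obstacle.} Convergence of $Y_k$ to $Y$ in the cube does not by itself give $\tilde v_k\to\tilde v$ in $Z$, because $F^{-1}$ is continuous only on each single $F(K_m)$. One needs, for almost every $\tilde\omega$, an index $m(\tilde\omega)$ with $Y_k(\tilde\omega),Y(\tilde\omega)\in F(K_{m(\tilde\omega)})$ for all large $k$. Once that holds, continuity of $F^{-1}$ on $F(K_{m(\tilde\omega)})$ finishes the argument.

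To arrange this I would follow Jakubowski's stratification. Decompose each $\mu_{l}$ along the layers $K_m\setminus K_{m-1}$ together with the remainder $Z\setminus\bigcup_m K_m$. By a diagonal argument, pass to a subsequence along which the layer masses converge and the conditioned laws on each closed $F(K_m)$ converge weakly. Then realise all layers jointly on a product probability space: draw a random layer index, coupled in $k$ through a uniform variable so that the index sequence stabilises almost surely, and apply Skorokhod inside the compact $F(K_m)$ of the selected layer. In this construction the points $Y_k$ and $Y$ lie, for large $k$, in a common compact $F(K_m)$, because compacts are closed so limits stay there. The delicate part is the bookkeeping of the limiting layer masses and the remainder set of vanishing measure, and this is where I expect most of the effort to go.
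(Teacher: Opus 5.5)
The paper does not prove this statement (it is quoted from Jakubowski). Your outline is correct and is essentially the standard argument behind that citation: embed $Z$ into $[-1,1]^{\mathbb{N}}$ via $F$, use that $F$ restricts to a homeomorphism on each compact, and build a layer-by-layer Skorokhod coupling whose layer index stabilises almost surely.

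Two small points should be made explicit. First, the index stabilises only because the limiting layer masses $\alpha_m=\lim_k\mu_{l_k}(K_m\setminus K_{m-1})$ satisfy $\sum_m\alpha_m=1$. This follows from $\sum_{j\le m}\alpha_j=\lim_k\mu_{l_k}(K_m)\ge 1-1/m$, and it is exactly where uniform tightness enters; the remainder $Z\setminus\bigcup_mK_m$ is $\mu_l$-null for every $l$, so the bookkeeping you anticipate is light. Second, in (3) a measure is not determined by its values on the sets $K_m$ alone. Uniqueness is cleanest as follows: any extension $\lambda$ has $\lambda(\bigcup_mK_m)=1$, hence $\lambda(B)=\mu(B\cap\bigcup_mK_m)$ for all Borel $B$.
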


In order to claim that Skorokhod-Jakubowski representations satisfy corresponding SPDEs, it is necessary to obtain convergence result of stochastic integrals, we use the following lemma:
\begin{lemm}[Convergence of stochastic integrals]\label{convergence of stochastic integrals}\cite{Temam2011}
	Fix a probability space $(\Omega, \mathcal{F}, \mathbb{P})$. For each $n \in \mathbb{N}$, consider a stochastic basis 
	$\mathcal{S}^n = (\Omega, \mathcal{F}, \{\mathcal{F}_t^n\}_{t\in[0,T]}, \mathbb{P})$, a Wiener process $W^n$ on $\mathcal{S}^n$, 
	and a predictable $L^2(\mathbb{S})$-valued process $G^n$ on $\mathcal{S}^n$ satisfying 
	$G^n \in L^2([0, T]; L^2(\mathbb{S}))$, $\mathbb{P}$-{\rm a.s.}. Suppose there is a stochastic basis 
	$\mathcal{S} = (\Omega, \mathcal{F}, \{\mathcal{F}_t\}_{t\in[0,T]}, \mathbb{P})$, a Wiener process $W$ on $\mathcal{S}$, 
	and a predictable $L^2(\mathbb{S})$-valued process $G$ on $\mathcal{S}$ with 
	$G \in L^2((0, T); L^2(\mathbb{S}))$ $\mathbb{P}$-almost surely, such that 
	$$
	W^n \xrightarrow[]{n \uparrow \infty} W \ \text{in } C([0, T]), \quad 
	G^n \xrightarrow[]{n \uparrow \infty} G \quad \text{in } L^2([0, T]; L^2(\mathbb{S})), \text{ in probability}.
	$$
	Then
	$$
	\int_0^t G^n \, dW^n \xrightarrow[]{n \uparrow \infty} \int_0^t G \, {\rm d}W \quad \text{in } L^2([0, T]; L^2(\mathbb{S})), \text{ in probability}.
	$$
\end{lemm}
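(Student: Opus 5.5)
This is the convergence-of-stochastic-integrals lemma, which the paper quotes from \cite{Temam2011}; I would prove it by a Riemann-sum approximation combined with a stopping-time localization. Since convergence in probability is metrizable, it suffices to show that every subsequence has a further subsequence along which $\int_0^\cdot G^n\,{\rm d}W^n\to\int_0^\cdot G\,{\rm d}W$ in $L^2([0,T];L^2(\mathbb{S}))$ in probability. Passing to such a subsequence, I may assume $W^n\to W$ in $C([0,T])$ and $G^n\to G$ in $L^2([0,T];L^2(\mathbb{S}))$ almost surely.

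\textbf{Step 1 (discretization).} Fix $\delta>0$. I replace each integrand by a piecewise-constant, adapted approximation built from time averages over the previous mesh interval:
$$
G^{n}_\delta(t)=\frac1\delta\int_{(k-1)\delta}^{k\delta}G^n(s)\,{\rm d}s,\qquad t\in[k\delta,(k+1)\delta),
$$
with $G_\delta$ defined from $G$ in the same way. This operator is a contraction on $L^2([0,T];L^2(\mathbb{S}))$ and $G_\delta\to G$ as $\delta\to0$. It is also continuous in its argument, uniformly in $\delta$, so $G^n_\delta\to G_\delta$ almost surely for each fixed $\delta$. Moreover $G^n_\delta$ is predictable with respect to $\{\mathcal{F}^n_t\}$.

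\textbf{Step 2 (fixed mesh).} For elementary integrands the stochastic integral is the finite sum
$$
\sum_k G^n_\delta(k\delta)\bigl(W^n(t\wedge(k+1)\delta)-W^n(t\wedge k\delta)\bigr).
$$
For fixed $\delta$ this converges almost surely, uniformly in $t$, to the corresponding sum for $G_\delta$ and $W$, because $W^n\to W$ uniformly and $G^n_\delta(k\delta)\to G_\delta(k\delta)$ in $L^2(\mathbb{S})$. Hence $\int_0^\cdot G^n_\delta\,{\rm d}W^n\to\int_0^\cdot G_\delta\,{\rm d}W$ in $C([0,T];L^2(\mathbb{S}))$ almost surely, for every fixed $\delta$.

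\textbf{Step 3 (uniform error control).} This is the main obstacle, because only convergence in probability of $G^n$ is available, with no moment bounds. I would use the standard estimate for Itô integrals, valid for all $\epsilon,\eta>0$ and any Wiener process $W^n$:
$$
\mathbb{P}\Bigl(\sup_t\Bigl\|\int_0^t H\,{\rm d}W^n\Bigr\|_{L^2}>\epsilon\Bigr)\le\frac{\eta}{\epsilon^2}+\mathbb{P}\bigl(\|H\|_{L^2_{t,x}}^2>\eta\bigr).
$$
It follows from stopping at $\tau=\inf\{t:\int_0^t\|H\|_{L^2}^2\,{\rm d}s>\eta\}$ and applying Doob's inequality with the Itô isometry. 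I apply this estimate with $H=G^n-G^n_\delta$. By the contraction property,
$$
\|G^n-G^n_\delta\|\le 2\|G^n-G\|+\|G-G_\delta\|.
$$
Therefore $\limsup_n\mathbb{P}\bigl(\|G^n-G^n_\delta\|^2>\eta\bigr)\le\mathbb{P}\bigl(\|G-G_\delta\|^2>\eta/4\bigr)$, and this tends to $0$ as $\delta\to0$. The same estimate applied to $G-G_\delta$ and $W$ controls the limiting error.

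\textbf{Step 4 (conclusion).} I combine the decomposition
$$
\int G^n\,{\rm d}W^n-\int G\,{\rm d}W=\int(G^n-G^n_\delta)\,{\rm d}W^n+\Bigl(\int G^n_\delta\,{\rm d}W^n-\int G_\delta\,{\rm d}W\Bigr)+\int(G_\delta-G)\,{\rm d}W.
$$
I take $\limsup_n$ first, using Step 2 for the middle term and Step 3 for the two outer terms. I then send $\delta\to0$, then $\eta\to0$. This gives convergence in probability in $C([0,T];L^2(\mathbb{S}))$, hence in $L^2([0,T];L^2(\mathbb{S}))$.
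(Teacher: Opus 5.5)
Your proof is correct. The paper gives no argument of its own and simply imports the lemma from \cite{Temam2011}, whose proof (going back to Bensoussan) follows the same scheme you use: a non-anticipating time regularization, pathwise convergence of the regularized integrals, and uniform-in-$n$ control of the remainder via the stopping-time inequality $\mathbb{P}(\sup_t\|\int_0^t H\,{\rm d}W^n\|_{L^2}>\epsilon)\le \eta/\epsilon^2+\mathbb{P}(\|H\|_{L^2_{t,x}}^2>\eta)$. The differences are cosmetic: you use block averages and Riemann sums where that proof uses a sliding average and pathwise integration by parts. Your argument also yields the stronger convergence in $C([0,T];L^2(\mathbb{S}))$.
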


By above convergence result, we can construct martingale solutions, in order to upgrade martingale solutions to pathwise solution, we will need the uniqueness of martingale solutions and the following Gy\"{o}ngy-Krylov characterization of convergence in probability:
\begin{theo}[Gy\"{o}ngy-Krylov]\label{Gyongy-Krylov}\cite{Gyongy1996}
	Let $\mathcal{X}$ be a Polish space. For a sequence $\{v_n\}$ of $\mathcal{X}$-valued random variables, define the joint probability laws $\{\mu_{m,n}\}_{m,n}$ by setting, for all $A \in \mathcal{B}(\mathcal{X} \times \mathcal{X})$,
	$$
	\mu_{m,n}(A) := \mathbb{P}(\{(v_m, v_n) \in A\}).
	$$
	Then the sequence $\{v_n\}$ converges in probability if and only if for every subsequence $\{\mu_{m_k, n_k}\}_k$, there exists a further subsequence that converges weakly to a probability measure $\mu$ supported on the diagonal:
	$$
	\mu(\{(v, w) \in \mathcal{X} \times \mathcal{X} : v = w\}) = 1.
	$$
\end{theo}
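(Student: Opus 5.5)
The plan is to reduce convergence in probability to being Cauchy in probability, and to test the Cauchy property against the closed set $\{(x,y):d(x,y)\ge\epsilon\}$ using the Portmanteau theorem. Throughout, $d$ is a complete metric on $\mathcal{X}$. Recall that $\{v_n\}$ is Cauchy in probability if for every $\epsilon>0$ we have $\lim_{m,n\to\infty}\mathbb{P}(d(v_m,v_n)\ge\epsilon)=0$.

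\emph{Necessity.} Suppose $v_n\to v$ in probability, and take any subsequence of pairs $(m_k,n_k)$ with $m_k,n_k\to\infty$. By the triangle inequality for the product metric, $(v_{m_k},v_{n_k})\to(v,v)$ in probability in $\mathcal{X}\times\mathcal{X}$. Convergence in probability implies convergence in law. Hence $\mu_{m_k,n_k}$ converges weakly to the law of $(v,v)$, and this law is supported on the diagonal. So this direction requires only a further subsequence trivially, namely the whole one.

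\emph{Sufficiency, step 1: Cauchy in probability.} Argue by contradiction. If $\{v_n\}$ is not Cauchy in probability, there exist $\epsilon,\delta>0$ and indices $m_k,n_k\to\infty$ such that
\[
\mu_{m_k,n_k}(F_\epsilon)=\mathbb{P}(d(v_{m_k},v_{n_k})\ge\epsilon)\ge\delta\quad\text{for all }k,
\qquad F_\epsilon:=\{(x,y):d(x,y)\ge\epsilon\}.
\]
By hypothesis, a further subsequence of $\mu_{m_k,n_k}$ converges weakly to some $\mu$ with $\mu(\{x=y\})=1$. The set $F_\epsilon$ is closed, so the Portmanteau theorem gives
\[
\delta\le\limsup\mu_{m_k,n_k}(F_\epsilon)\le\mu(F_\epsilon)=0,
\]
which is a contradiction.

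\emph{Sufficiency, step 2: from Cauchy to convergent.} This step is where completeness of $\mathcal{X}$ enters, and it is the only slightly delicate point. First choose indices $N_1<N_2<\cdots$ such that
\[
\mathbb{P}\bigl(d(v_m,v_n)\ge 2^{-j}\bigr)<2^{-j}\quad\text{for all }m,n\ge N_j.
\]
Applying Borel--Cantelli to the events $\{d(v_{N_j},v_{N_{j+1}})\ge 2^{-j}\}$ shows that almost surely $\sum_j d(v_{N_j},v_{N_{j+1}})<\infty$. Hence $\{v_{N_j}\}$ is a.s. Cauchy in $\mathcal{X}$ and converges a.s. to a limit $v$, which is measurable as an a.s. limit of measurable maps into a Polish space. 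Finally, for any $n$ and $j$,
\[
\mathbb{P}(d(v_n,v)\ge 2\epsilon)\le\mathbb{P}(d(v_n,v_{N_j})\ge\epsilon)+\mathbb{P}(d(v_{N_j},v)\ge\epsilon).
\]
The first term is small by the Cauchy property once $n$ and $N_j$ are large. The second term is small for large $j$ because a.s. convergence implies convergence in probability. Therefore $v_n\to v$ in probability, which proves the theorem.
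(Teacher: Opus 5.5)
Your proof is correct and follows the standard argument of Gy\"{o}ngy and Krylov, which the paper only cites without proof: the Portmanteau theorem applied to the closed set $\{(x,y): d(x,y)\ge\epsilon\}$ shows the sequence is Cauchy in probability, and completeness of $d$ together with a Borel--Cantelli subsequence then gives convergence in probability. You were also right to read ``subsequence $\{\mu_{m_k,n_k}\}_k$'' as requiring both $m_k\to\infty$ and $n_k\to\infty$, since otherwise the necessity direction is false; if genuine subsequences are demanded, your indices in step 1 can trivially be chosen increasing.
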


However, due to the lack of regularity estimates for continuity in time, we can only obtain the tightness of a family of probability laws on a quasi-Polish space, in which case the Gy\"{o}ngy-Krylov theorem \cite{Prato2014} cannot be applied directly. This problem can be overcome by the projection mapping that separates points in the Definition \ref{quasi Polish} of a quasi-Polish space.
\begin{lemm}\label{map of quasi Polish}\cite{Endelking1975,Jakubowski1997}
	Assume $Z$ is a quasi-Polish space, and $\{f_n\}_{n\in\mathbb{N}}$ is the sequence of continuous functions defined in Definition \ref{quasi Polish}. Define map $f:Z\rightarrow[-1,1]^{\mathbb{N}}$ by
	$$f:u\mapsto\{f_n(u)\}_{n\in\mathbb{N}}.$$
	The map $f$ is a measurable bijective function when restricted to a $\sigma$-compact subspace of $Z$.
\end{lemm}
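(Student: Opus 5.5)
The plan is to read ``bijective'' as bijective onto its image $f[Z_0]$, where $Z_0=\bigcup_{j\in\mathbb{N}}K_j$ is a $\sigma$-compact subspace with each $K_j$ compact in $Z$. I will show three things: $f$ is injective, $f$ is Borel measurable, and the inverse $f^{-1}:f[Z_0]\to Z_0$ is Borel measurable, so $f|_{Z_0}$ is a Borel isomorphism onto a Borel subset of $[-1,1]^{\mathbb{N}}$. This last property is what the Gy\"{o}ngy--Krylov argument in Theorem \ref{Galerkin H^1} actually needs.

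First, injectivity and measurability come directly from Definition \ref{quasi Polish}. If $u\neq v$ in $Z$, some $f_n$ separates them, so $f(u)\neq f(v)$. Give $[-1,1]^{\mathbb{N}}$ the product topology; it is compact and metrisable by the metric $d(a,b)=\sum_n 2^{-n}|a_n-b_n|$. Each coordinate $f_n$ is continuous, so $f$ is continuous and hence $\mathcal{B}(Z)/\mathcal{B}([-1,1]^{\mathbb{N}})$-measurable. Its restriction to $Z_0$ is therefore measurable for the trace $\sigma$-algebra.

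The main step is measurability of the inverse, which I would do one compact piece at a time. On each $K_j$, $f|_{K_j}$ is a continuous injection from a compact space into a Hausdorff space. By the closed map lemma it sends closed (hence compact) subsets of $K_j$ to compact, hence closed, subsets of $[-1,1]^{\mathbb{N}}$. So $f|_{K_j}$ is a homeomorphism onto the compact set $f[K_j]$, which is consistent with Theorem \ref{SJ reprensentation theorem}(1). Consequently, for every Borel $B\subset Z$, the set $f[B\cap K_j]$ is Borel in $f[K_j]$, and thus Borel in $[-1,1]^{\mathbb{N}}$ because $f[K_j]$ is closed.

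To pass to $Z_0$, I will use injectivity to write $f[B\cap Z_0]=\bigcup_j f[B\cap K_j]$. This is a countable union of Borel sets, so $(f|_{Z_0}^{-1})^{-1}(B)=f[B\cap Z_0]$ is Borel. Taking $B=Z$ also shows $f[Z_0]$ itself is Borel ($F_\sigma$). Hence $f|_{Z_0}:Z_0\to f[Z_0]$ is a measurable bijection with measurable inverse.

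The only delicate point is the injectivity-based identity for the image of the countable union. Every other step is a standard compactness fact, and no completeness or separability of $Z$ is needed beyond the separating sequence $\{f_n\}$.
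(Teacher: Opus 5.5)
Your proof is correct. It is essentially the standard argument behind the works of Engelking and Jakubowski, which the paper cites in place of giving its own proof. On each compact $K_j$ the continuous injection $f$ is a homeomorphism onto the closed set $f[K_j]$, and taking countable unions shows that $f|_{Z_0}$ is a Borel isomorphism onto an $F_\sigma$ subset of $[-1,1]^{\mathbb{N}}$; this same fact underlies parts (1)--(2) of Theorem \ref{SJ reprensentation theorem}. One small correction: the identity $f[B\cap Z_0]=\bigcup_j f[B\cap K_j]$ holds for any map, so it is not the delicate point. Injectivity is used only to identify this image with the preimage of $B\cap Z_0$ under $(f|_{Z_0})^{-1}$.
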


\begin{lemm}[Nonlinear compositions' a.s. representations]\label{representation nonlinear}\cite{Holden2024JDE}
	Let $Z, W$ be quasi-Polish spaces, and suppose $F : Z \to W$ is a Borel function. Consider a sequence $\{Y_j\}_{j=1}^{\infty}$ of $Z$-valued random variables on $(\Omega, \mathcal{F}, P)$. Then there exist a subsequence $\{Y_{j_k}\}_{k\in\mathbb{N}}$ and $Z$-valued random variables $V_0, V_1, V_2, \dots$, defined on $([0,1], \mathcal{B}([0,1]), \operatorname{Leb})$, where $\operatorname{Leb}$ is the Lebesgue measure, such that
	$$
	Y_{j_k} \sim V_k,\quad k\in\mathbb{N},\qquad 
	V_k(\xi) \xrightarrow[]{k\uparrow\infty} V_0(\xi) \quad \text{for a.e. } \xi\in[0,1].$$
	Denote by $(V_k, \tilde{F}_k)$ the a.s.\ representations of $(Y_j, F(Y_j))$. Then $\tilde{F}_k = F(V_{n_k})$, a.s., $k \in \mathbb{N}$.
\end{lemm}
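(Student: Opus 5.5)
The plan is to reduce the claim to two facts: the joint law of $(Y_j,F(Y_j))$ is carried by the graph of $F$, and this graph is a measurable set that the a.s. representation cannot leave. The existence of the subsequence $\{Y_{j_k}\}$ and of the representations $V_k\to V_0$ a.e.\ on $([0,1],\mathcal{B}([0,1]),\operatorname{Leb})$ I take from Theorem \ref{SJ reprensentation theorem}. This implicitly assumes that the laws of $Y_j$ are tight on $Z$, as is the case wherever the lemma is used, e.g.\ in Lemma \ref{tightness}. I apply the theorem to the pairs $(Y_j,F(Y_j))$ in the product $Z\times W$, which is quasi-Polish by Lemma \ref{quasi Polish product}. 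If the laws of $F(Y_j)$ are not tight in $W$, I would instead apply it to $Y_j$ alone and define $\tilde F_k$ on an enlarged probability space via a regular conditional law. In either case the real content is the identification $\tilde F_k=F(V_k)$ a.s., and only that step needs an argument.

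\textbf{Step 1 (the graph is measurable).} Let $\{g_m\}_{m\in\mathbb{N}}$ be the continuous functions $W\to[-1,1]$ separating points of $W$ (Definition \ref{quasi Polish}). Then
$$\Gamma_F:=\{(z,w)\in Z\times W:\ w=F(z)\}=\bigcap_{m\in\mathbb{N}}\{(z,w):\ g_m(w)=g_m(F(z))\}.$$
Each map $(z,w)\mapsto g_m(w)-g_m(F(z))$ is Borel on $Z\times W$, because $g_m$ is continuous and $F$ is Borel. Hence $\Gamma_F$ is a countable intersection of Borel sets, and it lies in $\mathcal{B}(Z)\otimes\Sigma_g$, where $\Sigma_g$ is the $\sigma$-algebra generated by the $g_m$. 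Using the $g_m$ avoids any comparison between $\mathcal{B}(Z\times W)$ and $\mathcal{B}(Z)\otimes\mathcal{B}(W)$: every set appearing is generated by countably many real-valued measurable functions.

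\textbf{Step 2 (the property transfers through equality of laws).} For each $k$, the law of $(Y_{j_k},F(Y_{j_k}))$ charges $\Gamma_F$ with full mass, since $\mathbb{P}((Y_{j_k},F(Y_{j_k}))\in\Gamma_F)=1$ trivially. The pairs $(V_k,\tilde F_k)$ have the same joint law on the $\sigma$-algebra on which Theorem \ref{SJ reprensentation theorem} identifies the laws. That $\sigma$-algebra is generated by separating functions of the product, which I choose to include the $f_n\otimes 1$ and $1\otimes g_m$. It therefore follows that
$$\operatorname{Leb}\big((V_k,\tilde F_k)\in\Gamma_F\big)=1,$$
that is, $\tilde F_k=F(V_k)$ a.e.\ on $[0,1]$ for every $k$.

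\textbf{Main obstacle.} The delicate point is Step 2: checking that $\Gamma_F$ lies in the $\sigma$-algebra on which the laws are actually known to coincide. In a quasi-Polish space the Borel sets need not belong to $\Sigma_f$. I would handle this through item (2) of Theorem \ref{SJ reprensentation theorem}. By tightness, the laws are concentrated on a $\sigma$-compact set $K\subset Z$. Every Borel subset of $K$, and likewise of $(K\times W)\cap\Gamma_F$ once the second component is controlled, belongs to $\Sigma_f$. Alternatively, Lemma \ref{map of quasi Polish} identifies $K$ measurably with a subset of the Polish space $[-1,1]^{\mathbb{N}}$, and on the image the standard Polish argument applies. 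Either way, the laws of the representation are Radon, so equality on $\Sigma_f$ extends to the relevant Borel set, and Step 2 goes through.
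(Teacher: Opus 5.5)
The paper gives no proof of this lemma. It is imported from \cite{Holden2024JDE} and used only through Proposition~\ref{SJ representations}, so there is no argument of the paper's to compare against. Your proof is correct and is the natural way to establish the identification: you transfer the full-measure set $\Gamma_F=\bigcap_m\{(z,w):\ g_m(w)=g_m(F(z))\}$ through equality of the joint laws.

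You are right on two further points. First, the existence part needs tightness of the laws of $Y_j$, and of $F(Y_j)$ for a joint representation. The statement as transcribed omits this hypothesis and is false without it (take $Y_j\equiv j\in\mathbb{R}$); in the paper's application the tightness lemma supplies it. Second, the conclusion should read $\tilde F_k=F(V_k)$ rather than $F(V_{n_k})$.

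Two small tightenings are worth making.

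Your last paragraph can be made exact without any control of the second component. Let $K$ be the $\sigma$-compact set carrying all the laws of $Y_j$. Then each $z\mapsto\mathbf{1}_K(z)\,g_m(F(z))$ is $\Sigma_f$-measurable by item (2) of the Skorokhod--Jakubowski theorem, so $\Gamma_F\cap(K\times W)\in\Sigma_f\otimes\Sigma_g$. This set has mass $\mathbb{P}(Y_{j_k}\in K)=1$ under the law of $(Y_{j_k},F(Y_{j_k}))$. Equality of laws on $\Sigma_f\otimes\Sigma_g$, the $\sigma$-algebra generated by your separating family $\{f_n\circ\pi_Z,\ g_m\circ\pi_W\}$, transfers this mass to $(V_k,\tilde F_k)$. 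That is all Step 2 needs.

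The fallback via regular conditional laws is unnecessary and should be dropped. If only $Y_j$ is represented, you can simply set $\tilde F_k:=F(V_k)$, and the identification becomes trivial.
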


	\smallskip
	\noindent\textbf{Acknowledgments}
	
This work was partially supported by the National Key R\&D Program of China(No. 2021YFA10021
00) and the National Natural Science Foundation of China (No.12171493 and No. 12471233).
	
	\smallskip
	\noindent\textbf{Conflict of interest}
	
	The authors have no conflicts to disclose.
	
	\smallskip
	\noindent\textbf{Data Availability}
	
	The data that support the findings of this study are available from the corresponding author upon reasonable request.

	\phantomsection
	\addcontentsline{toc}{section}{\refname}
	\bibliographystyle{abbrv} 
	\bibliography{SCHref}
\end{document}